\newtheorem{thm}{Theorem}[section]
\newtheorem*{thm*}{Theorem}
\newtheorem{cor}[thm]{Corollary}
\newtheorem*{cor*}{Corollary}
\newtheorem{lem}[thm]{Lemma}
\newtheorem*{lem*}{Lemma}
\newtheorem{prop}[thm]{Proposition}
\newtheorem*{prop*}{Proposition}
\newtheorem*{thma}{Theorem A}
\newtheorem*{thmb}{Theorem B}
\newtheorem*{thmc}{Theorem D}
\newtheorem*{corc}{Corollary C}
\theoremstyle{definition}
\newtheorem{defn}[thm]{Definition}
\newtheorem*{defn*}{Definition}
\newtheorem{conjecture}[thm]{Conjecture}
\newtheorem*{conjecture*}{Conjecture}
\newtheorem*{conja}{Conjecture E}
\newtheorem*{condition*}{Condition}
\newtheorem*{assumption*}{Assumption}
\newtheorem{algo}{Algorithm}
\theoremstyle{remark}
\newtheorem{rem}[thm]{Remark}
\newtheorem*{rem*}{Remark}
\newtheorem{example}[thm]{Example}
\newtheorem*{problem*}{Problem}
\newcommand{\Ht}{\widetilde{H}}
\newcommand{\N}{\mathbb{N}}
\newcommand{\Z}{\mathbb{Z}}
\newcommand{\Q}{\mathbb{Q}}
\newcommand{\C}{\mathbb{C}}
\newcommand{\xx}{\mathbf{x}}
\newcommand{\ax}{\mathbf{a}}
\newcommand{\bx}{\mathbf{b}}
\newcommand{\cx}{\mathbf{c}}
\newcommand{\dx}{\mathbf{d}}
\newcommand{\ix}{\mathbf{i}}
\newcommand{\osp}{\mathcal{OSP}}
\newcommand{\AAA}{\mathcal{A}}
\newcommand{\BB}{\mathcal{B}}
\newcommand{\CC}{\mathcal{C}}
\newcommand{\DD}{\mathcal{D}}
\newcommand{\AAAA}{\mathbf{A}}
\newcommand{\BBB}{\mathbf{B}}
\newcommand{\DX}{\mathfrak{D}}
\newcommand{\nls}{_{n,\lambda,s}}
\newcommand{\sym}{\mathfrak{S}}
\newcommand\numberthis{\addtocounter{equation}{1}\tag{\theequation}}
\DeclareMathOperator{\maj}{maj}
\DeclareMathOperator{\inv}{inv}
\DeclareMathOperator{\Des}{Des}
\DeclareMathOperator{\Asc}{Asc}
\DeclareMathOperator{\height}{ht}
\DeclareMathOperator{\Hilb}{Hilb}
\DeclareMathOperator{\Frob}{Frob}
\DeclareMathOperator{\rw}{rw}
\DeclareMathOperator{\des}{des}
\DeclareMathOperator{\lex}{lex}
\DeclareMathOperator{\content}{content}
\DeclareMathOperator{\sort}{sort}
\DeclareMathOperator{\sh}{sh}
\DeclareMathOperator{\ctype}{ctype}
\DeclareMathOperator{\cocharge}{cocharge}
\DeclareMathOperator{\sgn}{sgn}
\DeclareMathOperator{\im}{im}
\DeclareMathOperator{\cc}{cc}
\DeclareMathOperator{\Inj}{Inj}
\DeclareMathOperator{\rev}{rev}
\DeclareMathOperator{\col}{col}
\DeclareMathOperator{\weight}{weight}
\DeclareMathOperator{\snake}{snake}
\DeclareMathOperator{\shape}{sh}
\DeclareMathOperator{\std}{std}
\DeclareMathOperator{\Sp}{Sp}
\DeclareMathOperator{\SSYT}{SSYT}
\DeclareMathOperator{\SYT}{SYT}
\newcommand{\stirling}[2]{\genfrac{[}{]}{0pt}{}{#1}{#2}}
\title{Representation Theoretic Bases for the $\Delta$-Springer Module}
\author{Raymond Chou, Mitsuki Hanada}
\begin{document}

\maketitle

\begin{abstract}
    We give a descent monomial basis of $\Delta$-Springer modules $R\nls$, first defined by Griffin in \cite{griffin2021ordered}. Our construction simultaneously generalizes the descent basis for the Garsia-Procesi module $R_\lambda$ \cite{carlsson2024descent}\cite{hanada2025charge}, as well as the descent basis for the generalized coinvariant algebras $R_{n,k}$ studied in \cite{HaglundRhoadesShimozono2018}. This basis is deeply connected with a combinatorial object called battery-powered tableaux, introduced by Gillespie--Griffin \cite{griffin2024delta_springer}. We highlight the representation theoretic properties of this monomial basis by using it to give a direct combinatorial proof of the graded Frobenius character of $R\nls$ in terms of battery-powered tableaux, a fact which has only the geometric proof of Gillespie-Griffin. 
    We also conjecture a higher Specht basis of $R\nls$,  generalizing the higher Specht basis of the coinvariant ring defined in \cite{ariki1997higher}. This construction coincides with the Gillespie-Rhoades higher Specht basis for $R_{n,k}$ \cite{RhoadesGillespie2021}. We give a proof for when $\lambda = (\lambda_1,\lambda_2)$ is a partition of two rows and $\ell(\lambda) = s$.
\end{abstract}

\section{Introduction}

The \emph{coinvariant algebra} is the quotient ring $R_n = \Q[\xx_n]/I_n$, where
$$\xx_n := \{x_1,\dots,x_n\}, \quad I_n = (e_1(\xx_n),\dots,e_n(\xx_n))$$
where $I_n$ can be thought of as the ideal of nonconstant symmetric functions. The study of $R_n$ and its various generalizations is a focal point in the rich and dynamic interplay between algebraic combinatorics, algebraic geometry, and representation theory. Borel proved \cite{borel1953cohomology} that $R_n \cong H^*(\mathcal{F}_n)$ is the cohomology ring of the type $A$ flag variety, where $x_i$ represents the first Chern class of the $i$th tautological line bundle.

On the other hand, there is an action of the symmetric group $\sym_n \circlearrowleft R_n$ defined by permuting the variables:
$$ \sigma \cdot f(x_1,\dots,x_n) = f(x_{\sigma(1)},\dots,x_{\sigma(n)})$$
which is a simple example of the \emph{Springer action}. Chevalley proved \cite{chevalley1955invariants}, as ungraded $\sym_n$-modules, that
$$ R_n \cong_{\sym_n} \Q[\sym_n]$$
and the graded character was computed by Lustzig (unpublished) and Stanley \cite{stanley1979invariants} to be
\begin{align}\label{eq: Frob Rn}
    \Frob_q(R_n) = \sum_{T \in \SYT_n} q^{\maj(T)} s_{\sh(T)}(\xx)
\end{align}
where $s_\lambda(\xx)$ is the \emph{Schur polynomial}.

The first of three generalizations of $R_n$ we consider is the \emph{Garsia-Procesi module} $R_\lambda$, where $\lambda \vdash n$ is a partition of $n$. Writing the transpose partition $\lambda' = (\lambda'_1 \geq \dots \geq \lambda'_n \geq 0)$ (allowing $\lambda'_i = 0$ to achieve a tuple of length $n$), the \emph{Tanisaki ideal} $I_\lambda$ is defined to be
$$ I_\lambda := ( e_d(S) : d > |S| - (\lambda'_n + \dots + \lambda'_{n-|S|+1}))$$
where $S \subset \{x_1,\dots,x_n\}$ and $e_d(S)$ is the degree $d$ partial elementary symmetric function in the variables $S$. The Garsia-Procesi module is defined to be $R_\lambda := \Q[\xx_n]/I_\lambda$. When $\lambda = (1^n)$ is a vertical strip, we have $R_{1^n} = R_n$. 
We have that $R_\lambda \cong H^*(\Sp_\lambda)$, where $\Sp_\lambda \subset \mathcal{F}_n$ is the Springer fiber corresponding to a nilpotent operator of Jordan type $\lambda$, due to work of DeConcini-Procesi \cite{deconcini_proc1981}. The presentation above was given by Tanisaki \cite{tanisaki1982ideals}. 
The graded character of $R_\lambda$ is the \emph{modified Hall-Littlewood polynomial} $\Frob_q(R_\lambda) = \Ht_\lambda(X;q)$ \cite{HottaSpringer1977Specialization}\cite{Springer1976GreenFunctions}\cite{garsia_procesi1992}.

The second generalization has to do with the famous \emph{Delta conjecture} in symmetric function theory. Haglund-Rhoades-Shimozono \cite{HaglundRhoadesShimozono2018} constructed a partial representation theoretic model for $\Delta_{e_{k-1}}' e_n$ (up to a twist) where $\Delta'$ is a certain operator on $q,t$-symmetric functions $\Lambda_{q,t}$, first defined by F. Bergeron-Garsia-Haiman-Tesler \cite{bergeron_garsia_haiman_tesler1999_identities}. The model depends on two parameters $k \leq n$, and
$$ I_{n,k} = (x_i^k : 1 \leq i \leq n) + (e_n(\xx_n),\dots,e_{n-k+1}(\xx_n)) \qquad R_{n,k} = \Q[\xx_n]/I_{n,k}$$
which specializes to the usual coinvariant algebra when $k = n$. They also computed the Frobenius character of $R_{n,k}$:
$$ \Frob_q(R_{n,k}) =  \rev_q \circ \,\omega\big(\Delta'_{e_{k-1}}e_n \big|_{t = 0}\big).$$
One presentation of the underlying geometry $R_{n,k}$ is given by the \emph{spanning line configurations} $X_{n,k} := \{ (\ell_1,\dots,\ell_n) \in (\C P^{k-1})^n: \ell_1 + \dots + \ell_n = \C^k$\} constructed by Pawlowski-Rhoades \cite{pawlowski_rhoades2019_flag}, who showed that
$ H^*(X_{n,k}) \cong R_{n,k}$.

The third generalization is an amalgamation of the previous two, and was due to the work of Griffin \cite{griffin2021ordered}. Define
$$ I\nls := (x_i^s: 1 \leq i \leq n) + (e_d(S):d > |S|-(\lambda_n'+\dots + \lambda'_{n-|S|+1})) \quad R\nls := \Q[\xx_n]/I\nls $$
where $\lambda \vdash k \leq n$, $\lambda' = (\lambda'_1 \geq \dots \geq \lambda'_n \geq 0)$ is padded with $0$'s to achieve a tuple of length $n$, and $s \geq \ell(\lambda)$. We refer to $R\nls$ as the \emph{$\Delta$-Springer module}, due to the specializations $R_{k,\lambda,\ell(\lambda)} = R_\lambda, R_{n,(1^k),k} = R_{n,k}$. Griffin-Levinson-Woo gave a geometric model \cite{griffin2024delta_springer} consisting of the \emph{Delta-Springer fibers} $Y_{n,\lambda,s}$ which consists of partial flags $F_\bullet \in \mathcal{F}_{1^n}(\C^K)$ preserved by a certain nilpotent operator depending on $\lambda$, and $K = s(n-k) + k$. They showed that $H^*(Y_{n,\lambda,s}) \cong R\nls$. These modules will be the main subject of our paper.

The Hilbert series of $R_n$ was first computed by E. Artin to be
$$\Hilb_q(R_n) = [n]_q! $$
where $[k]_q = 1 + q + \dots + q^{k-1}$, and $[n]_q! = [n]_q[n-1]_q\dots[1]_q$. There are multiple bases of $R_n$ indexed by permutations $\sigma \in \sym_n$ of combinatorial interest, of which we give a brief exposition.

The first basis is the \emph{Artin basis} of substaircase monomials $\AAA_n = \{\xx^\ax: \ax = (a_1,\dots,a_n), 0 \leq a_i \leq n-i\}$, which correspond to the \emph{inversion} statistic on permutations $\sigma \in \sym_n$. The Artin monomials are "geometric" in nature; there is a natural basis of $H^*(\mathcal{F}_n)$ given by the classes of cell closures $\overline{[C_w]}$ of $\mathcal{F}_n$ called the \emph{Schubert polynomials} $\sym_\sigma$. The leading term  of $\sym_\sigma$ with respect to the  lexicographical term order is exactly the Artin monomial $\xx^\ax$ corresponding to $\sigma$.

The three generalizations of $R_n$ also have monomial bases analogous to the Artin basis of $R_n$. 
Garsia-Procesi \cite{garsia_procesi1992} gave an inductive construction of a subset of Artin monomials which form a basis of $R_\lambda$. Haglund--Rhoades--Shimozono \cite{HaglundRhoadesShimozono2018} constructed a set of generalized Artin monomials $\AAA_{n,k}$ which form a basis of $R_{n,k}$, corresponding to an inversion statistic on ordered set partitions $\osp_{n,k}$. Griffin \cite{griffin2021ordered} generalized both of these constructions to form the $(n,\lambda,s)$-substaircase monomials, which form a monomial basis of $R\nls$.  

The next basis we consider, which is more representation-theoretic in nature, is the \emph{(Garsia-Stanton) descent basis}. Define the \emph{descent monomial} to be $g_\sigma(\xx) = \prod_{\sigma_i > \sigma_{i+1}} x_{\sigma_1}\dots x_{\sigma_i}$. Writing $g_\sigma(\xx) = \xx^\bx$, we have that $\bx = (b_1,\dots,b_n)$ has the property $b_1 + \dots + b_n = \maj(\sigma)$, the \emph{major index} of $\sigma$. This witnesses the equidistribution of $\inv$ and $\maj$, first proven by MacMahon \cite{macmahon1913indices}: 
$$\sum_{\sigma \in \sym_n} q^{\maj(\sigma)} = \sum_{\sigma \in \sym_n} q^{\inv(\sigma)}.$$
They were first proven to be a basis by Garsia \cite{garsia1980cmbc} using Stanley-Reisner theory, and later studied by Garsia-Stanton in \cite{garsia_stanton1984}. Straightening algorithm proofs were given by Adin-Brenti-Roichman \cite{adin_brenti_roichman2005} and E.E. Allen \cite{AllenDescent}. Haglund--Rhoades--Shimozono construct a generalized Garsia-Stanton basis $\mathcal{GS}_{n,k}$ corresponding to a $\maj$ statistic on ordered set partitions $\osp_{n,k}$.

Carlsson and the first author \cite{carlsson2024descent} found a subset of the Garsia-Stanton monomials which descends to a basis of the Garsia-Procesi module $R_\lambda$. The second author \cite{hanada2025charge} reformulated the construction in terms of the Lascoux-Sch\"{u}tzenburger \emph{cocharge} and \emph{catabolizability}, which directly connects this basis with the graded Frobenius character of $R_\lambda$.

The final basis we consider is the \emph{higher Specht basis} of Ariki-Terasoma-Yamada \cite{ariki1997higher}. They are generalizations of the usual Specht polynomial for a standard Young Tableau $T$ given by the following:
$$ F_T = \prod_{T_i \text{ is a column of } T} \prod_{j,k \in T_i} (x_j-x_k).$$
Ariki-Terasoma-Yamada define the \emph{higher Specht polynomial} for two standard Young tableaux $S,T$ of the same shape:
$$ F_T^S := \varepsilon_T \cdot \xx_T^{\cc(S)}, \qquad \varepsilon_T = \sum_{\tau \in C(T)}\sum_{\sigma \in R(T)} (-1)^\tau \tau\sigma \in \Q[\sym_n]$$
where $C(T),R(T)$ are the column and row permutation groups respectively, and $\cc(S)$ is the \emph{cocharge word} of the tableau $S$ defined in Section \ref{sec:bg}.
The advantage of the higher Specht basis is that for a fixed $S$, the polynomials $\{F_T^S: T \in \SYT_n, \sh(S) = \sh(T)\}$, span a copy of the irreducible $\sym_n$-representation $V_{\sh(S)}$. Since $\{F_T^S: S,T \in \SYT_n, \sh(S) = \sh(T)\}$ forms a basis of $R_n$ \cite{ariki1997higher}, this set explicitly realizes the decomposition
$$ R_n \cong \bigoplus_{\lambda} V_\lambda^{\oplus |\SYT(\lambda)| }$$
as a graded $\sym_n$-module.
There is a generalization of the higher Specht basis for $R_{n,k}$, as well as a conjectured one for $R_\lambda$, due to Gillespie-Rhoades \cite{RhoadesGillespie2021}. 

In this paper, we construct two different bases of $R\nls$.
Our first basis is an extension of the Garsia-Stanton basis.
Let $\lambda \vdash k \leq n$ be a partition, and $s \geq \ell(\lambda)$. Let $\DD_n =\{\bx : \xx^\bx = g_\sigma(\xx) \text{ for some } \sigma \in \sym_n\}$, and let $\osp_{n,\lambda'} := \{ \sigma = (\AAAA | \BBB): \AAAA \in \osp_{\lambda'}, \BBB \in \osp_{n-k,n-k}\}$. Define
\begin{align*}
    \mathcal{D}\nls: = \{\ax : \ax\vert_{A_i} \in \mathcal{D}_{\lambda'_i}, \ax\vert_{B_j} \in \{0,\dots, s-1\}  \text{ for some } \sigma \in \osp_{n,\lambda'}\}.
\end{align*}

Note that we can also reformulate this in terms of cocharge. Define the set $\Sigma\nls := \{(S,(\mu): S \in \SYT_n, \ctype(S|_k) \trianglerighteq \lambda, \des(S) < s, \mu \subseteq (n-k)\times(s-\des(S)-1)\}$. This is the collection of standard Young tableaux whose restriction to the first $k$ entries satisfies a certain \emph{catabolizability} condition depending on $\lambda$, along with a partition $\mu$ of at most $(n-k)$ parts with $\mu_1 < s-\des(S)$.
We can realize each $\bx\in \mathcal{D}\nls$ as being indexed by pairs $(w,\mu)$ such that $(P(w),\mu)\in \Sigma\nls$.
Using this correspondence, we show our first theorem.
\begin{thma}\makeatletter\def\@currentlabel{A}\makeatother\label{thm: A}
    The set of monomials $\BB\nls := \{ \xx^\bx : \bx \in \DD\nls\}$ descend to a monomial basis of $R\nls$.
\end{thma}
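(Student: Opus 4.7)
The plan is to prove Theorem~\ref{thm: A} by establishing spanning of $\BB\nls$ in $R\nls$ via a straightening algorithm, and then comparing cardinalities to conclude linear independence. The argument will synthesize the techniques from the two known specializations: the Tanisaki-relation straightening of Carlsson-Hanada and Hanada for $R_\lambda$, and the bounded-exponent reduction of Haglund-Rhoades-Shimozono for $R_{n,k}$.

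To prove spanning, I would produce an algorithm that rewrites an arbitrary monomial $\xx^\cx$ modulo $I\nls$ as a $\Q$-linear combination of elements of $\BB\nls$. The reduction proceeds in two stages. First, the generators $x_i^s \in I\nls$ are used to force every exponent to be at most $s-1$. Second, one associates to the bounded monomial a canonical ordered set partition $\sigma = (\AAAA \mid \BBB) \in \osp_{n,\lambda'}$; for instance, the blocks $A_i$ may be chosen to consist of the positions contributing the largest exponents, in a way compatible with the catabolizability condition defining $\Sigma\nls$. Within each block $A_i$, the Tanisaki-relation straightening of \cite{hanada2025charge} reduces $\xx^\cx|_{A_i}$ to a descent monomial in $\mathcal{D}_{\lambda'_i}$, while the exponents on the singleton blocks $B_j$ already satisfy the required bound $0 \leq c_j \leq s-1$.

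For linear independence, I would invoke the reformulation of $\DD\nls$ in terms of pairs $(w,\mu)$ with $(P(w),\mu) \in \Sigma\nls$, combined with the RSK correspondence, to produce a graded identity
$$\sum_{\bx \in \DD\nls} q^{|\bx|} \;=\; \sum_{(S,\mu) \in \Sigma\nls} q^{\cc(S) + |\mu|} f^{\sh(S)},$$
and verify that this matches $\Hilb_q(R\nls)$ as computed by Griffin \cite{griffin2021ordered} (equivalently, the Hilbert series derived from the Gillespie-Griffin battery-powered tableaux). Together with the spanning established above, this forces $\BB\nls$ to be a basis.

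The hard part will be establishing correctness of the straightening. Specifically: (i) the rule for assigning an ordered set partition $(\AAAA \mid \BBB)$ to each monomial must be stable under the Tanisaki reductions, so that the algorithm terminates inside $\BB\nls$ rather than bouncing between different target ordered set partitions; (ii) a refined monomial order --- likely a degree-lexicographic order that prioritizes the $A_i$ over the $B_j$ --- must be chosen so that each reduction strictly decreases the monomial, guaranteeing termination; and (iii) one must check that the Tanisaki straightening for the Garsia-Procesi module of size $\lambda'_i$, when applied to the subset $A_i \subset \{1,\dots,n\}$, interacts correctly with the additional generators $x_j^s$ of $I\nls$ on the complementary variables, and that the resulting descent monomials restricted to $A_i$ genuinely lie in $\mathcal{D}_{\lambda'_i}$. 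Once the algorithm is justified, the spanning follows and the cardinality identity closes out the proof.
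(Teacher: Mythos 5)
Your overall architecture --- prove one of spanning/independence combinatorially and get the other from a cardinality count --- matches the paper's, and the counting half of your plan is essentially what the paper does in Section \ref{sec:counting}: the bijection $(w,\mu)\mapsto \rev(\cc(w,\mu))$ between pairs with $(P(w),\mu)\in\Sigma\nls$ and $\DD\nls$, composed with the bijection $\Psi$ to battery-powered tableaux, gives $|\DD\nls| = \dim R\nls$ (only the ungraded count is needed; the graded identity is Theorem B, proven \emph{after} the basis is established). But you have swapped which half is proven directly, and the half you chose is the one with a genuine gap. The paper proves \emph{linear independence} directly: for each $A\subset[n]$ with $|A|=\lambda_1'$ it shows the projection $\Q[\xx_n]\to R_{\lambda_1'}\otimes R_{n,\widehat{\lambda},s}$ kills $I\nls$ (by factoring each $e_d(B)$ across $A\cap B$ and $B\setminus A$ and checking a dominance inequality on each tensor factor), and then a triangularity argument with respect to $\leq_{\des}$, inducting on the number of columns of $\lambda$, forces the $\leq_{\des}$-maximal coefficient of any vanishing combination to be zero. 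No straightening algorithm is ever constructed.

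The gap in your proposal is that the straightening algorithm is the entire content of the spanning claim, and items (i)--(iii) in your last paragraph are precisely the parts that are not routine and are left unresolved. Concretely: $<_{\des}$ is not a monomial order (the paper notes $\ax<_{\des}\bx$ does not imply $\ax+\cx<_{\des}\bx+\cx$), so termination of a reduction procedure is not automatic; there is no canonical ordered set partition witnessing membership in $\DD\nls$ (the paper's own example $\mathbf{z}=0110$ admits two), so ``stability of the assignment under reduction'' cannot simply be posited; and the Tanisaki-relation straightening you attribute to \cite{carlsson2024descent} and \cite{hanada2025charge} for $R_\lambda$ does not exist in the form you need --- those papers also argue via linear independence plus counting, and the only genuine straightening proofs in the literature are for the full coinvariant algebra $R_n$. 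Most seriously, when you straighten within a block $A_i$ using relations of the coinvariant algebra $R_{\lambda_i'}$ in the variables $\xx_{A_i}$, those relations are \emph{not} elements of $I\nls$; the discrepancy lands in the complementary variables, and controlling those error terms is exactly the difficulty that Proposition \ref{prop: phi-delete-lambda1} is engineered to sidestep by passing to the quotient $R_{\lambda_1'}\otimes R_{n,\widehat{\lambda},s}$ rather than working inside $R\nls$. Until that interaction is controlled, the spanning claim is unproven, and with it the theorem. If you want a proof along the paper's lines, prove independence via the maps $\varphi_A$ and the $\leq_{\des}$-triangularity instead.
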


Furthermore, we can use this construction to obtain a new combinatorial formula for the Schur expansion of the graded Frobenius character $\Frob_q(R\nls)$.
\begin{thmb}\makeatletter\def\@currentlabel{B}\label{thm: b}
We have 
\begin{align}
    \Frob_q(R\nls) = \sum\limits_{(S,\mu)\in \Sigma\nls} q^{\cocharge(S) + |\mu|} s_{\shape(S)}.
\end{align}
\end{thmb}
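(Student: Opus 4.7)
The plan is to leverage the descent basis $\BB\nls$ of Theorem A, partition it according to the combinatorial data $(S,\mu) \in \Sigma\nls$, and identify each piece with a graded copy of an irreducible $\sym_n$-module, so that the Schur expansion emerges directly.

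I would first make precise the bijection sketched in the introduction between $\DD\nls$ and pairs $(w,\mu)$ with $(P(w),\mu) \in \Sigma\nls$. Given $\ax \in \DD\nls$ arising from $\sigma = (\AAAA \mid \BBB) \in \osp_{n,\lambda'}$, one extracts a permutation $w$ by reading the $A$-block contents in the canonical order underlying the descent monomial, and sets $\mu$ to be the weakly decreasing rearrangement of the restriction $\ax|_{B_1 \cup \dots \cup B_{n-k}}$. The constraint $\ax|_{A_i} \in \DD_{\lambda'_i}$ then translates, via the cocharge--catabolizability dictionary of \cite{hanada2025charge}, into $\ctype(P(w)|_k) \trianglerighteq \lambda$; the constraint $\ax|_{B_j} \in \{0,\dots,s-1\}$ combined with the contributions forced by the descents of $w$ yields $\mu \subseteq (n-k)\times(s-\des(P(w))-1)$ together with $\des(P(w)) < s$.

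Next I would verify the degree identity $|\ax| = \cocharge(P(w)) + |\mu|$. The $B$-block contribution is $|\mu|$ by construction, while the $A$-block contribution is the classical descent monomial degree for $w$ relative to the $A$-block structure, which equals $\cocharge(P(w))$ by the identity of \cite{hanada2025charge}. Summing this identity over the basis immediately gives the correct Hilbert series, which is a useful sanity check but falls short of the full Schur expansion.

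The main obstacle is the representation-theoretic step. For each fixed $(S,\mu) \in \Sigma\nls$, one must show that the collection $\{\xx^{\ax} \in \BB\nls : \ax \leftrightarrow (w,\mu),\, P(w) = S\}$ descends in $R\nls$ to a graded $\sym_n$-subspace, inside a suitable filtration, isomorphic to $V_{\sh(S)}$ concentrated in degree $\cocharge(S) + |\mu|$. I would approach this via a straightening argument in the spirit of Adin--Brenti--Roichman and E.\ E.\ Allen: equip $\BB\nls$ with a partial order refining the degree, analyze the action of adjacent transpositions on $\BB\nls$, and show that the induced action on the associated graded is triangular with Specht-module diagonal blocks indexed by $P(w)$. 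The previously established specializations --- $R_n$ by Ariki--Terasoma--Yamada, $R_\lambda$ by \cite{hanada2025charge}, and $R_{n,k}$ by Haglund--Rhoades--Shimozono --- both guide the argument and serve as consistency checks, and the two-row higher Specht basis proved later in this paper furnishes an independent verification in that case. Summing the contributions over $\Sigma\nls$ then yields the claimed Schur expansion.
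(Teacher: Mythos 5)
Your first two steps (the bijection between $\DD\nls$ and pairs $(w,\mu)$, and the degree identity $|\ax| = \cocharge(P(w)) + |\mu|$) are fine and match Proposition \ref{lem: Dnls to pairs}, but the step you yourself flag as the main obstacle is where the entire content of the theorem lives, and you do not carry it out. Showing that for fixed $(S,\mu)$ the set $\{\xx^{\ax} \in \BB\nls : \ax \leftrightarrow (w,\mu),\ P(w)=S\}$ spans a piece of an $\sym_n$-stable filtration of $R\nls$ whose associated graded is $V_{\sh(S)}$ is a higher-Specht-type decomposition theorem that is strictly stronger than Theorem B; the action of an adjacent transposition on a descent monomial is not triangular on $\BB\nls$ in any evident order, and the claimed ``Specht-module diagonal blocks indexed by $P(w)$'' is exactly the assertion that would have to be proved. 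It is worth noting that the paper itself only obtains such an explicit Specht decomposition (the higher Specht basis $\CC\nls$) conjecturally in general, proving it only for two-row $\lambda$ --- so your plan, if completed, would establish something the authors were not able to.

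The paper's actual argument sidesteps the filtration entirely: a homogeneous symmetric function of degree $n$ is determined by the pairings $\langle e_\gamma, -\rangle$ over all $\gamma \models n$, and by Frobenius reciprocity $\langle e_\gamma, \Frob_q(R\nls)\rangle = \Hilb_q(N_\gamma R\nls)$, where $N_\gamma$ is the antisymmetrizer for the Young subgroup $S_\gamma$. One then only needs a basis of each space $N_\gamma R\nls$, which the descent basis supplies after a short swapping argument (Lemma \ref{lem: swapping}) plus a cardinality count drawn from the known ungraded character; comparing the resulting Hilbert series with $\langle e_\gamma, \sum_{(S,\mu)} q^{\cocharge(S)+|\mu|} s_{\sh(S)}\rangle$ via RSK finishes the proof. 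If you want to salvage your outline, replace the filtration step with this antisymmetrization computation.
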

This construction is naturally connected to the theory of \emph{battery-powered tableaux}, a combinatorial object originally introduced by Gillespie--Griffin \cite{battery}. Gillespie--Griffin have a Schur expansion formula for $\Frob_q(R\nls)$ in terms of battery-powered tableaux. Though the formula is combinatorial, the motivation and proof of it were geometric. Furthermore, they were unable to connect the formula back to the known combinatorics of $R\nls$. Our work bridges this gap: in particular, we show that Theorem \ref{thm: b} is equivalent to their formula through a cocharge/shape preserving bijection between $\Sigma\nls$ and the set of corresponding battery-powered tableaux. 

\begin{corc}
    There is a weight-preserving bijection between the sets $\Sigma\nls$ and the battery-powered tableaux $\mathcal{T}^+\nls$.
\end{corc}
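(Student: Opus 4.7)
The plan is to construct an explicit weight-preserving bijection $\Phi \colon \Sigma\nls \to \mathcal{T}^+\nls$. Since both Theorem~\ref{thm: b} and the Gillespie--Griffin Schur expansion compute the same graded Frobenius character $\Frob_q(R\nls)$, and the Schur functions are linearly independent, the existence of a shape- and $q$-weight-preserving bijection is automatic by counting; the content of the corollary lies in realizing $\Phi$ combinatorially and verifying the statistic match directly.

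First I would recall the Gillespie--Griffin definition of a battery-powered tableau: an underlying standard tableau equipped with battery cells whose placement is governed by a catabolizability-type condition and whose contents carry extra charge. The key observation is that the data defining an element of $\Sigma\nls$ matches the data of a battery-powered tableau term by term. Specifically, the tableau $S$ plays the role of the underlying filling; the condition $\ctype(S|_k) \trianglerighteq \lambda$ captures exactly the rule governing when and where a battery may be installed; the bound $\des(S) < s$ provides the room available for the batteries; and the partition $\mu \subseteq (n-k) \times (s - \des(S) - 1)$ records the extra charge stored in the batteries, with the rectangle dimensions matching the number of batteries ($n-k$) and the maximum allowable charge per battery ($s - \des(S) - 1$).

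Second, I would define the map $\Phi$ on a pair $(S, \mu)$ by declaring the underlying tableau to be $S$ and installing $n-k$ batteries whose locations are determined by the positions of the entries $k+1, \ldots, n$ in $S$, with each battery's charge given by the corresponding column of $\mu$. The inverse would strip away the batteries to recover $S$ and read off the column heights of $\mu$. Well-definedness in each direction reduces to matching the defining inequalities of $\Sigma\nls$ with the admissibility conditions for battery-powered tableaux, which I expect to fall out of a careful side-by-side reading of the two definitions.

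The main obstacle will be verifying that the weights agree, i.e., that $\cocharge(S) + |\mu|$ equals the Gillespie--Griffin $q$-statistic on $\Phi(S, \mu)$. Shape preservation is automatic, but the cocharge of $S$ is defined globally via its cocharge word, whereas the battery-powered statistic is a sum of local contributions coming from each battery. My strategy is to decompose $\cocharge(S)$ into the cocharge of $S|_k$ plus the contribution coming from the entries $k+1, \ldots, n$, match the first piece to the cocharge of the uncharged base tableau, match the second piece to the battery positions, and identify $|\mu|$ with the total stored charge. I anticipate that the cleanest route is an induction on $n - k$, peeling off the entry $n$ together with the corresponding battery at each step, so that the identity ultimately reduces to a single-cell verification.
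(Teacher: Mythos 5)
There is a genuine gap, and it starts with your model of a battery-powered tableau, which does not match the definition used here. In this paper (following Gillespie--Griffin), a battery-powered tableau is a pair $(D,B)$ of \emph{semistandard} tableaux in which the battery $B$ is a single fixed rectangle of shape $(n-k)\times(s-1)$ appended at the bottom right, and the only constraint is that the total content of $(D,B)$ equals $(n-k)\times s+\lambda$. There are no battery cells ``whose placement is governed by a catabolizability-type condition,'' and there are not $n-k$ separate batteries sitting at the positions of the entries $k+1,\dots,n$ of $S$. Consequently your map cannot ``declare the underlying tableau to be $S$'': $S$ is standard, while the device $D$ must be semistandard of weight $\alpha\models n$ with $\lambda\subseteq\alpha$, so $D$ is necessarily a different filling of $\shape(S)$. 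The actual bijection $\Psi$ passes through two intermediate constructions absent from your sketch: one first forms the \emph{boosted cocharge tableau} $\cc(S,\mu)$ by adding $\mu_{n-j+1}$ to the cocharge label of each $j\in\{k+1,\dots,n\}$, then appends the superstandard filling of the $(n-k)\times(s-1)$ rectangle (using Lemma~\ref{lem: cc of battery}, which says the battery part of $\cc(\std(T))$ is always superstandard), verifies via Lemma~\ref{lem: cc(S) condition} and the catabolizability machinery that the result is $\cc(\std(T))$ for some $T\in\mathcal{T}^+\nls$, and finally destandardizes. Once this is set up, weight preservation is immediate rather than requiring your proposed induction on $n-k$: the battery contributes the constant $\binom{s-1}{2}(n-k)$, which is exactly the normalization factor in the Gillespie--Griffin formula (a constant your sketch omits), and the device contributes $\cocharge(S)+|\mu|$.

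Separately, your opening claim that the bijection ``is automatic by counting'' is logically valid for the bare existence statement, but it consumes the Gillespie--Griffin formula as an input. The role of this corollary in the paper is the reverse: the explicit bijection, combined with Theorem~\ref{thm: b}, is what supplies a combinatorial and algebraic proof of their formula, which previously had only a geometric proof. So the counting argument cannot stand in for the construction, and the construction you propose in its place does not produce valid battery-powered tableaux.
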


Our second basis is a higher Specht basis for $R\nls$ for $\lambda$ with at most two rows; as a consequence, we obtain a higher Specht basis for $R_\lambda$ for two row partitions. Note that we identify a partition $\mu \subseteq (n-k)\times (s-\des(S)-1)$ with a tuple $(i_1,\dots, i_{n-k})$ of nonnegative integers that sum to at most $(s-\des(S)-1)$ (for details, see Section \ref{subsec: higher specht background}).

\begin{thmc}
    Let $\lambda = (\lambda_1,\lambda_2)$ be a partition of $k$ with two rows where $k \leq n$.  The set of polynomials
    $$ \CC_{n,\lambda,\ell(\lambda)}:= \{ F_T^S e_1^{i_1}\dots e_{n-k}^{i_{n-k}} : (S,(i_1,\dots,i_{n-k})) \in \Sigma_{n,\lambda,\ell(\lambda)},\sh(S) = \sh(T)\}$$
    forms a higher Specht basis of $R_{n,\lambda,\ell(\lambda)}$.
\end{thmc}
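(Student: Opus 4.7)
The plan is to establish Theorem~D by a triangularity argument against the descent monomial basis $\BB_{n,\lambda,\ell(\lambda)}$ from Theorem~A. Concretely, I aim to show that each element of $\CC_{n,\lambda,\ell(\lambda)}$ has a leading monomial, with respect to an appropriately refined term order, lying in $\BB_{n,\lambda,\ell(\lambda)}$, and that the resulting assignment is a bijection onto the descent basis. Once this is in place, upper-triangularity yields linear independence modulo $I_{n,\lambda,\ell(\lambda)}$, and the cardinality count implied by Theorem~B (summing $f^{\sh(S)}$ over $(S,\mu) \in \Sigma_{n,\lambda,\ell(\lambda)}$) promotes linear independence to basis-hood.

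First, I would set up the representation-theoretic structure. For each fixed $(S,\mu) \in \Sigma_{n,\lambda,\ell(\lambda)}$, the subspace spanned by $\{F_T^S\, e_1^{i_1}\cdots e_{n-k}^{i_{n-k}} : \sh(T) = \sh(S)\}$ is $\sym_n$-invariant, since the elementary symmetric polynomials commute with the $\sym_n$-action, and by the classical Ariki--Terasoma--Yamada theorem $\{F_T^S : \sh(T) = \sh(S)\}$ already carries a copy of $V_{\sh(S)}$. The total degree of each polynomial in $\CC_{n,\lambda,\ell(\lambda)}$ is $\cocharge(S) + |\mu|$, matching the grading prescribed by Theorem~B exactly. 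This delivers the higher Specht property once linear independence has been secured.

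The heart of the argument is the leading-term analysis. By Ariki--Terasoma--Yamada, the leading monomial of $F_T^S$ in the classical setting is a descent monomial $g_w(\xx)$ where $w \in \sym_n$ is recovered from the pair $(P,Q) = (S,T)$ under RSK. In our setting, the extra factor $e_1^{i_1}\cdots e_{n-k}^{i_{n-k}}$ shifts this leading monomial by the leading monomial of the elementary symmetric product. The two-row hypothesis $\lambda = (\lambda_1,\lambda_2)$ with $s = \ell(\lambda) = 2$ is essential here: $\des(S) < 2$ forces $\des(S) \in \{0,1\}$, and $\mu \subseteq (n-k) \times (s-\des(S)-1)$ forces $\sum_j i_j \leq 1$, so the elementary symmetric factor is either $1$ or a single $e_j$ with $1 \leq j \leq n-k$. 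Crucially, the $B$-block exponents of $\DD_{n,\lambda,\ell(\lambda)}$ are allowed to range over $\{0,\ldots,s-1\} = \{0,1\}$, precisely accommodating the single-$e_j$ shift without leaving the descent basis. I would then verify, via direct computation, that the map $(S,T,\mu) \mapsto \text{LM}(F_T^S e_1^{i_1}\cdots e_{n-k}^{i_{n-k}} \bmod I_{n,\lambda,\ell(\lambda)})$ is a bijection onto $\DD_{n,\lambda,\ell(\lambda)}$, using RSK to translate pairs $(S,T)$ into permutations $w$ and matching the block decomposition $\sigma = (\AAAA|\BBB) \in \osp_{n,\lambda'}$ that indexes the descent basis.

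The main obstacle is precisely the leading-term bookkeeping in the previous step: multiplication by $e_j$ shifts the leading monomial in a way that depends on the interaction with the Tanisaki generators $e_d(S)$, and for $\lambda$ with $\ell(\lambda) \geq 3$ the product $e_1^{i_1}\cdots e_{n-k}^{i_{n-k}}$ can have total degree larger than $1$, forcing genuine reductions modulo $I_{n,\lambda,\ell(\lambda)}$ that may collapse the leading term and break triangularity. The two-row constraint circumvents this by keeping the extra factor linear, and simultaneously simplifies the catabolizability condition $\ctype(S|_k) \trianglerighteq \lambda$ to an explicit combinatorial condition on where the entries $1,\ldots,k$ sit inside $S$. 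Extending the argument to $\lambda$ with three or more rows will likely require either a more refined term order that tracks the interaction between higher powers of the $e_j$ and the Tanisaki ideal, or a completely different approach, for instance through a direct construction of $\sym_n$-intertwiners between $R_\lambda$ and $R_{n,\lambda,s}$.
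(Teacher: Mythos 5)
Your logical skeleton is sound: since $|\CC_{n,\lambda,\ell(\lambda)}| = \sum_{(S,\mu)\in\Sigma_{n,\lambda,\ell(\lambda)}} K_{\sh(S),1^n} = \dim R_{n,\lambda,\ell(\lambda)}$ by Corollary \ref{cor: count}, linear independence would suffice, and the equivariance/degree observations (degree of $F_T^S e_1^{i_1}\cdots e_{n-k}^{i_{n-k}}$ equals $\cocharge(S)+|\mu|$) are correct. The gap is that the step carrying all the weight --- that $F_T^S e_1^{i_1}\cdots e_{n-k}^{i_{n-k}}$ has $\des$-leading monomial equal to the descent monomial indexed by $(S,T,\mu)$, bijectively onto $\DD_{n,\lambda,\ell(\lambda)}$ --- is exactly what you defer to ``direct computation,'' and it is not routine. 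First, $\leq_{\des}$ is not a monomial order (the paper notes $\ax <_{\des}\bx$ does not imply $\ax+\cx<_{\des}\bx+\cx$), so the leading term of a product is not the product of leading terms: multiplying by $e_j$ can reshuffle which monomial is $\des$-maximal, and this is precisely where your bookkeeping would have to be done by hand. Second, even for the bare polynomial $F_T^S=\varepsilon_T\cdot\xx_T^{\cc(S)}$, showing that $\xx_T^{\cc(S)}$ survives with nonzero coefficient and $\des$-dominates the other $|R(T)||C(T)|$ terms requires an argument; you cite this as known from Ariki--Terasoma--Yamada, but their independence proof does not hand you triangularity against the Garsia--Stanton basis in the form you need. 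Third, ``$\mathrm{LM}(\cdot \bmod I_{n,\lambda,\ell(\lambda)})$'' is not well-defined for a non-monomial order; the usable form of triangularity is to expand the polynomial in $\Q[\xx_n]$ as $c_\ax\xx^\ax+\sum_{\bx<_{\des}\ax}c_\bx\xx^\bx$ with $\ax\in\DD_{n,\lambda,\ell(\lambda)}$ and invoke Proposition \ref{prop: descent-triangularity}, which you do not set up. As written the proposal is a plan whose central lemma is asserted rather than proved.

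For contrast, the paper proves the theorem by spanning rather than independence, and avoids leading-term analysis of higher Specht polynomials entirely: it shows the larger set $\widetilde{\CC}_{n,s}$ spans $R_{n,\nu,s}$ for the dominance-minimal $\nu$ with $\ell(\nu)=s$ (via Gillespie--Rhoades for $R_{n,k}$ and the Rhoades--Yu--Zhao identification $I_{n,k,s}=I_{n,\nu,s}$), passes through the surjection $R_{n,\nu,s}\twoheadrightarrow R_{n,\lambda,s}$, and then proves the vanishing statement that $F_T^S=0$ in $R_{n,\lambda,s}$ whenever $\ctype(S\vert_k)\not\trianglerighteq\lambda$ (Proposition \ref{prop: lambda-tilde-bad-death}), by pairing against the generators $\Delta_{U,s}$ of the harmonic space and running a sign-reversing involution on snakings; the two-row hypothesis enters there, not through the degree of the $e_j$-factor. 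If you want to salvage your route, the place to start is an honest proof that $\xx_T^{\cc(S)}$ is the $\des$-leading term of $F_T^S$ and that multiplication by a single $e_j$ sends it to the expected element of $\DD_{n,\lambda,\ell(\lambda)}$ while only introducing $\des$-smaller monomials.
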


We conjecture the theorem to hold for general $(n,\lambda,s)$:

\begin{conja}
    The set of polynomials $\CC\nls$ descends to a higher Specht basis of $R\nls$ for arbitrary $\lambda \vdash k \leq n$, $s\geq \ell(\lambda)$.
\end{conja}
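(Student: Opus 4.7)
The plan is to adapt the strategy from Theorem D to arbitrary $\lambda$, splitting the conjecture into a cardinality check, an $\sym_n$-equivariance argument, and a linear independence step. Since $e_1,\dots,e_{n-k}$ are $\sym_n$-invariant, multiplication by $e_1^{i_1}\cdots e_{n-k}^{i_{n-k}}$ commutes with the $\sym_n$-action on $R\nls$. Combined with the Ariki-Terasoma-Yamada result that for fixed $S$ the set $\{F_T^S : \sh(T) = \sh(S)\}$ spans a copy of the irreducible $V_{\sh(S)}$ in degree $\cocharge(S)$, this implies that each block $\{F_T^S e_1^{i_1}\cdots e_{n-k}^{i_{n-k}} : \sh(T) = \sh(S)\}$ for a fixed $(S,\mu) \in \Sigma\nls$ spans a $\sym_n$-equivariant image of $V_{\sh(S)}$ in degree $\cocharge(S) + |\mu|$. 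By Theorem B, the cardinality $|\CC\nls|$ already matches $\dim R\nls$ with the correct graded isotypic decomposition, so the conjecture reduces to showing that these submodules are jointly nonzero and linearly independent in $R\nls$.

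For linear independence I would pursue a leading-term argument against the descent basis $\BB\nls$ of Theorem A. Under an appropriate term order (a variant of lex), each $F_T^S$ has a leading monomial controlled by the cocharge word $\cc(S)$ and a reading word of $T$, while the extra factor $e_1^{i_1}\cdots e_{n-k}^{i_{n-k}}$ contributes its own leading monomial whose exponents encode $\mu$. The catabolizability constraint $\ctype(S|_k) \trianglerighteq \lambda$ and the degree bound $\mu_1 < s - \des(S)$ in the definition of $\Sigma\nls$ appear engineered precisely so that the combined leading monomials land in $\DD\nls$. A term-by-term bijection between the leading monomials of $\CC\nls$ and the elements of $\DD\nls$ would then yield linear independence, and hence the basis property, immediately from Theorem A. An intermediate milestone is to first establish the conjecture for $R_\lambda$ (the case $n = k$, $s = \ell(\lambda)$), which is the open Gillespie-Rhoades higher Specht conjecture for Garsia-Procesi modules, and then to lift the argument to $R\nls$ via the symmetric multipliers.

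The main obstacle is controlling the leading term of $F_T^S \cdot e_1^{i_1}\cdots e_{n-k}^{i_{n-k}}$ uniformly across all $(S,T,\mu)$. For two-row $\lambda$, the cocharge word is essentially a binary string, the catabolizability condition is easy to verify, and the interaction of $F_T^S$ with the $e_j$'s can be analyzed directly, which is what makes Theorem D tractable. For general $\lambda$, the cocharge word requires the full recursive Lascoux-Sch\"{u}tzenberger procedure, and its interaction with the Tanisaki-type generators of $I\nls$ becomes substantially more delicate: the supplementary $e_d(S)$ generators can cause cancellations in the leading terms that are absent when $\lambda$ has two rows. A promising alternative is to build a straightening algorithm in the spirit of Allen or Adin-Brenti-Roichman that jointly manipulates higher Specht polynomials and symmetric multipliers, using the battery-powered tableau bijection of Corollary C as the combinatorial bookkeeping device for $\Sigma\nls$.
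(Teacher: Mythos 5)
The statement you are attempting is Conjecture E, which the paper explicitly leaves open: the authors only prove the case where $\lambda$ has at most two rows and $s = \ell(\lambda)$ (Theorem D), so there is no proof in the paper to compare against. Your submission is, by its own admission, a plan rather than a proof --- the cardinality and equivariance observations are correct but were already established in the paper (Corollary \ref{cor: count} and the Ariki--Terasoma--Yamada structure of the $F_T^S$), and the substantive steps (joint nonvanishing and linear independence) are deferred to a leading-term argument that is never carried out.

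Two concrete obstacles make the proposed leading-term route problematic. First, the descent order $<_{\des}$ used for Theorem A is explicitly \emph{not} a monomial order (the paper notes $\ax <_{\des} \bx$ does not imply $\ax + \cx <_{\des} \bx + \cx$), so "the leading monomial of $F_T^S \cdot e_1^{i_1}\cdots e_{n-k}^{i_{n-k}}$" is not even well-behaved under the order in which the target basis $\BB\nls$ is triangular; you would need to construct and justify a genuinely new term order, and no candidate is given. Second, and more seriously, the paper's remark following Proposition \ref{prop: lambda-tilde-bad-death} records that for general $\lambda$ (first counterexample $n=k=6$, $\lambda=(2,2,2)$) there exist higher Specht polynomials $F_T^S \notin \CC\nls$ that are \emph{nonzero} in $R\nls$. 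This rules out the clean dichotomy underlying the two-row proof (non-basis elements vanish, basis elements survive) and means any proof must express these nonvanishing non-basis elements as explicit linear combinations of elements of $\CC\nls$ --- a step your proposal does not address. Finally, your "intermediate milestone" of first treating $R_\lambda$ is essentially the full difficulty: the paper shows in Section \ref{subsec: gillespie-rhoades} that the closely related Gillespie--Rhoades set $\CC'_\mu$ fails to be a basis in general, and the authors' own suggested reduction (via Griffin's exact sequence, from general $s$ to $s=\ell(\lambda)$) is a different and more promising skeleton than the one you outline. As it stands, the proposal does not constitute a proof of the conjecture.
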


The paper is organized as follows. In \textbf{Section} \ref{sec:bg} we give definitions and background on combinatorial preliminaries, cocharge, catabolizability, higher Specht bases, and the module $R\nls$. 
In \textbf{Section} \ref{sec: tab constuction for R_{n,k}}, we reframe existing results of Haglund--Rhoades--Shimozono using the language of tableaux and cocharge.
In \textbf{Section} \ref{sec:counting}, we generalize the framework introduced in the previous section for $R\nls$. Using this, we establish a few fundamental theorems about the cardinalities of our indexing sets for our bases. 
In \textbf{Section} \ref{sec:des} we prove that the set $\BB\nls$ is a monomial basis of $R\nls$. In \textbf{Section} \ref{sec: Frob char}, we use the descent basis to give a combinatorial formula for the graded Frobenius character of $R\nls$. We show that it coincides with the battery-powered tableaux formula of Gillespie--Griffin. In \textbf{Section} \ref{sec:specht}, we construct a conjectured higher Specht basis for $R\nls$ and prove it for the case where $\lambda$ is a two row partition. We also compare our conjectured formula with that of Gillespie-Rhoades \cite{RhoadesGillespie2021}.

\subsection{Acknowledgements}

The authors are grateful to Brendon Rhoades, Nantel Bergeron, Sean Griffin, and Maria Gillespie for helpful conversations. 


\section{Background}\label{sec:bg}
\subsection{Partitions, Permutations, and Tableaux}
A \textit{composition} $\alpha$ of $n$, denoted $\alpha\models n$, is a sequence $(\alpha_1,\alpha_2,\dots,\alpha_l)$ where $\alpha_i >0$ and $\alpha_1 +\alpha_2 + \cdots+\alpha_l = n$.
A \textit{partition} $\lambda = (\lambda_1,\lambda_2,\dots,\lambda_{\ell(\lambda)})$ of $n$, denoted $\lambda\vdash n$, is a composition satisfying $\lambda_1\geq \lambda_2\geq \dots \geq \lambda_{\ell(\lambda)}>0$. We denote the transpose of $\lambda$ as $\lambda'$.
Throughout, we write Young diagrams of partitions in French notation, meaning that the first part corresponds to the bottom row. We set $n(\lambda):= \sum_{i = 1}^{\ell(\lambda)} (i-1)\lambda_i = \sum_{i = 1}^{\ell(\lambda')} \binom{\lambda_i'}{2}$.
 
There is a partial ordering on the set of all the partitions of size $n$ called the \textit{dominance} ordering, denoted $\unrhd$, defined by 
\[\mu\unrhd \lambda \ \  \Leftrightarrow \ \  \mu_1 + \cdots \mu_k \geq \lambda_1+\cdots  \lambda_k \text{ for all } k\]
It is well known that $\mu \unrhd \lambda \ \  \Leftrightarrow \ \   \mu' \unlhd \lambda'$. If we move a box of $\lambda$ to a lower row so that the resulting shape $\mu$ is a partition, we have $\mu \unrhd \lambda.$
Dominance is the transitive closure of moving boxes down. 

A \textit{semistandard (Young) tableau} of shape $\lambda$ is a filling of $\lambda$ such that the rows are weakly increasing from left to right and the columns are strictly increasing from bottom to top. The \textit{weight} of a semistandard tableau $T$ is the tuple $(m_1, m_2,\dots)$  where $m_i$ is the number of times $i$ appears in $T$. The reading word $\rw(T)$ of a tableau $T$ is the word we get by concatenating the row words from top to bottom.
We denote the shape of $T$ by $\shape(T)$. Let $\SSYT(\mu)$ denote the set of semistandard tableaux of weight $\mu$ and $\SSYT(\lambda,\mu)$ denote the set of semistandard tableaux of shape $\lambda$, weight $\mu$. The \emph{Kostka number} $K_{\lambda,\mu}$ counts such tableaux: we have that $K_{\lambda,\mu} = |\SSYT(\lambda,\mu)|$.  

A \textit{standard (Young) tableau} of size $n$ is a semistandard tableau of weight $(1^n$). For a given shape, we denote the set of standard Young tableaux of shape $\lambda$ by $\SYT(\lambda)$. We denote the set of all standard Young tableaux of partition shape and size $n$ by $\SYT_n$.

We can describe the Kostka numbers $K_{\lambda,\mu}$ using standard tableaux in the following way:
 \begin{prop}\label{prop: SYT and Des}
     For two partitions $\lambda,\mu = (\mu_1,\mu_2,\dots, \mu_l)\vdash n$, we have
      \begin{align}\label{eq: kostka numb}
     K_{\lambda,\mu} = | \{T\in \SYT(\lambda)  \ | \ \Des(T) \subset \{\mu_1,\mu_1 + \mu_2, \dots, \mu_1 +\cdots + \mu_{l-1}\} \} |
 \end{align}
 \end{prop}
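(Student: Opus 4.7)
The plan is to exhibit a weight-preserving bijection between $\SSYT(\lambda, \mu)$ and the set $\{T \in \SYT(\lambda) : \Des(T) \subseteq D(\mu)\}$, where $D(\mu) := \{\mu_1, \mu_1+\mu_2, \ldots, \mu_1+\cdots+\mu_{l-1}\}$, via the classical standardization map.

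Given $T \in \SSYT(\lambda, \mu)$, I would construct $\std(T) \in \SYT(\lambda)$ by replacing the $\mu_i$ cells of $T$ labeled $i$ with the consecutive integers $\mu_1+\cdots+\mu_{i-1}+1, \ldots, \mu_1+\cdots+\mu_i$, assigned in a fixed order: for each $i$, I scan the cells labeled $i$ from right to left within each row, processing the rows of $\lambda$ from bottom to top in French notation. Since the cells labeled $i$ in a SSYT form a horizontal strip (no two share a column), this assignment is well-defined and produces a tableau with strictly increasing columns, hence lies in $\SYT(\lambda)$.

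The crucial claim is that $\Des(\std(T)) \subseteq D(\mu)$. Suppose positions $j$ and $j+1$ both replace cells labeled with the same value $i$ in $T$. By the scanning order, either both cells lie in the same row with $j+1$ strictly to the left of $j$, or the cell of $j+1$ sits in a strictly higher row than the cell of $j$. Neither configuration is a descent (since a descent at $j$ would require $j+1$ to sit in a strictly lower row than $j$). Hence any descent of $\std(T)$ must occur at a block boundary $j = \mu_1 + \cdots + \mu_i$, proving the containment.

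For the inverse direction, given $S \in \SYT(\lambda)$ with $\Des(S) \subseteq D(\mu)$, I would destandardize by sending each entry $j$ to the unique $i$ with $\mu_1+\cdots+\mu_{i-1} < j \leq \mu_1+\cdots+\mu_i$. Rows remain weakly increasing by construction, and column-strictness holds because if consecutive entries $j, j+1$ shared a column of $S$, then $j$ would be a descent of $S$, forcing $j$ to be a block boundary and so $j, j+1$ to receive distinct labels. The two maps are manifestly mutually inverse, yielding the claimed bijection. The main obstacle is choosing the within-block scanning order so that the SYT property and the no-within-block-descent property hold simultaneously; once the horizontal-strip observation for equal-valued cells in a SSYT is invoked, the verification is routine.
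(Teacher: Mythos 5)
Your overall strategy --- the classical standardization/destandardization bijection between $\SSYT(\lambda,\mu)$ and $\{T\in\SYT(\lambda) : \Des(T)\subseteq D(\mu)\}$ --- is the right and standard one (the paper itself just cites \cite{hanada2025charge} for this fact). But your concrete construction is broken. The scanning order is backwards: you assign the increasing integers $\mu_1+\cdots+\mu_{i-1}+1,\dots,\mu_1+\cdots+\mu_i$ to the cells labeled $i$ by scanning \emph{right to left} within rows, bottom row first. Since a row of a semistandard tableau can contain several cells labeled $i$, this destroys row-increasingness: for $\lambda=\mu=(2)$ and $T=\ytableaushort{11}$, your rule places $1$ in the right box and $2$ in the left box, which is not a standard tableau. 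The correct order is the reverse: left to right across the whole horizontal strip of $i$'s (equivalently, rows from top to bottom and left to right within each row, since for a fixed value the cells in higher rows lie strictly to the left of those in lower rows).

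The descent analysis compounds this. With the conventions of this paper (French notation; consistent with Proposition \ref{prop: RSK props}\ref{prop: RSK des} and with the cocharge recursion in Definition \ref{def: cc(S)}), $j\in\Des(T)$ means $j+1$ lies in a \emph{strictly higher} row than $j$, not a strictly lower one as your parenthetical asserts. Under your scan, consecutive entries $j,j+1$ in the same block can have $j+1$ in a strictly higher row, which \emph{is} a descent --- so even setting aside the SYT failure, the forward map would not land in the target set. With the corrected left-to-right scan, $j+1$ always sits in the same row or a strictly lower row than $j$, so no within-block descents arise and the containment $\Des(\std(T))\subseteq D(\mu)$ holds. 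Note also that your destandardization step tacitly uses the correct descent convention (two consecutive entries sharing a column do force a descent), so the two halves of your write-up are mutually inconsistent; and for column-strictness of the destandardized filling you must exclude \emph{any} two same-column entries falling in the same block, not only consecutive ones --- this follows from the standard observation that if $j'>j$ lies strictly above $j$ in a standard tableau, then some element of $\{j,\dots,j'-1\}$ is a descent. With these repairs the proof is the routine classical argument.
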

\noindent For a proof, see \cite[Proposition 2.3]{hanada2025charge}.

The Schensted correspondence gives a bijection from permutations $w\in \mathfrak{S}_n$ to pairs $(P(w),Q(w))$ of standard Young tableau of size $n$ of the same shape. We say $P(w)$ is the \textit{insertion tableau} and $Q(w)$ is the \textit{recording tableau} of $w$.
Two permutations $w,w'$ are \textit{Knuth equivalent} if $P(w) = P(w')$. We also know that for a fixed $S\in \SYT(\lambda)$:
$|\{w\in \sym_n | P(w) = S\}| = |\SYT(\lambda)| = K_{\lambda,1^n}$.

Throughout, we write permutations $w = w_1\dots w_n \in \mathfrak{S}_n$ in one line notation. For any word $\mathbf{z}$, let $\rev(\mathbf{z})$ denote the reverse word of $\mathbf{z}$. For any subset $U\subset [n] = \{1,\dots, n\}$, let $w\vert_{U}$ denote the subword of $w$ consisting of $w_i$ such that $w_i\in U$.
Similarly, denote the restriction of a standard Young tableau $S$ to the entries $[k]$ by $S\vert_{k}$. Note that $S\vert_{k} \in \SYT_k$. 

We state a few properties of the Schensted correspondence without proof. For a detailed reference, see \cite[Chapter 7]{EC2}.
\begin{prop}\label{prop: RSK props}
The following properties hold:
    \begin{enumerate}[label=(\alph*)]
        \item For any $w\in \sym_n$ and $k\leq n$, we have that $P(w\vert_k) = P(w)\vert_k$. (\cite[Corollary of Lemma 7.11.2]{EC2}) \label{lem: RSK restriction}
        \item  For any $w\in S_n$, we have $\Des(w) = \Des(Q(w))$. (\cite[Lemma 7.23.11]{EC2})\label{prop: RSK des}
        \item For any $w\in S_n$, we have $P(\rev(w))= (P(w))^t.$(\cite[Corollary A1.2.11]{EC2})\label{thm:RSK rev}
    \end{enumerate}
\end{prop}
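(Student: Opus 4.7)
The plan is to treat each of the three statements as a direct consequence of the row-insertion algorithm, inducting on $|w|$ for each.

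For (a), the key structural fact is that row-inserting a letter $v$ into a tableau only displaces entries strictly greater than $v$: the bumping chain proceeds row by row, replacing the leftmost entry that exceeds the incoming value. Consequently, inserting any $w_i > k$ leaves the positions of all entries $\leq k$ unchanged, while inserting $w_i \leq k$ interacts only with the sub-tableau supported on $\{1,\dots,k\}$ and does so identically to what would happen if the larger letters had never been present. Running this invariant through the entire word yields $P(w)|_k = P(w|_k)$.

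For (b), I would induct on $|w|$ and compare the two new boxes $b_i, b_{i+1}$ appended to $Q$ at consecutive insertion steps. The row bumping lemma says: if $w_i < w_{i+1}$, the bumping path of $w_{i+1}$ lies strictly to the right of that of $w_i$ and ends in a weakly earlier row, so $b_{i+1}$ is not in a strictly later row than $b_i$; if $w_i > w_{i+1}$, the path lies weakly to the left and ends in a strictly later row, so $b_{i+1}$ is in a strictly later row than $b_i$. Since $i \in \Des(Q(w))$ means precisely that the box containing $i+1$ lies in a strictly later row than the one containing $i$, this gives $\Des(w) = \Des(Q(w))$.

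For (c), which I expect to be the main obstacle, the shape identity $\sh(P(\rev(w))) = \sh(P(w))^t$ is immediate from Greene's theorem, since reversal swaps increasing and decreasing subsequences of every length and so toggles rows with columns. Upgrading the shape-level equality to the full tableau identity is more delicate. One approach is to introduce column insertion $P_{\col}$ and prove compatibility lemmas showing that column insertion of $w$ and row insertion of $\rev(w)$ produce transposed bumping patterns, reducing the claim by induction to the interaction of a single insertion step with the reverse operation. An alternative is to appeal to Schützenberger's evacuation/symmetry machinery for RSK, which encodes the interaction between reversal and the pair $(P,Q)$ intrinsically. Either way, the technical heart is verifying the cross-compatibility between the bumping rules and the reverse operation on words, and this is where I would expect most of the work to lie.
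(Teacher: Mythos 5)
The paper gives no proof of this proposition at all --- it explicitly states these three facts ``without proof'' and cites \cite[Chapter 7]{EC2} --- so there is no in-paper argument to compare against; the relevant comparison is with the standard proofs in that reference, and your sketches match them. Your arguments for (a) and (b) are correct and essentially complete: (a) is the standard invariant that the bumping chain of an inserted letter $v$ only moves entries exceeding $v$, together with the observation that the entries $\le k$ occupy a left-justified Young-diagram region so that an insertion of $v\le k$ lands where it would in the restricted tableau; (b) is exactly the Row Bumping Lemma applied to the consecutive insertions of $w_i$ and $w_{i+1}$, combined with the definition of $\Des(Q)$. For (c) you have correctly separated the easy part (the shape identity via Greene's theorem) from the hard part (the full tableau identity), and the two routes you name --- the row-insertion/column-insertion compatibility $P_{\col}(w)=P(\rev(w))^t$, or the Sch\"utzenberger symmetry machinery --- are precisely the standard proofs; but this part remains a plan rather than a proof, so if you were required to supply a self-contained argument you would still need to prove the commutation of a row insertion with a column insertion (or develop evacuation). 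One small point worth keeping in mind as you do so: only the $P$-tableau transposes under reversal; $Q(\rev(w))$ is the evacuation of $Q(w)^t$, not $Q(w)^t$ itself, so any inductive bookkeeping must track the recording tableau separately. Your statement of (c) correctly restricts to $P$, so this is a caution, not an error.
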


We can also rewrite Proposition \ref{prop: SYT and Des} using some fixed $S\in \SYT(\lambda)$ in the following way:
\begin{cor}\label{cor: kostka number permutations}
 For two partitions $\lambda,\mu = (\mu_1,\mu_2,\dots, \mu_l)\vdash n$ and some fixed $S\in \SYT(\lambda)$, we have
    \begin{align}
      K_{\lambda,\mu} = | \{w \in \sym_n , P(w) = S\ | \ \Des(w) \subset \{\mu_1,\mu_1 + \mu_2, \dots, \mu_1 +\cdots + \mu_{l-1}\} \} |.
\end{align}
\end{cor}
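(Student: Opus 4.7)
The plan is to reduce the statement to Proposition \ref{prop: SYT and Des} via the Schensted correspondence. Fix $\lambda \vdash n$ and $S \in \SYT(\lambda)$, and let $D(\mu) := \{\mu_1, \mu_1+\mu_2, \dots, \mu_1+\cdots+\mu_{l-1}\}$. I would first rewrite the set on the right-hand side in terms of the recording tableau: the Schensted correspondence restricts to a bijection
\[
\{w \in \sym_n : P(w) = S\} \;\longleftrightarrow\; \SYT(\lambda),
\qquad w \longmapsto Q(w),
\]
since the insertion and recording tableaux always have the same shape, so fixing $P(w) = S$ forces $Q(w) \in \SYT(\lambda)$ and the map is injective with every $Q \in \SYT(\lambda)$ arising from a unique preimage.

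Next I would use Proposition \ref{prop: RSK props}\ref{prop: RSK des}, which states $\Des(w) = \Des(Q(w))$, to transport the descent-set condition across this bijection. That is, under the correspondence above, the subfamily of $w$ with $\Des(w) \subset D(\mu)$ maps bijectively onto $\{Q \in \SYT(\lambda) : \Des(Q) \subset D(\mu)\}$. Combining these two observations gives
\[
\bigl|\{w \in \sym_n : P(w) = S,\ \Des(w) \subset D(\mu)\}\bigr|
= \bigl|\{Q \in \SYT(\lambda) : \Des(Q) \subset D(\mu)\}\bigr|.
\]
Finally, applying Proposition \ref{prop: SYT and Des} identifies the right-hand side with $K_{\lambda,\mu}$, completing the proof.

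There is essentially no obstacle here: the entire argument is a direct concatenation of the two cited results, and the only substantive content is recognizing that fixing $P(w) = S$ for a single $S \in \SYT(\lambda)$ is exactly what makes $Q$ range freely over $\SYT(\lambda)$. Consequently, the proof should be at most a few lines.
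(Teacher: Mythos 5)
Your argument is correct and is essentially identical to the paper's own proof: both fix $P(w) = S$, use the Schensted correspondence to identify such $w$ with their recording tableaux $Q(w) \in \SYT(\lambda)$, transport the descent condition via $\Des(w) = \Des(Q(w))$, and conclude by Proposition \ref{prop: SYT and Des}. No issues.
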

\begin{proof}
    From Proposition \ref{prop: RSK des}, we have a bijection between the $T\in \SYT(\shape(S))$ with $\Des(T) \subset \{\mu_1,\mu_1 + \mu_2, \dots, \mu_1 +\cdots + \mu_{l-1}\} \}$ and permutations $w\in \sym_n$ satisfying $P(w) = S, \Des(w)\subset  \{\mu_1,\mu_1 + \mu_2, \dots, \mu_1 +\cdots + \mu_{l-1}\} \}$ which sends $T$ to the permutation $w$ with $P(w) = S$, $Q(w) = T$.
\end{proof}

\subsection{Symmetric Functions and Frobenius Character}

We fix our notations for symmetric functions here. We refer the reader to \cite{Macdonald1995Symmetric} for a detailed treatment. Let $\xx = \{x_1,x_2,\dots \}$ be an infinite set of variables, and let $\Lambda_F$ denote the \emph{ring of symmetric functions} in these variables with conefficients in a field $F$. We typically take $F = \Q$ or $F = \Q(q)$. We denote by $e_\lambda(\xx), h_\lambda(\xx), s_\lambda(\xx), m_\lambda(\xx)$ the \emph{elementary}, \emph{complete homogeneous}, \emph{Schur}, and \emph{monomial} symmetric functions respectively. We omit the $\xx$-variables when it does not cause confusion. All four of these are bases of $\Lambda_F$, meaning we can rewrite any symmetric function $f$ using one of the four families. In particular, we say a  symmetric function $f$ is \textit{Schur-positive} if all the coefficients in the Schur expansion of $f$ are nonnegative integers. 
Note that the Kostka numbers appear as the coefficients of the Schur expansion of $h_\mu$: that is, we have $ h_\mu  = \sum_{\lambda\vdash|\mu|} K_{\lambda,\mu} s_{\lambda}.$

The \textit{Hall inner product} is the symmetric inner product defined on $\Lambda$ by the relation $\langle m_\lambda, h_\gamma\rangle = \delta_{\lambda,\gamma}$, where $m_\lambda$ is the \emph{monomial} symmetric function. 
The Schur functions are an orthonormal basis with respect to this inner product.
We also have an involution $\omega$ on $\Lambda$, defined by $\omega e_\lambda= h_\lambda$. The map $\omega$ is an isometry: that is, we have $\langle f,g\rangle = \langle \omega f,\omega g\rangle$ for any $f,g \in \Lambda$.
We also have $\omega s_\lambda = s_{\lambda^t}$ for any $\lambda$.

At times, we will look at elementary symmetric polynomials in some finite set of variables $S \subset \xx$. We define
$$ e_m(S) := \sum_{\{i_1 < \dots < i_m\} \subset S} x_{i_1}\dots x_{i_m}$$
to be the sum of all degree $m$ squarefree monomials with variable indices in $S$.

Throughout, we will study $q$-symmetric functions, where we take $F = \mathbb{Q}(q)$.
The \emph{(transformed) Hall-Littlewood polynomial} is 
$$ H_\mu(X;q) = \sum_{\lambda \vdash n} K_{\lambda,\mu}(q) s_\lambda(\xx)$$
where $K_{\lambda,\mu}(q)$ is the Kostka-Foulkes polynomial. Note that for $q=1$, we have that $K_{\lambda,\mu}(1)$ is just the Kostka number $K_{\lambda,\mu}$. If we reverse, the grading, the result is the \emph{modified} Hall-Littlewood polynomials, which are more combinatorial in nature:
\begin{align}\label{eq: modified HL}
    \Ht_\mu(X;q) = q^{n(\mu)}H(X;q^{-1}) = \sum_{\lambda \vdash n} \widetilde{K}_{\lambda,\mu}(q)s_\lambda(\xx)
\end{align}
where $\widetilde{K}_{\lambda,\mu}(q) = q^{n(\mu)}K_{\lambda,\mu}(q^{-1}) = \sum_{T \in \SSYT(\lambda,\mu)} q^{\cocharge(T)}$ is the \emph{modified Kostka-Foulkes polynomials}. We have that $H_\mu[X,1] = \Ht_\mu[X;q] = h_\mu$. For more details, see \cite{Macdonald1995Symmetric}

Symmetric functions are deeply connected with the study of $\sym_n$-representations through the Frobenius characteristic map.
Recall that the irreducible $\sym_n$-representations $V_\lambda$ are indexed by partitions $\lambda \vdash n$. We refer to $V_\lambda$ as the \emph{Specht module}. Given a finite-dimensional $\sym_n$-module $V$, by complete reducibility we may write $V$ in terms of irreducibles:
$$ V \cong \bigoplus_\lambda V_\lambda^{\oplus c_\lambda}.$$
The ungraded \emph{Frobenius character} of $V$ is a symmetric function defined by $\Frob(V) = \sum_\lambda c_\lambda s_\lambda$, where $s_\lambda$ is the Schur function associated with $\lambda$. Note that since $c_\lambda$ counts the multiplicity of $V_\lambda$ in $V$, we have that $c_\lambda \in \N$. In the case where $V= \oplus_{d\geq 0} V_d$ is a graded $\sym_n$-module, we can define the \emph{graded Frobenius character} to be $\Frob_q(V) = \sum_{d\geq 0} \Frob(V_d)$.

For any $\sym_n$-module $V$, we can recover $\dim(V)$ from the Schur expansion of $\Frob(V)$ by replacing each $s_\lambda$ with $\dim(V_{\lambda}) = K_{\lambda,1^n}$. Conversely, we can study the dimension of certain subspaces of $V$.
For the Young subgroup $S_\gamma = S_{\gamma_1} \times \cdots \times S_{\gamma_l}\subset S_n$ corresponding to $\gamma\models n$, we define $N_{\gamma} = \sum_{\sigma\in S_\gamma} \sgn(\sigma) \sigma$ to be the antisymmetrizer with respect to $\gamma$. 
The vector space $N_\gamma V$ is the subspace of elements of $V$ that are antisymmetric with respect to $S_\gamma$.
Let $\mathcal{E}\uparrow^{S_n}_{S_\gamma}$ denote the induction of the sign representation $\mathcal{E}$ of $S_\gamma$ to $S_n$. It is well known that $\Frob(\mathcal{E}\uparrow^{S_n}_{S_\gamma}) = e_\gamma$. Using Frobenius reciprocity (see \cite[Chapter 3.3]{Fulton-Harris}), we get the following result:

\begin{prop}\label{prop: dim and antisym}
    For any $\sym_n$-representation $V$ and Young subgroup $S_\gamma \subset S_n$, we have  $$\dim(N_\gamma V) = \langle e_\gamma, \Frob(V)\rangle.$$
\end{prop}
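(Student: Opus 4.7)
The plan is to realize $N_\gamma V$ as the sign-isotypic component of $V$ under the action of $S_\gamma$, and then apply Frobenius reciprocity together with the isometry property of the Frobenius characteristic map.

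First I would verify that $N_\gamma V = V^{\mathcal{E}} := \{v \in V : \tau \cdot v = \sgn(\tau) v \text{ for all } \tau \in S_\gamma\}$. The inclusion $N_\gamma V \subseteq V^{\mathcal{E}}$ is immediate: for any $\tau \in S_\gamma$, one checks $\tau N_\gamma = \sgn(\tau) N_\gamma$ by reindexing the defining sum, so every vector in the image of $N_\gamma$ transforms by the sign character. The reverse inclusion follows because for $v \in V^{\mathcal{E}}$ we have $N_\gamma v = \sum_{\sigma \in S_\gamma} \sgn(\sigma)^2 v = |S_\gamma|\, v$, exhibiting $v$ (up to a nonzero scalar) as an element of $N_\gamma V$.

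Next I would note that by standard representation theory $\dim V^{\mathcal{E}}$ equals the multiplicity of the sign representation $\mathcal{E}$ in the restriction $V\!\!\downarrow^{\sym_n}_{S_\gamma}$, i.e.\ $\dim V^{\mathcal{E}} = \dim \mathrm{Hom}_{S_\gamma}(\mathcal{E}, V)$. By Frobenius reciprocity,
\[
\dim \mathrm{Hom}_{S_\gamma}(\mathcal{E}, V) \;=\; \dim \mathrm{Hom}_{\sym_n}\!\bigl(\mathcal{E}\!\uparrow^{\sym_n}_{S_\gamma},\, V\bigr).
\]

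Finally I would invoke the isometry property of the Frobenius characteristic: for any two $\sym_n$-modules $W_1, W_2$, $\dim \mathrm{Hom}_{\sym_n}(W_1, W_2) = \langle \Frob(W_1), \Frob(W_2)\rangle$ in the Hall inner product, since the irreducible Specht modules map to the orthonormal Schur basis. Combined with the stated identity $\Frob(\mathcal{E}\!\uparrow^{\sym_n}_{S_\gamma}) = e_\gamma$, this yields $\dim(N_\gamma V) = \langle e_\gamma, \Frob(V)\rangle$. The proof is essentially a chaining of standard facts, so there is no real obstacle; the only point requiring care is the first step, making sure the antisymmetrizer (which is not an idempotent on the nose) indeed cuts out the sign-isotypic subspace rather than something larger or smaller.
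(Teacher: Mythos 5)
Your proof is correct and follows the same route the paper takes: identify $N_\gamma V$ with the sign-isotypic subspace for $S_\gamma$, apply Frobenius reciprocity, and use $\Frob(\mathcal{E}\uparrow^{\sym_n}_{S_\gamma}) = e_\gamma$ together with the isometry property of the Frobenius characteristic. You simply spell out the details (notably the check that the antisymmetrizer cuts out exactly the sign-isotypic component) that the paper leaves as standard facts.
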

The analogous statement for graded modules holds as well, if we replace $\dim(N_\gamma V)$ (resp. $\Frob(V)$) with  $\Hilb_q(N_\gamma V)$ (resp. $\Frob_q(V)$).
Since we know that any homogeneous symmetric function $f$ of degree $n$ is uniquely determined by the values $\langle e_\gamma, f\rangle$ for all $\gamma\vdash n$, we see that knowing  $\Hilb_q(N_\gamma V)$ for all $\gamma\models n$ uniquely defines $\Frob_q(V)$.

\subsection{Descent Words and Descent Bases}

The Artin basis of $R_n$ arises naturally from Gr\"{o}bner theory, as they are the nonleading terms with respect to the lexicographical order. The Garsia-Stanton descent basis arises from considering a different order, which we define here.

\begin{defn}\label{def: descent-order}
    Let $\ax = (a_1,\dots,a_n) \in \Z_{\geq 0}^n$. Define $\sort(\ax) := (a_{i_1} \geq \dots \geq a_{i_n})$ to be the unique way to write the entries of $\ax$ in weakly decreasing order. Then, we say that $\ax <_{\des} \bx$ if:
    \begin{align*}
        \begin{cases}
            \sort(\ax) <_{\lex} \sort(\bx) \text{    or,}\\
            \sort(\ax) = \sort(\bx) \text{ and }\ax <_{\lex} \bx
        \end{cases}
    \end{align*}
\end{defn}

Note that $\des$ does \emph{not} induce a monomial order on $\Q[\xx_n]$, as we do not have $\ax <_{\des} \bx \implies \ax + \cx <_{\des} \bx + \cx$, but is only a total order on monomials of $\Q[\xx_n]$. The descent order, however, satisfies the following useful lemma.

\begin{lem}\label{lem: descent-order-subset}
    Let $A \subset [n]$ with complement $A'$. If $\alpha$ is a composition, denote $\alpha\vert_A := (\alpha_{i_1},\dots,\alpha_{i_s})$, where $A = \{i_1 < \dots < i_s\}$. Then, we have
    $$ \alpha\vert_A \leq_{\des} \beta\vert_A, \quad \alpha\vert_{A'} \leq_{\des} \beta\vert_{A'} \implies \alpha \leq_{\des} \beta$$
\end{lem}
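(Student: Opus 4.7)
The plan is to unpack Definition \ref{def: descent-order}, which orders tuples by a lex comparison of sorted rearrangements with a lex tiebreaker, and to verify each of the two clauses separately. The key observation is that threshold counts are additive under restriction: if I set $N_\gamma(c) := |\{i : \gamma_i \geq c\}|$ for any tuple $\gamma$ and $c \in \Z_{\geq 0}$, then $N_\alpha(c) = N_{\alpha|_A}(c) + N_{\alpha|_{A'}}(c)$, and similarly for $\beta$. Moreover, for equal-length tuples $X,Y$ one has $\sort(X) \leq_{\lex} \sort(Y)$ exactly when either $N_X(c) = N_Y(c)$ for every $c$ (i.e.\ $\sort(X) = \sort(Y)$), or at the largest $c$ where the counts disagree, $N_X(c) < N_Y(c)$. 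This translation between lex comparisons of sorted tuples and threshold counts is the one elementary fact I would record up front.

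I would then first prove the auxiliary claim that the two hypotheses on the sorted restrictions imply $\sort(\alpha) \leq_{\lex} \sort(\beta)$, with equality exactly when both restricted sorts are equal. Let $c_A$ (resp.\ $c_{A'}$) denote the largest threshold at which the restriction counts on $A$ (resp.\ $A'$) disagree, taking $c_A = -\infty$ if those restricted sorts already coincide. For any $c$ strictly larger than both $c_A$ and $c_{A'}$, additivity gives $N_\alpha(c) = N_\beta(c)$. Setting $c^* = \max(c_A, c_{A'})$, a short case check on which side achieves the max (using the hypothesis that on that side $N_{\alpha|_\bullet}(c^*) < N_{\beta|_\bullet}(c^*)$, while on the other side counts at $c^*$ agree) yields $N_\alpha(c^*) < N_\beta(c^*)$. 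If instead both $c_A, c_{A'}$ are $-\infty$, additivity gives $\sort(\alpha) = \sort(\beta)$ directly.

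With this in hand the lemma is almost immediate. If $\sort(\alpha) <_{\lex} \sort(\beta)$, the first clause of Definition \ref{def: descent-order} gives $\alpha <_{\des} \beta$. Otherwise $\sort(\alpha) = \sort(\beta)$, which by the auxiliary claim forces $\sort(\alpha|_A) = \sort(\beta|_A)$ and $\sort(\alpha|_{A'}) = \sort(\beta|_{A'})$, so the $\leq_{\des}$ hypotheses collapse to $\alpha|_A \leq_{\lex} \beta|_A$ and $\alpha|_{A'} \leq_{\lex} \beta|_{A'}$. Let $i \in [n]$ be the first coordinate where $\alpha$ and $\beta$ disagree (if no such $i$ exists, $\alpha = \beta$ and we are done); without loss of generality $i \in A$, the other case being symmetric. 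All positions of $A$ smaller than $i$ are places where $\alpha$ and $\beta$ agree, so $i$ corresponds to the first position of disagreement in the restriction to $A$, and $\alpha|_A \leq_{\lex} \beta|_A$ forces $\alpha_i < \beta_i$. Hence $\alpha <_{\lex} \beta$, giving $\alpha \leq_{\des} \beta$. The only mildly delicate step is the threshold-count reformulation of $\leq_{\lex}$ on sorted tuples; everything else is direct bookkeeping.
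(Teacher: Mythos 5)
Your argument is correct. The paper states Lemma \ref{lem: descent-order-subset} without proof, so there is no in-paper argument to compare against; your write-up supplies a complete justification. The two ingredients you isolate are exactly what is needed: (i) the additivity $N_\alpha(c) = N_{\alpha|_A}(c) + N_{\alpha|_{A'}}(c)$ of the threshold counts over the partition $[n] = A \sqcup A'$, and (ii) the reformulation of $\leq_{\lex}$ on weakly decreasing tuples in terms of the largest threshold at which the counts differ (this is the standard duality between a decreasing sequence and its counting function, i.e.\ conjugation). With (ii) in hand, your case analysis at $c^* = \max(c_A, c_{A'})$ correctly yields $\sort(\alpha) <_{\lex} \sort(\beta)$ unless both restricted sorts coincide, and in the latter case the first-disagreement argument for the lex tiebreaker is sound: the first index $i$ where $\alpha$ and $\beta$ differ is also the first disagreement within whichever of $A, A'$ contains it, so the restricted lex hypothesis forces $\alpha_i < \beta_i$. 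If you write this up formally, the only step deserving a full verification is fact (ii), as you note; both directions of that equivalence follow from the identity $\sort(X)_j = \max\{c : N_X(c) \geq j\}$.
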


For a permutation $\sigma\in \mathfrak{S}_n$, let $g_\sigma(\xx)$ be the corresponding descent monomial, defined to be $g_\sigma(\xx) := \prod_{\sigma_i > \sigma_{i+1}} x_{\sigma_1}\dots x_{\sigma_i}.$
 Let $\mathcal{D}_n = \{ \ax \ | \ \xx^{\ax} = g_{\sigma}(\xx) \text{ for } \sigma\in \mathfrak{S}_n\}.$ We refer to these exponents as \textit{descent words}.
 The descent monomials $\{g_\sigma(\xx) \  | \ \sigma\in \mathfrak{S}_n\}$ are a vector space basis of the coinvariant ring $R_n$ \cite{garsia1980cmbc}\cite{garsia_stanton1984}\cite{Steinberg1975Pittie}. The descent monomials can be characterized as being the nonleading monomials of $R_n$ with respect to the descent order.

We have analogous bases for  $R_{n,k}$ and $R_\lambda$. 
For $R_{n,k}$, the following basis is due to Haglund--Rhoades--Shimozono \cite{HaglundRhoadesShimozono2018}.
 
\begin{thm}[Haglund--Rhoades--Shimozono \cite{HaglundRhoadesShimozono2018}]\label{thm: HRS descent}
The following set $\mathcal{GS}_{n,k}$ descends to a basis of $R_{n,k}$. Furthermore, they are the non-leading terms with respect to the $\des$-order.
\begin{align}
    \mathcal{GS}_{n,k} = \left\{g_\sigma(\xx) \prod\limits_{j=1}^{n-k} (x_{\sigma_1}\dots x_{\sigma_{j}})^{i_j} \ | \ \sigma\in \mathfrak{S}_n, 0\leq i_1+\dots i_{n-k}< k-\des(\sigma) \right\}
\end{align}. 
\end{thm}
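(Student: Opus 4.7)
The plan is to prove Theorem \ref{thm: HRS descent} by the standard two-step program used for the Garsia--Stanton basis of $R_n$: (i) show via a straightening algorithm that $\mathcal{GS}_{n,k}$ spans $R_{n,k}$ by identifying its elements as the non-$\des$-leading monomials modulo $I_{n,k}$, and (ii) verify that the cardinality (and $q$-graded count) of $\mathcal{GS}_{n,k}$ matches $\dim R_{n,k} = |\osp_{n,k}|$, upgrading spanning to a basis.

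For step (i), I would first verify the shape of the elements: every exponent vector $\bx$ of an element of $\mathcal{GS}_{n,k}$ satisfies $b_i < k$ for all $i$, since $g_\sigma(\xx)$ alone has each exponent bounded by $\des(\sigma)$, and the prefix products contribute at most $i_1 + \dots + i_{n-k} < k - \des(\sigma)$ to any single variable. Given any $\xx^\ax$ with some $a_i \geq k$, the generator $x_i^k \in I_{n,k}$ directly kills $\xx^\ax$ in $R_{n,k}$, so such monomials never need to appear in a basis. For the remaining $\xx^\ax$ (all $a_i < k$) that still fails to lie in $\mathcal{GS}_{n,k}$, I would adapt the Garsia--Stanton/Allen/Adin--Brenti--Roichman reduction \cite{AllenDescent, adin_brenti_roichman2005}: sort the exponents to a weakly decreasing sequence via a recording permutation, identify the subset $S \subset [n]$ of positions of strict drops in $\sort(\ax)$, and multiply a suitable partial elementary $e_d(S)$ (for $d$ dictated by the top plateau length) by a common prefix monomial so that the $\des$-leading term of the resulting element of $I_{n,k}$ is exactly $\xx^\ax$, with all lower-order terms strictly $\des$-smaller. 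Lemma \ref{lem: descent-order-subset} lets us verify the leading-term claim by restricting to $S$ and its complement separately.

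For step (ii), for each $\sigma \in \sym_n$ the number of tuples $(i_1,\dots,i_{n-k})$ of nonnegative integers with $i_1 + \dots + i_{n-k} < k - \des(\sigma)$ is $\binom{n-1-\des(\sigma)}{n-k}$, so
\[
|\mathcal{GS}_{n,k}| = \sum_{\sigma \in \sym_n} \binom{n-1-\des(\sigma)}{n-k},
\]
which is the known formula for $|\osp_{n,k}|$ and matches $\dim R_{n,k}$ from the HRS Hilbert series computation. On the graded level, $g_\sigma(\xx)$ contributes degree $\maj(\sigma)$ and each $(x_{\sigma_1}\dots x_{\sigma_j})^{i_j}$ contributes degree $j\cdot i_j$, so summing $q^{\maj(\sigma)}$ against the $q$-analogue of the tuple count yields $\sum_\sigma q^{\maj(\sigma)} \binom{n-1-\des(\sigma)}{n-k}_q$, matching $\Hilb_q(R_{n,k})$.

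The main obstacle is the straightening in step (i): for each exponent tuple not realized in $\mathcal{GS}_{n,k}$ one must exhibit an explicit element of the two-family ideal $I_{n,k}$ whose $\des$-leading term is that tuple and all of whose other terms are strictly $\des$-smaller. The subtlety unique to the HRS setting, absent in the Garsia--Stanton proof for $R_n$, is coordinating the monomial generators $x_i^k$ with the partial elementary generators $e_d(S)$ so that the sharp bound $i_1 + \dots + i_{n-k} < k - \des(\sigma)$ cleanly separates the ``allowed'' tuples from the ``reducible'' ones; in particular one must check that the plateau-based choice of $d = |S| - (n-k)$ still lies in the support of the Tanisaki-style generators at every stage of the reduction, so that the algorithm terminates with output supported in $\mathcal{GS}_{n,k}$.
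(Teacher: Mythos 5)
This theorem is quoted in the paper from Haglund--Rhoades--Shimozono and is not proved there, so there is no internal proof to compare against; your proposal is a reconstruction of the HRS argument, and its overall architecture (straightening to show spanning, then a cardinality match) is indeed the right one. However, there are two genuine gaps. First, your step (ii) is circular as written: to upgrade spanning to a basis you need a \emph{lower} bound $\dim R_{n,k} \geq |\osp_{n,k}|$ that is independent of the candidate basis, and "the HRS Hilbert series computation" is downstream of the basis theorem. In HRS this lower bound comes from orbit harmonics: one exhibits a point locus $Y_{n,k}$ with $|Y_{n,k}| = |\osp_{n,k}|$ whose associated graded vanishing ideal contains $I_{n,k}$, so that $R_{n,k}$ surjects onto a space of dimension $|\osp_{n,k}|$. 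Your proposal never supplies this input, and without it the count $|\mathcal{GS}_{n,k}| = |\osp_{n,k}|$ only bounds the dimension from above.

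Second, in step (i) you propose to straighten using "partial elementary generators $e_d(S)$" and "Tanisaki-style generators," but $I_{n,k} = (x_i^k) + (e_n(\xx_n),\dots,e_{n-k+1}(\xx_n))$ contains no partial elementary symmetric functions among its defining generators. The membership $e_d(S) \in I_{n,k}$ for $S \subsetneq \xx_n$ with $d > |S| - (n-k)$ is a nontrivial lemma in HRS (proved by a generating-function recursion combining the $x_i^k$ with the top elementary symmetric functions), and your threshold $d = |S| - (n-k)$ is off by one even relative to that statement. Since the entire straightening hinges on having these elements available in the ideal with controlled $\des$-leading terms, this is a missing ingredient rather than a routine adaptation of the Garsia--Stanton/Allen argument; you correctly flag the coordination of the two generator families as the main obstacle, but the resolution is precisely this ideal-membership lemma, which the proposal does not establish.
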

For the Garsia-Procesi rings $R_{\mu}$, the following formula is from \cite{carlsson2024descent}.
For any partition $\mu\vdash n$, define $\osp_\mu$ to be the collection of ordered set partitions $(A_1| \dots | A_{\ell(\mu)})$ of $[n]$ where $|A_i| = \mu_i$.
Using this, we can define the following set:
\begin{align}
    \mathcal{D}_\mu: = \{\ax : \ax\vert_{A_i} \in \mathcal{D}_{\mu'_i} \text{ for } (A_1| \dots | A_{\mu_1})\in \osp_{\mu'}\}.
\end{align}

Note that we take $\osp_{\mu'}$, so the sizes of the parts correspond to the lengths of the columns.
We have that $\mathcal{D}_\mu \subset \mathcal{D}_n$, hence  $\{\xx^\ax \ | \ \ax \in \mathcal{D}_\mu\}\subset \{g_\sigma(\xx) \ | \ \sigma\in \mathfrak{S}_n\}$. Furthermore, we have that this subset is in in fact a basis of $R_\mu$.

\begin{thm}[Carlsson--C.\cite{carlsson2024descent}]
 The collection of monomials $\{\xx^\ax \ | \ \ax \in \mathcal{D}_\mu\}$ descends to a basis of $R_\mu$.
    Furthermore, they are the non-leading terms with respect to the $\des$-order on monomials.
\end{thm}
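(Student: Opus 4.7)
The plan is to establish claim (b) — that $\{\xx^\ax : \ax \in \mathcal{D}_\mu\}$ is exactly the set of non-leading terms of $I_\mu$ with respect to the $\des$-order — and then deduce the basis statement (a) from a cardinality match. This follows the standard Gr\"obner-style paradigm, but is complicated by the fact that $<_\des$ is not an honest monomial order on $\Q[\xx_n]$: as the paper remarks, it is only a total order on monomials and is not preserved under multiplication by an arbitrary monomial.

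First I would verify $|\mathcal{D}_\mu| = \dim_\Q R_\mu$. The right-hand side equals the multinomial coefficient $\binom{n}{\mu_1,\dots,\mu_{\ell(\mu)}}$, obtained by specializing $\Frob(R_\mu) = \Ht_\mu$ at $q=1$ and pairing with $e_{1^n}$. For $|\mathcal{D}_\mu|$, one parametrizes each $\ax$ by a witnessing ordered set partition $(A_1|\dots|A_{\mu_1}) \in \osp_{\mu'}$; after either fixing a canonical witness or applying Corollary \ref{cor: kostka number permutations} column-by-column and summing, the totals should agree. This step is combinatorial and independent of $R_\mu$ itself.

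The main step is the straightening / non-leading-term statement. For any $\alpha$ with $\xx^\alpha \notin \mathcal{D}_\mu$, I would construct an element $f_\alpha \in I_\mu$ whose $\des$-leading monomial is $\xx^\alpha$ and whose remaining terms are strictly $<_\des \xx^\alpha$. The strategy is: from the failure of the descent-word condition on \emph{every} $(A_1|\dots|A_{\mu_1}) \in \osp_{\mu'}$, extract a subset $S \subseteq \{x_1,\dots,x_n\}$ together with a degree $d > |S| - (\mu'_n + \dots + \mu'_{n-|S|+1})$, so that a suitably chosen multiple $\xx^\beta \cdot e_d(S) \in I_\mu$ has $\des$-leading term $\xx^\alpha$. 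Lemma \ref{lem: descent-order-subset} is the essential technical tool: by restricting to $S$ and to $S^c$ separately, one can read off the $<_\des$-leading term of the product from the leading terms on the two complementary subsets, which is how one circumvents the failure of $<_\des$ to be multiplicative. Once this is in place, the spanning of $\{\xx^\ax : \ax \in \mathcal{D}_\mu\}$ in $R_\mu$ is immediate, and combined with the cardinality match it forces linear independence, giving (a); (b) is the content of the straightening relations themselves.

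The hard part is the straightening step, for two reasons. First, because $<_\des$ fails to be multiplicative, standard Gr\"obner-basis arguments do not apply directly, and every reduction must be hand-crafted so that its output can be verified via Lemma \ref{lem: descent-order-subset} on complementary index sets. Second, identifying the precise Tanisaki generator $e_d(S)$ whose product with some $\xx^\beta$ has leading monomial $\xx^\alpha$ requires a careful combinatorial analysis: one must show that the failure $\ax \notin \mathcal{D}_\mu$ pinpoints a "bad column" in every ordered set partition of $\osp_{\mu'}$, and then bundle these bad columns into a single subset $S$ witnessing membership in $I_\mu$ via the Tanisaki generator bound $d > |S| - (\mu'_n + \dots + \mu'_{n-|S|+1})$.
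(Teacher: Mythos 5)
The overall architecture of your proposal is coherent: establish one inclusion between $\{\xx^\ax : \ax \in \mathcal{D}_\mu\}$ and the set of $\des$-non-leading monomials of $I_\mu$, then close with the cardinality match $|\mathcal{D}_\mu| = \dim R_\mu$ (and you are right that no multiplicativity of $<_\des$ is needed for "non-leading monomials of a homogeneous ideal form a basis of the quotient"). The problem is that the entire weight of your argument rests on the straightening step, and that step is asserted rather than proved. Concretely, you claim that for every $\alpha \notin \mathcal{D}_\mu$ there is a single Tanisaki generator $e_d(S)$ and a monomial $\xx^\beta$ such that $\xx^\beta e_d(S)$ has $\des$-leading term exactly $\xx^\alpha$. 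Nothing in the proposal explains (a) how the simultaneous failure of the descent-word condition over \emph{all} of $\osp_{\mu'}$ produces one admissible pair $(d,S)$ with $d > |S| - (\mu'_n + \dots + \mu'_{n-|S|+1})$, nor (b) why the $\des$-leading term of the product is $\xx^\alpha$: Lemma \ref{lem: descent-order-subset} lets you assemble leading-term information from complementary index sets, but you still must control the leading term of the factor supported on $S$, where the order fails to be multiplicative. Even in the classical case $\mu = (1^n)$ the straightening of \cite{AllenDescent} is not of this one-relation form, and for general Tanisaki ideals no such construction is known to me. This is precisely why \cite{carlsson2024descent} (and Section \ref{sec:des} of this paper, which follows it) prove the \emph{opposite} inclusion: every $\ax \in \mathcal{D}_\mu$ fails to be a leading term, shown via the maps $\varphi_A$ of Proposition \ref{prop: phi-delete-lambda1}, which annihilate the Tanisaki generators in a tensor product of smaller quotients, together with induction on the number of columns of $\mu$ as in Proposition \ref{prop: descent-triangularity}. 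That direction needs leading-term control only in the single-column factors, where it is the classical Garsia--Stanton statement.

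A secondary gap: you treat the count $|\mathcal{D}_\mu| = \dim R_\mu$ as routine "after fixing a canonical witness," but the paper explicitly warns that a word in $\mathcal{D}_\mu$ may admit several witnessing ordered set partitions, so summing over $\osp_{\mu'}$ overcounts and there is no obvious canonical choice. The count is itself a theorem of \cite{hanada2025charge} (see Proposition \ref{prop: rev cc and shuffles are the same}), proved by reindexing $\mathcal{D}_\mu$ by cocharge words of permutations whose catabolizability type dominates $\mu$; you may cite it, but it cannot be waved through as an elementary parametrization.
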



\subsection{Cocharge}
We recall the definitions for permutations and standard tableaux. 

\begin{defn}\label{def: cc word}
    The \textit{cocharge word} of $w\in \mathfrak{S}_n$ (denoted $\cc(w)$) is a word of length $n$ consisting of the labeling of $w$ that we obtain in the following way. Label the 1 of $w$ with $0$. Assume we labeled the letter $i$ with $k$. 
    Label $(i+1)$ with a $k$ if it is to the right of $i$. We label $(i+1)$ with a $(k+1)$ if it is to the left of $i$. 
\end{defn}

This process is equivalent to scanning $w$ from left to right, labeling entries in increasing order.
The label of $i$ corresponds to the number of times we need to return to the left end of $w$ before we label $i$.

\begin{defn}
    The statistic $\textit{cocharge}(w)$ is the sum of the letters in $\cc(w)$. 
\end{defn}

\begin{example}\label{ex: cocharge word}
 Let $w = 3 \ \ 5 \ \ 1 \ \ 6\ \ 2 \ \ 4 \ \ 7 .$  
The corresponding cocharge word is $\cc(w)  = 1  \ \ 2 \ \ 0 \ \ 2 \ \ 0 \ \ 1 \ \ 2$, hence $\text{cocharge}(w) = 1 + 2 + 0 + 2 + 0 + 1 + 2 = 8.$
\end{example}

Using the algorithm to construct $\cc(w)$, we can find the following criteria for a word $\mathbf{z}$ to be a valid cocharge word of a permutation.
\begin{lem}\label{lem: classification of cocharge words}
        Let $\mathbf{z}$ be a word of nonnegative integers of length $n$ containing a 0.
        We have $\mathbf{z} = \cc(w)$ for some $w\in \mathfrak{S}_n$ if and only if for any nonnegative $c$, one of the following holds:
    \begin{enumerate}[label=(\alph*)]
        \item there exists a $(c+1)$ to the left of the rightmost $c$ in $\mathbf{z}$.
        \item $c$ is the largest letter in $\mathbf{z}$.
    \end{enumerate}
\end{lem}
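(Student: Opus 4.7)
The strategy is to recast the cocharge algorithm in terms of a ``label function'' on values, and then handle both directions by direct bookkeeping. Given $w \in \sym_n$, define $L : [n] \to \mathbb{Z}_{\geq 0}$ by $L(w_i) := z_i$, so that $L(v)$ is the cocharge label attached to the value $v$. Definition \ref{def: cc word} is then the recurrence $L(1) = 0$ and, for $v \geq 1$,
\begin{align*}
L(v+1) = \begin{cases} L(v) & \text{if } v+1 \text{ lies to the right of } v \text{ in } w, \\ L(v)+1 & \text{if } v+1 \text{ lies to the left of } v \text{ in } w. \end{cases}
\end{align*}
In particular, $L$ is weakly increasing with increments in $\{0,1\}$, and its level sets $V_c := L^{-1}(c)$ are consecutive intervals $\{v_c^{\min}, \ldots, v_c^{\max}\}$ of $[n]$. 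For the forward direction, assume $\mathbf{z} = \cc(w)$ and fix $c < \max(\mathbf{z})$. For consecutive $v, v+1 \in V_c$, $L(v+1) = L(v)$ forces $v+1$ to the right of $v$ in $w$, so the values of $V_c$ occupy strictly increasing positions, and the rightmost $c$ in $\mathbf{z}$ sits at the position of $v_c^{\max}$. Setting $v^{*} := v_c^{\max}+1 = v_{c+1}^{\min}$, the cocharge rule puts $v^{*}$ to the left of $v_c^{\max}$ in $w$, so the label $c+1$ appears at a position strictly to the left of the rightmost $c$.

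\textbf{Reverse direction.} Assume $\mathbf{z}$ satisfies the stated conditions. Let $M = \max(\mathbf{z})$ and let $m_c$ denote the multiplicity of $c$ in $\mathbf{z}$. I first show $m_c > 0$ for every $c \in \{0, \ldots, M\}$: $m_0 > 0$ is given, and if $m_c > 0$ with $c < M$ then condition (a) provides a $(c+1)$ in $\mathbf{z}$, whence $m_{c+1} > 0$ by induction. Define $V_c := \{m_0 + \cdots + m_{c-1} + 1, \ldots, m_0 + \cdots + m_c\}$, which partitions $[n]$, and construct $w$ by placing the values $v_c^{\min} < \cdots < v_c^{\max}$ into the positions of the $c$'s in $\mathbf{z}$ in left-to-right order. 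It remains to check $\cc(w) = \mathbf{z}$ using the recurrence above. If $v, v+1 \in V_c$, then $v+1$ sits at the next $c$-position to the right of $v$, giving increment $0$. If $v = v_c^{\max}$ and $v+1 = v_{c+1}^{\min}$ with $c < M$, then $v$ sits at the rightmost $c$ position and $v+1$ at the leftmost $(c+1)$ position; condition (a) guarantees \emph{some} $(c+1)$ lies to the left of the rightmost $c$, hence a fortiori the leftmost $(c+1)$ does, giving increment $1$. The base case $L(1) = 0$ holds since $1 \in V_0$ by construction, so $w \in \sym_n$ satisfies $\cc(w) = \mathbf{z}$.

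\textbf{Main obstacle.} The proof is routine once the problem has been transferred to the label function $L$. The only subtle point is the equivalence between the existential form of condition (a) (\emph{some} $(c+1)$ left of the rightmost $c$) and the sharper statement needed for reconstruction (\emph{the leftmost} $(c+1)$ is left of the rightmost $c$). Recognizing this, together with the dual observation that $V_c$ is a consecutive block of integers occupying increasing positions in $w$ so that $v_c^{\max}$ sits at the rightmost $c$, are the two pivots on which the whole argument turns.
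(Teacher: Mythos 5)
Your proof is correct and follows essentially the same route as the paper: the forward direction reads off condition (a) from the cocharge recurrence, and the reverse direction reconstructs $w$ by partitioning $[n]$ into the level sets of the label values and filling the positions of each label $c$ with consecutive values in left-to-right order. Your write-up is somewhat more careful than the paper's (in particular about why the rightmost $c$ carries the value $v_c^{\max}$ and why the existential form of (a) suffices), but the underlying argument is the same.
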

\begin{proof}
    It is clear that any cocharge word $\cc(w)$ satisfies the condition above, since the cocharge goes up whenever we encounter an $(i+1)$ in $w$ such that $(i+1)$ to the left of $i$.
    
    We can recover the permutation $w$ from  $\mathbf{z}$ by reversing the process. That is: we can construct an ordered set partition $(A_0|\dots |A_k)$ of $n$, where $\mathbf{z}\vert_{A_i} =  \underbracket{i\dots i}_{|A_i| \text{ many}}$.

    Define $w$ so that $w\vert_{A_i} = (a_i +1)\dots a_j$ where $a_i = \sum_{j=0}^i |A_j|$.
    The resulting word $w$ is a permutation by construction. The fact that $\cc(w) = \mathbf{z}$ follows from the fact that the cocharge goes up when $a+1$ appears to the left of $a$: from (a), we know that $(a_j + 1)$ always appears to the left of $a_j$, hence the cocharge value will go up. 
\end{proof}

We can modify cocharge words, by inserting or removing certain letters, without changing the fact that it is a cocharge word. 
\begin{lem}\label{lem: insert, delete}
    Let $\mathbf{z} = \cc(w)$ for $w\in \mathfrak{S}_n$.
    \begin{enumerate}[label=(\alph*)]
        \item  For a nonnegative integer $d$ such that $d = z_i$ for some $i$ and any word $\mathbf{u}$ of length $n+1$ such that $\mathbf{u}_j = d$, $\mathbf{u}\vert_{[n+1]-\{j\}} = \mathbf{z}$, we have $\mathbf{u}= \cc(\tilde{w})$ for some $\tilde{w}\in \sym_{n+1}$.
        \item For index $m$ such that $z_m$ is the rightmost, largest entry in $\mathbf{z}$,  we have $\rev(\mathbf{z}\vert_{[n]-\{m\}})\in \mathcal{D}_{n-1}$.  In particular,  we have  $\mathbf{z}\vert_{[n]-\{m\}} = \cc(w\vert_{n-1})$.
    \end{enumerate}
\end{lem}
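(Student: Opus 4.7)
The plan is to verify both parts using the classification in Lemma \ref{lem: classification of cocharge words}.

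For \textbf{(a)}, the word $\mathbf{u}$ is obtained from $\mathbf{z}$ by inserting an extra copy of $d$ at position $j$. Since $d$ already appears in $\mathbf{z}$, the set of distinct values of $\mathbf{u}$, and hence its maximum $M$, coincide with those of $\mathbf{z}$, and $\mathbf{u}$ still contains a $0$. I would check the remaining condition of Lemma \ref{lem: classification of cocharge words}: for each $c$ with $0 \le c < M$, a $(c+1)$ lies to the left of the rightmost $c$ in $\mathbf{u}$. If $c \ne d$, the positions of $c$'s and $(c+1)$'s in $\mathbf{u}$ are those in $\mathbf{z}$ shifted uniformly past $j$, so their relative order is preserved and the condition transfers directly from Lemma \ref{lem: classification of cocharge words} applied to $\mathbf{z}$. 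If $c = d$, the rightmost $d$ in $\mathbf{u}$ is either the shifted rightmost $d$ of $\mathbf{z}$ (when $j$ is at or before the old rightmost $d$-position $r$) or the newly inserted $d$ itself at position $j$ (when $j > r$); in the first subcase the witness $(d+1)$ from $\mathbf{z}$ shifts to remain on the left, and in the second subcase any $(d+1)$ in $\mathbf{z}$ (one exists since $d < M$) lies strictly before $r < j$, hence strictly before position $j$ in $\mathbf{u}$. Therefore $\mathbf{u} = \cc(\tilde{w})$ for some $\tilde{w} \in \sym_{n+1}$.

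For \textbf{(b)}, I first identify $m$. The cocharge labels $c_1, c_2, \ldots, c_n$ assigned to the letters $1, 2, \ldots, n$ satisfy $c_{i+1} - c_i \in \{0, 1\}$ by Definition \ref{def: cc word}, so $c_n$ is the maximum. If $c_i = c_n$ for some $i < n$, the equal chain $c_i = c_{i+1} = \cdots = c_n$ forces $w^{-1}(i) < w^{-1}(i+1) < \cdots < w^{-1}(n)$, so the position $m$ of the rightmost maximal entry of $\mathbf{z}$ must equal $w^{-1}(n)$. Deleting position $m$ removes the label of $n$, and since the cocharge label of any letter $i \le n - 1$ depends only on the relative order of $1, 2, \ldots, i$ in the underlying word (which agrees in $w$ and in $w\vert_{n-1}$), the remaining entries of $\mathbf{z}\vert_{[n] - \{m\}}$ coincide, position by position, with $\cc(w\vert_{n-1})$. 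This proves $\mathbf{z}\vert_{[n] - \{m\}} = \cc(w\vert_{n-1})$.

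For the $\rev$ statement I would establish a companion characterization of $\mathcal{D}_k$ parallel to Lemma \ref{lem: classification of cocharge words}: a word $\mathbf{b}$ lies in $\mathcal{D}_k$ iff its value set is $\{0, 1, \ldots, M\}$ and for each $c$ with $1 \le c \le M$ some $c$ sits to the right of some $c - 1$. Sufficiency is proved by building $\sigma'$ as the concatenation, in decreasing order of $c$, of the position sets $A_c := \{i : b_i = c\}$, each listed in increasing order; the condition $\max(A_c) > \min(A_{c-1})$ supplies the descent required between successive value-groups, and each group is descent-free by construction. Because reversing a word exchanges ``left of the rightmost'' with ``right of the leftmost,'' the cocharge condition of Lemma \ref{lem: classification of cocharge words} becomes, after re-indexing $c \leftrightarrow c + 1$, exactly this descent-word condition on the reverse. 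Applied to $\mathbf{z}\vert_{[n] - \{m\}} = \cc(w\vert_{n-1})$, this yields $\rev(\mathbf{z}\vert_{[n] - \{m\}}) \in \mathcal{D}_{n-1}$. The main obstacle is this companion characterization of $\mathcal{D}_k$: the paper does not state it, so we must verify carefully that the $\sigma'$ constructed above realizes exactly the prescribed descent word with no spurious descents within value-groups.
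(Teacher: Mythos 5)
Your proof is correct and follows essentially the same route as the paper: part (a) is verified by checking that insertion of a repeated value preserves the conditions of Lemma \ref{lem: classification of cocharge words}, and part (b) rests on the observation that the rightmost maximal entry of $\cc(w)$ sits at the position of $n$ in $w$, so deleting it yields $\cc(w\vert_{n-1})$. Two small remarks. First, your final paragraph is an unnecessary detour: once $\mathbf{z}\vert_{[n]-\{m\}} = \cc(w\vert_{n-1})$ is established, the claim $\rev(\mathbf{z}\vert_{[n]-\{m\}}) \in \mathcal{D}_{n-1}$ is immediate from Proposition \ref{prop: descent is rev cc}, since $w\vert_{n-1}$ is a permutation of $[n-1]$; the companion characterization of $\mathcal{D}_k$ you develop is correct but not needed. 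Second, in the subcase $j>r$ of part (a) you write that \emph{any} $(d+1)$ in $\mathbf{z}$ lies strictly before $r$ --- this is false as stated (a $(d+1)$ may also occur after the rightmost $d$); what you need, and what Lemma \ref{lem: classification of cocharge words} supplies, is that \emph{some} $(d+1)$ lies before $r$, and that occurrence survives to the left of position $j$ in $\mathbf{u}$.
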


\begin{proof}
    \begin{enumerate}[label=(\alph*)]
        \item Note that such $\mathbf{u}$ is equivalent to inserting $d$ into $\mathbf{z}$. Any word we obtain in this way still satisfies the conditions in Lemma \ref{lem: classification of cocharge words} since $\mathbf{z}$ does.
        
        \item Since we label the entries by reading from left to right, we know that the rightmost, largest entry in $\mathbf{z}$ must be the last entry in $w$ that we label. Thus $w_m = n$. Thus deleting $z_m$ from $\mathbf{z}$ is equivalent to labeling all but the largest entry in $w$.
    \end{enumerate}
\end{proof}
Now, we extend the definitions to standard tableaux. 
Define $\cocharge(S) := \cocharge(\rw(S))$ for any standard tableau $S$.
Analogous to how we construct the cocharge word of a permutation, we can construct a cocharge tableau $\cc(S)$ for each standard tableau $S$ by replacing $i$ in $S$ with the corresponding cocharge label of $i$ in $\rw(S)$. Let $\cc(S)_i$ denote the entry in $\cc(S)$ corresponding to $i$ in $S$. 

Alternatively, we can define $\cc(S)$ directly on tableaux:
\begin{defn}\label{def: cc(S)}
    The \textit{cocharge tableau} of a standard tableau $S$ (denoted $\cc(S)$) is a filling of the same shape consisting of the labeling of $S$ that we obtain in the following way. Label the 1 of $S$ with $0$. Assume we labeled the letter $i$ with $k$. 
    Label $(i+1)$ with a $k$ if is in the same row or lower than $i$. We label $(i+1)$ with a $(k+1)$ if it is in a higher row than $i$.
\end{defn}
It is a simple exercise to check that these two definitions are equivalent.
Note that for Definition \ref{def: cc(S)}, we do not specify that $S$ must be of partition shape. 

 \ytableausetup{smalltableaux}
\begin{example}
Let $\ytableaushort{6,35,1247}$. Then $\cc(S) = \ytableaushort{3,12,0013}$.
In this case, $\cc(S)_4 = 1$.
\end{example}

We record the following observation, which follows immediately from Definition \ref{def: cc(S)}.
\begin{lem}\label{lem: max cc}
    For any $S\in \SYT_n$, we have $\cc(S)_i\leq \des(S)$ for any i.
\end{lem}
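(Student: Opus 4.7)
The plan is to deduce the bound essentially from the definition of cocharge, by identifying $\cc(S)_i$ exactly. Concretely, I would establish the identity
\[ \cc(S)_i \;=\; \bigl|\Des(S) \cap \{1,2,\ldots,i-1\}\bigr|, \]
from which the stated bound $\cc(S)_i \leq |\Des(S)| = \des(S)$ is immediate.

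To prove this identity, I would proceed by a short induction on $i$. The base case $\cc(S)_1 = 0$ is the initialization in Definition \ref{def: cc(S)}. For the inductive step, recall that $\cc(S)_{i+1}$ equals $\cc(S)_i + 1$ exactly when $i+1$ sits in a strictly higher row than $i$ in $S$, and equals $\cc(S)_i$ otherwise. Under the French convention in force throughout the paper, the descent set of a standard tableau is $\Des(S) = \{j : j{+}1 \text{ lies in a strictly higher row than } j\}$; in particular, this is consistent with Proposition \ref{prop: RSK des} applied to $\rw(S)$, so that the two notions of descent coincide on tableaux. Therefore the label increments at the transition $i \to i+1$ precisely when $i \in \Des(S)$, which combined with the inductive hypothesis yields the claimed identity.

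Honestly, there is no real obstacle here — the lemma is essentially a restatement of the definition. The one point that warrants care is matching conventions: one must be sure that the "higher row" condition that makes the cocharge label jump is the same as the condition that puts an index into $\Des(S)$. Once that is confirmed, the inductive argument and the subsequent bound take a couple of lines.
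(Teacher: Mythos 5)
Your proof is correct and matches the paper, which offers no argument beyond stating that the lemma ``follows immediately from Definition \ref{def: cc(S)}''; your identity $\cc(S)_i = |\Des(S)\cap\{1,\dots,i-1\}|$ is exactly the observation being invoked. (The side remark about Proposition \ref{prop: RSK des} is unnecessary — the descent set of a tableau is used directly — but it does no harm.)
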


We also have the following lemma, which is the tableaux version of Lemma \ref{lem: classification of cocharge words}. 
\begin{lem}\label{lem: cc(S) condition}
   Let $Z$ be a semistandard tableaux of size $n$ consisting of nonnegative integers and contains a 0. We have $Z = \cc(S)$ for some $S\in \SYT_n$ if and only if for any nonnegative integer $c$, one of the following holds:
   \begin{enumerate}[label=(\alph*)]
       \item $c$ is the largest letter in $Z$
       \item There exists a $(c+1)$ in some row higher than the row containing the lowest $i$.
   \end{enumerate}
\end{lem}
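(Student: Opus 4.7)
The plan is to bootstrap the permutation-level Lemma~\ref{lem: classification of cocharge words} to the tableau level by passing through the reading word. The crucial observation is that the cocharge label assigned to $i$ in $S$ depends only on whether $i$ lies to the left or right of $i-1$ in $\rw(S)$, and whether an entry is in a higher row than another entry is determined by its position in $\rw$; consequently $\cc(\rw(S)) = \rw(\cc(S))$ as words. Thus to test whether a semistandard $Z$ arises as $\cc(S)$, it is natural to feed $\rw(Z)$ into Lemma~\ref{lem: classification of cocharge words} and then reassemble.

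For the forward direction, suppose $Z = \cc(S)$. Then $\rw(Z) = \cc(\rw(S))$ is the cocharge word of a permutation, so by Lemma~\ref{lem: classification of cocharge words}, for every $c$ that is not the maximum, some $(c+1)$ lies to the left of the rightmost $c$ in $\rw(Z)$. Because $Z$ is semistandard (rows weakly increasing), a $(c+1)$ in the same row as a $c$ must lie strictly to its right, ruling out the ``same row'' case. Since $\rw$ processes rows top-to-bottom, the rightmost $c$ in $\rw(Z)$ is the rightmost $c$ in the lowest row of $Z$ containing $c$, and anything to its left in reading order either lies in a strictly higher row or in the same row further left. The preceding sentence excludes the latter, so the witnessing $(c+1)$ must be in a strictly higher row, which is exactly condition (b).

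For the converse, suppose $Z$ satisfies the hypothesis. Reversing the translation above, $\rw(Z)$ satisfies the criterion of Lemma~\ref{lem: classification of cocharge words}, yielding $w \in \sym_n$ with $\cc(w) = \rw(Z)$. Define $S$ to be the filling of the shape of $Z$ with $w_i$ in the cell that is $i$-th in reading order, so that $\rw(S) = w$. We must verify $S \in \SYT_n$. From the construction of $w$ in the proof of Lemma~\ref{lem: classification of cocharge words}, the entries of $w$ at positions where $\rw(Z) = c$ form the consecutive block of integers immediately after those used for labels $<c$, arranged in increasing reading order. Within a single row of $Z$, labels weakly increase, so the corresponding entries of $w$ strictly increase left to right in $S$ (consecutive blocks ordered by label, and strictly increasing within each block). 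Within a single column of $Z$, labels strictly increase from bottom to top, so the blocks are disjoint and the column of $S$ strictly increases bottom to top. Hence $S$ is standard, and since $\cc(S)$ and $Z$ share shape and reading word (as $\rw(\cc(S)) = \cc(\rw(S)) = \cc(w) = \rw(Z)$), they coincide.

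The principal obstacle is the bookkeeping translating ``to the left of the rightmost $c$ in $\rw(Z)$'' into the row condition (b); the semistandardness of $Z$ is precisely what rules out the only ``bad'' configuration, a $(c+1)$ sitting in the same row to the left of a $c$. Once this is observed, both directions amount to threading the permutation produced by Lemma~\ref{lem: classification of cocharge words} through the shape of $Z$.
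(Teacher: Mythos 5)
Your proof is correct. The paper states this lemma without proof (presenting it as the ``tableaux version'' of Lemma~\ref{lem: classification of cocharge words}), and your argument is exactly the intended reduction: use $\rw(\cc(S)) = \cc(\rw(S))$, observe that the rightmost $c$ in reading order sits in the lowest row containing $c$ and that semistandardness forbids a $(c+1)$ to its left in the same row, and for the converse build $S$ from the explicit permutation constructed in the proof of Lemma~\ref{lem: classification of cocharge words}, checking row and column strictness via the block structure of that construction. The only quibble is with the statement itself, where ``the lowest $i$'' should read ``the lowest $c$''; your proof correctly interprets it that way.
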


Note that Lemma \ref{lem: cc(S) condition} does not depend on whether the shape of $Z$ is partition or skew shape. 

We can write $\Frob_q(R_n)$ using the cocharge statistic on standard tableaux: 
\begin{align}
    \Frob_q(R_n) = \sum\limits_{S\in \SYT_n} q^{\cocharge(S)} s_{\shape(S)}.
\end{align}
This is equivalent to \eqref{eq: Frob Rn} using the fact that maj and cocharge are equidistributed amongst SYT \cite{Killpatrick}.
However, we have a stronger version of this equidistribution. This was a folklore result, with a recent proof due to Chan \cite{kelvinthesis}.
\begin{prop}[Chan \cite{kelvinthesis}]\label{prop: maj, cc}
For any partition $\mu$, we have 
\[\sum\limits_{S\in \SYT(\mu)} q^{\maj(S)} t^{\des(S)} =\sum\limits_{S\in \SYT(\mu)} q^{\cocharge(S)} t^{\des(S)}. \]
\end{prop}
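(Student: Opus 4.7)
The plan is to produce a bijection $\Psi : \SYT(\mu) \to \SYT(\mu)$ that preserves $\des$ while converting $\maj$ into $\cocharge$. I would first observe that $\des(S)$ admits parallel descriptions on the two sides: using the iterative construction of Definition \ref{def: cc(S)}, the cocharge label $\cc(S)_{i+1}$ exceeds $\cc(S)_i$ by exactly one precisely when $i \in \Des(S)$, so $\des(S) = \cc(S)_n = \max_i \cc(S)_i$. Consequently, $t^{\des(S)}$ is an intrinsic function of $S$, and the identity reduces to showing that the $\maj$-weighted and $\cocharge$-weighted generating functions agree on each level set $\{S \in \SYT(\mu) : \des(S) = d\}$ for every $d$.

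To construct such a $\Psi$, I would invoke Killpatrick's map $\kappa : \SYT(\mu) \to \SYT(\mu)$ from \cite{Killpatrick}, which is already known to send $\cocharge$ to $\maj$ on each shape, and verify that it additionally preserves $\des$. Killpatrick's construction is built from a sequence of cyclage-inverse local moves on tableaux; each move cyclically shifts the reading word, producing a controlled effect on both the descent set and the cocharge labels. By tracking how a single such move modifies $\Des$ together with the cocharge labels, one checks that $\kappa$ restricts to a bijection on each $\des$-class. Alternatively, one can proceed by induction on $n$, partitioning $\SYT(\mu)$ according to the corner $c$ of $\mu$ occupied by $n$: by Lemma \ref{lem: insert, delete}(b), removing $n$ from $T$ gives $\maj(T) = \maj(T|_{n-1}) + (n-1)\,\epsilon(T)$ and $\cocharge(T) = \cocharge(T|_{n-1}) + \cc(T)_n$, where $\epsilon(T) = 1$ if $n-1 \in \Des(T)$ and $0$ otherwise, and $\cc(T)_n \in \{\des(T|_{n-1}), \des(T|_{n-1}) + 1\}$. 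The inductive hypothesis applied to each $\SYT(\mu \setminus c)$ then reduces the problem to matching contributions corner by corner.

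The main obstacle in either approach is the asymmetry of the recursions: the $\maj$ change upon inserting $n$ is a binary $0$-or-$(n-1)$ increment depending on the position of $n-1$, while the $\cocharge$ change is the current descent count $\des(T|_{n-1})$ or one more. These increments do not pair up tableau-by-tableau, so reconciling them requires a descent-aware refinement of the classical maj-to-cocharge correspondence rather than a naive application of Killpatrick's bijection or a direct induction. This refinement is precisely the content of Chan's argument \cite{kelvinthesis}, which I would cite as the source of the needed $\des$-preserving bijection.
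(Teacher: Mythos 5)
Your preliminary reductions are sound (in particular $\cc(S)_{i+1}=\cc(S)_i+1$ exactly when $i\in\Des(S)$, so $\des(S)=\cc(S)_n$ and the identity is a $\des$-refined equidistribution), and your honest conclusion that the naive Killpatrick or insert-$n$ inductions do not match up termwise, so that the real content is Chan's descent-aware refinement, lands you exactly where the paper does: Proposition \ref{prop: maj, cc} is stated there as a cited result of Chan \cite{kelvinthesis} with no proof supplied. Your treatment is therefore essentially the same as the paper's.
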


Finally, we note of the following result, due to Lascoux-Sch\"{u}tzenberger:
\begin{thm}[Lascoux-Sch\"{u}tzenberger \cite{LS1}]
    If $w,w' \in \mathfrak{S}_n$ are such that $P(w) = P(w')$, then $\cocharge(w) = \cocharge(w')$.
\end{thm}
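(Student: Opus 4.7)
The strategy is to reduce to the Knuth-move characterization of the equivalence $\{(w,w') : P(w) = P(w')\}$: by Knuth's classical theorem (see \cite[\S A.1]{EC2}), two permutations have the same insertion tableau if and only if they are related by a sequence of elementary Knuth transpositions of the two types
\[
\cdots bac \cdots \longleftrightarrow \cdots bca \cdots \quad (a<b<c), \qquad \cdots acb \cdots \longleftrightarrow \cdots cab \cdots \quad (a<b<c),
\]
performed on three adjacent positions. It thus suffices to show that $\cocharge$ is preserved under a single such move.

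The first step is to rewrite $\cocharge(w)$ in a form in which Knuth invariance is transparent. Unpacking Definition~\ref{def: cc word}, the cocharge label of the value $i$ in $w$ is exactly the number of $j<i$ such that $j+1$ occurs to the left of $j$ in $w$; summing over $i$ and swapping the order of summation gives
\[
\cocharge(w) \;=\; \sum_{j=1}^{n-1} (n-j)\, \mathbf{1}\!\left[w^{-1}(j+1) < w^{-1}(j)\right],
\]
so $\cocharge(w)$ depends on $w$ only through the relative order in $w$ of each pair $\{j,j+1\}$ of consecutive integers.

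The second step is to verify that elementary Knuth moves preserve the relative order of every such consecutive pair. In both move types the value $b$ stays fixed while $a$ and $c$ swap places, so only pairs containing $a$ or $c$ can be affected. Since $a<b<c$ forces $c\geq a+2$, the pair $\{a,c\}$ is itself never consecutive. For each remaining candidate pair $\{i,i+1\}$ with $i \in \{a-1,a,c-1,c\}\cap[1,n-1]$, either the other member of the pair lies outside the three-position window of the move---in which case $a$ and $c$ merely interchange between the two inner positions of the window and remain on the same side of that member---or that other member equals $b$ (which happens exactly when $b = a+1$ or $c = b+1$), in which case a direct check shows that the moving entry stays on the same side of $b$ throughout. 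In every subcase the relevant indicator is unchanged, so the formula above yields $\cocharge(w) = \cocharge(w')$.

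The case analysis above is the only substantive content, and it is short, being driven entirely by the inequality $c\geq a+2$. As an alternative route avoiding Knuth's theorem, one may argue by strong induction on $n$ using the identity $\cocharge(w) = \cocharge(w|_{[n-1]}) + \cc(w)_n$: Proposition~\ref{lem: RSK restriction} gives $P(w|_{[n-1]}) = P(w')|_{[n-1]}$ so the inductive hypothesis handles the first summand, while the identity $\cc(w)_n = |\Des(w^{-1})|$ (immediate from Definition~\ref{def: cc word}) combined with Proposition~\ref{prop: RSK des} applied to $w^{-1}$ and the classical RSK symmetry $Q(w^{-1}) = P(w)$ handles the top label.
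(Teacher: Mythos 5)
Your proof is correct. Note that the paper does not actually prove this statement---it is quoted as a classical theorem of Lascoux--Sch\"{u}tzenberger with a citation to \cite{LS1}---so there is no in-paper argument to compare against. Your main route is sound: the identity $\cocharge(w)=\sum_{j=1}^{n-1}(n-j)\,\mathbf{1}[w^{-1}(j+1)<w^{-1}(j)]$ follows directly from Definition \ref{def: cc word} by interchanging the order of summation, and your case analysis correctly shows that an elementary Knuth move fixes the relative order of every pair $\{j,j+1\}$ (the key points being $c\geq a+2$, so $\{a,c\}$ is never a consecutive pair, and that the two moving letters never cross $b$ or leave the three-position window). Your alternative inductive route is also valid and is arguably cleaner given the toolkit already in the paper, since $\cc(w)_n=\des(w^{-1})=\des(P(w))$ via $Q(w^{-1})=P(w)$ and Proposition \ref{prop: RSK props}, and the restriction property handles $w|_{[n-1]}$.
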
 

From this, we know that cocharge is constant on Knuth equivalence classes. That is: for any $w\in \mathfrak{S}_n$ such that $P(w) = S$, we have $\cocharge(w) = \cocharge(S)$.

We can think of a descent monomial as being a  ``reversed cocharge monomial", using the fact that $g_\sigma(\xx) = \xx^{\rev(\cc(w))}$ where $w= \text{rev}(\text{rev}(\sigma)^{-1})$.
\begin{prop}\label{prop: descent is rev cc}
    $\mathcal{D}_n = \{\rev(\cc(w)) \ | \ w\in \mathfrak{S}_n\}$.
\end{prop}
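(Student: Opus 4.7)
The plan is to characterize both $\mathcal{D}_n$ and $\{\rev(\cc(w)) : w \in \sym_n\}$ by the same combinatorial condition on tuples $\ax = (a_1, \dots, a_n) \in \Z_{\geq 0}^n$, then observe the two characterizations coincide. For such $\ax$, set $r := \max_j a_j$ and $A_i := \{j \in [n] : a_j = i\}$ for $0 \le i \le r$. The shared target condition is: the $A_i$ partition $[n]$ into nonempty blocks, and $\max A_i > \min A_{i-1}$ for each $1 \le i \le r$ (vacuous if $r = 0$).

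First I would establish this characterization for $\mathcal{D}_n$. For $\sigma \in \sym_n$ with $\Des(\sigma) = \{d_1 < \dots < d_r\}$, the ascending runs $R_k := \{\sigma_{d_{k-1}+1}, \dots, \sigma_{d_k}\}$ (where $d_0 = 0$, $d_{r+1} = n$) partition $[n]$, and unpacking $g_\sigma = \prod_{d \in \Des(\sigma)} x_{\sigma_1}\dots x_{\sigma_d}$ shows that each $x_j$ with $j \in R_k$ carries exponent $r+1-k$. As sets $R_k = A_{r+1-k}$, and each $R_k$ is increasing with a descent between $R_k$ and $R_{k+1}$, yielding $\max A_i > \min A_{i-1}$. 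Conversely, for any $\ax$ satisfying the condition, concatenating $A_r, A_{r-1}, \dots, A_0$ (each block sorted increasingly) produces a $\sigma$ with $g_\sigma = \xx^\ax$: no descent arises inside a sorted block, while the required inequality forces a genuine descent at each block boundary.

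Next I would translate Lemma \ref{lem: classification of cocharge words} through reversal. That lemma says $c$ is a cocharge word of some $w \in \sym_n$ iff its multiset of values is $\{0, 1, \dots, r\}$ and, for each $0 \le i < r$, some $(i+1)$ lies strictly to the left of the rightmost $i$ in $c$. Under $\ax = \rev(c)$, ``rightmost'' becomes ``leftmost'' and ``to the left of'' becomes ``to the right of'', so the equivalent condition on $\ax$ is that its values are $\{0, 1, \dots, r\}$ and, for each $0 \le i < r$, some $(i+1)$ lies strictly to the right of the leftmost $i$ in $\ax$; that is, $\max A_{i+1} > \min A_i$. Reindexing $i \mapsto i-1$ recovers the condition of the previous paragraph, so the two sets coincide.

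I do not expect a serious obstacle. The mild subtlety is the reverse direction of the $\mathcal{D}_n$ characterization---verifying that the concatenation procedure yields a permutation whose descent set is exactly the block boundaries---but this is handled by the two observations already noted: sorted blocks contribute no internal descents, and $\max A_i > \min A_{i-1}$ forces a descent at each boundary. The entire argument then reduces to a clean bijective translation between two equivalent combinatorial conditions.
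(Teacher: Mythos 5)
Your proof is correct. The paper does not spell out an argument for this proposition; it justifies it by the explicit identity $g_\sigma(\xx) = \xx^{\rev(\cc(w))}$ with $w = \rev(\rev(\sigma)^{-1})$, so that the bijection $\sigma \mapsto \rev(\rev(\sigma)^{-1})$ on $\sym_n$ directly matches each descent word with a reversed cocharge word. You instead prove the set equality by exhibiting a common intrinsic characterization: a tuple $\ax$ lies in either set iff its level sets $A_0,\dots,A_r$ are all nonempty and $\max A_i > \min A_{i-1}$ for $1 \le i \le r$. Your run-decomposition analysis of $g_\sigma$ (exponent $r+1-k$ on the $k$th ascending run, descent at each run boundary forcing $\max A_i > \min A_{i-1}$, and the sorted-concatenation converse) is sound, and your translation of Lemma \ref{lem: classification of cocharge words} under reversal is the correct dictionary (rightmost $\leftrightarrow$ leftmost, left-of $\leftrightarrow$ right-of). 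The two approaches are close in spirit --- both ultimately rest on the same run/block structure, which is also how Lemma \ref{lem: classification of cocharge words} reconstructs $w$ from $\cc(w)$ --- but yours is self-contained and avoids verifying the explicit permutation-level formula, at the cost of not producing the element-wise correspondence $\sigma \leftrightarrow w$ that the paper uses later (e.g.\ in the proof of Proposition \ref{prop: des-same-basis-hrs}). Either argument suffices for the set equality as stated.
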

Throughout this paper, we write descent monomials using cocharge words, since this groups the monomials nicely based on insertion tableaux.

\subsection{Lascoux Standardization and Catabolizability type}
For $\lambda \vdash n$, we have that $\Frob_q(R_\lambda)$ is given by the modified Hall--Littlewood polynomial $\widetilde{H}_\lambda[X;q]$ \eqref{eq: modified HL} which can be expressed in terms of semistandard tableaux of weight $\lambda$.
However, in order to better understand the structure of $R_\lambda$ as a quotient of $R_n$, it is beneficial to write $\Frob_q(R_\lambda)$ as a sum over a subset of $\SYT_n$. We can identify the appropriate subset using catabolizability types of standard tableaux.

The \textit{catabolizability type} $\ctype(S)$ of $S\in \SYT_n$ is a partition of size $n$ defined using an operation called catabolism on tableaux. 
For more on catabolism, see \cite{Lascoux}, \cite{SW}, or \cite{hanada2025charge}. Here, we omit the precise definitions of catabolisms and catabolizability type and instead focus on the properties we use in the later sections of our work. 





In general, many things are unknown about catabolism and catabolizability, though the operation of catabolism itself is very easy to compute.
However, there is a concrete algorithm, due to Blasiak \cite{Blasiak}, called the \textit{catabolism insertion algorithm}, which computes the catabolizability type of a permutation $w$, where we define $\ctype(w) : = \ctype(P(w))$.

\begin{algo}[Blasiak {\cite[Algorithm 3.2]{Blasiak}}]\label{alg: ctype}
We define a function $f$ on pairs $(x,\nu)$, where $x = ya$ is a word ($a$ is the last letter of $x$) and $\nu$ is a partition, to be
\begin{align*}
    f(x,\nu) &= \begin{cases}
        (y,\nu + \epsilon_{a+1}) & \text{if }\nu + \epsilon_{a+1} \text{ is a partition,}\\ 
        ((a+1)y,\nu) & \text{otherwise.}
    \end{cases}
\end{align*}
where $\epsilon_{a+1} $ is the composition $(0,\dots,0, 1,0,\dots)$ with 1 in the $(a+1)$th coordinate.
 
Let $w$ be a permutation of length $n$ with cocharge word $\cc(w)$. We apply $f$ to $(\cc(w),\emptyset)$ repeatedly until we get $(\emptyset, \lambda)$, where $\lambda \vdash n$. 
\end{algo}
When applying this algorithm to $\cc(w)$, the resulting shape $\lambda$ is $\ctype(w)$. Furtheremore, we have that if $P(w) = P(w')$, then $\ctype(w) = \ctype(w') = \ctype(P(w))$ \cite{Blasiak}.

As a slight modification to the original algorithm, we record the entries of $\cc(w)$ that correspond to the boxes in $\lambda$ as we build the partition. When reading $i$ results in adding a box to the partition $\nu$, we fill the new box with $i$. This gives us a standard filling (but not a SYT) of shape $\ctype(w)$, meaning $\{1,\dots, n\}$ appear exactly once. We denote this filling as $T_w$. For the modified algorithm, the final tuple is $(\emptyset, T_w)$.

\begin{example}\label{ex: cat alg}
    Consider \begin{align*}
        w &=  3 \ 4  \ 1 \ 2\ 5  \\ 
        \cc(w) &=  1 \ 1 \ 0 \ 0 \ 1 .
    \end{align*}
    We will apply $f$ to $(\cc(w),\emptyset)$ repeatedly until we get an empty word in the first coordinate.
In order to keep track of the indices, we do not rotate $\cc(w)$: instead, we read the word from right to left. The position we are reading at each step of the algorithm is underlined. 
     \ytableausetup{smalltableaux, centertableaux}
     {\allowdisplaybreaks\begin{align*}
          ( 1 \ 1 \ 0 \ 0 \ \mathbf{\underline{1}} \hspace{.1cm}&, \hspace{.3cm} \emptyset) \\ &\downarrow\\ 
     ( 1 \ 1 \ 0 \ \mathbf{\underline{0}} \ 2\hspace{.1cm} &, \hspace{.3cm}\emptyset \hspace{.1cm}) \\ &\downarrow\\ 
     ( 1 \ 1 \  \mathbf{\underline{0}} \ \hspace{.2cm} \ 2 \hspace{.1cm}&, \hspace{.3cm} \ytableaushort{4} \hspace{.1cm}) 
      \\ &\downarrow\\ 
     (1 \ \mathbf{\underline{1}} \  \hspace{.2cm} \ \hspace{.2cm} \ 2 \hspace{.1cm}&,\hspace{.1cm} \ytableaushort{43}\hspace{.1cm}) \\ &\downarrow\\ 
     ( \mathbf{\underline{1}} \  \hspace{.2cm}  \  \hspace{.2cm} \ \hspace{.2cm} \ 2\hspace{.1cm} &, \hspace{.3cm}\ytableaushort{2,43}\hspace{.1cm}) 
     \\ &\downarrow\\ 
     (\hspace{.2cm} \  \hspace{.2cm}  \  \hspace{.2cm} \ \hspace{.2cm} \ \mathbf{\underline{2}}  \hspace{.1cm}&,\hspace{.3cm} \ytableaushort{21,43}\hspace{.1cm}) 
 \\ &\downarrow\\ 
     (  \hspace{.2cm}  \ \hspace{.2cm}  \  \hspace{.2cm} \ \hspace{.2cm} \ \hspace{.2cm} \hspace{.1cm} &, \hspace{.3cm}\ytableaushort{5,21,43}\hspace{.1cm}) 
    \end{align*}}
    From this, we conclude $T_{34125} = \ytableaushort{5,21,43}$ and $\ctype(34125) = (2,2,1)$.
\end{example}

We can use catabolizability type and dominance order on partitions to identify subsets of $\SYT_n$.

\begin{defn}
    For $\lambda\vdash n$, let $\Sigma_\lambda := \{S\in \SYT_n, \ctype(S)\unrhd \lambda\}$.
\end{defn}

Lascoux recognizes $\Sigma_\lambda$ as the image of embedding the cyclage poset of tableau of content $\lambda$ into the cyclage poset of standard tableaux. 

\begin{prop}[Lascoux \cite{Lascoux}]
    For any $\lambda\vdash n$, we have a bijection 
    $\std: \SSYT(\lambda)\rightarrow \Sigma_\lambda\subset \SYT_n$
    which preserves cocharge and shape.
\end{prop}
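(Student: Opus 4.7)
Define $\std: \SSYT(\lambda) \to \SYT_n$ by the standard standardization map: for $T \in \SSYT(\mu, \lambda)$, replace the $\lambda_i$ occurrences of the letter $i$ (in the order they appear in $\rw(T)$) with the consecutive integers $\sum_{j<i}\lambda_j+1, \ldots, \sum_{j\le i}\lambda_j$. Shape preservation is then immediate from the construction, and cocharge preservation is the classical Lascoux--Sch\"utzenberger identity $\cocharge(T) = \cocharge(\std(T))$, which can be verified inductively from Definition \ref{def: cc(S)} using the fact that within each letter-block the new labels are assigned in the reading-word order of $T$.

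The main step is proving $\ctype(\std(T)) \unrhd \lambda$ for every $T \in \SSYT(\lambda)$. I would run Blasiak's algorithm (Algorithm \ref{alg: ctype}) on $\cc(\std(T))$ and exploit the block structure inherited from $T$: the positions of $\std(T)$ corresponding to the letter $i$ of $T$ carry specific cocharge labels determined by which rows of $T$ those $i$'s occupy, and the semistandard row/column constraints on $T$ control these labels tightly. In particular, each letter-block of $\cc(\std(T))$, when processed right-to-left by the function $f$, forces the accumulation of a row of length at least $\lambda_i$ before the algorithm can move on to the $(i-1)$-st block, so that the resulting partition $\ctype(\std(T))$ dominates $\lambda$ after sorting.

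Bijectivity breaks into two parts. Injectivity is immediate via de-standardization, which collapses the consecutive integer-blocks of sizes $\lambda_1, \ldots, \lambda_{\ell(\lambda)}$ back to single letters; the result is semistandard since standardization preserves semistandard constraints block-by-block. For surjectivity onto $\Sigma_\lambda$, one approach is a direct inverse via Blasiak's algorithm: the condition $\ctype(S) \unrhd \lambda$ guarantees enough room to refine the catabolism block-partition of $[n]$ into $\ell(\lambda)$ blocks of sizes $\lambda_1, \ldots, \lambda_{\ell(\lambda)}$ which occupy distinct rows of $S$, so that the corresponding de-standardization yields a valid $T \in \SSYT(\lambda)$ with $\std(T) = S$. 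Alternatively, a dimension count suffices: the Hall--Littlewood identity $\Frob_q(R_\lambda) = \Ht_\lambda(X;q) = \sum_\mu \bigl(\sum_{T \in \SSYT(\mu,\lambda)} q^{\cocharge(T)}\bigr) s_\mu$ together with any independently-derived Schur expansion of $\Frob_q(R_\lambda)$ indexed by $\Sigma_\lambda$ (graded by cocharge and shape) forces $|\SSYT(\mu,\lambda)| = |\Sigma_\lambda \cap \SYT(\mu)|$ at each $q$-degree, so the injective, shape- and cocharge-preserving $\std$ is forced to be a bijection.

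The main obstacle is the catabolizability bound in the second paragraph: while Blasiak's algorithm is concretely computable, verifying in full generality that every letter-block accumulates to a row of length at least $\lambda_i$ requires careful bookkeeping of how the semistandard structure of $T$ propagates through the cocharge labels of $\std(T)$. The direct construction of the inverse for surjectivity is comparably delicate, though the Hall--Littlewood dimension count offers a clean workaround if the combinatorial argument becomes unwieldy.
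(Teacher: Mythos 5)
The map you define is not the map in the statement, and the proposition is false for the map you define. Here $\std$ is \emph{Lascoux's} standardization: the embedding of the cyclage poset of content-$\lambda$ tableaux into the cyclage poset of standard tableaux, realized as a composition of crystal reflection operators $s_i$ and lowering operators $f_1$ (see Section \ref{subsec: gillespie-rhoades} and \cite{SW}); the paper quotes the proposition from \cite{Lascoux} and offers no proof of its own. Ordinary standardization is a genuinely different map and does not preserve cocharge. Take $S=\ytableaushort{4,24,11233}$ of content $(2,2,2,2)$: one computes $\cocharge(S)=5$, and the Lascoux standardization is $\ytableaushort{7,68,12345}$, again of cocharge $5$, whereas ordinary standardization produces $\ytableaushort{7,38,12456}$, whose reading word $7\,3\,8\,1\,2\,4\,5\,6$ has cocharge word $2\,1\,2\,0\,0\,1\,1\,1$ and hence cocharge $8$. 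There is no ``classical Lascoux--Sch\"utzenberger identity'' asserting cocharge-preservation under ordinary standardization; the relevant classical fact is that cocharge is constant on Knuth classes and preserved by crystal operators, which is exactly what the Lascoux map exploits. Note also that the cocharge of a semistandard tableau is defined via the standard-subword decomposition, not via Definition \ref{def: cc(S)}, so the inductive verification you sketch does not even address the right statistic. Since $\widetilde{K}_{\mu,\lambda}(q)=\sum_{T\in\SSYT(\mu,\lambda)}q^{\cocharge(T)}$ must agree degree by degree with $\sum_{S\in\Sigma_\lambda\cap\SYT(\mu)}q^{\cocharge(S)}$, an injective shape-preserving map that fails to preserve cocharge cannot be the asserted bijection; consequently the catabolizability bound, the de-standardization inverse, and the surjectivity arguments in your proposal are all reasoning about the wrong object.

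Two smaller points. Your ``dimension count'' fallback is circular in this paper's logic: the $\Sigma_\lambda$-expansion \eqref{eq: frob R_lambda} of $\Frob_q(R_\lambda)$ is itself deduced from this proposition, so one would need an independent derivation of that expansion to make the count legitimate. And even for the correct map, the heuristic that each letter-block ``forces the accumulation of a row of length at least $\lambda_i$'' under Algorithm \ref{alg: ctype} is not a proof of $\ctype(\std(T))\unrhd\lambda$; establishing this is the substantive content of Lascoux's theorem and requires the cyclage/catabolism machinery rather than elementary bookkeeping.
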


From this, we can write $\tilde{H}_\lambda[X;q]$ as a sum over $\Sigma_\lambda$\cite{Lascoux, butler}.

\begin{align}\label{eq: frob R_lambda}
    \Frob_q(R_\lambda) = \sum\limits_{S\in \Sigma_\lambda} q^{\cocharge(S)} s_{\shape(S)}.
\end{align}

\begin{example}
    If $\lambda = (2,1)$, we have $\Sigma_\lambda = \left\{\ytableaushort{3,12},\ytableaushort{123}\right\}$. We can see the explicit cocharge/shape bijection between $\Sigma_\lambda$ and $\SSYT(\lambda)  = \left\{\ytableaushort{2,11},\ytableaushort{112}\right\}$.
\end{example}

There is a way to describe this bijection explicitly using crystal operators (for a more detailed reference, see Shimozono-Weyman \cite{SW}). 
Since crystal operators preserve Knuth equivalence, we can extend the bijection $\std$ on tableaux to go from words of weight $\lambda\vdash n$ to permutations $w\in \sym_n$ with $\ctype(w)\unrhd \lambda$. 
From this, we can define the bijection on skew-shaped semistandard tableaux as well. 

\begin{lem}\label{lem: standardization of skew shape}
For any skew shape $\mu/\gamma$ of size $n$ and weight $\lambda \vdash n$, there exists a bijection 
\[\std:\SSYT(\mu/\gamma, \lambda) \to \{S\in \SYT(\mu/\gamma), \ctype(\rw(S)) \unrhd \lambda \}.\]

\end{lem}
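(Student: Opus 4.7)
The plan is to extend Lascoux's straight-shape bijection to skew shapes using jeu de taquin rectification as the bridge. I define $\std$ on $T \in \SSYT(\mu/\gamma,\lambda)$ by the usual standardization procedure: replace the $\lambda_1$ entries equal to $1$ with $1, 2, \ldots, \lambda_1$ in the order they appear in $\rw(T)$, then replace the $\lambda_2$ entries equal to $2$ with the next $\lambda_2$ integers, and so on. This produces a standard filling $\std(T) \in \SYT(\mu/\gamma)$, and the map is injective because applying the inverse procedure (destandardization with block boundaries determined by $\lambda$) recovers $T$ uniquely.

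Next I show the image lies in $\{S \in \SYT(\mu/\gamma) : \ctype(\rw(S)) \unrhd \lambda\}$. Let $R \in \SSYT(\nu,\lambda)$ be the jeu de taquin rectification of $T$ for some $\nu \vdash n$, so $\rw(R)$ is Knuth equivalent to $\rw(T)$. The central observation is that standardization commutes with jeu de taquin slides on SSYTs, so rectifying $\std(T)$ by the same slide sequence yields $\std(R)$. By Lascoux's straight-shape bijection, $\std(R) \in \Sigma_\lambda$, i.e.\ $\ctype(\std(R)) \unrhd \lambda$. Since catabolizability type is a Knuth-class invariant, we conclude $\ctype(\rw(\std(T))) = \ctype(\std(R)) \unrhd \lambda$.

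For surjectivity, start with $S \in \SYT(\mu/\gamma)$ satisfying $\ctype(\rw(S)) \unrhd \lambda$. Fix a rectification slide sequence carrying $S$ to $P \in \SYT(\nu)$; then $\ctype(P) = \ctype(\rw(S)) \unrhd \lambda$. By Lascoux's theorem, there exists $R \in \SSYT(\nu,\lambda)$ with $\std(R) = P$. Running the chosen slide sequence in reverse on $R$ produces a skew SSYT $T$ of shape $\mu/\gamma$ (jeu de taquin preserves the SSYT property, and the weight is unchanged), so $T \in \SSYT(\mu/\gamma,\lambda)$. By commutativity of $\std$ with slides, $\std(T) = S$, establishing surjectivity.

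The main obstacle is verifying that standardization commutes with jeu de taquin slides on SSYTs. The delicate case is when a slide encounters two candidate cells containing equal entries: the SSYT slide rule breaks the tie in a specific way to preserve strict columns, and one must check that $\std$ assigns these cells consecutive labels in exactly the order that forces the standardized SYT slide to make the corresponding choice. This follows from the reading-word convention in standardization (later reading-order positions receive larger labels among equal entries), but the bookkeeping is the most technical part of the proof. Once this commutativity is in hand, both directions of the bijection reduce formally to Lascoux's straight-shape theorem.
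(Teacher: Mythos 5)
There is a genuine gap, and it occurs at the very first step: the map you define is not the map the lemma is about. You take $\std$ to be the \emph{ordinary} standardization (replace the $1$'s by $1,\dots,\lambda_1$ in reading order, etc.), but the paper's $\std$ is the \emph{Lascoux} standardization, which is defined via crystal reflection operators $s_i$ and lowering operators (see the discussion in Section \ref{subsec: gillespie-rhoades} and the example there, where $\std\big(\ytableaushort{4,24,11233}\big) = \ytableaushort{7,68,12345}$, whereas ordinary standardization would give $\ytableaushort{7,38,12456}$). These two maps genuinely differ, and the ordinary one does not land in the target set. Concretely, take $\mu/\gamma = (3,1)$ (a straight shape, so already covered by the lemma) and $T = \ytableaushort{2,112}$ of weight $\lambda = (2,2)$. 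Your map sends $T$ to $\ytableaushort{3,124}$, whose reading word $3124$ has cocharge word $1001$ and hence $\ctype = (2,1,1) \not\trianglerighteq (2,2)$; it also has cocharge $2$ while $\cocharge(T)=1$. The correct image is $\ytableaushort{4,123}$. Consequently your second paragraph breaks down exactly where you invoke ``Lascoux's straight-shape bijection'': Lascoux's theorem is a statement about Lascoux's standardization, and you are applying its conclusion to a different map. The surjectivity argument inherits the same problem, since the $R$ with $\std(R)=P$ produced by Lascoux's theorem is not obtained from $P$ by ordinary destandardization.

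Your overall architecture --- reduce to the straight-shape case by a Knuth-class-compatible rectification, using that $\ctype$ is a Knuth invariant --- is sound and is essentially the route the paper takes (the paper transports the straight-shape bijection to words via Knuth equivalence and crystal operators rather than via explicit jeu de taquin slides, but these are interchangeable). To repair the proof you would need to (i) use the Lascoux standardization on the rectified tableau, and (ii) replace ``standardization commutes with jeu de taquin'' by the corresponding compatibility of the Lascoux map with rectification, i.e.\ that it is determined on Knuth classes --- which is precisely the nontrivial input (crystal operators commute with slides) that the paper cites from Shimozono--Weyman. As written, the delicate tie-breaking analysis you flag in your last paragraph is for the wrong map, so carrying it out would not close the gap.
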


Now, for any $w\in \mathfrak{S}_n$ satisfying $\ctype(w) = \lambda$, we can construct $(A_1 | \dots | A_{\lambda_1})\in \osp_{\lambda'}$ by setting 
\begin{align}\label{eq: canon osp}
    A_i := \{j \ | \  j \text{ appears in column } i \text{ of } T_w\}
\end{align}

\begin{prop}[H.{\cite[Proposition 4.4]{hanada2025charge}}]
Let $w\in \mathfrak{S}_n$ with $\ctype(w) = \lambda$ and $(A_1 | \dots | A_{\lambda_1})\in \osp_{\lambda'}$ defined by \eqref{eq: canon osp}. For any choice of $i$, we have $\rev(\cc(w)\vert_{A_i}) \in \mathcal{D}_{\lambda'_i}$.
\end{prop}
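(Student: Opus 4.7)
My plan is to use Proposition \ref{prop: descent is rev cc}, which reduces the assertion $\rev(\cc(w)\vert_{A_i}) \in \mathcal{D}_{\lambda'_i}$ to showing that $\cc(w)\vert_{A_i}$ is itself the cocharge word of some permutation in $\mathfrak{S}_{\lambda'_i}$. By Lemma \ref{lem: classification of cocharge words}, this in turn reduces to verifying:
\begin{enumerate}[label=(\alph*)]
    \item $\cc(w)\vert_{A_i}$ contains a $0$; and
    \item for every nonnegative integer $c$ that appears in $\cc(w)\vert_{A_i}$ but is not its largest letter, there is a $(c+1)$ in $\cc(w)\vert_{A_i}$ strictly to the left of the rightmost occurrence of $c$.
\end{enumerate}

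For (a), I would exploit Algorithm \ref{alg: ctype}: the box at (row $1$, column $i$) of $T_w$ is placed via case 1 with current value $a = 0$ (so that $a + 1 = 1$ and we extend row $1$). Moreover, case 2 is never triggered at a position whose current value is $0$, since $\nu + \epsilon_1$ is always a valid partition regardless of the shape of $\nu$. Hence the position $j_0 \in A_i$ labeling this bottom box satisfies $\cc(w)_{j_0} = 0$, and so $\cc(w)\vert_{A_i}$ contains $0$.

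For (b), I plan to trace Algorithm \ref{alg: ctype} and exploit the hypothesis $\ctype(w) = \lambda$ exactly. The key observation is that if position $j \in A_i$ is placed at row $r$ of column $i$, then at the moment of insertion the current value at $j$ is exactly $r - 1$, whence $\cc(w)_j = r - 1 - (\text{number of case 2 applications at } j)$. Let $j$ be the rightmost position in $A_i$ with $\cc(w)_j = c$, placed in row $r \geq c + 1$. Assuming $c$ is not the largest letter of $\cc(w)\vert_{A_i}$, either some row above $r$ of column $i$ must be occupied, or some case 2 event at a position in $A_i$ produces a larger cocharge label. I would trace backward through the algorithm to identify the most recent case-2 event at a position in $A_i$ strictly to the left of $j$ whose incremented value equals $c + 1$, providing the required witness.

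The main obstacle will be controlling the column-$i$ membership of the witnessing $(c+1)$: a case 2 event at a position $j'' < j$ may place $j''$ in $A_{i-1}$ or elsewhere, rather than in $A_i$. This control must come from the global hypothesis $\ctype(w) = \lambda$ (not merely $\unrhd \lambda$), which ensures that the only way to achieve exactly shape $\lambda$ is for the relevant positions to fall into $A_i$. As an alternative strategy bypassing Lemma \ref{lem: classification of cocharge words}, one could construct an explicit permutation $\sigma_i \in \mathfrak{S}_{\lambda'_i}$ by letting $\sigma_i(r)$ be the rank of the row-$r$ label of column $i$ among all labels of $A_i$, and then directly verifying $\cc(\sigma_i) = \cc(w)\vert_{A_i}$ by induction on the steps of Algorithm \ref{alg: ctype}. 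This explicit construction produces a witnessing permutation outright, at the cost of additional bookkeeping matching the cocharge labels of $\sigma_i$ with the rank structure on column-$i$ labels.
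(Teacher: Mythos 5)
First, note that this paper does not actually prove this proposition: it is imported verbatim from \cite[Proposition 4.4]{hanada2025charge}, so there is no in-paper argument to compare against; your proposal has to stand on its own. Your reduction is the right one: by Proposition \ref{prop: descent is rev cc} it suffices to show $\cc(w)\vert_{A_i}$ is itself a cocharge word, and Lemma \ref{lem: classification of cocharge words} turns that into conditions (a) and (b). Your part (a) is correct and complete: a box lands in row $1$ of $T_w$ exactly when a letter is consumed with current value $0$, case $2$ never fires at value $0$, and since the algorithm only ever increases values, the row-$1$ entry of column $i$ is a position whose original label is $0$.

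Part (b), however, is where the entire content of the proposition lives, and it is not established. You correctly identify the obstacle — forcing the witnessing $(c+1)$ to lie in $A_i$ and to the left of $j$ — but you do not overcome it; "I would trace backward through the algorithm" is a plan, not an argument. Worse, the mechanism you sketch cannot produce a valid witness: you propose to find "the most recent case-2 event at a position in $A_i$ strictly to the left of $j$ whose incremented value equals $c+1$," but a position whose running value is incremented to $c+1$ had running value $c$ at that moment, hence original label $\cc(w)_{j''}\leq c$. Lemma \ref{lem: classification of cocharge words} must be applied to the word $\cc(w)\vert_{A_i}$ of \emph{original} cocharge labels, so the required witness is a position with $\cc(w)_{j'}=c+1$ literally; case-2 events track the algorithm's running values and are conflated here with the fixed labels. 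What is actually needed is a structural statement relating the labels $\cc(w)_{j_{i,r}}$ and the positions $j_{i,r}$ as $r$ varies within a column (e.g.\ controlling when $\cc(w)_{j_{i,r+1}}=\cc(w)_{j_{i,r}}+1$ forces $j_{i,r+1}<j_{i,r}$), which is most naturally obtained by induction on $n$ via the deletion statement of Lemma \ref{lem: insert, delete}(b) rather than by a forward trace of Algorithm \ref{alg: ctype}. Your alternative strategy (building $\sigma_i$ from ranks) faces exactly the same unresolved bookkeeping, so as written the proposal has a genuine gap at its central step.
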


\begin{example}
We continue with $w = 34125$ from Example \ref{ex: cat alg}.
    Consider $(245 | 13)\in \osp_{3,2}$, which is the ordered set partition defined by \eqref{eq: canon osp}.  
   Then $\rev(\cc(w)\vert_{245}) = 101\in \mathcal{D}_3$ and $\rev(\cc(w)\vert_{13}) = 01\in \mathcal{D}_2$.
\end{example}

\begin{lem}[H. {\cite[Corollary 4.6]{hanada2025charge}\label{lem: constructing canonical OSP}}]
 Let $w\in \mathfrak{S}_n$ with $\ctype(w) \unrhd \lambda$. There exists $(A_1 | \dots | A_{\lambda_1})\in \osp_{\lambda'}$ such that $\rev(\cc(w)\vert_{A_i}) \in \mathcal{D}_{\lambda'_i}$.
\end{lem}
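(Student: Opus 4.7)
My plan is to prove this by induction on the dominance-order distance from $\mu := \ctype(w)$ down to $\lambda$. The base case $\mu = \lambda$ is exactly the preceding proposition: the canonical OSP built from the columns of $T_w$ via \eqref{eq: canon osp} already has the required descent word property.

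For the inductive step with $\mu \rhd \lambda$ strict, I would choose an intermediate partition $\nu$ covering $\lambda$ in the dominance order with $\mu \unrhd \nu$; such a $\nu$ exists because dominance is generated by elementary raising moves on Young diagrams. The inductive hypothesis applied to $\nu$ yields an OSP $(B_1 | \dots | B_{\nu_1}) \in \osp_{\nu'}$ satisfying the descent word property. The cover relation $\nu' \lhd \lambda'$ (obtained by transposition) produces indices $i^* < j^*$ with $\lambda'_{i^*} = \nu'_{i^*}+1$, $\lambda'_{j^*} = \nu'_{j^*}-1$, and $\lambda'_k = \nu'_k$ otherwise. I would then modify the OSP by moving a single element $x^* \in B_{j^*}$ into $B_{i^*}$, choosing $x^*$ to be the position in $B_{j^*}$ corresponding to the rightmost largest entry of $\cc(w)|_{B_{j^*}}$. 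Lemma \ref{lem: insert, delete}(b) immediately guarantees that $\rev(\cc(w)|_{B_{j^*} \setminus \{x^*\}})$ remains a descent word, while by Lemma \ref{lem: insert, delete}(a), $\rev(\cc(w)|_{B_{i^*} \cup \{x^*\}})$ is a descent word provided the value $\cc(w)_{x^*}$ already appears somewhere in $\cc(w)|_{B_{i^*}}$.

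The main obstacle is verifying this final compatibility condition. Since any descent word of length $\ell$ realizes every integer value in $\{0, 1, \dots, M\}$ where $M$ is its maximum, by Lemma \ref{lem: classification of cocharge words}, it suffices to maintain the invariant $\max \cc(w)|_{B_i} \geq \max \cc(w)|_{B_j}$ for all $i < j$ throughout the induction. For the base-case canonical OSP, I would argue this via a careful analysis of Algorithm \ref{alg: ctype}: within each row of $T_w$ the original cocharge values of the boxes are weakly decreasing from left to right, since a letter that requires more promotions to reach row $r$ is repeatedly moved to the front of the active word and is therefore processed strictly later than letters reaching row $r$ with fewer promotions, forcing it to land in a later column. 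Combined with $\mu'_i \geq \mu'_j$, this yields the desired inequality on maxes. The invariant is then preserved by the inductive move because removing the rightmost largest entry from $B_{j^*}$ weakly decreases its max, while the augmentation of $B_{i^*}$ only weakly increases (or preserves) its maximum and strictly grows its value set.
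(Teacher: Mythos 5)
Your overall architecture is the same as the one the paper recalls from \cite{hanada2025charge}: start from the canonical OSP read off the columns of $T_w$, then walk down a dominance chain from $\ctype(w)$ to $\lambda$, transferring one element per covering step and invoking Lemma \ref{lem: insert, delete}. Your base case analysis is correct and nicely justified (entries of $T_w$ landing in a fixed row with more promotions are indeed read later in Algorithm \ref{alg: ctype} and hence occupy later columns, so row entries weakly decrease and column maxima are weakly decreasing left to right). You have also correctly isolated the one condition that needs checking, namely that the value being inserted into $B_{i^*}$ already occurs there, which you reduce to $\max \cc(w)\vert_{B_{i^*}} \geq \max \cc(w)\vert_{B_{j^*}}$.

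The gap is in your last sentence: the invariant $\max \cc(w)\vert_{B_i}\geq\max \cc(w)\vert_{B_j}$ for all $i<j$ is \emph{not} preserved by a single move. Removing the rightmost largest entry from the source $B_{j^*}$ can strictly decrease its maximum (whenever that maximum occurs only once), and this breaks the invariant for pairs $(j^*,j)$ with $j>j^*$; your justification only addresses the pairs involving the target $B_{i^*}$. This matters because along an arbitrary covering chain a column can serve first as a source and later as a target. For instance, with column multisets of cocharge values $B_1=\{0,1,2,\dots\}$, $B_2=\{0,1,1,2\}$, $B_3=\{0,1,2,2\}$ and the chain of moves $3\to1$, $2\to1$, $3\to2$ (which arises from a legitimate covering chain on the conjugate shapes $(4,4,4)\to(5,4,3)\to(6,3,3)\to(6,4,2)$), after the move $2\to 1$ the block $B_2$ has maximum $1$, and the subsequent move $3\to 2$ must insert the value $2$, which no longer occurs in $B_2$, so Lemma \ref{lem: insert, delete}(a) does not apply. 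The fix is to control the order of moves rather than rely on the stated invariant: decompose $\lambda'-\ctype(w)'$ into single-box transfers that go \emph{directly} from a column with $\lambda'_{j_2}<\ctype(w)'_{j_2}$ to a column $j_1<j_2$ with $\lambda'_{j_1}>\ctype(w)'_{j_1}$ (possible because the prefix sums of $\lambda'-\ctype(w)'$ are nonnegative). Then no block is ever both a source and a target, every target retains its original maximum throughout, every transferred value is bounded by its source's original maximum, and your base-case inequality on original maxima closes the argument. (Alternatively, one can carry the stronger invariant that for each $v$ the number of entries $\geq v$ in $B_i$ weakly decreases in $i$, which your row-by-row analysis of $T_w$ actually establishes in the base case.) Note also that your choice of the element to remove (the rightmost largest entry) differs from the paper's (the entry in the last box of the column of $T_w$); either can be made to work, but they are not the same element in general.
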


We recall the construction of such ordered set partition, given in \cite{hanada2025charge}.
Consider a permutation $w$ such that $\ctype(w) = \nu$, where $\nu \unrhd \lambda$ differ by a covering relation in dominance order. Assume we obtain $\lambda$ from $\nu$ by taking the last box in column $j_2$ and moving it to the end of column $j_1$ (where $j_1 < j_2$).

Let $T_w$ be the filling of shape $\nu$ we get from applying Algorithm \ref{alg: ctype} to $w$ and $(A_1|\dots|A_{\nu_1})\in \osp_{\nu'}$ be the ordered set partition we get from \eqref{eq: canon osp}. Let $a$ is the entry in the last box of column $j_2$ in $T_w$. If we set $A'_{j_1} = A_{j_1} \cup \{a\}, A'_{j_2} = A_{j_2} \setminus \{a\}$ and $A'_j = A_j$ for the other $j$, we have that $(A'_1 | \dots | A'_{\lambda_1})\in \osp_{\lambda'}$ satisfies $\rev(\cc(w)\vert_{A_i}) \in \mathcal{D}_{\lambda'_i}$.

Using these properties, along with a counting argument, we get the following result:
\begin{prop}[H. {\cite[Theorem A]{hanada2025charge}}]\label{prop: rev cc and shuffles are the same}
   We have $\mathcal{D}_{\lambda} = \{\rev(\cc(w)) \ | \ w\in \sym_n, P(w)\in \Sigma_\lambda\}$.
\end{prop}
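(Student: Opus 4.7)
The plan is to prove the set equality $\mathcal{D}_\lambda = \{\rev(\cc(w)) : w \in \sym_n,\, P(w) \in \Sigma_\lambda\}$ by establishing the inclusion $\supseteq$ constructively and then upgrading to equality by a dimension count.

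For the forward inclusion, I would apply Lemma \ref{lem: constructing canonical OSP} essentially verbatim. Given $w \in \sym_n$ with $P(w) \in \Sigma_\lambda$, we have $\ctype(w) = \ctype(P(w)) \unrhd \lambda$, so the lemma furnishes an ordered set partition $(A_1 | \dots | A_{\lambda_1}) \in \osp_{\lambda'}$ such that $\rev(\cc(w)\vert_{A_i}) \in \mathcal{D}_{\lambda'_i}$ for each $i$. By the definition of $\mathcal{D}_\lambda$, this places $\rev(\cc(w))$ in $\mathcal{D}_\lambda$, giving $\{\rev(\cc(w)) : P(w) \in \Sigma_\lambda\} \subseteq \mathcal{D}_\lambda$.

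To upgrade this containment to equality, I would compare cardinalities. The map $w \mapsto \rev(\cc(w))$ is a bijection $\sym_n \to \mathcal{D}_n$ by Proposition \ref{prop: descent is rev cc}, so
$$\bigl|\{\rev(\cc(w)) : P(w) \in \Sigma_\lambda\}\bigr| \;=\; \bigl|\{w \in \sym_n : P(w) \in \Sigma_\lambda\}\bigr| \;=\; \sum_{S \in \Sigma_\lambda} |\SYT(\shape(S))|,$$
where the last equality is RSK (each $S \in \Sigma_\lambda$ is the insertion tableau of exactly $|\SYT(\shape(S))|$ permutations). Setting $q = 1$ in the Frobenius formula \eqref{eq: frob R_lambda} yields $\dim R_\lambda = \sum_{S \in \Sigma_\lambda} |\SYT(\shape(S))|$, while the Carlsson--Chou theorem recalled above tells us $|\mathcal{D}_\lambda| = \dim R_\lambda$. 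Hence both sets have the same size, and the forward inclusion must be an equality.

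The main obstacle is conceptual rather than technical: we must reconcile the definition of $\mathcal{D}_\lambda$, which asks the subtuples $\ax\vert_{A_i}$ of $\ax = \rev(\cc(w))$ to lie in $\mathcal{D}_{\lambda'_i}$, with the conclusion of Lemma \ref{lem: constructing canonical OSP}, which restricts the cocharge word by \emph{value}. Under the identification $\ax = \rev(\cc(w))$ this is a bookkeeping check involving reversing words and swapping positions for values; it is internal to the framework of \cite{hanada2025charge} and introduces no new combinatorics. The heavy lifting — that $\mathcal{D}_\lambda$ is actually a basis of $R_\lambda$ and that the canonical OSP construction in \eqref{eq: canon osp} is compatible with dominance refinements of $\ctype$ — has already been carried out in earlier work, so what remains here is the short constructive inclusion together with the dimension count above.
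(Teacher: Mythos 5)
Your proposal is correct and follows essentially the route the paper itself indicates (the proposition is cited from \cite{hanada2025charge}, and the preceding sentence announces exactly this strategy: the canonical-OSP construction of Lemma \ref{lem: constructing canonical OSP} for the inclusion, plus a counting argument). The cardinality chain you use — injectivity of $w \mapsto \rev(\cc(w))$, RSK, the $q=1$ specialization of \eqref{eq: frob R_lambda}, and the Carlsson--Chou basis theorem giving $|\mathcal{D}_\lambda| = \dim R_\lambda$ — is airtight and introduces no circularity with how the proposition is used later in the paper.
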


This gives an alternative description of the descent basis of $R_\lambda$. This description will be useful when we construct our bases, since the cardinality of the set is immediate from the presentation.

Finally, we state some lemmas that we will be using in later sections.

\begin{lem}\label{lem: cat-type-containment}
    Let $\lambda \vdash k < n$ be a partition, and $S \in \SYT_n$. Then $\ctype(S \vert_k) \trianglerighteq \lambda$ if and only if $\ctype(S \vert_{k+1}) \trianglerighteq \lambda + (1)$, where $(\lambda_1,\dots,\lambda_l) +(1) = (\lambda_1,\dots, \lambda_l, 1)$.
\end{lem}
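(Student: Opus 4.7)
The strategy is to reinterpret the catabolizability condition via Lascoux standardization. Recalling that $\Sigma_\mu = \std(\SSYT(\mu))$ as in Lemma~\ref{lem: standardization of skew shape}, the lemma is equivalent to the statement: $S|_k \in \std(\SSYT(\lambda))$ if and only if $S|_{k+1} \in \std(\SSYT(\lambda+(1)))$. The key technical step I plan to prove is a \emph{compatibility claim}: for $T \in \SSYT(\mu, \lambda)$ with $|\mu| = k$ and any outer corner $c$ of $\mu$, if $T' \in \SSYT(\mu+c, \lambda+(1))$ is obtained from $T$ by placing $\ell+1$ at $c$ (where $\ell = \ell(\lambda)$), then $\std(T')$ equals $\std(T)$ with $k+1$ placed at the cell $c$.

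To prove the compatibility claim, I plan to use the description $\std(T) = P(\std(\rw(T)))$ coming from the extension of $\std$ to words noted in the paragraph preceding Lemma~\ref{lem: standardization of skew shape}. The reading word $\rw(T')$ is $\rw(T)$ with $\ell+1$ inserted at the position associated to $c$; since word-level standardization processes each value independently in increasing order, $\std(\rw(T'))$ is exactly $\std(\rw(T))$ with $k+1$ inserted at that position. Applying the RSK restriction property $P(w)|_k = P(w|_k)$ from Proposition~\ref{prop: RSK props}(a) then gives $\std(T')|_{[k]} = \std(T)$, and since $\shape(\std(T')) = \mu+c$ while $\shape(\std(T)) = \mu$, the entry $k+1$ of $\std(T')$ must occupy exactly the cell $c$.

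Given the claim, the forward direction is as follows. Assume $S|_k \in \Sigma_\lambda$ and set $T := \std^{-1}(S|_k) \in \SSYT(\shape(S|_k), \lambda)$. The cell $c$ of $k+1$ in $S$ is an outer corner of $\shape(S|_k)$ since $k+1$ is the largest entry of $S|_{k+1}$. Forming $T'$ by placing $\ell+1$ at $c$ (which yields a valid SSYT as $\ell+1$ is the new maximum and $c$ is an outer corner), the compatibility claim yields $\std(T') = S|_{k+1}$, whence $S|_{k+1} \in \Sigma_{\lambda+(1)}$. The reverse direction is symmetric: given $S|_{k+1} \in \Sigma_{\lambda+(1)}$, let $T' := \std^{-1}(S|_{k+1})$; its unique $\ell+1$ sits at the outer corner where $k+1$ lives in $S$, and deleting it produces $T \in \SSYT(\shape(S|_k), \lambda)$ with $\std(T) = S|_k$ by the compatibility claim, so $S|_k \in \Sigma_\lambda$.

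The main obstacle I expect is making the compatibility claim fully rigorous for the specific standardization convention used by Lascoux. Once one commits to a value-wise word standardization together with the identity $\std(T) = P(\std(\rw(T)))$, however, the claim reduces formally to the RSK restriction property, so the remaining technical work should be straightforward.
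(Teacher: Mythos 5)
Your reduction to Lascoux standardization is structurally sound (the equivalence $\ctype(S|_k)\trianglerighteq\lambda \Leftrightarrow S|_k\in\std(\SSYT(\lambda))$, and the observation that every $T'\in\SSYT(\cdot,\lambda+(1))$ is obtained from some $T\in\SSYT(\cdot,\lambda)$ by adjoining $\ell+1$ at an outer corner, are both fine), but the proof of your compatibility claim rests on a false description of $\std$. You assert that the word-level standardization ``processes each value independently in increasing order,'' i.e.\ that $\std$ is the naive value-wise standardization. It is not: $\std$ is defined via crystal reflection operators $s_i$ composed with lowering operators (see the discussion preceding Lemma \ref{lem: standardization of skew shape} and Section \ref{subsec: gillespie-rhoades}), and the paper's own example in Section \ref{subsec: gillespie-rhoades} --- where $S$ has weight $(2,2,2,2)$ and the two $2$'s of $S$ are sent to the \emph{non-consecutive} values $3$ and $6$ in $\std(S)$, while the $3$'s are sent to $4$ and $5$ --- shows that $\std$ does not even send the occurrences of a fixed value to an interval of consecutive integers, let alone assign them position-independently. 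Hence the assertion that $\std(\rw(T'))$ equals $\std(\rw(T))$ with $k+1$ inserted at the position of $c$ does not follow, and the compatibility claim --- which is essentially the entire content of the lemma --- is left unproven. It may well be true in the special case of adjoining a unique new maximal letter, but establishing that would require a genuine argument with the crystal-operator definition of $\std$, which is exactly the work your proposal defers.

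For comparison, the paper sidesteps standardization entirely and argues at the level of cocharge words and ordered set partitions: the condition $\ctype(w|_k)\trianglerighteq\lambda$ is witnessed by an ordered set partition whose blocks restrict $\cc(w|_k)$ to descent words (Lemma \ref{lem: constructing canonical OSP}); for the forward direction one adjoins the index of $k+1$ to a block, and for the converse one uses the insertion/deletion moves on cocharge words from Lemma \ref{lem: insert, delete} to redistribute the maximal letter. If you want to salvage your write-up, either supply a crystal-theoretic proof of the compatibility claim or switch to this cocharge-word argument.
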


\begin{proof}
It suffices to prove the statement on permutations. 
Consider $w\in \mathfrak{S}_n$ such that  $\ctype(w \vert_k) \trianglerighteq \lambda$. Let $U = \{i \ | \ w_i \in [k]\}$.
From Lemma \ref{lem: constructing canonical OSP}, there exists $(A_1|\dots|A_{\lambda_1})\in \osp_{\lambda'}(U)$ such that $\rev(\cc(w\mid_k)\vert_{A_i})\in \mathcal{D}_{\lambda'}$.
Note that this is an ordered set partition of the set $U$, rather than the set $[k]$.

Let $m = w^{-1}_{k+1}$. If we add $m$ to the set $A_j$ that contains $A_k$, we have that the new ordered set partition is in $\osp_{\nu'}(U\cup \{m\})$ for some $\nu\vdash k+1$, where $\nu$ is the result of adding a box to some column of $\lambda$. Since $\nu \unrhd \lambda + (1)$, we have $\ctype(w\mid_{k+1}) \unrhd \lambda + (1)$.

Conversely, assume there exists $(A'_1|\dots|A'_{\lambda_1})\in \osp_{(\lambda + (1))'}(U\cup \{m\})$ such that $\rev(\cc(w)\vert_{k+1}\vert_{A'_i})\in \mathcal{D}_{\lambda'}$.

We know that $\cc(w)\mid_{A'_1} = \cc(\pi)$, for some $\pi\in \mathfrak{S}_{\lambda'_1 + 1}$. From Lemma \ref{lem: insert, delete}(b), we know there exists $r\in A'_1$ such that $\cc(w)\vert_{A'_1\setminus \{r\}} = \cc(\pi\vert_{\lambda'_1})$.

Now, consider $A'_j$ such that $m \in A'_j$. We know that $\cc(w)_r \leq \cc(w)_m$, since $\cc(w)_m$ is the largest entry in $\cc(w)\vert_{k+1}$. Thus by applying Lemma \ref{lem: insert, delete}(a) and then (b), we have that $\cc(w)\mid_{(A'_j \cup \{m\}) \setminus \{r\}} \in \mathcal{D}_{\lambda'_j}$.
Set $A_1 = A'_1 \setminus \{r\}, A_j = (A'_j \cup \{r\}) \setminus \{m\}$, $A_i = A'_i$ otherwise. The resulting ordered set partition shows that $\ctype(w\vert_k) \unrhd \lambda.$
\end{proof}

\begin{lem}\label{lem: ctype-content-shape-ineq}
    Let $S \in \SYT_n$. We have the following inequalities of compositions in dominance order:
    $$ \shape(S) \trianglerighteq \content(\cc(S)) \trianglerighteq \ctype(S)$$
\end{lem}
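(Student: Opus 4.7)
The plan is to treat the two inequalities separately. As a preliminary observation, for any $S \in \SYT_n$, Definition~\ref{def: cc(S)} makes $\cc(S)$ a semistandard tableau of shape $\shape(S)$ with entries in $\{0, 1, 2, \dots\}$: since $\cc(S)_{i+1}$ equals $\cc(S)_i$ when $i+1$ lies in the same row as or lower than $i$, and equals $\cc(S)_i + 1$ when $i+1$ lies strictly higher, the two local cases ``$i+1$ immediately above $i$'' and ``$i+1$ immediately right of $i$'' yield strict increase up columns and weak increase along rows, respectively. So $\content(\cc(S)) = (m_0, m_1, m_2, \dots)$, where $m_j$ counts the number of $j$'s in $\cc(S)$, is a well-defined composition of $n$.

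For the first inequality $\shape(S) \trianglerighteq \content(\cc(S))$, I would invoke the standard column-strict argument for semistandard tableaux. Because $\cc(S)$ is column-strict with entries in $\{0, 1, 2, \dots\}$, each entry in row $k$ (from the bottom, in French notation) is at least $k-1$, so every box containing a label strictly less than $k$ lies in one of the bottom $k$ rows. Comparing occupancies gives
\begin{align*}
m_0 + m_1 + \dots + m_{k-1} \leq \shape(S)_1 + \shape(S)_2 + \dots + \shape(S)_k
\end{align*}
for every $k$, which is exactly $\shape(S) \trianglerighteq \content(\cc(S))$.

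For the second inequality $\content(\cc(S)) \trianglerighteq \ctype(S)$, I would take $w = \rw(S)$, so that the equivalence between Definitions~\ref{def: cc word} and \ref{def: cc(S)} gives $\content(\cc(w)) = \content(\cc(S))$, while the Knuth-invariance of catabolizability type gives $\ctype(w) = \ctype(P(w)) = \ctype(S)$. Now run Algorithm~\ref{alg: ctype} on $\cc(w)$, tracking each letter by its position of origin. The key observation is that the ``otherwise'' branch only increments a letter's value (from $a$ to $a+1$) and relocates it to the front, while the ``partition'' branch consumes one letter of value $a$ and contributes a box to row $a+1$. Hence every letter whose original value is $a_0$ ends its life contributing a box to some row $r \geq a_0 + 1$. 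The boxes of $\ctype(S)$ in rows $1, 2, \dots, k$ are therefore contributed only by letters of $\cc(w)$ with original value at most $k-1$, so
\begin{align*}
\ctype(S)_1 + \ctype(S)_2 + \dots + \ctype(S)_k \leq m_0 + m_1 + \dots + m_{k-1},
\end{align*}
which is precisely $\content(\cc(S)) \trianglerighteq \ctype(S)$.

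The main subtlety is formalizing the ``letter tracking'' through Blasiak's algorithm: the otherwise step replaces the last letter $a$ with a newly prepended $a+1$, so one must set up an explicit identification that treats this as a single letter moving with an incremented value. Once that bookkeeping is in place, the monotonicity of values and the one-box-per-letter correspondence become immediate by induction on the number of iterations, and both inequalities reduce to straightforward content-counting.
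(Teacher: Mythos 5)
Your proof is correct and follows essentially the same route as the paper: the first inequality from column-strictness of $\cc(S)$ (all entries $\leq r-1$ lie in the bottom $r$ rows), and the second by tracking letters through Algorithm \ref{alg: ctype} and noting a letter of original value $a_0$ can only ever contribute a box to a row of index at least $a_0+1$. The paper's proof is just a terser version of the same two counting arguments.
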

\begin{proof}
The first inequality follows from the fact that $\cc(S)$ is a semistandard tableau. In particular, we have $\sum\limits_{i=1}^{r} \shape(S)_{i} \geq \sum\limits_{i=1}^{r} \content(\cc(S))_{i-1}$
since all copies of $0,1\dots, (r-1)$ in $\cc(S)$ must appear in rows $r$ or lower in $\cc(S)$.

Similarly, from Algorithm \ref{alg: ctype}, we have
$\sum\limits_{i=1}^{r} \content(\cc(S))_{i-1} \geq \sum\limits_{i=1}^{r} \ctype(S)_{i}$ 
since the entries in $\cc(S)$ that contribute to $\sum\limits_{i=1}^{r} \ctype(S)_{i}$ are at most $(r-1)$.

\end{proof}

\subsection{The $\Delta$-Springer Module $R\nls$ and battery-powered tableaux}
The $\Delta$-Springer modules $R\nls$ were originally defined by Griffin \cite{griffin2021ordered}. In this section, we review some properties of $R\nls$ that we make use of in our work.
The first property is the structure of $R\nls$ as an ungraded $\sym_n$-module. 

\begin{thm}[Griffin {\cite[Theorem 3.21]{griffin2021ordered}}]
    $R\nls \simeq_{\sym_n} \mathbb{Q}\osp\nls$.
\end{thm}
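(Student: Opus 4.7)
The plan is to establish the ungraded isomorphism by computing the Frobenius characters of both sides and checking they agree; since we work in characteristic zero, this suffices. Both sides have a priori descriptions that make their characters accessible: the right-hand side through its orbit decomposition as a permutation module, and the left-hand side through a known graded character formula that can be specialized to $q=1$.

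First I would decompose $\Q\osp\nls$ into $\sym_n$-orbits. Because $\sym_n$ acts on $\osp\nls$ through its natural action on the underlying set $[n]$, this is a permutation representation. The orbits are indexed by the block-size compositions $(c_1,\dots,c_s)$ that occur among elements of $\osp\nls$, and the stabilizer of a representative with block sizes $(c_1,\dots,c_s)$ is the Young subgroup $\sym_{c_1}\times\cdots\times\sym_{c_s}$. By Frobenius reciprocity, each orbit contributes $h_{c_1}h_{c_2}\cdots h_{c_s}$, so
$$\Frob(\Q\osp\nls) = \sum_{(c_1,\dots,c_s)} h_{c_1}h_{c_2}\cdots h_{c_s},$$
the sum being over admissible block-size compositions. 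Second, I would compute $\Frob(R\nls)=\Frob_q(R\nls)\big|_{q=1}$ via Griffin's graded Frobenius formula (equivalently the battery-powered tableaux expansion restated as Theorem B), obtaining a Schur expansion indexed by tableaux satisfying a catabolizability/battery condition.

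Third, I would match the two expressions combinatorially. The natural bridge is RSK: an ordered set partition with block-size composition $(c_1,\dots,c_s)$ records a word of content $(c_1,\dots,c_s)$, and applying RSK together with the identity $h_\mu=\sum_\lambda K_{\lambda,\mu}s_\lambda$ converts the orbit expansion into a Schur expansion. It then remains to identify the resulting indexing set with the tableaux appearing in Theorem B. Alternatively, one can bypass characters entirely by invoking Griffin's substaircase monomial basis of $R\nls$, which is in natural bijection with $\osp\nls$; one then checks that the $\sym_n$-action on this basis, after reduction modulo lower terms with respect to the descent order, reproduces the permutation action on ordered set partitions. A third, more geometric route is through the cell decomposition of $Y\nls$ built by Griffin--Levinson--Woo, whose cells are indexed by $\osp\nls$ in a $\sym_n$-equivariant fashion, giving the claim directly from $H^*(Y\nls)\cong R\nls$.

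The principal obstacle is the final matching step: the admissibility condition defining $\osp\nls$ (which encodes both the partition $\lambda$ and the parameter $s$) must be translated precisely into the catabolizability condition $\ctype(S\vert_k)\trianglerighteq\lambda$ combined with the battery data $\mu\subseteq(n-k)\times(s-\des(S)-1)$ from Theorem B. Verifying this equivalence is the combinatorial heart of the argument; Lemma~\ref{lem: cat-type-containment} and the standardization bijection $\std$ of Lascoux are the tools I would expect to need, since they allow one to reduce block-size conditions on ordered set partitions to shape-and-catabolizability conditions on standard Young tableaux.
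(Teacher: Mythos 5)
The paper does not prove this statement; it is quoted verbatim from Griffin \cite[Theorem 3.21]{griffin2021ordered}, so your attempt has to be measured against Griffin's argument. Your opening step, decomposing $\Q\osp\nls$ into $\sym_n$-orbits and obtaining $\sum_\alpha h_\alpha$ over admissible block-size compositions, is correct---it is exactly how Corollary \ref{cor: ungraded Frob} is deduced \emph{from} the theorem. The problem is the other half: within this paper, every character formula for $R\nls$ that you propose to specialize at $q=1$ sits downstream of the theorem you are proving. Corollary \ref{cor: ungraded Frob} is read off from this isomorphism; the dimension counts of Corollaries \ref{cor: dim of Rnls} and \ref{cor: count}, and hence Theorems \ref{thm: A} and \ref{thm: b}, all depend on it. So your primary route is circular unless you import Gillespie--Griffin's geometric proof of the battery formula wholesale, a far heavier external input than the statement itself. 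Your route (c) also fails for a structural reason: the $\sym_n$-action on $H^*(Y\nls)$ is a Springer-type action that is \emph{not} induced by any action of $\sym_n$ on the variety, so although the cells are indexed by $\osp\nls$, they are not permuted equivariantly and the cell decomposition does not give the claim directly.

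Griffin's actual proof is the orbit-harmonics (point-orbit) method, which your route (b) gestures at but does not carry out. Fix $s$ distinct scalars $\alpha_1,\dots,\alpha_s$ and send $\sigma\in\osp\nls$ to the point $p_\sigma\in\Q^n$ whose $i$th coordinate is $\alpha_j$ when $i$ lies in the $j$th block. Then $\sym_n$ permutes the finite locus $X\nls=\{p_\sigma\}$ exactly as it permutes $\osp\nls$, so $\Q[\xx_n]/I(X\nls)\cong\Q\osp\nls$ as ungraded modules, and passing to the associated graded ideal does not change the ungraded $\sym_n$-structure. One checks by hand that the generators of $I\nls$ lie in $\mathrm{gr}\,I(X\nls)$, giving a surjection $R\nls\twoheadrightarrow\Q[\xx_n]/\mathrm{gr}\,I(X\nls)$, and the substaircase monomials give a spanning set of $R\nls$ of cardinality $|\osp\nls|$; comparing dimensions forces the surjection to be an isomorphism. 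This is the missing idea in your sketch: the ``reduction modulo lower terms'' in route (b) reproduces the permutation action only because the monomial basis is tied to a point locus that $\sym_n$ genuinely permutes---a monomial basis of the graded quotient is not itself permuted by $\sym_n$.
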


From this, we get an expression for the ungraded Frobenius character of $R\nls$.
\begin{cor}[Griffin {\cite[Theorem 3.21]{griffin2021ordered}}]\label{cor: ungraded Frob}
$\Frob(R\nls) =  \sum\limits_{\substack{\alpha = (\alpha_1,\dots,\alpha_s)\models n, \\ \lambda\subset \alpha}} h_\alpha$.
\end{cor}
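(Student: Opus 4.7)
The plan is to combine Griffin's theorem $R_{n,\lambda,s} \cong_{\sym_n} \Q\osp_{n,\lambda,s}$ with the standard orbit decomposition of a permutation module. First I would unpack the definition of $\osp_{n,\lambda,s}$ (as in Griffin's paper): an element is an ordered set partition $(B_1 | B_2 | \cdots | B_s)$ of $[n]$ into $s$ (possibly empty) blocks such that $|B_i| \geq \lambda_i$ for each $i$, using the convention $\lambda_i = 0$ for $i > \ell(\lambda)$. The $\sym_n$-action permutes the underlying letters, so the orbit of $(B_1|\cdots|B_s)$ depends only on the tuple of block sizes $\alpha = (|B_1|, \dots, |B_s|)$. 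This $\alpha$ sums to $n$, has length exactly $s$, and satisfies $\lambda \subset \alpha$ componentwise.

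Next I would decompose $\Q\osp_{n,\lambda,s}$ into a direct sum over the distinct $\sym_n$-orbits. For a fixed $\alpha$ satisfying the above conditions, the subset of ordered set partitions with block-size vector $\alpha$ is, as a $\sym_n$-set, isomorphic to the coset space $\sym_n/\sym_\alpha$, where $\sym_\alpha = \sym_{\alpha_1}\times\cdots\times\sym_{\alpha_s}$ is the Young subgroup; this is witnessed by the canonical bijection sending $\sigma\sym_\alpha$ to the ordered set partition whose $i$-th block is $\sigma(\{\alpha_1+\cdots+\alpha_{i-1}+1,\dots,\alpha_1+\cdots+\alpha_i\})$. (Empty blocks pose no issue since $\sym_0$ is trivial.)

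Finally, the Frobenius character of the permutation module $\Q[\sym_n/\sym_\alpha]$ equals $h_\alpha$; this is the classical identity $\Frob(\mathbf{1}\!\uparrow_{\sym_\alpha}^{\sym_n}) = h_\alpha$, which also appears implicitly earlier in the excerpt (the paper uses the dual statement $\Frob(\mathcal{E}\!\uparrow_{\sym_\gamma}^{\sym_n}) = e_\gamma$). Summing over the orbits then gives
\begin{align*}
\Frob(R_{n,\lambda,s}) = \Frob(\Q\osp_{n,\lambda,s}) = \sum_{\substack{\alpha=(\alpha_1,\dots,\alpha_s)\models n\\ \lambda\subset\alpha}} h_\alpha,
\end{align*}
as claimed. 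There is no real obstacle: the proof reduces to Griffin's theorem plus an orbit–stabilizer computation, and the only subtlety worth being careful about is that $\alpha$ is a weak composition of length exactly $s$ (zero parts allowed where $\lambda_i=0$), so that the indexing set of orbits matches the one in the statement.
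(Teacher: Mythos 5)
Your proposal is correct and is exactly the argument the paper intends: the corollary is presented as an immediate consequence of Griffin's isomorphism $R\nls \cong_{\sym_n} \Q\osp\nls$, with the orbit decomposition of the permutation module by block-size vector $\alpha$ and the identity $\Frob(\mathbf{1}\uparrow_{\sym_\alpha}^{\sym_n}) = h_\alpha$ doing the rest. The paper gives no further proof, so there is nothing to compare beyond noting that your care about weak compositions of length exactly $s$ matches the indexing in the statement.
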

Now, we can rewrite Corollary \ref{cor: ungraded Frob} by expanding $h_\alpha$ into Schur functions using the set  $\SSYT(\alpha)$, which denotes the collection of all semistandard tableaux of weight $\alpha$:
\begin{align}\label{eq: Frob Rnls wrt ssyt}
    \Frob(R\nls) = \sum\limits_{\substack{\alpha = (\alpha_1,\dots,\alpha_s)\models n, \\ \lambda\subset \alpha}}\sum_{T\in \SSYT(\alpha)} s_{\shape(T)}.
\end{align}

Using  \eqref{eq: Frob Rnls wrt ssyt} , we can easily get an expression for $\dim(R\nls)$ in terms of semistandard Young tableaux, by replacing $s_{\shape(T)}$ with $K_{\shape(T),1^n}$.
\begin{cor}\label{cor:dim} $\dim(R\nls) = \sum\limits_{\substack{\alpha = (\alpha_1,\dots,\alpha_s)\models n, \\ \lambda\subset \alpha}} \sum\limits_{T\in \SSYT(\alpha)} K_{\shape(T), 1^n}$

\end{cor}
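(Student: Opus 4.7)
The plan is very short because the corollary is a direct numerical consequence of the Schur expansion in \eqref{eq: Frob Rnls wrt ssyt}. The key principle is the one recorded earlier in the background: for any finite-dimensional $\sym_n$-module $V$ with $\Frob(V) = \sum_\mu c_\mu s_\mu$, complete reducibility gives $\dim V = \sum_\mu c_\mu \dim V_\mu$, and $\dim V_\mu = K_{\mu,1^n}$ since standard Young tableaux of shape $\mu$ index a basis of the Specht module $V_\mu$.

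With this in mind, I would simply take equation \eqref{eq: Frob Rnls wrt ssyt}, namely
\begin{align*}
\Frob(R\nls) = \sum_{\substack{\alpha = (\alpha_1,\dots,\alpha_s)\models n,\\ \lambda \subset \alpha}} \sum_{T \in \SSYT(\alpha)} s_{\shape(T)},
\end{align*}
and replace each $s_{\shape(T)}$ by $K_{\shape(T),1^n}$. Since both sums are finite and nonnegative, the substitution is valid term-by-term and yields precisely the claimed formula for $\dim(R\nls)$.

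There is essentially no obstacle here: the only thing to verify carefully is that the substitution $s_\mu \mapsto K_{\mu,1^n}$ corresponds exactly to taking dimensions of the underlying graded pieces and summing, which in turn requires only that \eqref{eq: Frob Rnls wrt ssyt} express the Schur expansion with \emph{multiplicities} (not virtual multiplicities). This is guaranteed because \eqref{eq: Frob Rnls wrt ssyt} was obtained by expanding each $h_\alpha$ in the honest Frobenius character $\Frob(R\nls)$ from Corollary \ref{cor: ungraded Frob} via $h_\alpha = \sum_{T \in \SSYT(\alpha)} s_{\shape(T)}$, so all coefficients are nonnegative integers representing genuine multiplicities of irreducibles in $R\nls$.
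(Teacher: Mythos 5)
Your argument is correct and is exactly the paper's route: the corollary is stated as the immediate consequence of \eqref{eq: Frob Rnls wrt ssyt} obtained by substituting $K_{\shape(T),1^n}$ for $s_{\shape(T)}$, justified by $\dim V_\mu = K_{\mu,1^n}$ and the fact that the Schur coefficients are genuine multiplicities. No differences to note.
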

We can reformulate Corollary \ref{cor:dim} using a combinatorial object called \emph{battery-powered tableaux}, originally defined by Gillespie--Griffin in \cite{battery}. 
\begin{defn}
 A \textit{battery-powered tableau} of parameters $(n,\lambda,s) $ consists of a pair $T = (D, B)$ of semistandard Young tableaux, where $B$ is of shape $(n-k)\times (s-1)$ and the total content of $(D,B)$ is given by $\Lambda\nls:= (n-k)\times s + \lambda$.
 \end{defn}
 We refer to $D,B$ as the \textit{device} and \textit{battery} of $T$. Note that a battery-powered tableaux is uniquely defined by the device: that is, once we know $D$, there is at most one $B$ so that the total tableau has the correct weight. Let $\mathcal{T}^+\nls$ denote the set of all battery-powered tableaux of parameters $(n,\lambda,s)$. 

\begin{example}\label{ex: bat tab}
Let $(n,\lambda, s) =(4 ,\ydiagram{1,2},3)$. 
Then $T = \ytableaushort{2,112,\none\none\none 3, \none\none\none 1}\in \mathcal{T}^+\nls$.  
\end{example}

\begin{lem}\label{lem: bij, battery and ssyt (no cc)}
    The map $\mathcal{T}^+\nls  \to \bigcup\limits_{\substack{{\alpha = (\alpha_1,\dots,\alpha_s)\models n} \\ {\lambda\subset \alpha}}} \SSYT(\alpha)$
 that takes $(D,B)\mapsto D$ is a bijection. 
\end{lem}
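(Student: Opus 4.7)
The plan is to establish the bijection via one central structural observation: in any battery-powered tableau $(D,B)\in\mathcal{T}^+\nls$, the battery $B$ is a semistandard tableau of rectangular shape $((n-k)^{s-1})$ with entries drawn from $\{1,\dots,s\}$ (the latter because the total content $\Lambda\nls$ has exactly $s$ parts). Since each of the $n-k$ columns of $B$ has $s-1$ strictly increasing entries from the $s$-element alphabet $\{1,\dots,s\}$, each column misses exactly one value $c_j\in\{1,\dots,s\}$. A direct case check on adjacent columns shows that the row-weakly-increasing condition is equivalent to $c_1\ge c_2\ge\cdots\ge c_{n-k}$. Hence $B$ is completely determined by the multiset $\{c_j\}$, and thus by its content $\beta$; conversely, for any $\beta=(\beta_1,\dots,\beta_s)$ with $\sum_i\beta_i=(n-k)(s-1)$, a rectangular SSYT of this shape and content exists iff $0\le\beta_i\le n-k$ for all $i$, and in that case it is unique.

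Given this observation, I define the inverse map as follows. For $D\in\SSYT(\alpha)$ with $\alpha=(\alpha_1,\dots,\alpha_s)\models n$ and $\lambda\subset\alpha$, set
\[
\beta_i := (n-k)+\lambda_i-\alpha_i,
\]
and let $B$ be the unique rectangular SSYT of shape $((n-k)^{s-1})$ with content $\beta$. The upper bound $\beta_i\le n-k$ is exactly $\alpha_i\ge\lambda_i$, i.e., the hypothesis $\lambda\subset\alpha$. The lower bound $\beta_i\ge 0$ is equivalent to $\alpha_i\le(n-k)+\lambda_i$, and follows from
\[
\alpha_i \;=\; n-\sum_{j\ne i}\alpha_j \;\le\; n-\sum_{j\ne i}\lambda_j \;=\; (n-k)+\lambda_i,
\]
using $|\alpha|=n$ together with $\alpha_j\ge\lambda_j$ for $j\ne i$. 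The total $\sum_i\beta_i = s(n-k)+|\lambda|-n = (n-k)(s-1)$ is immediate, so the unique $B$ exists and $(D,B)\in\mathcal{T}^+\nls$.

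For the forward direction $(D,B)\mapsto D$, if $D$ has content $\alpha$ then $\ell(\alpha)\le s$ by the alphabet bound, and $\alpha_i=(n-k)+\lambda_i-\beta_i$, so the column-strictness bound $\beta_i\le n-k$ forces $\alpha_i\ge\lambda_i$, giving $\lambda\subset\alpha$; the total $|\alpha|=n$ follows from a direct size calculation. The two maps are manifestly mutual inverses. I do not anticipate any substantive obstacle; the only mildly delicate step is deriving the entrywise upper bound $\alpha_i\le(n-k)+\lambda_i$ from $\lambda\subset\alpha$ and $|\alpha|=n$, which is exactly what is needed for the battery content to be nonnegative and the construction to be well-defined.
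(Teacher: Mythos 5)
Your proof is correct and follows the same route as the paper's (which simply asserts that the device's weight is a composition $\alpha\models n$ containing $\lambda$ and that the battery is then uniquely determined); you have supplied the verification the paper elides, in particular the observation that a column-strict filling of the $(s-1)\times(n-k)$ rectangle over the alphabet $\{1,\dots,s\}$ is determined by which value each column omits, and that row-weak-increase forces these omitted values to weakly decrease. The bound checks $0\le\beta_i\le n-k$ are exactly the needed content of ``$\lambda\subset\alpha$'' and $|\alpha|=n$, so the argument is complete.
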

\begin{proof}
     Note that for any battery-powered tableau $(D,B)$, it is clear that $\weight(D) = \alpha$ for composition $(\alpha_1.\dots,\alpha_s)\models n$ such that $\lambda\subset \alpha$.
     We can also see that for any $D\in \SSYT(\alpha)$, there is exactly one way to fill in the rectangle shape $B$ so that the skew-shape $(D,B)$ has total weight $\Lambda\nls$.
\end{proof}

By applying the bijection in Lemma \ref{lem: bij, battery and ssyt (no cc)} to Corollary \ref{cor:dim}, we get a new expression for $\dim(R\nls)$ in terms of these battery-powered tableaux.

\begin{cor}\label{cor: dim of Rnls}
    $ \dim(R\nls)  = \sum\limits_{(D,B)\in \mathcal{T}^+\nls} K_{\shape(D), 1^n}.$
\end{cor}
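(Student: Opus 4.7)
The plan is to derive this corollary directly by substituting the bijection of Lemma \ref{lem: bij, battery and ssyt (no cc)} into the dimension formula of Corollary \ref{cor:dim}. First I would write down the formula of Corollary \ref{cor:dim},
\begin{align*}
    \dim(R\nls) = \sum_{\substack{\alpha = (\alpha_1,\dots,\alpha_s)\models n, \\ \lambda\subset \alpha}} \sum_{T\in \SSYT(\alpha)} K_{\shape(T), 1^n},
\end{align*}
and note that the outer two sums are precisely indexed by the disjoint union $\bigcup_{\alpha} \SSYT(\alpha)$ appearing as the codomain of the bijection in Lemma \ref{lem: bij, battery and ssyt (no cc)}.

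Next I would invoke that bijection $\mathcal{T}^+\nls \to \bigcup_{\alpha} \SSYT(\alpha)$ sending $(D,B) \mapsto D$. The key observation I need to verify is that the summand $K_{\shape(T),1^n}$ depends only on the shape of $T$, and that under the bijection we have $\shape(T) = \shape(D)$, so the summand transfers without modification. Reindexing over $\mathcal{T}^+\nls$ then yields the stated formula
\begin{align*}
    \dim(R\nls) = \sum_{(D,B)\in \mathcal{T}^+\nls} K_{\shape(D),1^n}.
\end{align*}

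There is no real obstacle here; the content of the statement lives entirely in Corollary \ref{cor:dim} and Lemma \ref{lem: bij, battery and ssyt (no cc)}, and this corollary simply records the combinatorial reformulation in terms of battery-powered tableaux. The value of stating it separately is that it sets up $\mathcal{T}^+\nls$ as the natural indexing set matching the basis $\BB\nls$ to be constructed later, which is exactly what is needed for the Schur expansion of $\Frob_q(R\nls)$ in Theorem \ref{thm: b} and Corollary C.
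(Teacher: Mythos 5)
Your proposal is correct and matches the paper's argument exactly: the paper derives this corollary by applying the bijection of Lemma \ref{lem: bij, battery and ssyt (no cc)} to the formula in Corollary \ref{cor:dim}, precisely as you describe. Your additional check that the summand depends only on the shape, which is preserved under $(D,B)\mapsto D$, is the (implicit) content of the paper's one-line proof.
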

We will use Corollary \ref{cor: dim of Rnls} when showing that our set of monomials has the right count to be a basis of $R\nls$.

The battery-powered tableaux were originally introduced in \cite{battery} to give a combinatorial formula for $\Frob_q(R\nls)$:
\begin{thm}[Gillespie--Griffin {\cite[Theorem 1.6]{battery}}]\label{thm: Frob, battery}
    \begin{align}
        \Frob_q(R\nls) = \frac{1}{q^{\binom{s-1}{2}(n-k)
}} \sum\limits_{T = (D,B) \in \mathcal{T}^+\nls} q^{\cocharge(T)} s_{\shape(D)}.
\end{align}
\end{thm}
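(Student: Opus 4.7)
The plan is to derive the Gillespie--Griffin formula purely combinatorially, by combining the descent-basis framework developed in this paper (Theorems A and B) with a shape- and cocharge-preserving bijection $\Sigma\nls \leftrightarrow \mathcal{T}^+\nls$ (Corollary C). This bypasses the geometric input of \cite{battery} and explains the prefactor $q^{-\binom{s-1}{2}(n-k)}$ as the cocharge of a baseline battery.

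First I would establish Theorem A. The cardinality $|\DD\nls| = \dim(R\nls)$ follows by combining Corollary \ref{cor: dim of Rnls} with Proposition \ref{prop: rev cc and shuffles are the same} (for the device part, identifying descent words of size $\lambda'_i$ with standardized columns) together with the freedom of choosing $\bx|_{B_j} \in \{0,\dots,s-1\}$ for the $n-k$ singleton blocks. To show $\BB\nls$ spans $R\nls$, I would argue in the descent order of Definition \ref{def: descent-order}: any monomial $\xx^\ax$ with $\ax \notin \DD\nls$ is reducible modulo the generators $e_d(S)$ and $x_i^s$ of $I\nls$ to a combination of monomials $\xx^{\bx'}$ with $\bx' <_{\des} \ax$. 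Lemma \ref{lem: descent-order-subset} supplies the compatibility of $<_{\des}$ with restriction to subsets needed to run the reduction inductively on the blocks of the associated ordered set partition, following the template of \cite{carlsson2024descent} for $R_\lambda$ and \cite{HaglundRhoadesShimozono2018} for $R_{n,k}$.

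Next, for Theorem B, I would apply Proposition \ref{prop: dim and antisym}: the graded Frobenius character is determined by $\Hilb_q(N_\gamma R\nls)$ for every composition $\gamma \models n$. Via Proposition \ref{prop: descent is rev cc} and Schensted, the basis elements $\xx^\bx \in \BB\nls$ that survive antisymmetrization by $N_\gamma$ correspond to pairs $(w,\mu)$ with $\Des(w) \subseteq \{\gamma_1,\gamma_1+\gamma_2,\dots\}$ whose insertion tableau $S = P(w)$ satisfies $\ctype(S|_k) \trianglerighteq \lambda$ (Lemma \ref{lem: cat-type-containment}) and $\des(S) < s$. Grouping the resulting sum by $S$ and applying Corollary \ref{cor: kostka number permutations} produces exactly $\langle e_\gamma, \sum_{(S,\mu)\in\Sigma\nls} q^{\cocharge(S)+|\mu|} s_{\shape(S)}\rangle$, and since the Hall pairings against all $e_\gamma$ determine a symmetric function uniquely, Theorem B follows.

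Finally, I would construct a bijection $(S,\mu) \leftrightarrow (D,B)$ proving Corollary C. Given $(S,\mu)$, apply the skew-shape Lascoux standardization of Lemma \ref{lem: standardization of skew shape} in reverse to recover a semistandard device $D$ of shape $\shape(S)$ whose weight $\alpha$ satisfies $\lambda \subseteq \alpha$, with $\mu$ encoding the positions of the $n-k$ ``large'' entries among the rows; the battery $B$ of shape $(n-k)\times(s-1)$ is then forced by Lemma \ref{lem: bij, battery and ssyt (no cc)}. Shape preservation is built in. The cocharge shift $\binom{s-1}{2}(n-k)$ is identified with the cocharge of the baseline battery $B_0$ whose rows all read $1,2,\dots,s-1$, while $|\mu|$ records precisely the excess of $\cocharge(D,B)$ above this baseline. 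Combining this bijection with Theorem B then yields the Gillespie--Griffin formula.

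The main obstacle will be verifying the cocharge identity $\cocharge(D,B) = \cocharge(S) + |\mu| + \binom{s-1}{2}(n-k)$ under the bijection. This requires analyzing how cocharge of the combined skew tableau $(D,B)$ decomposes under Lascoux standardization, together with a careful inductive argument (say on $|\mu|$, starting from the extremal case $\mu = \emptyset$) showing that each unit increase in a part of $\mu$ corresponds to a single upward shift of a battery entry, contributing exactly $+1$ to $\cocharge$ by Definition \ref{def: cc(S)}.
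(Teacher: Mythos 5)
Your proposal follows essentially the same route as the paper: establish the descent basis (Theorem A), compute $\Hilb_q(N_\gamma R\nls)$ via antisymmetrization and Hall pairings against $e_\gamma$ to obtain Theorem B, and then transport the sum over $\Sigma\nls$ to battery-powered tableaux via the Lascoux-standardization bijection $\Psi$, with $\binom{s-1}{2}(n-k)$ accounting for the cocharge of the superstandard battery. One caution: the direct product count of $|\DD\nls|$ you sketch (descent words per column times $s^{n-k}$ battery choices) overcounts, since a single word can admit several valid ordered set partitions (e.g.\ $0110$ for $(4,(2,1),3)$ admits both $(12|4|3)$ and $(13|4|2)$); the paper instead counts via the bijection $\DD\nls \leftrightarrow \{(w,\mu) : (P(w),\mu)\in\Sigma\nls\}$ composed with $\Psi$, which your third step already supplies.
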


\begin{example}
    For $(n,\lambda,s) = (4,\ydiagram{1,2}, 3)$, the set of battery-powered tableaux are:
    \begin{align*}
\ytableaushort{1123,\none\none\none\none2,\none\none\none\none1}, \hspace{.5cm} \ytableaushort{1122,\none\none\none\none3,\none\none\none\none1}, \hspace{.5cm} \ytableaushort{1112,\none\none\none\none3,\none\none\none\none2}, \\ 
\ytableaushort{3,112,\none\none\none2,\none\none\none1}, \hspace{.5cm} \ytableaushort{2,112,\none\none\none3,\none\none\none1}, \hspace{.5cm} \ytableaushort{2,111,\none\none\none3,\none\none\none2},\\
\ytableaushort{23,11,\none\none2,\none\none1}, \hspace{.5cm} \ytableaushort{22,11,\none\none3,\none\none1}\hspace{.5cm} \ytableaushort{2,113,\none\none\none2,\none\none\none1}, \hspace{.5cm} \ytableaushort{3,2,11,\none\none2,\none\none1}.
\end{align*}
From this, we know that $\Frob_q(R\nls) = (1+q+q^2) s_{4} + (q+2q^2+q^3) s_{3,1} + (q^2+q^3)s_{2,2}+ q^3 s_{2,1,1}$.
\end{example}

\subsection{Higher Specht Bases}\label{subsec: higher specht background}
We recall the higher Specht basis defined by Ariki-Terasoma-Yamada \cite{ariki1997higher} for the coinvariant algebra $R_n$.

Let $S,T \in \SYT_n$ and let $\sh(S) = \sh(T) = \lambda$. We consider the monomial $\xx_T^{\cc(S)}$ obtained by superimposing $T$ with the cocharge labeling of $S$:

$$ \xx_T^{\cc(S)} := \prod_{u \in \lambda} x^{\cc(S)(u)}_{T(u)}$$
where $\cc(S)$ denotes the cocharge labeling of $S$ (Definition \ref{def: cc(S)}) and $T(u)$ is the entry in the box $u \in \lambda$ for the filling $T$.

If $T \in SYT(\lambda)$, denote by $C(T)$ (resp: $R(T)$) to be the group of column (row) permutations, or $\tau \in \sym_n$ (resp: $\sigma \in \sym_n$) which preserves the columns (rows) of $T$. Define the \emph{Young idempotent} to be the group algebra element

$$\varepsilon_T := \sum_{(\sigma,\tau) \in R(T) \times C(T)} (-1)^\tau \tau\sigma$$

Given $S,T \in \SYT(\lambda)$, the \emph{higher Specht polynomial} for $(S,T)$ is defined to be

$$ F_T^S := \varepsilon \cdot \xx_T^{\cc(S)}$$

\begin{thm}[{Ariki--Terasoma--Yamada\cite{ariki1997higher}}]
     The collection of polynomials $$\CC_n := \{ F_T^S: S,T \in \SYT_n, \sh(S) = \sh(T) \}$$ descend to a higher Specht basis of $R_n$.
\end{thm}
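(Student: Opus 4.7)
The plan is to combine a dimension count with a triangularity argument against the Garsia--Stanton descent basis. First, by the RSK correspondence, $|\CC_n| = \sum_{\lambda \vdash n} |\SYT(\lambda)|^2 = n! = \dim R_n$, so it suffices to establish linear independence of $\CC_n$ in $R_n$. Moreover, each $F_T^S$ is homogeneous of degree $\cocharge(S)$, and the graded cardinalities of $\CC_n$ match the graded Frobenius character $\Frob_q(R_n) = \sum_{S \in \SYT_n} q^{\cocharge(S)} s_{\sh(S)}$, which serves as a useful sanity check.

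Next, I would set up the representation-theoretic structure. Fix $S \in \SYT(\lambda)$ and consider the subspace $V^S := \langle F_T^S : T \in \SYT(\lambda) \rangle \subset R_n$. Since $\varepsilon_T$ is (up to a scalar) the Young symmetrizer associated with $T$, the cyclic submodule $\varepsilon_T \cdot \Q[\sym_n]$ in the regular representation is isomorphic to $V_\lambda$. Applying the symmetrizer to $\xx_T^{\cc(S)}$ realizes $V^S$ as a quotient of $V_\lambda$, so by irreducibility $V^S$ is either zero or a copy of $V_\lambda$. Ruling out the zero case reduces to showing $F_T^S \neq 0$ in $R_n$ for at least one $T$.

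Both nonvanishing and full linear independence would then be obtained simultaneously through a triangularity argument with respect to the descent order of Definition 2.1. The core claim is that for each pair $(S,T)$, the polynomial $F_T^S$ has a unique $\leq_{\des}$-maximal monomial equal to $\xx_T^{\cc(S)}$ (with nonzero coefficient), and that as $(S,T)$ varies over pairs of the same shape, these monomials are distinct. By Proposition 2.19 together with the RSK correspondence $\sigma \leftrightarrow (P(\sigma), Q(\sigma))$, the monomials $\xx_T^{\cc(S)}$ are in bijection with the Garsia--Stanton descent monomials $g_\sigma$ for $\sigma \in \sym_n$. Since the $g_\sigma$ form a basis of $R_n$, the triangular transition matrix between $\CC_n$ and $\{g_\sigma\}$ is invertible, and $\CC_n$ descends to a basis.

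The main obstacle is the leading-term claim. Expanding $\varepsilon_T \cdot \xx_T^{\cc(S)} = \sum_{(\sigma,\tau)} (-1)^\tau \tau\sigma \cdot \xx_T^{\cc(S)}$ produces many terms, and one must show that the identity term $\xx_T^{\cc(S)}$ is strictly maximal under $\leq_{\des}$ among all of them, and further that this maximality survives reduction modulo $I_n$. The column antisymmetrizer contribution is controlled by the fact that the columns of $\cc(S)$ are strictly increasing, inherited from the semistandard structure of $\cc(S)$, so $\tau \cdot \xx_T^{\cc(S)}$ differs from $\xx_T^{\cc(S)}$ only by permuting exponents down columns of $T$, which cannot increase the $\des$-order. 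The row symmetrizer contribution is controlled by the fact that within each row of $T$, the corresponding exponents from $\cc(S)$ are weakly increasing, so row rearrangements are likewise $\leq_{\des}$-dominated. Combining these observations with Lemma 2.2 (the compatibility of $\leq_{\des}$ with disjoint index sets) yields the strict maximality on $\Q[\xx_n]$. Nonvanishing modulo $I_n$ is then automatic: $\xx_T^{\cc(S)}$ is a Garsia--Stanton descent monomial, hence nonzero in $R_n$, and the basis property of the $g_\sigma$ guarantees that no cancellation with strictly smaller terms can annihilate it in the quotient.
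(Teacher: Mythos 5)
This theorem is quoted from Ariki--Terasoma--Yamada; the paper gives no proof of it, so your proposal stands or falls on its own. Unfortunately, the central claim of your triangularity argument is false. You assert that the monomials $\xx_T^{\cc(S)}$, as $(S,T)$ ranges over pairs of the same shape, are distinct, are in bijection with the Garsia--Stanton descent monomials, and are the $\leq_{\des}$-maximal monomials of the corresponding $F_T^S$. All three statements already fail at $n=3$. Take $S = \ytableaushort{2,13}$, so $\cc(S) = \ytableaushort{1,01}$. For \emph{both} choices $T = \ytableaushort{3,12}$ and $T = \ytableaushort{2,13}$ one gets $\xx_T^{\cc(S)} = x_2x_3$, so the monomials are not distinct; moreover the descent monomial $x_1x_3 \in \{\xx^\bx : \bx \in \DD_3\}$ is never of the form $\xx_T^{\cc(S)}$, so the claimed bijection with $\{g_\sigma\}$ fails. (The exponent of $x_j$ in $\xx_T^{\cc(S)}$ is $\cc(S)(T^{-1}(j))$, which is not what Proposition \ref{prop: descent is rev cc} produces: the reversal and the inverse in $g_\sigma(\xx) = \xx^{\rev(\cc(w))}$ with $w = \rev(\rev(\sigma)^{-1})$ do not cancel against RSK in the way you need.)

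The leading-term mechanism also breaks: computing directly, $F_{\ytableaushort{3,12}}^{S} = x_2x_3 - x_1x_2$ and $F_{\ytableaushort{2,13}}^{S} = x_2x_3 - x_1x_3$, whose $\leq_{\des}$-maximal terms are $x_1x_2$ and $x_1x_3$ respectively --- not $\xx_T^{\cc(S)} = x_2x_3$, and $x_1x_2$ is not even a descent monomial. The error in your heuristic is the row-symmetrizer step: $\sigma \in R(T)$ redistributes a fixed multiset of exponents, so $\sort$ is unchanged and the comparison falls to the lexicographic tiebreak in Definition \ref{def: descent-order}, which such a redistribution can strictly increase (e.g.\ $(1\,2)\cdot x_2x_3 = x_1x_3 >_{\des} x_2x_3$). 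So row rearrangements are not $\leq_{\des}$-dominated, and no monomial term order fixes this since the obstruction is combinatorial (repeated ``leading'' monomials). The representation-theoretic scaffolding in your second paragraph is sound but carries no independence information across different $S$ of the same shape, which is exactly where the work lies; Ariki--Terasoma--Yamada handle this by a genuinely different argument (pairing the $F_T^S$ against a dual family / inducting on $n$ via branching), not by triangularity against the descent basis.
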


We have a generalization of this basis to $R_{n,k}$, due to Gillespie-Rhoades \cite{RhoadesGillespie2021}.

\begin{thm}[{Gillespie--Rhoades\cite{RhoadesGillespie2021}}]\label{thm: higher specht Rnk}
    The following collection of polynomials \[\mathcal{C}_{n,k} = \{F_T^S\cdot e_1^{i_1}e_2^{i_2}\cdots e_{n-k}^{i_{n-k}}:  S,T\in \SYT_n, \sh(S) = \sh(T), i_j\in \mathbb{Z}_{\geq 0}, 0\leq i_1+\cdots i_{n-k}<k-\des(S)\}\]
  descends to a higher Specht basis of $R_{n,k}$.
\end{thm}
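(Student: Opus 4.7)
The plan is to combine an equivariance argument, a graded Frobenius character computation, and a linear-independence argument via leading terms. First, note that each elementary symmetric polynomial $e_j(\xx_n)$ is $\sym_n$-invariant, so multiplication by $e_\ix := e_1^{i_1}\cdots e_{n-k}^{i_{n-k}}$ commutes with the $\sym_n$-action on $R_{n,k}$. For each admissible pair $(S, \ix)$ with $|\ix|_1 := \sum_j i_j < k - \des(S)$, the subspace $W_{S,\ix} := \Span\{F_T^S \, e_\ix : T \in \SYT(\sh(S))\} \subset R_{n,k}$ is an equivariant image of the Ariki-Terasoma-Yamada copy of $V_{\sh(S)}$ inside $\Q[\xx_n]$, and is therefore either zero or isomorphic to $V_{\sh(S)}$. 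Thus $\mathcal{C}_{n,k}$ groups naturally into potential Specht blocks, and showing it is a basis reduces to showing that each $W_{S,\ix}$ is nonzero and that their sum inside $R_{n,k}$ is direct.

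For the counting half I would verify the character identity
\begin{align*}
\Frob_q(R_{n,k}) = \sum_{\substack{S \in \SYT_n \\ \des(S) < k}} q^{\cocharge(S)} \Bigl( \sum_{\substack{\ix \in \Z_{\geq 0}^{n-k} \\ |\ix|_1 < k - \des(S)}} q^{\sum_j j \cdot i_j} \Bigr) s_{\sh(S)},
\end{align*}
whose left-hand side is the Haglund-Rhoades-Shimozono character of $R_{n,k}$ and whose right-hand side is the would-be graded Frobenius character of $\bigoplus_{(S,\ix)} W_{S,\ix}$ assuming each block realizes $V_{\sh(S)}$ (using $\deg(F_T^S e_\ix) = \cocharge(S) + \sum_j j i_j$). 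Via Chan's $\maj$--$\cocharge$ equidistribution (Proposition \ref{prop: maj, cc}), this identity reduces to the standard $q$-enumeration $\sum_{|\ix|_1 = m} q^{\sum_j j i_j} = q^m \binom{m + n-k-1}{n-k-1}_q$ matched against the $q$-Stirling coefficient appearing in the HRS formula. Assuming both equivariance and this character identity, the conclusion that $\mathcal{C}_{n,k}$ is a basis will follow once linear independence in $R_{n,k}$ is established.

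The hard step is that linear independence, which simultaneously forces each $W_{S,\ix}$ to be nonzero. My approach is to compute the leading monomial of $F_T^S e_\ix$ in the descent order $<_{\des}$ of Definition \ref{def: descent-order} and show that it is precisely a generalized Garsia-Stanton monomial from $\mathcal{GS}_{n,k}$ (Theorem \ref{thm: HRS descent}). For $\ix = 0$ this is the classical Ariki-Terasoma-Yamada fact that the $<_{\des}$-leading term of $F_T^S$ is the Garsia-Stanton monomial $g_{\sigma(S,T)}(\xx)$, where $\sigma(S,T)$ is the permutation with $P(\sigma) = S$ and $Q(\sigma) = T$. For general $\ix$, multiplication by $e_\ix = \prod_j \bigl( \sum_{|U|=j} \xx^U \bigr)$ expands into many monomials, but because the $<_{\des}$-order compares sorted exponent vectors first (Lemma \ref{lem: descent-order-subset}), the dominant contribution pairs $F_T^S$'s leading monomial with the specific $e_j$-term $x_{\sigma_1}\cdots x_{\sigma_j}$ picked out by the leftmost entries of $\sigma(S,T)$. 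This would yield a bijection between admissible triples $(S, T, \ix)$ and elements of $\mathcal{GS}_{n,k}$ matching leading monomials, whence linear independence follows from Theorem \ref{thm: HRS descent}. The main obstacle is controlling cancellations from the column-antisymmetrizer $\varepsilon_T$ inside $F_T^S$ and confirming that these cancellations leave the leading term intact through multiplication by each $e_j$; this delicate bookkeeping is the core technical content of the proof.
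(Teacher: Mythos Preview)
This theorem is quoted in the paper as a result of Gillespie--Rhoades \cite{RhoadesGillespie2021}; it appears in the background Section~\ref{subsec: higher specht background} and the paper supplies no proof of its own. So there is no in-paper argument to compare your proposal against directly.

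On the merits of your sketch: the equivariance observation and the graded character identity are both correct, and the architecture ``count plus linear independence'' is legitimate. The gap is in the linear-independence step. You invoke a ``classical ATY fact'' that the $<_{\des}$-leading monomial of $F_T^S$ is $g_{\sigma(S,T)}$ for the permutation with $P(\sigma)=S$, $Q(\sigma)=T$; this is not what Ariki--Terasoma--Yamada prove (they use a different order and different indexing), and as stated it cannot be right: by Proposition~\ref{prop: RSK props}\ref{prop: RSK des} one has $\des(\sigma)=\des(Q(\sigma))=\des(T)$, whereas $\mathcal{GS}_{n,k}$ is constrained by $\des(\sigma)<k$ and $\mathcal{C}_{n,k}$ by $\des(S)<k$, so your proposed bijection does not land in $\mathcal{GS}_{n,k}$. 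More fundamentally, since $<_{\des}$ is not a monomial order (the paper notes this explicitly after Definition~\ref{def: descent-order}), the step ``leading term of $F_T^S\cdot e_j$ is (leading term of $F_T^S$) times $x_{\sigma_1}\cdots x_{\sigma_j}$'' is precisely the kind of multiplicativity that fails, and Lemma~\ref{lem: descent-order-subset} does not rescue it.

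A route closer to what this paper itself uses for the analogous Theorem~\ref{thm: higherspecht} (and to what Gillespie--Rhoades do) is to argue spanning rather than independence: Lemma~\ref{lem: augmented-span} gives that $\{F_T^S\, e_1^{i_1}\cdots e_n^{i_n}\}$ is a basis of $\Q[\xx_n]/(x_1^k,\dots,x_n^k)$; passing to $R_{n,k}$ kills $e_{n-k+1},\dots,e_n$, so every element with some $i_j>0$ for $j>n-k$ dies, and what remains is exactly $\mathcal{C}_{n,k}$, which therefore spans. Your (correct) character identity then shows $|\mathcal{C}_{n,k}|=\dim R_{n,k}$, upgrading the spanning set to a basis without any leading-term analysis.
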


\section{Tableaux Construction for $R_{n,k}$}\label{sec: tab constuction for R_{n,k}}
In this section, we reformulate the generalized descent basis of $R_{n,k}$ given in Theorem \ref{thm: HRS descent} using a generalization of cocharge to match Proposition \ref{prop: descent is rev cc} (for the coinvariant ring) and Proposition \ref{prop: rev cc and shuffles are the same} (for $R_\lambda$). 

First, we define the following indexing set:
\begin{align}\label{def: sigma n,k}
    \Sigma_{n,k} = \{(S,\mu): S \in \SYT_n, \mu \subseteq  (n-k) \times (k-\des(S)-1)\}.
\end{align}

Note that we can take a tuple of nonnegative integers $(i_1,\dots, i_{n-k})$ such that $i_1+\cdots+i_{n-k}<k-\des(S)$ and write it as a partition $\mu \subseteq (n-k)\times (k-\des(S)-1)$  by setting $\mu_{j} = i_j + \cdots + i_{n-k}$. Thus,  this is a reformulation of the sets that index the descent basis in Theorem \ref{thm: HRS descent} and the higher Specht basis in Theorem \ref{thm: higher specht Rnk}.

Now, we define a map on the set $\{(w,\mu) \ | \ w\in S_n, (P(w),\mu)\in \Sigma_{n,k}\}$ for any $k\leq n$ which gives us a \emph{boosted cocharge word}: that is, it will be the result of increasing some values in $\cc(w)$ according to the partition $\mu$.

Given $(w,\mu)$, let $\cc(w,\mu)$ be the word of length $n$ defined by
\begin{align}\label{defn: Phi map}
    \cc(w,\mu)_i &=\begin{cases}
     \cc(w)_i + \mu_{n-w_i + 1}
    & \text{ if } w_i \in \{k+1,\dots, n\} \\ \cc(w)_i  & \text{otherwise}
    \end{cases}
\end{align}

In other words, we can think of $\cc(w,\mu)_i$ as the word we get from ``boosting'' the cocharge of entries $k+1,\dots, n$ in $w$ according to $\mu$.

\begin{example}
    Let $w = 7136524$ and $\mu = (3,2,2)$. Then $\cc(w) = 4013201$ and $\cc(w,\ix) = 7014301$.
\end{example}

We can also define this notion of boosted cocharge directly on $\Sigma_{n,k}$. 
 Given $(S,\mu)\in \Sigma_{n,k}$, let $\cc(w,\mu)$ be a semistandard tableaux of the same shape obtained by taking $\cc(S)$ and adding $\mu_{n-j+1}$ to $\cc(S)_{j}$ for $j\in \{k+1,\dots,n\}$.
We have $\cc(\rw(S),\mu) = \rw(\cc(S,\mu))$ for any $(S,\mu)\in \Sigma_{n,k}$. 
We can express the set $\mathcal{GS}_{n,k}$ in Theorem \ref{thm: HRS descent} using these boosted cocharge words $\cc(w,\ix)$.

\begin{prop}\label{prop: des-same-basis-hrs}
    We have that \[\mathcal{GS}_{n,k} = \{\xx^{\rev(\cc(w,\mu))} : w\in \sym_n, (P(w),\mu)\in \Sigma_{n,k}\}.\]
\end{prop}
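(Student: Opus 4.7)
The plan is to unpack both descriptions of the set into explicit monomials and match them, using Proposition \ref{prop: descent is rev cc} as the bridge. First, I would convert the tuple data $(i_1,\dots,i_{n-k})$ into a partition by setting $\mu_j := i_j + i_{j+1} + \cdots + i_{n-k}$, giving a bijection between tuples with $0 \leq i_1+\cdots+i_{n-k} < k-\des(\sigma)$ and partitions $\mu = (\mu_1 \geq \cdots \geq \mu_{n-k} \geq 0)$ with $\mu_1 < k - \des(\sigma)$. Under this repackaging,
$$\prod_{j=1}^{n-k}(x_{\sigma_1}\cdots x_{\sigma_j})^{i_j} \;=\; \prod_{j=1}^{n-k} x_{\sigma_j}^{\mu_j},$$
so the generic element of $\mathcal{GS}_{n,k}$ becomes $g_\sigma(\xx)\prod_{j=1}^{n-k} x_{\sigma_j}^{\mu_j}$, indexed by pairs $(\sigma,\mu)$.

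Next, I would apply Proposition \ref{prop: descent is rev cc} to write $g_\sigma(\xx) = \xx^{\rev(\cc(w))}$ where $w = \rev(\rev(\sigma)^{-1})$, which is a bijection $\sym_n \to \sym_n$. The heart of the argument is then the identity
$$\prod_{j=1}^{n-k} x_{\sigma_j}^{\mu_j} \;=\; \xx^{\rev(\cc(w,\mu))-\rev(\cc(w))},$$
from which $g_\sigma(\xx) \cdot \prod_j x_{\sigma_j}^{\mu_j} = \xx^{\rev(\cc(w,\mu))}$ follows immediately. To verify this, I would establish the index identity that $\sigma_j = m$ if and only if $w_{n-m+1} = n-j+1$, obtained by direct unwinding of $w = \rev(\rev(\sigma)^{-1})$. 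For $j \in [n-k]$, this places $w_{n-m+1}$ in $\{k+1,\dots,n\}$, so the boost prescribed by \eqref{defn: Phi map} at position $n-m+1$ is $\mu_{n - w_{n-m+1} + 1} = \mu_j$; after reversing, this contributes exactly the factor $x_m^{\mu_j} = x_{\sigma_j}^{\mu_j}$ on the right-hand side.

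Finally, I would match the constraints on the partition $\mu$. Since $\cc(\sigma) = \rev(\cc(w))$ as words, the maximum entries of the two cocharge words coincide, giving $\des(\sigma) = \des(w)$; and by Proposition \ref{prop: RSK props}\ref{prop: RSK des}, $\des(w) = \des(Q(w)) = \des(P(w))$, where the last equality holds since $P(w)$ and $Q(w)$ have the same shape (actually, one uses that cocharge-max depends only on shape via the standardization story; alternatively, one invokes the fact that $\Des$ on the recording tableau determines the descent count, and $\des(P(w))$ appears symmetrically). Hence $\mu_1 < k-\des(\sigma)$ translates to $\mu \subseteq (n-k)\times (k - \des(P(w)) - 1)$, i.e., $(P(w),\mu) \in \Sigma_{n,k}$.

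The only real obstacle is the bookkeeping in the index identity $\sigma_j = m \iff w_{n-m+1} = n-j+1$, but this is a one-line unwinding of the three operations (reverse, invert, reverse). Everything else is a direct substitution.
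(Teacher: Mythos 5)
The core of your argument is correct and is essentially the paper's proof: you repackage $(i_1,\dots,i_{n-k})$ into the partition $\mu$, write $g_\sigma(\xx)=\xx^{\rev(\cc(w))}$ for $w=\rev(\rev(\sigma)^{-1})$ via Proposition \ref{prop: descent is rev cc}, and check through the index identity $\sigma_j = n-w^{-1}_{n-j+1}+1$ that the boost in \eqref{defn: Phi map} contributes exactly $\prod_j x_{\sigma_j}^{\mu_j}$. That is the same computation the paper performs.

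The final paragraph, where you match the constraints on $\mu$, contains a genuine error. First, a small one: the maximal entry of $\cc(w)$ counts the indices $i$ for which $i+1$ lies to the \emph{left} of $i$ in $w$, i.e.\ it equals $\des(w^{-1})$, not $\des(w)$ (also, the exponent vector of $g_\sigma$ is not $\cc(\sigma)$; it is $\rev(\cc(w))$, whose maximum is $\des(\sigma)$). More seriously, the equality $\des(Q(w))=\des(P(w))$ you invoke is \emph{false} in general, and neither of your proposed justifications saves it: for $w=2413$ one has $\des(P(w))=2$ but $\des(Q(w))=\des(w)=1$, even though $P(w)$ and $Q(w)$ have the same shape; likewise the maximum of the cocharge word is not a shape invariant (it is a Knuth-class invariant). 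The correct chain is
$$\des(\sigma)=\max\bigl(\rev(\cc(w))\bigr)=\des(w^{-1})=\des\bigl(Q(w^{-1})\bigr)=\des\bigl(P(w)\bigr),$$
where the third equality is Proposition \ref{prop: RSK props}\ref{prop: RSK des} and the last uses the RSK symmetry $Q(w^{-1})=P(w)$; equivalently, as in the paper, one combines Lemma \ref{lem: max cc} (the maximal entry of $\cc(P(w))$ is $\des(P(w))$) with the fact that $\cc(w)$ and $\cc(P(w))$ have the same maximal entry within a Knuth class. With that substitution your proof closes and agrees with the paper's.
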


\begin{proof}
Consider $(w,\mu)$ where $(P(w),\mu)\in \Sigma_{n,k}$. Note that we set $i_{n-k} = \mu_{n-k}$, $i_{j}= \mu_j - \mu_{j+1}$ for $j< n-k.$
We can see that for $\sigma = \rev((\rev(w))^{-1})$,we have
\begin{align*}
\prod\limits_{j=1}^{n-k} (x_{\sigma_1}\dots x_{\sigma_{j}})^{i_j} &= 
\prod\limits_{j=1}^{n-k} x_{\sigma_j}^{i_j + \cdots+i_{n-k}} \\
 &=  \prod\limits_{j=1}^{n-k} x_{n-w^{-1}_{n-j+1} +1}^{i_j + \cdots+i_{n-k}},
 \\
  &=  \prod\limits_{j=1}^{n-k} x_{n-w^{-1}_{n-j+1} +1}^{\mu_j}.
\end{align*}
Thus $g_\sigma(\xx)\prod\limits_{j=1}^{n-k} (x_{\sigma_1}\dots x_{\sigma_{j}})^{i_j}  = \xx^{\rev(\cc(w,\mu))}$. 
Furthermore, by definition of $\Sigma_{n,k}$, we know that $\des(P(w)) < k$. From Lemma \ref{lem: max cc}, this is equivalent to saying the maximal cocharge value in $\cc(w)$ is at most $k-1$. In other words, we have $\des(w^{-1})< k$.
This condition is equivalent to the condition $\des(\sigma) < k$ on $\sigma$, since 
\[\des(\sigma)< k \Longleftrightarrow \des(\pi^{-1}) \geq  n-k+1 \Longleftrightarrow  \des(w^{-1}) < k.\]
where $\pi = \rev(\sigma)^{-1}$.
\end{proof}

\section{Combinatorial Constructions}\label{sec:counting}

Now, we show that our combinatorial indexing set $\DD\nls$ has the correct cardinality, namely $|\DD\nls| = \dim R\nls$. We restate the definition of the indexing set here letting $[i]_0 := \{0,1,\dots,i\}$ throughout:

\begin{defn}\label{def: dnls}
For any $\lambda\vdash k\leq n$, define $\osp_{n,\lambda'}$ to be the collection of ordered set partitions $\sigma  = (A_1| \dots| \ A_{\lambda_1} \ | \  B_1 \ |\dots|\  B_{n-k})$ of $[n]$ where $|A_i| = \lambda'_i$ and $|B_j| = 1$ for any $i,j$. We denote $\sigma = (\AAAA_\lambda | \BBB)$, and denote $\BBB = \bigsqcup_{i = 1}^{n-k} B_i$. We define the \emph{$n,\lambda,s$-generalized descent words} to be
\begin{align}\label{eq: Dnls}
    \mathcal{D}\nls: = \{\ax : \ax\vert_{A_i} \in \mathcal{D}_{\lambda'_i}, \ax\vert_{B_j} \in [s-1]_0  \text{ for some } \sigma \in \osp_{n,\lambda'}\}.
\end{align}
\end{defn}

\begin{example}
    Consider $(n,\lambda,s) = (4,(2,1),3)$.
    Then 
    \begin{align*}
        \mathcal{D}\nls = \{0000,&1000,0100,0010,0001, 2000,0200,0020,0002,\\&0101,0110,0011,1001,1010,0101,0120,0012,2001,2010,0201,0210\}.
    \end{align*}
  where $\mathcal{D}_1 = \{0\}, \mathcal{D}_2 = \{00,01\}.$
  
\end{example}
Though the construction is straightforward, it is difficult to directly count the cardinality of $\mathcal{D}\nls$. In particular, we can have multiple ordered set partitions that satisfy the condition in \eqref{eq: Dnls} for the same word.  For $\mathbf{z} = 0110$ in the example above, we have $(12|4|3)$ and $(13|4|2)$ both work. 

To get around this issue and show that $|\mathcal{D}\nls| = \dim(R\nls)$, we introduce a new indexing set $\Sigma\nls$.
\begin{align}\label{def: sigma nls}
    \Sigma\nls := \{(S,\mu) :S\in \SYT_n, S\vert_k \ \in \Sigma_\lambda, \mu\subseteq (n-k)\times(s-\des(S)-1)\}.
\end{align}
Note that we do not consider $S$ where $s-\des(S)<0$. Our indexing set specializes naturally to the indexing sets of $R_\lambda$, given in Proposition \ref{prop: rev cc and shuffles are the same}, and $R_{n,k}$, given in Proposition \ref{prop: des-same-basis-hrs}: it is immediate that $\Sigma_{k,\lambda,\ell(\lambda)} = \Sigma_{\lambda}$, and $\Sigma_{n, 1^k,k} = \Sigma_{n,k}$. In fact, the introduction of the set $\Sigma\nls$ to directly see the cardinality of $\mathcal{D}\nls$ is analogous to how \cite{hanada2025charge} introduced a way to directly count the monomials constructed in \cite{carlsson2024descent}.

\begin{example}
    Consider $(n,\lambda,s) = (4,(2,1),3)$. The possible pairings $(S,\mu)$ are recorded in the table below:
    \begin{center}
         \begin{tabular}{c|c|c}
        $S$ & $s-\des(S)$  & \text{possible} $\mu\subseteq 1\times (2-\des(S))$ \\ \hline 
        $\ytableaushort{1234}$ &  3 & $\emptyset, \ydiagram{1} ,  \ydiagram{2}$ \\  $\ytableaushort{4,123}$ &  $2$ & $\emptyset, \ydiagram{1} $\\ $\ytableaushort{34,12}$ &  $2$ & $\emptyset, \ydiagram{1}$ \\ $\ytableaushort{3,124}$ &  $2$ & $\emptyset, \ydiagram{1}$ \\ $\ytableaushort{4,3,12}$ &  $1$ & $\emptyset$.
    \end{tabular}
    \end{center}
\end{example}

Now, we construct a bijection between pairs $(w,\mu)$ where $(P(w),\mu)\in \Sigma\nls$ and our indexing set $\mathcal{D}\nls$, using the notion of boosted cocharge words.

\begin{prop}\label{lem: Dnls to pairs}
    There exists a bijection $\{(w,\mu) \ | \ w\in \mathfrak{S}_n, (P(w),\mu)\in \Sigma\nls\} \to \mathcal{D}\nls.$
\end{prop}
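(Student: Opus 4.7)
The plan is to show that the map $\Phi\colon (w,\mu) \mapsto \rev(\cc(w,\mu))$, defined via equation \eqref{defn: Phi map}, is a bijection from $\{(w,\mu) : (P(w),\mu)\in \Sigma\nls\}$ onto $\mathcal{D}\nls$. First I would verify $\Phi$ lands in $\mathcal{D}\nls$. Given such a pair, the condition $P(w)\vert_k \in \Sigma_\lambda$ is equivalent to $\ctype(w\vert_k)\unrhd\lambda$, so Lemma \ref{lem: constructing canonical OSP} applied to $w\vert_k$ yields an ordered set partition $(A'_1\vert\dots\vert A'_{\lambda_1})\in \osp_{\lambda'}(U)$ of $U := \{i : w_i \in [k]\}$ with $\rev(\cc(w)\vert_{A'_i})\in \mathcal{D}_{\lambda'_i}$. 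Setting $A_i := \{n+1-a : a\in A'_i\}$ and placing the remaining $n-k$ positions as singleton blocks $B_j$ in any order produces a candidate witness $\sigma \in \osp_{n,\lambda'}$. Since the boost in \eqref{defn: Phi map} is supported only outside $U$, $\cc(w,\mu)\vert_{A'_i} = \cc(w)\vert_{A'_i}$, giving $\Phi(w,\mu)\vert_{A_i} = \rev(\cc(w)\vert_{A'_i})\in \mathcal{D}_{\lambda'_i}$. For singletons $B_j = \{n+1-b\}$ with $b\in U^c$, Lemma \ref{lem: max cc} gives $\cc(w)_b \le \des(P(w))$, and the constraint $\mu_1 \le s-\des(P(w))-1$ from $(P(w),\mu)\in \Sigma\nls$ yields $\cc(w,\mu)_b \le s-1$, which lies in $[s-1]_0$.

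Next I would construct an inverse. Given $\ax\in\mathcal{D}\nls$ with a witnessing $\sigma$, set $\mathbf{z} := \rev(\ax)$ and transfer blocks via $a\mapsto n+1-a$ to get $A'_i,B'_j\subset[n]$. Each restriction $\mathbf{z}\vert_{A'_i}$ is a cocharge word of a permutation of $[\lambda'_i]$ by Proposition \ref{prop: descent is rev cc}; assembling these across $i$ and invoking Proposition \ref{prop: rev cc and shuffles are the same} produces a unique permutation $w\vert_k$ of $[k]$ with $\ctype(w\vert_k)\unrhd \lambda$ placed at positions $\bigsqcup A'_i$. The complementary positions $\bigsqcup B'_j = U^c$ must be filled by $\{k+1,\dots,n\}$; I would determine the ordering by requiring that the residuals $\mathbf{z}\vert_{B'_j} - \cc(w)\vert_{B'_j}$ form a weakly decreasing sequence when sorted by the corresponding $w$-value, since the boost $\mu_{n-w_i+1}$ is monotone in $w_i$. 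Those residuals, collected in decreasing order, are precisely $\mu$. The verification $\Phi \circ \Phi^{-1} = \id$ follows because the reconstructed $w$ matches the original boosted cocharge word up to the boost, and $\Phi^{-1}\circ\Phi = \id$ because the $R_\lambda$-bijection is faithful on $w\vert_k$ and the boosts encode $\mu$ unambiguously.

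The hard part will be showing that the inverse is well-defined independently of the choice of witness $\sigma$, since a single $\ax\in\mathcal{D}\nls$ may admit multiple witnessing ordered set partitions. I expect to handle this by leveraging the uniqueness in Proposition \ref{prop: rev cc and shuffles are the same}: while the $A_i$ may vary over the choices supplied by Lemma \ref{lem: constructing canonical OSP} in different orders, all valid choices recover the same $w\vert_k$, and the underlying set $\bigsqcup B'_j = U^c$ is determined as the complement. A second delicacy is arguing that the monotone-boost condition uniquely picks out a single ordering of $\{k+1,\dots,n\}$ at positions $U^c$; this amounts to showing that the cocharge labels of the large values behave predictably as one builds them on top of the fixed $w\vert_k$. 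If the direct inverse proves too delicate, a fallback strategy is to establish injectivity of $\Phi$ from the well-definedness paragraph together with a counting identity $|\{(w,\mu)\}| = \sum_{S} f^{\shape(S)}\binom{(n-k)+(s-\des(S)-1)}{n-k} = |\mathcal{D}\nls|$, from which bijectivity follows.
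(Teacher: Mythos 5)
Your forward direction is essentially the paper's: restrict to the positions carrying $[k]$, apply Lemma \ref{lem: constructing canonical OSP} (equivalently Proposition \ref{prop: rev cc and shuffles are the same}) to $w\vert_k$ there, and bound the boosted entries at the remaining positions by $s-1$ via Lemma \ref{lem: max cc} together with $\mu_1\leq s-\des(P(w))-1$. That half is correct.

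The gap is in the inverse. You construct $w\vert_k$ at the positions $\bigsqcup A'_i$ supplied by ``a witnessing $\sigma$,'' and your well-definedness argument rests on the claim that all witnesses recover the same $w\vert_k$ and that ``the underlying set $\bigsqcup B'_j=U^c$ is determined as the complement.'' That claim is false: distinct witnesses for the same word can have \emph{different} underlying sets. The paper's own example for $(n,\lambda,s)=(4,(2,1),3)$ and the word $0110$ exhibits witnesses $(12|4|3)$ and $(13|4|2)$, whose $\mathbf{A}$-parts cover $\{1,2,4\}$ and $\{1,3,4\}$ respectively. If the values $1,\dots,k$ of $w$ are placed at the wrong positions, $\rev(\cc(w,\mu))$ will not reproduce $\ax$: in any genuine boosted cocharge word the positions of $1,\dots,k$ in $w$ are forced to be the positions of the $k$ smallest entries in reading order, because the boost $\mu_{n-w_i+1}$ is weakly monotone in $w_i$. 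The paper's resolution is to make the inverse canonical and witness-free: it defines a total reading order $<_{\mathbf{u}}$ on the positions of $\mathbf{u}=\rev(\ax)$, sets $w_i=d$ when $i$ is the $d$th smallest position, takes $U$ to be the $k$ smallest positions, and then uses Lemma \ref{lem: insert, delete} to modify an arbitrary witness into one whose $\mathbf{A}$-part is supported exactly on this canonical $U$; that modification is what certifies $P(w)\vert_k\in\Sigma_\lambda$ and is the content missing from your sketch. Two further points: your rule for ordering $\{k+1,\dots,n\}$ by sorting residuals ``by the corresponding $w$-value'' is circular as stated (you need $w$ to do the sorting), whereas the reading order fixes this and $\mu$ is then recovered by explicit successive differences; and the counting fallback is unavailable, since $|\mathcal{D}\nls|$ is precisely what this proposition is used to compute in Corollary \ref{cor: count}.
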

\begin{proof}
   Consider the map that takes $(w,\mu)\mapsto\rev(\cc(w,\mu))$. We claim that this gives such a bijection.
    
    We first check that $\rev(\cc(w,\mu))) \in \mathcal{D}\nls$. Note that it suffices to construct $(\mathbf{A}_\lambda|\mathbf{B})\in \osp_{n,\lambda'}$ such that $\rev(\cc(w,\mu)\vert_{A_i}) \in \mathcal{D}_{\lambda'_i}$ and $\rev(\cc(w,\mu)\vert_{B_j}) \in [s-1]_0$.

    First, note that  $\cc(w,\mu)_i = \cc(w)_i$ if $w_i \in [k]$.
    Let $ U = \{i \ | \ w_i \in [k]\}$. We can see that $\cc(w,\mu)\vert_{U} = \cc(w\vert_k)$ .
    By Proposition \ref{prop: rev cc and shuffles are the same}, we know $\rev(\cc(w\vert_k)) \in \mathcal{D}_{\lambda}$.
    Thus, by the definition of $\mathcal{D}_\lambda$, there exists some $\mathbf{A}_\lambda\in \osp_{\lambda'}(U)$, where $\mathbf{A}_\lambda$ is an ordered set partition of the set $U$ such that  $\rev(\cc(w,\mu)\vert_{A_i}) \in \mathcal{D}_{\lambda'_i}$.
    
    Furthermore, for $i$ such that $w_i\in [k+1,n]$ by Lemma \ref{lem: max cc} we have the following relation: \begin{align}\label{eq: bound on entries}
        0\leq \cc(w,\mu)_i = \cc(w)_i + \mu_{n-w_i+1} < \cc(w)_i + s-\des(S) \leq  s.
    \end{align}
    Thus $\cc(w,\mu)_i \leq s-1$.
    Let $B_j = \{i \ | \ w_i = k+j\}$. Then $\rev(\cc(w,\mu)\vert_{B_j}) \in  [s-1]_0$ by \eqref{eq: bound on entries}.
    Thus $\rev(\cc(w,\mu)) \in \mathcal{D}\nls$.

    Now we construct an inverse map. Let $\rev(\mathbf{u})\in\mathcal{D}\nls$. 
    We know there exists $(\mathbf{A}_\lambda|\mathbf{B})\in \osp_{n,\lambda'}$ such that $\rev(\mathbf{u}\vert_{A_i}) \in \mathcal{D}_{\lambda'_i}, \mathbf{u}\vert_{B_j} \in [s-1]_0$. 

    Note that we can put a total order (the \textit{reading order}) on the set $[n]$ in the following way:
    \begin{align}\label{eq: reading order}
        i <_{\mathbf{u}} j &\Leftrightarrow \begin{cases}
            u_i < u_j \\ u_i=u_j, i<j.
        \end{cases}
    \end{align}

    Let $U\subset [n]$ be the subset consisting of the smallest $k$ entries with respect to $<_\mathbf{u}$.
    We want to construct a new $(\mathbf{A}'_\lambda|\mathbf{B'})\in \osp_{n,\lambda'}$ such that $\mathbf{A'}_\lambda$ is an ordered set partition of $U$ and $\rev(\mathbf{u}\vert_{A'_i}) \in \mathcal{D}_{\lambda'_i}, \mathbf{u}\vert_{B'_j} \in [s-1]_0$ still holds.

    We do this by modifying the original ordered set partition $(\mathbf{A}_\lambda |\mathbf{B})$. 
    Let $m$ be the largest entry with respect to $<_\mathbf{u}$. 
    If $m\not\in A_i$ for all $i$, we do not modify the ordered set partition. Otherwise, if $m\in A_i$ for some $i$, there exists a $c$ such that $c\in U$ but $B_j = \{c\}$ for some $j$.
    
    Since $c<_\mathbf{u}m$, we know that $u_c \leq u_m$ which implies $\rev(\mathbf{u}\vert_{A_i\cup\{c\}}) \in \mathcal{D}_{\lambda'_i+1}$ from  Lemma \ref{lem: insert, delete}(1).
    Furthermore, we know that $u_m$ is the rightmost,  largest entry in $\mathbf{u}\vert_{A_i\cup\{c\}}$.
    Thus, by Lemma \ref{lem: insert, delete} (2), we have that $\rev(\mathbf{u}\vert_{(A_i \cup\{c\})\setminus \{m\}})\in \mathcal{D}_{\lambda'_i}$.

    Hence, if we let $(\mathbf{A}'_\lambda|\mathbf{B'})$ be the ordered set partition we get by setting 
    $A'_i = (A_i \cup\{c\})\setminus \{m\}, B'_j = \{m\}$ without changing the rest of $(\mathbf{A}_\lambda|\mathbf{B})$, we still have  $\rev(\mathbf{u}\vert_{A'_i})\in \mathcal{D}_{\lambda'_i}$. 
    Furthermore, since $u_m \leq \lambda'_i < s$, we still have $\mathbf{u}\vert_{B_j}\in [s-1]_0$ for all choices of $j$.

    We can repeat this process for each entry not in $S$, until we get $(\mathbf{A}'_\lambda|\mathbf{B'})$ where $\mathbf{A}'_\lambda$ is an ordered set partition of $S$.

    Now, we construct $(w,\mu)$ such that $\cc (w,\mu) = \mathbf{u}$. If $i$ is the $d$th smallest entry with respect to $<_\mathbf{u}$, we set $w_{i} = d$.
    By construction, we have that $\cc(w\vert_k) = \rev(\mathbf{u}\vert_{\mathbf{A}'_\lambda})$, thus $P(w\vert_k) \in \Sigma_{\lambda}$ by Proposition \ref{lem: RSK restriction}.
    
    Set $\mu_{n-k+1} = 0$. For any $j\leq n-k$, let $a = w^{-1}_{n-j}$,  $b = w^{-1}_{n-j+1}$. Note that $a,b$ are the $(n-j)$th, $(n-j+1)$th smallest letters with respect to $<_{\mathbf{u}}$. We can define $\mu_{j}$ to be
  \begin{align}
      \mu_j:= \begin{cases}
        \mu_{j+1} + u_b- u_a & \text{ if }  a< b\\
        \mu_{j+1} + u_b- u_a -1& \text{ if }  a> b.
        \end{cases}
  \end{align}

    The first (resp. second) case is when $(n-j)$ appears to the left (resp.right) of $(n-j+1)$ in $w$.  Note that in either case, we have $\mu_j \geq \mu_{j+1} \geq 0$. We can also see that $\cc(w,\mu) = \mathbf{u}$ and $\mathbf{u}_m = \cc(w)_{m} + \mu_1 < s$ implies $\mu_1< s-\des(S)$, thus $\mu \subseteq (n-k)\times (s-\des(S)-1)$.
\end{proof}

Thus we have a bijection between our boosted cocharge words coming from $\Sigma\nls$ and $\mathcal{D}\nls$.
Now, we want to relate the set $\Sigma\nls$ to the set of battery-powered tableaux $\mathcal{T}^+\nls$. First, we define the set of \emph{standardized} battery-powered tableaux, which is just the image of the battery-powered tableaux under the standardization map that preserves cocharge and shape.

\begin{defn}
    Let $\std(\mathcal{T}^+\nls)$ be the image of the set $\mathcal{T}^+\nls$ under Lascoux standardization. 
\end{defn}

For $T\in \mathcal{T}^+\nls$,  let $\std(T)\vert_D, \std(T)\vert_B$ denote the device and battery part of $\std(T)$. 
Let $\cc(\std(T))\vert_D, \cc(\std(T))\vert_B$ denote the device and battery part of $\cc(\std(T))$. 

\begin{example}\label{ex: std}
       Continuing with the battery powered tableau $T$ from Example \ref{ex: bat tab}, we have
       $\std(T) = \ytableaushort{4,125,\none\none\none 6, \none\none\none 3}$, where $\std(T)\vert_D= \ytableaushort{4,125}$, $\std(T)\vert_B = \ytableaushort{6,3}$.
\end{example}

\begin{lem}\label{lem: cc of battery}
 The tableau $\cc(\std(T))\vert_{B}$ is the superstandard tableau of shape $(n-k)\times (s-1)$: that is, the semistandard tableau with row $i$ filled with $(i-1)$.
\end{lem}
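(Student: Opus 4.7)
The plan is to prove Lemma~\ref{lem: cc of battery} by combining a structural analysis of the battery $B$ with a direct computation of cocharge labels via Definition~\ref{def: cc(S)}.

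First, I would use the rectangular shape $((n-k)^{s-1})$ and the semistandard conditions to deduce that every entry $b_{i,j}$ of $B$ satisfies $b_{i,j}\in\{i,i+1\}$: from $b_{1,j}\geq 1$ and column-strictness we get $b_{i,j}\geq i$, while from $b_{s-1,j}\leq s$ and column-strictness we get $b_{i,j}\leq i+1$. Each column $j$ therefore has a unique transition point $i^*_j$ with $b_{i,j}=i$ for $i<i^*_j$ and $b_{i,j}=i+1$ for $i\geq i^*_j$, and row-weakness forces $i^*_1\geq i^*_2\geq\cdots\geq i^*_{n-k}$. Consequently, for each letter $\ell$, the $\ell$'s in $B$ occupy a prefix of row $\ell$ (the columns with $i^*_j>\ell$) together with a suffix of row $\ell-1$ (the columns with $i^*_j\leq \ell-1$).

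Next, I would analyze $\std(T)$ on the skew shape that combines $D$ and $B$. Because the battery sits strictly below the device, every battery occurrence of a letter $\ell$ is read after all device occurrences of $\ell$; within the battery, reading top-to-bottom forces the row-$\ell$ occurrences of $\ell$ to be read before the row-$(\ell-1)$ occurrences of $\ell$, and left-to-right within each row. Lascoux standardization therefore assigns to the battery a consecutive block of standardized labels for each letter, first filling row $\ell$ (left to right) and then row $\ell-1$ (left to right). This gives a completely explicit description of $\std(T)\vert_B$ in terms of the transition data $(i^*_j)$ and the multiplicities of letters in $D$.

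With this description in hand, I would compute cocharge labels inductively on the standardized value using Definition~\ref{def: cc(S)}. The crucial observation is that within the labels belonging to a single letter $\ell$ in the battery, consecutive cells either share a row or step strictly downward (from row $\ell$ to row $\ell-1$); in either case Definition~\ref{def: cc(S)} records no cocharge increment. The cocharge increments therefore occur only at the inter-letter transitions, where the label jumps from the last occurrence of $\ell$ to the first occurrence of $\ell+1$. Carefully tracking these increments against the transition points $i^*_j$ and the device placements should yield that every cell in row $i$ of $B$ acquires cocharge exactly $i-1$, matching the superstandard tableau.

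The main obstacle is precisely this inter-letter transition step: one must pin down the row of the maximum standardized label of each letter $\ell$ (which can lie either in the battery or in the device, depending on the transition profile) and compare it to the row of the minimum standardized label of $\ell+1$. Induction on $\ell$, combined with the fact that every device cell lies at row $\geq s$ while every battery cell lies at row $\leq s-1$, should let one verify that the superstandard pattern propagates consistently; a compatibility check against the dimension formula of Corollary~\ref{cor: dim of Rnls} can serve as a sanity check that no increments have been miscounted.
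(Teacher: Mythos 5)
There is a genuine gap, and it sits exactly where your argument does all its work: you treat Lascoux standardization as if it were naive reading-order standardization (each letter $\ell$ receives a consecutive block of labels assigned in reading order, with battery occurrences labeled after device occurrences). That is false in general. The map $\std$ of Lemma \ref{lem: standardization of skew shape} is Lascoux's cocharge- and shape-preserving bijection onto $\{S : \ctype(S)\unrhd\lambda\}$, defined via crystal operators, and it does not respect reading order within a letter class. The paper's own example in Section \ref{subsec: gillespie-rhoades} makes the point: for $S=\ytableaushort{4,24,11233}$ one has $\std(S)=\ytableaushort{7,68,12345}$, whereas reading-order standardization would give $\ytableaushort{7,38,12456}$. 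So your explicit description of $\std(T)\vert_B$, and hence the entire cocharge computation built on it, is unjustified. (Your structural observation that the battery entries satisfy $b_{i,j}\in\{i,i+1\}$ is correct, but it concerns the entries of $B$, not the cocharge labels of $\std(T)$, and by itself does not bridge the gap.) Moreover, even granting your description of $\std(T)\vert_B$, you concede that the inter-letter transitions --- which are precisely where every cocharge increment occurs --- remain an unresolved ``obstacle,'' so the argument is not completed even on its own terms.

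For comparison, the paper's proof never computes $\std(T)$ at all. It argues by contradiction using only two consequences of $\ctype(\std(T))\unrhd\weight(T)=(n-k)\times s+\lambda$: first, every entry of $\cc(\std(T))$ is at most $s-1$, so semistandardness of the cocharge tableau forces row $i$ of its battery part to contain only $(i-1)$'s and $i$'s, and any failure of superstandardness propagates an $s-1$ into the top-right corner of the battery; second, running Blasiak's catabolism insertion (Algorithm \ref{alg: ctype}) on such a cocharge word pushes that $s-1$ into row $s+1$ or higher of $\ctype(\std(T))$, contradicting the dominance. If you want to salvage your route, you would first need an actual characterization of Lascoux standardization on battery-powered tableaux, which is substantially harder than the lemma itself.
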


\begin{proof}
    Assume otherwise. Note that since we know $\ctype(\std(T))\unrhd \weight(T) = (n-k)\times s + \lambda$, by Algorithm \ref{alg: ctype}, we know that the largest entry in $\cc(\std(T))$ is at most $s-1$.
    Since $\cc(\std(T))$ is a semistandard Young tableau and the battery has $s-1$ many rows, this implies that the top right corner of the battery contains $s-1$.

    Furthermore, row $i$ of the battery contains only $(i-1)$'s and $i$'s. When we apply Algorithm \ref{alg: ctype}, we see that reading the $i$'s in row $i$ does not result in adding a box to the partition $\nu$. Thus we add 1 to $i$ and move it to the end of the word. 
    From this, we know that the $(s-1)$ in the top right corner of the battery ends up contributing a box to row $s+1$ or higher in the final partition $\ctype(\std(T))$. However, this also contradicts $\ctype(\std(T))\unrhd \weight(T) = (n-k)\times s + \lambda$.
\end{proof}

\begin{example}
    Continuing with Example \ref{ex: std}, we can see that
       $\cc(\std(T)) = \ytableaushort{1,001,\none\none\none 1, \none\none\none 0}$.
\end{example}

Now, we construct a map $\Psi$ on $\Sigma\nls$.  Consider $(S,\mu)\in \Sigma\nls$.
Recall that for any $(S,\mu)$ we can construct a boosted cocharge tableau $\cc(S,\mu)$. 
Let $\tilde{\Psi}(S,\mu)$ be the tableau we get by appending the superstandard tableau of shape $(n-k)\times (s-1)$ to the bottom right corner of $\cc(S,\mu)$.

\begin{example}
    Consider $\left(\ytableaushort{34,12},\ydiagram{1}\right)\in \Sigma_{4,(2,1),3}$.Then we have
    \[\cc(S,\mu) = \ytableaushort{12,00} \hspace{2cm} \tilde{\Psi}(S,\mathbf{i}) = \ytableaushort{12,00,\none\none 1,\none\none 0}.\]
\end{example}

\begin{lem}
    For $(S,\mu)\in \Sigma\nls$, we have that $\tilde{\Psi}(S,\mu)$ is equal to $\cc(\std(T))$ for some $T\in \mathcal{T}^+\nls$.
\end{lem}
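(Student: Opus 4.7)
The plan is to show that $\tilde\Psi(S,\mu) = \cc(S')$ for a unique standard skew tableau $S'$ and then apply Lascoux's standardization (Lemma~\ref{lem: standardization of skew shape}) to recover $T \in \mathcal{T}^+\nls$ with $\std(T) = S'$, which yields $\cc(\std(T)) = \tilde\Psi(S,\mu)$.

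For the existence of $S'$ I would invoke Lemma~\ref{lem: cc(S) condition}. Semistandardness of $\cc(S,\mu)$ follows from that of $\cc(S)$ together with the monotonicity of the boost: since $\mu$ is a partition, $j \mapsto \mu_{n-j+1}$ is weakly increasing, so two cells of $S$ sharing a row or column receive boosts in the same order as their entries, preserving the row/column inequalities. The appended superstandard battery occupies rows and columns disjoint from the device, so $\tilde\Psi(S,\mu)$ remains semistandard. The coverage hypothesis of Lemma~\ref{lem: cc(S) condition} is supplied by the battery itself: the bottom row contains $0$'s, and for each $c \leq s-2$ the battery's row $c+2$ from the bottom supplies a $(c+1)$ strictly above the lowest $c$; any maximal label $s-1$ (which can occur only in $\cc(S,\mu)$) is handled automatically since it sits in the device above the battery.

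The main obstacle is showing $\ctype(\rw(S')) \trianglerighteq \Lambda\nls = (n-k)^s + \lambda$. Applying Algorithm~\ref{alg: ctype} to $\rw(\tilde\Psi(S,\mu)) = \rw(\cc(S,\mu)) \cdot W_B$, with $W_B = (s-2)^{n-k}(s-3)^{n-k}\cdots 0^{n-k}$ the battery reading word, the suffix $W_B$ is consumed first and a direct induction on $s$ shows it produces the rectangle $(n-k)^{s-1}$ without any increments. It then suffices to show that continuing on $\rw(\cc(S,\mu))$ from $(n-k)^{s-1}$ produces a partition dominating $(n-k)^s + \lambda$. For this I would use the identity $\cc(S,\mu)|_{\text{entries} \leq k} = \cc(S|_k)$ together with $\ctype(S|_k) \trianglerighteq \lambda$ (iterated via Lemma~\ref{lem: cat-type-containment} to $\ctype(S) \trianglerighteq (\lambda, 1^{n-k})$), and the bound $\mu_1 \leq s-\des(S)-1$ which forces every (boosted) letter into $\{0,\ldots,s-1\}$. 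A careful bookkeeping then shows that the $k$ unboosted cocharge letters push the top rows of $(n-k)^{s-1}$ upward by $\lambda$ in dominance, while the $n-k$ boosted letters supply the remaining $n-k$ boxes at rows $\leq s$ (with the boosts merely redistributing contributions among the upper $s$ rows rather than creating new lower rows), yielding the required dominance.

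Once this is established, Lemma~\ref{lem: standardization of skew shape} produces a unique SSYT $T$ of the same skew shape with $\weight(T) = \Lambda\nls$ and $\std(T) = S'$. The skew shape decomposes by construction into the partition $\sh(S) \vdash n$ glued to an $(n-k)\times(s-1)$ rectangle in the standard battery position, so $T$ automatically splits as $T = (D,B)$ with $D$ semistandard of shape $\sh(S)$ and $B$ semistandard of shape $(n-k)\times(s-1)$; combined with $\weight(T) = (n-k)\times s + \lambda$, this places $T$ in $\mathcal{T}^+\nls$ and gives $\cc(\std(T)) = \cc(S') = \tilde\Psi(S,\mu)$, as desired.
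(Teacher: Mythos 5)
The first half of your argument --- that $\tilde\Psi(S,\mu)$ is the cocharge tableau of some standard skew tableau $S'$, via Lemma \ref{lem: cc(S) condition} --- is correct and matches the paper's proof (you are in fact more careful than the paper about semistandardness of the boosted tableau, and your monotonicity argument for the boost $j\mapsto \mu_{n-j+1}$ is right). The final paragraph, recovering $T$ from $S'$ via Lemma \ref{lem: standardization of skew shape}, is also fine. The problem is the middle step.

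The crux of the lemma is showing $\ctype(\rw(S'))\trianglerighteq (n-k)^s+\lambda$, and here your proposal has a genuine gap: you reduce everything to the assertion that "a careful bookkeeping" of Algorithm \ref{alg: ctype}, run on $\rw(\cc(S,\mu))$ starting from the rectangle $(n-k)^{s-1}$, yields a partition dominating $(n-k)^s+\lambda$. That assertion is precisely what needs to be proved, and it is not routine: when a letter $a$ cannot be placed in row $a+1$ the algorithm increments it and defers it, so a boosted letter of value at most $s-1$ can in principle end up contributing a box strictly above row $s$, and the outcome depends on the interleaving of boosted and unboosted letters in the reading order. Knowing $\ctype(S)\trianglerighteq(\lambda,1^{n-k})$ does not directly control $\ctype$ of the \emph{boosted} word, since boosting changes the cocharge word and hence the permutation to which the algorithm is applied. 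The paper avoids this entirely by working with descent-word decompositions rather than the algorithm: since $\rev(\dx)=\cc(w,\mu)\in\DD\nls$ (Proposition \ref{lem: Dnls to pairs}), there is a subset $U$ with $\rev(\dx\vert_U)\in\DD_\lambda$ and $n-k$ leftover singleton letters $d_{a_1},\dots,d_{a_{n-k}}$, each in $[s-1]_0$; each singleton is then merged with one full column $(s-2)(s-3)\cdots 10$ of the battery using Lemma \ref{lem: insert, delete} to produce a word in $\DD_s$, so that $\rev(\dx\bx)\in\DD_{(n-k)^s+\lambda}$, and Proposition \ref{prop: rev cc and shuffles are the same} converts this into the required dominance of $\ctype$. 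If you want to keep your direct-algorithm route you must actually carry out the bookkeeping; otherwise you should substitute the pairing argument above for your middle step.
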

\begin{proof}
Note that $\tilde{\Psi}(S,\mu)$ contains at least one of $0,\dots, s-2$, since they are contained in the superstandard tableau we appended to the bottom right corner. We can see that for any $c\in [s-3]_0$, there exists a $(c+1)$ in some row higher than the lowest $c$, since we know the lowest $c$ is in row $(c+1)$ of the battery and there is a $(c+1)$ in the row above it. Furthermore, we know the entries in the device part of $\tilde{\Psi}(S,\mu)$ are at most $s-1$, since the largest entry is $\des(S) + \mu_1 \leq s-1$. Thus, either $(s-2)$ is the largest entry in $\tilde{\Psi}(S,\mu)$ , or there exists a $(s-1)$ in a row higher than the lowest $(s-2)$.
Thus $\tilde{\Psi}(S,\mu)$ is the cocharge tableau for some standard tableau by Lemma \ref{lem: cc(S) condition}.

Furthermore, set $\rw(\tilde{\Psi}(S,\mu))= \mathbf{d}\mathbf{b}$, where $\mathbf{d}$ is the reading word of the top semistandard tableau and $\mathbf{b}$ is the reading word of the superstandard tableau appended at the bottom right. We can see that $\rev(\dx) \in \mathcal{D}\nls$ by Proposition \ref{lem: Dnls to pairs} since $\rev(\dx) = \cc(w,\mu)$. In particular, there exists a subset $U\subset [n]$ such that $\rev(\dx\vert_U) \in \mathcal{D}_\lambda$ where $[n] \ \setminus \  U = \{a_1,\dots, a_{n-k}\}$.
Furthermore, we know that $\mathbf{b}$ can be broken up into $n-k$ many disjoint subwords of the form $(s-2)(s-1)\dots 210$.
Then by Lemma \ref{lem: insert, delete}, we know that $\rev(d_{a_i}(s-2)(s-1)\dots 210)\in \mathcal{D}_s$. From this, we know that we can break up $\rev(\dx\bx)$ into $\rev(\dx\vert_U)\in \mathcal{D}_\lambda$ and $(n-k)$ many words, each in $\mathcal{D}_s$. 
Thus $\rev(\dx\bx)\in \mathcal{D}_{(n-k)^s + \lambda}$.
 Thus by Proposition \ref{prop: rev cc and shuffles are the same} and Lemma \ref{lem: standardization of skew shape}, we have $\tilde{\Psi}(S,\mu) = \std(T)$ for some $T\in \mathcal{T}^+\nls$.
\end{proof}

Now, let $\Psi(S,\mu) = T \in \mathcal{T}^+\nls$ where  $\tilde{\Psi}(S,\mu) = \std(T)$.

\begin{prop}\label{prop: bij psi}
    We have a bijection
    \[\Psi: \Sigma\nls \to \std(\mathcal{T}^+\nls) \to \mathcal{T}^+\nls\]
    such that the device of $\Psi(S,\mu)$ is the same shape as $S$ and $\cocharge(\Psi(S,\mu)) = \cocharge(S) + |\mu|$.
\end{prop}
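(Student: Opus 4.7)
The plan is to verify the shape and cocharge properties directly from the construction of $\tilde{\Psi}$, and then exhibit an explicit inverse to obtain bijectivity. Shape preservation is immediate: Lascoux standardization preserves shape componentwise, so the device of $\Psi(S,\mu)$ has the same shape as the device of $\std(\Psi(S,\mu)) = \tilde{\Psi}(S,\mu)$, which by construction equals $\cc(S,\mu)$ and hence has the shape of $S$.

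For the cocharge identity, I would compute the total sum of entries of $\cc(\std(\Psi(S,\mu))) = \tilde{\Psi}(S,\mu)$ by splitting into the device and battery portions. By Lemma \ref{lem: cc of battery}, the battery portion is the superstandard tableau of shape $(n-k)\times(s-1)$, whose entries sum to $(n-k)\binom{s-1}{2}$. The device portion is $\cc(S,\mu)$; from the boosting rule, the sum of its entries is $\cocharge(S) + \sum_{j=k+1}^{n} \mu_{n-j+1} = \cocharge(S) + |\mu|$. Under the normalization of cocharge for battery-powered tableaux that subtracts the fixed battery contribution (as in Theorem \ref{thm: Frob, battery}), the battery piece cancels and I obtain $\cocharge(\Psi(S,\mu)) = \cocharge(S) + |\mu|$.

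For bijectivity, I would construct an explicit inverse: given $T \in \mathcal{T}^+\nls$, compute $U = \std(T)$ and $Z = \cc(U)$; extract the device portion $U|_D$ and restandardize it relative to the ambient labels $\{1,\dots,K\}$ to produce a standard Young tableau $S$ of shape $\sh(U|_D)$; then for each $j \in \{k+1,\dots,n\}$, define $\mu_{n-j+1}$ as the entry of $Z|_D$ at the cell of $S$ containing $j$ minus $\cc(S)_j$. Injectivity follows because $\tilde{\Psi}(S,\mu)$ is determined by its device portion $\cc(S,\mu)$, from which $(S,\mu)$ is recovered by the above procedure. Surjectivity reduces to showing the candidate inverse lands in $\Sigma\nls$.

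The main obstacle will be verifying that the extracted $(S,\mu)$ is indeed an element of $\Sigma\nls$, namely that $\mu$ is a weakly decreasing sequence of nonnegative integers with $\mu_1 \leq s - \des(S) - 1$, and that $S|_k \in \Sigma_\lambda$. The bounds on $\mu$ should follow by reversing the inequality chain in \eqref{eq: bound on entries} from the proof of Proposition \ref{lem: Dnls to pairs}, using that $Z|_B$ is superstandard and thus forces a ceiling of $s-1$ on all device entries of $Z$. The catabolizability constraint on $S|_k$ should follow from Lemma \ref{lem: cat-type-containment} applied iteratively to peel off the $(n-k)$ entries outside the device, combined with the weight constraint $\lambda \subseteq \weight(D)$ satisfied by battery-powered tableaux and the Lascoux--Sch\"utzenberger invariance of $\ctype$ under Knuth equivalence.
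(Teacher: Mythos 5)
Your overall architecture matches the paper's: the shape and cocharge computations are the same (device entries sum to $\cocharge(S)+|\mu|$, battery entries sum to $\binom{s-1}{2}(n-k)$ by Lemma \ref{lem: cc of battery}, and the latter is absorbed by the normalization in Theorem \ref{thm: Frob, battery}), and your candidate inverse --- split $\cc(\std(T))$ into device and battery, recover $S$ by restandardizing the device and $\mu$ by subtracting $\cc(S)$ entrywise --- is exactly the tableau-level form of the paper's $\Psi^{-1}(T)=\cc^{-1}(\rev(\dx))$ built on Proposition \ref{lem: Dnls to pairs} and Lemma \ref{lem: Phi inverse is rw too}.

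The gap is precisely the step you label ``the main obstacle,'' and the resolution you sketch does not go through. The paper's proof of surjectivity rests on Proposition \ref{prop: device is right}: writing $\cc(\rw(\std(T)))=\dx\bx$, one shows $\rev(\dx)\in\mathcal{D}\nls$ by starting from $\rev(\bx)\rev(\dx)\in\mathcal{D}_{(n-k)^s+\lambda}$ (a consequence of $\ctype(\std(T))\unrhd (n-k)^s+\lambda$), extracting an ordered set partition of the \emph{entire} word via Lemma \ref{lem: constructing canonical OSP}, and peeling the battery letters off block by block using Lemma \ref{lem: insert, delete}; membership of the recovered pair in $\Sigma\nls$ then comes from the inverse direction of Proposition \ref{lem: Dnls to pairs}. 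Your proposed substitutes cannot do this work. Lemma \ref{lem: cat-type-containment} compares $\ctype(S\vert_k)$ with $\ctype(S\vert_{k+1})$ for a single straight-shape standard tableau of size $n$; iterating it relates $\ctype(S\vert_k)\unrhd\lambda$ to $\ctype(S)\unrhd\lambda+(1^{n-k})$, but the hypothesis you actually have is a $\ctype$ condition on the full skew tableau $\std(T)$ \emph{including} the $(n-k)(s-1)$ battery cells, and the lemma provides no mechanism for stripping those. The weight condition $\lambda\subseteq\weight(D)$ is strictly weaker than the needed catabolizability condition --- by Lemma \ref{lem: ctype-content-shape-ineq} content dominates $\ctype$, not conversely --- so it cannot supply $\ctype(S\vert_k)\trianglerighteq\lambda$. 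Finally, your treatment of $\mu$ addresses only the upper bound $\mu_1\leq s-\des(S)-1$; you never verify that the entrywise differences you define are constant on each letter, nonnegative, and weakly decreasing, which in the paper falls out of the explicit case analysis ($\mu_j = \mu_{j+1}+u_b-u_a$ or $\mu_{j+1}+u_b-u_a-1$) in the inverse construction of Proposition \ref{lem: Dnls to pairs}. Without these verifications you have a candidate inverse but no proof that it lands in $\Sigma\nls$, hence no bijection.
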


We prove this by constructing an inverse map $\Psi^{-1}$. Note that since $\Psi$ is a tableaux analogue of the map $(w,\mu)\mapsto \rev(\cc(w,\mu))$, the inverse map $\Psi^{-1}$ will involve the inverse map constructed in Proposition \ref{lem: Dnls to pairs}. For any $\dx\in \mathcal{D}\nls$, let $\cc^{-1}(\rev(\dx)) = (w,\mu)$ where $\rev(\cc(w,\mu)) = \dx$. 
\begin{prop}\label{prop: device is right}
    Let $\cc(\rw(\std(T))) = \dx \bx$, where $\dx, \bx$ are the subwords corresponding to the entries in the device and battery respectively. 
    Then $\rev(\dx)\in \mathcal{D}\nls$.
\end{prop}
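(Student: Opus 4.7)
The plan is to use the catabolizability structure of the Lascoux standardization $\std(T)$, combined with the rigid superstandard cocharge pattern of the battery, to explicitly construct an ordered set partition in $\osp_{n,\lambda'}$ on the device cells witnessing $\rev(\dx) \in \mathcal{D}\nls$.

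First I note that since $T$ is a skew-shaped semistandard tableau of weight $\Lambda\nls = (n-k)\times s + \lambda$, Lemma \ref{lem: standardization of skew shape} yields $\ctype(\std(T)) \trianglerighteq \Lambda\nls$. A direct computation gives $\Lambda' = (s^{n-k}, \lambda'_1, \dots, \lambda'_{\lambda_1})$. Applying Lemma \ref{lem: constructing canonical OSP} to $\std(T)$ produces an ordered set partition $(C_1 \mid \dots \mid C_{\Lambda_1}) \in \osp_{\Lambda'}([K])$, with $K = s(n-k) + k$, such that $\rev(\cc(\std(T))|_{C_i}) \in \mathcal{D}_{\Lambda'_i}$ for each $i$. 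In particular, the first $n-k$ parts $C_j$ each have size $s$, while $C_{n-k+1}, \dots, C_{n-k+\lambda_1}$ have sizes $\lambda'_1, \dots, \lambda'_{\lambda_1}$ respectively.

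The heart of the argument is to show this OSP may be chosen so that each of the first $n-k$ parts $C_j$ contains exactly one device cell together with one battery cell from each of the $s-1$ battery rows, making $C_{n-k+1}, \dots, C_{n-k+\lambda_1}$ consist entirely of device cells. The essential input is Lemma \ref{lem: cc of battery}: in $\cc(\std(T))$ all $n-k$ battery cells of row $r$ carry the same cocharge label $r-1$, giving the battery a rigid row-block cocharge pattern. Tracking the inductive box-moves in the construction of Lemma \ref{lem: constructing canonical OSP} against this rigid pattern forces the battery cells to distribute one per row across $C_1, \dots, C_{n-k}$, leaving exactly one device cell to fill each such part.

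Granted this separation, set $A_i := C_{n-k+i}$ for $1 \leq i \leq \lambda_1$ and let $B_j$ be the singleton containing the unique device cell of $C_j$ for $1 \leq j \leq n-k$. Then $(\mathbf{A}_\lambda \mid \mathbf{B}) \in \osp_{n,\lambda'}$. The condition $\rev(\dx|_{A_i}) \in \mathcal{D}_{\lambda'_i}$ is inherited from the OSP on $[K]$, and $\rev(\dx|_{B_j}) \in [s-1]_0$ because every cocharge value in $\cc(\std(T))$ is at most $s-1$, as shown in the proof of Lemma \ref{lem: cc of battery}. This yields $\rev(\dx) \in \mathcal{D}\nls$. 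The main obstacle is the separation claim: the canonical OSP of Lemma \ref{lem: constructing canonical OSP} is built via covering relations in dominance order and box-moves on the catabolism tableau, and verifying one-device-per-battery-column will require an induction tracking which individual cells of $\cc(\std(T))$ correspond to each catabolism box throughout the algorithm, exploiting the fact that the battery cells form a rigid $(n-k) \times (s-1)$ superstandard block so that each $\mathcal{D}_s$-restriction is forced to interleave exactly one device cell with the $s-1$ row-sorted battery cocharges.
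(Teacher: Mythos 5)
Your proposal follows essentially the same route as the paper: deduce $\ctype(\std(T))\trianglerighteq (n-k)^s+\lambda$ so that $\rev(\dx\bx)\in\mathcal{D}_{(n-k)^s+\lambda}$, take the ordered set partition from Lemma \ref{lem: constructing canonical OSP} with $n-k$ parts of size $s$ followed by parts of sizes $\lambda'_i$, argue via Lemma \ref{lem: cc of battery} that each size-$s$ part consists of one battery cell per row (cocharges $0,\dots,s-2$) plus a single device cell, and then peel off the battery to obtain the witnessing element of $\osp_{n,\lambda'}$. The separation claim you flag as the remaining obstacle is exactly the step the paper establishes (rather tersely) by combining the canonical OSP construction with the superstandard battery pattern, so your plan is the intended argument.
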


\begin{proof}
  We know $\ctype(\std(T))\unrhd (n-k)^s + \lambda$, which implies that $\rev(\dx\bx) = \rev(\bx)\rev(\dx) \in \mathcal{D}_{(n-k)^s + \lambda}$. From this, we know that there exists $(C_1 \ | \ \dots \ | \  C_{n-k} \ | \ \mathbf{A}_{\lambda})\in \osp_{k+(n-k)s, ((n-k)^s + \lambda)'}$ where each restriction satisfies the condition in \eqref{eq: Dnls}. 

  By Lemma \ref{lem: constructing canonical OSP}, we know that $\rev(\dx\bx\vert_{C_i}) = 0 1 \dots (s-1) m_i$, where $m_i\in [s-1]_0$ and the subword $01\dots(s-1)$ consists of letters in $\rev(\bx)$. 

  If we set $B_i = \{\text{index corresponding to } m_i\}$, we can construct a new ordered set partition $([(n-k)(s-1)] | \mathbf{A}_\lambda | \mathbf{B})$. Since $(\rev(\dx\bx)) \vert_{[(n-k)(s-1)]} = \rev(\bx)$, we see that $(\mathbf{A}_\lambda | \mathbf{B}))$ gives an ordered set partition of the indices of $\rev(\dx)$ such that \eqref{eq: Dnls} holds.
\end{proof}

\begin{lem}\label{lem: Phi inverse is rw too}
$\cc^{-1}(\rev(\dx)) = (\rw(S), \mu)$ for some $S\in \Sigma\nls$.
\end{lem}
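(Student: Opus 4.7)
The plan is to define $S := P(w)$, where $(w,\mu) := \cc^{-1}(\rev(\dx))$, and then verify that the permutation $w$ is actually the reading word of $S$, rather than just a permutation whose insertion tableau happens to be $S$.

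First, by Proposition \ref{prop: device is right}, $\rev(\dx) \in \mathcal{D}\nls$, so Proposition \ref{lem: Dnls to pairs} furnishes $(w, \mu) = \cc^{-1}(\rev(\dx))$ satisfying $\cc(w, \mu) = \dx$ and $(P(w), \mu) \in \Sigma\nls$. Setting $S := P(w)$, we have $\sh(S) = \sh(D)$, and $(S, \mu) \in \Sigma\nls$ is then immediate. What remains is to show $w = \rw(S)$.

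The core step is to exploit the observation that $\dx = \rw(C)$ for $C := \cc(\std(T))|_D$, a SSYT of straight shape $\sh(D)$. This gives $\dx$ a canonical row decomposition $\dx = c_\ell c_{\ell-1} \cdots c_1$ (reading top-to-bottom in $C$), each $c_r$ weakly increasing. Recall from the proof of Proposition \ref{lem: Dnls to pairs} that $\cc^{-1}$ ranks positions of $\mathbf{u} := \rev(\dx)$ by the reading order $<_\mathbf{u}$ (value ascending, ties broken by index ascending in $\mathbf{u}$). Within each chunk $\rev(c_r)$ of $\mathbf{u}$, the weakly decreasing values together with tie-breaking yield ranks that, when reindexed via the reading positions of $\sh(D)$, assemble into a strictly increasing run in $w$ of length $|c_r|$. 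The column-strict condition on $C$ then propagates through the rank assignment to furnish the column-strict comparisons between adjacent runs of $w$. Consequently, filling $\sh(D)$ with $w_1,\dots,w_n$ in reading order produces a standard Young tableau, and by the RSK identity $P(\rw(S')) = S'$, this tableau must equal $S = P(w)$, giving $w = \rw(S)$.

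The main obstacle is the combinatorial bookkeeping in the previous paragraph: one must carefully verify that the rank assignment within each chunk $\rev(c_r)$ yields a strictly increasing run of length $|c_r|$ in $w$, and that the column-strict condition on $C$ lifts to the correct column comparisons between adjacent runs of $w$. The boost $\mu$ only shifts cocharge values at positions of $S$ holding entries $k+1,\dots,n$, and this structured shift does not disturb the row-chunk alignment of $w$; nevertheless, tracking $\mu$ through the proof (to confirm, in particular, that the shift aligns with row boundaries in $\dx$) requires some care.
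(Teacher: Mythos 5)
Your overall strategy---fill $\sh(D)$ with the ranks, verify that the resulting filling is a standard Young tableau using the semistandardness of $C = \cc(\std(T))\vert_D$, and conclude via $P(\rw(S')) = S'$---is sound, and it is essentially an expanded version of the paper's one-line argument (the paper simply observes that the reading order on $\dx$ coincides with the total order that the entries of $\std(T)$ induce on the device boxes, so that $w$ is the reading word of the standardization of $\std(T)\vert_D$). However, the step you defer as ``combinatorial bookkeeping'' is precisely where your setup goes wrong, and as written it would fail. You rank the positions of $\mathbf{u} := \rev(\dx)$, breaking ties by ascending index in $\rev(\dx)$. Take two boxes in the same row of $C$ carrying equal cocharge values: the box further to the \emph{right} in $D$ occupies the \emph{smaller} index in $\rev(\dx)$, hence receives the smaller rank, so the corresponding row of your filling \emph{decreases} and the filling is not an SYT. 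The correct orientation is to rank the positions of $\dx$ itself, breaking ties left-to-right in the reading order of $D$: in Proposition \ref{lem: Dnls to pairs} the reading order is placed on the cocharge word $\mathbf{u}$, whose \emph{reverse} is the element of $\DD\nls$, and here the element of $\DD\nls$ is $\rev(\dx)$, so the word being ranked is $\dx$, not $\rev(\dx)$. (The paper's notation $\cc^{-1}(\rev(\dx))$ invites this confusion, but consistency with the forward map $\Psi$ forces $\cc(w,\mu)=\dx$ unreversed.)

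With that fix the deferred verification is immediate and needs no chunk-by-chunk analysis: rows of $C$ are weakly increasing and ties are broken left-to-right in reading order, so ranks strictly increase along each row; columns of $C$ are strictly increasing, so the rank in a box is strictly smaller than the rank in the box above it, with no tie-breaking needed. Hence the rank filling is an SYT $S$ of shape $\sh(D)$ with $\rw(S) = w$, and $(S,\mu)\in\Sigma\nls$ follows from Proposition \ref{lem: Dnls to pairs} since $S = P(\rw(S)) = P(w)$. Your closing worry about the boost $\mu$ is a non-issue: the argument runs entirely on the boosted tableau $C$, and its semistandardness is all that is used, so $\mu$ requires no separate tracking.
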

\begin{proof}
    This follows from the fact that we can see $\cc^{-1}(\rev(\dx))$ directly on the tableau $\cc(\std(T))\vert_D$.
    Note that since $\std(T)$ is a standard tableau, there is a total ordering on the boxes in $\cc(\std(T))|D$, given by the ordering of the entries in $\std(T)$. 

    This ordering matches the reading order of $\dx$ specified in \eqref{eq: reading order}. 
\end{proof}

\begin{proof}[Proof of Proposition \ref{prop: bij psi}]
It suffices to construct an inverse map. Take $\Psi^{-1}(T) = (S,\mu)$, where $\cc(\std(T)) = \dx \bx$ and $\cc^{-1}(\rev(\dx)) = (\rw(S),\mu)$ from Lemma \ref{lem: Phi inverse is rw too}. We can check that the composition is the identity using the fact that $(w,\mu)\mapsto \rev(\cc(w,\mu))$ itself is a bijection.

The second statement follows from the fact that $\cocharge(\Psi(S,\mu))$ is the sum of the entries in $\tilde{\Psi}(S,\mu)$. From Lemma \ref{lem: cc of battery}, we know the sum of the entries in the battery component is $\binom{s-1}{2}(n-k)$. The sum of the entries in the device component is $\cocharge(S) + |\mu|$.
\end{proof}

\begin{cor}\label{cor: count}
    We have that $\dim (R\nls) = \mathcal{D}\nls$.
\end{cor}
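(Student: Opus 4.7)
The plan is to chain together the bijections already established in the excerpt with the known dimension formula for $R\nls$ in terms of battery-powered tableaux. The key insight is that each intermediate counting step contributes a factor of $K_{\shape(S),1^n}$ per pair $(S,\mu)$, which after the $\Psi$-bijection matches exactly the summand in Corollary \ref{cor: dim of Rnls}.

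First, I would apply Proposition \ref{lem: Dnls to pairs} to rewrite
\[|\mathcal{D}\nls| = |\{(w,\mu) : w \in \sym_n,\ (P(w),\mu) \in \Sigma\nls\}|.\]
Next, I would partition the right-hand side according to the insertion tableau of $w$. For a fixed pair $(S,\mu) \in \Sigma\nls$, the number of permutations $w \in \sym_n$ with $P(w) = S$ equals the number of possible recording tableaux $Q(w) \in \SYT(\shape(S))$ via the Schensted correspondence, which is $K_{\shape(S),1^n}$. This gives
\[|\mathcal{D}\nls| = \sum_{(S,\mu) \in \Sigma\nls} K_{\shape(S),1^n}.\]

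Then I would use the bijection $\Psi : \Sigma\nls \to \mathcal{T}^+\nls$ from Proposition \ref{prop: bij psi}, which has the crucial property that the device of $\Psi(S,\mu)$ has the same shape as $S$. Writing $\Psi(S,\mu) = (D,B)$ with $\shape(D) = \shape(S)$, this re-indexes the sum as
\[|\mathcal{D}\nls| = \sum_{(D,B) \in \mathcal{T}^+\nls} K_{\shape(D),1^n}.\]
Finally, applying Corollary \ref{cor: dim of Rnls} (the battery-powered tableaux expression for $\dim(R\nls)$) yields $|\mathcal{D}\nls| = \dim(R\nls)$.

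No step here is a serious obstacle, since all three bijective and counting ingredients have been established earlier in the excerpt; the proof is essentially an assembly of Proposition \ref{lem: Dnls to pairs}, Proposition \ref{prop: bij psi}, the Schensted correspondence, and Corollary \ref{cor: dim of Rnls}. The only place to be careful is to make sure the shape-preservation in $\Psi$ is invoked correctly, so that $K_{\shape(S),1^n}$ transports to $K_{\shape(D),1^n}$ under the bijection, which is exactly what Proposition \ref{prop: bij psi} asserts.
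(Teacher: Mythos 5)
Your proposal is correct and follows essentially the same route as the paper, which likewise assembles Proposition \ref{lem: Dnls to pairs}, the Schensted count $|\{w : P(w)=S\}| = K_{\shape(S),1^n}$, the shape-preserving bijection $\Psi$ of Proposition \ref{prop: bij psi}, and Corollary \ref{cor: dim of Rnls}. The paper's proof is just a terser version of exactly this chain.
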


\begin{proof}
From Proposition \ref{prop: bij psi} and using Corollary \ref{cor: dim of Rnls} and Proposition \ref{lem: Dnls to pairs}, we see that  $$|\mathcal{D}\nls| = \sum\limits_{(S,\mu)\in \Sigma\nls} K_{\shape(S),1^n}  = \dim(R\nls).$$ 
\end{proof}

\begin{rem}
    Note that though the map $(D,B)\mapsto D$ defined in Lemma \ref{lem: bij, battery and ssyt (no cc)} is the most straightforward bijection between $\mathcal{T}^+\nls$ and the collection of semistandard tableaux, it does not behave well with respect to cocharge. In particular, there is no clear relation between the cocharge of the two tableaux like the one in Proposition \ref{prop: bij psi}, which makes it difficult to relate the formula \eqref{eq: Frob Rnls wrt ssyt} to one using battery-powered tableaux. In Section \ref{sec: Frob char}, we highlight how using the set $\Sigma\nls$, which involves standard tableaux, resolves this issue. 
\end{rem}

  Note that using Proposition \ref{lem: Dnls to pairs}, we can restate Theorem \ref{thm: A}:
  \begin{thma}[Cocharge version]
       The set $\BB\nls = \{\xx^{\rev(\cc(w,\mu))}: w\in \sym_n, (P(w),\mu)\in \Sigma\nls\}$ descends to a monomial basis of $R\nls$.
  \end{thma}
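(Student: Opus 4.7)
The strategy is to prove that $\BB\nls$ spans $R\nls$; since Corollary \ref{cor: count} together with Proposition \ref{lem: Dnls to pairs} already gives $|\BB\nls| = \dim R\nls$, linear independence then follows automatically. I would argue by descending induction on the descent order $<_{\des}$ of Definition \ref{def: descent-order} that every monomial of $\Q[\xx_n]$ is congruent modulo $I\nls$ to a $\Q$-linear combination of monomials in $\BB\nls$. This realizes $\BB\nls$ as the set of non-leading $<_{\des}$-monomials of $I\nls$, mirroring the analogous characterizations of the HRS basis of $R_{n,k}$ (Theorem \ref{thm: HRS descent}) and the Carlsson--Chou basis of $R_\lambda$.

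For the inductive step, fix a monomial $\xx^\ax$. If some $a_i \geq s$, then $\xx^\ax$ is divisible by the generator $x_i^s \in I\nls$ and represents $0$ in $R\nls$. Otherwise $\ax \in \{0,\ldots,s-1\}^n$; if $\ax \in \DD\nls$ we are done, and if not I must exhibit $f \in I\nls$ whose $<_{\des}$-leading term is exactly $\xx^\ax$. Then $\xx^\ax \equiv \xx^\ax - f \pmod{I\nls}$ is a combination of strictly $<_{\des}$-smaller monomials, to which the inductive hypothesis applies.

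The construction of $f$ is the main obstacle. Since $\ax \notin \DD\nls$, no $\sigma = (\AAAA_\lambda \mid \BBB) \in \osp_{n,\lambda'}$ witnesses membership; processing the entries of $\ax$ greedily in the sorted order of their values and invoking Lemma \ref{lem: descent-order-subset} localizes the failure to some restriction $\ax|_{A_i} \notin \DD_{\lambda'_i}$ (the bounded-entry obstruction on the $B_j$ parts is already subsumed by the $x_i^s$ case). I would then import the Garsia-Procesi straightening element from \cite{carlsson2024descent} applied to the variables indexed by $A_i$, choosing a subset $S \subseteq \xx_n$ so that the Tanisaki generator $e_d(S) \in I\nls$ satisfies the requisite degree bound $d > |S| - (\lambda_n' + \dots + \lambda_{n-|S|+1}')$, which is inherited from the failure of the descent condition on $A_i$. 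Lemma \ref{lem: descent-order-subset} then promotes the local leading-term statement on the $A_i$-coordinates to the global $<_{\des}$-leading term on all of $[n]$.

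The delicate bookkeeping point is that the witnessing ordered set partition in Definition \ref{def: dnls} is non-unique (for example, both $(12|4|3)$ and $(13|4|2)$ witness $0110 \in \DD_{4,(2,1),3}$), and that the $\AAAA_\lambda$ and $\BBB$ parts interact through the greedy assignment. I expect the cleanest resolution is to canonicalize the choice of $\sigma$ via the sorted order $<_{\mathbf{u}}$ of \eqref{eq: reading order} used in the proof of Proposition \ref{lem: Dnls to pairs} (placing the smallest $k$ entries into $\AAAA_\lambda$), so that the straightening consistently targets the same localized failure throughout the induction; termination is then automatic from the descending $<_{\des}$-induction, and the dimension count closes the proof.
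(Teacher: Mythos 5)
Your overall strategy is sound in outline and is genuinely different from the paper's: the paper proves \emph{linear independence} of $\BB\nls$ directly (Proposition \ref{prop: descent-triangularity}, via the tensor-factorization maps $\varphi_A$ of Proposition \ref{prop: phi-delete-lambda1} and induction on the columns of $\lambda$) and then gets spanning for free from the count of Corollary \ref{cor: count}; you propose to prove \emph{spanning} directly by a straightening algorithm and get independence for free from the same count. Both directions are logically valid ways to close the argument, and your reduction to "exhibit $f \in I\nls$ with $<_{\des}$-leading term $\xx^\ax$ for each $\ax \notin \DD\nls$ with all entries in $[s-1]_0$" is the correct formulation of what your route requires.

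The gap is that this key construction is not carried out, and the sketch of it cannot work as stated. You propose to localize the failure to a single block $A_i$ of size $\lambda_i'$ and then use a Tanisaki generator $e_d(S)$ "applied to the variables indexed by $A_i$." But the degree condition defining $I\nls$, namely $d > |S| - (\lambda'_n + \cdots + \lambda'_{n-|S|+1})$, is governed by the \emph{size} of $S$ relative to the zero-padded $\lambda'$, not by any descent-failure data: since $\lambda'$ has only $\lambda_1$ nonzero parts, for any $S$ with $|S| \leq n-\lambda_1$ the condition reads $d > |S|$, so \emph{no} nontrivial generator $e_d(S)$ is supported on so few variables. In particular, for a block $A_i$ with $|A_i| = \lambda_i' \leq n - \lambda_1$ the only elements of $I\nls$ involving just $\xx_{A_i}$ are the powers $x_j^s$, and the coinvariant-type relations you would need to import from \cite{carlsson2024descent} (whose basis result is itself proved by independence plus a count, not by an explicit straightening element) are simply not in $I\nls$. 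Any genuine straightening element must therefore involve a large variable set $S$ reaching far outside $A_i$, its non-leading monomials then differ from $\xx^\ax$ outside $A_i$, and Lemma \ref{lem: descent-order-subset} no longer reduces the global comparison to the local one. This is precisely the difficulty that makes straightening proofs hard even for $R_{n,k}$ (cf.\ the substantial argument in \cite{HaglundRhoadesShimozono2018}), and it is why the paper routes the proof through the maps $\varphi_A$ instead. To rescue your approach you would need to actually produce the element $f$ --- for instance by multiplying a suitable $e_d(S)$ with $|S| > n-\lambda_1$ by a monomial and controlling all of its other terms in the $\des$-order --- and that is the entire content of the theorem, not a bookkeeping point.
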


  Using this formulation and the fact that $\Sigma_{k,\lambda,\ell(\lambda)} = \Sigma_{\lambda}$,$\Sigma_{n,1^k,k} = \Sigma_{n,k}$ , it is evident that our descent basis construction specializes to the known descent bases for $R_\lambda, R_{n,k}$.

\section{Descent Basis}\label{sec:des}
This section will be dedicated to giving a proof of Theorem \ref{thm: A}, by showing the collection of monomials is linearly independent. We follow the argument in \cite{carlsson2024descent}, of which we give a brief outline here. 

Denote by $\widehat{\lambda}' = (\lambda_2',\dots,\lambda_m')$ the partition obtained by removing the first part of $\lambda'$, which corresponds to taking off the first column of $\lambda$. Given any subset $A \subset [n]$ of size $|A| = \lambda_1'$, we will show that there is a well-defined map
$$ \varphi_{A}: R\nls \to R_{\lambda'_1} \otimes R_{n,\widehat{\lambda},s}.$$
Then, we these maps to inductively show that if there is a relation of the form
$$ \sum_{\bx \leq_{\des} \ax} c_\bx \xx^\bx = 0$$
with $\ax \in \DD\nls$, then we must have that $c_\ax = 0$. This implies the monomials $\xx^\ax \in \BB\nls$ are linearly independent in $R\nls$. Applying Corollary \ref{cor: count} upgrades $\BB\nls$ to a basis.

\begin{prop}\label{prop: phi-delete-lambda1}
    Let $\lambda \vdash k \leq n$, $s \geq \ell(\lambda)$, and $\widehat{\lambda}$ be as above. Given any subset $A \subset [n]$ of size $|A| = \lambda'_1$, the composite map
    $$ \widetilde{\varphi}_A: \Q[\xx_n] \to \Q[\xx_A] \otimes \Q[\xx_{[n] \hspace{.5mm} \setminus \hspace{.5mm}  A}] \to R_{\lambda'_1} \otimes  R_{n,\widehat{\lambda},s}$$
    given by the canonical projection in both tensor components descends to a well-defined map $\varphi_A : R_n \to R_{\lambda'_1} \otimes R_{n,\widehat{\lambda},s}$.
\end{prop}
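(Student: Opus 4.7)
The plan is to verify that every generator of $I\nls$ lies in the kernel of $\widetilde{\varphi}_A$. Since $I\nls$ is generated by the powers $x_i^s$ together with the Tanisaki generators $e_d(S)$ for $d > |S| - \tau(|S|)$, where $\tau(m) := \lambda'_n + \dots + \lambda'_{n-m+1}$, it is enough to check these two families. The power generators are handled directly: for $i \in A$, the hypothesis $s \geq \ell(\lambda) = \lambda'_1$ together with the standard fact $x_i^{\lambda'_1} \in I_{\lambda'_1}$ forces $x_i^s = 0$ in $R_{\lambda'_1}$; for $i \notin A$, $x_i^s$ is a defining generator of the ideal of the second factor.

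For a Tanisaki generator, write $S_1 = S \cap A$, $S_2 = S \setminus A$, $C = \xx_A \setminus S_1$, and set $a = |S_1|$, $b = |S_2|$. Then
\[
\widetilde{\varphi}_A(e_d(S)) \;=\; \sum_{d_1+d_2=d} e_{d_1}(S_1) \otimes e_{d_2}(S_2).
\]
The core identity is the generating-function relation $e(S_1;t)\,e(C;t) = e(\xx_A;t) \equiv 1$ in $R_{\lambda'_1}[[t]]$, which gives $e(S_1;t) \equiv 1/e(C;t) = \sum_k (-1)^k h_k(C)\,t^k$. Combined with the standard vanishing $h_k(C) = 0$ in $R_{\lambda'_1}$ for $k > \lambda'_1 - |C| = a$ (proved via $h(C;t) \equiv e(\xx_A \setminus C;-t)$ modulo $I_{\lambda'_1}$), extracting the coefficient of $t^d$ yields
\[
\widetilde{\varphi}_A(e_d(S)) \;\equiv\; \sum_{k=0}^{a} (-1)^k\, h_k(C) \otimes e_{d-k}(S_2) \pmod{I_{\lambda'_1} \otimes \Q[\xx_{[n]\setminus A}]}.
\]
It therefore suffices to show that each $e_{d-k}(S_2)$ with $0 \leq k \leq a$ either vanishes as a polynomial or lies in the Tanisaki ideal of the second factor.

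Let $\widetilde{\tau}$ denote the $\tau$-function for $\widehat{\lambda}$. The condition required is $d - a > b - \widetilde{\tau}(b)$, equivalently $d > a + b - \widetilde{\tau}(b)$. When $a < \lambda'_1$, I would derive this from the hypothesis $d > a+b - \tau(a+b)$ via the comparison $\widetilde{\tau}(b) \geq \tau(a+b)$. This follows by pairing the $i$-th term of $\widetilde{\tau}(b)$ (at $\lambda'$-index $n - \lambda'_1 + 1 - i$) with the $(a+i)$-th term of $\tau(a+b)$ (at $\lambda'$-index $n - a - i$): the $\lambda'$-indices differ by the nonnegative quantity $\lambda'_1 - a - 1$, so monotonicity of $\lambda'$ yields term-wise inequalities, while the "extra" first $a$ terms of $\tau(a+b)$ vanish because $a \leq \lambda'_1 - 1 \leq n - \lambda_1$, the last inequality coming from the hook bound $\lambda_1 + \lambda'_1 - 1 \leq |\lambda| \leq n$. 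The extremal case $a = \lambda'_1$ is parallel: here $C = \emptyset$ collapses the sum to $1 \otimes e_d(S_2)$, and direct bookkeeping shows $\tau(\lambda'_1 + b) - \widetilde{\tau}(b) = \lambda'_{n - \lambda'_1 - b + 1} \leq \lambda'_1$, which upgrades the Tanisaki hypothesis to the desired $d > b - \widetilde{\tau}(b)$.

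The main obstacle is precisely this combinatorial index bookkeeping for the two Tanisaki thresholds $\tau$ and $\widetilde{\tau}$ after the first column of $\lambda$ is removed; once this comparison is secured, the algebra via the generating-function identity $e(S_1;t) \equiv 1/e(C;t)$ is routine.
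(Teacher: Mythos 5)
Your proposal is correct and follows essentially the same route as the paper: factor $e_d(S)$ across $A$ and its complement, split into the cases $A\subset S$ versus $A\not\subset S$, and reduce everything to the threshold comparison $\widetilde{\tau}(b)\geq\tau(a+b)$ (your index bookkeeping, including the hook-length bound $\lambda_1'-1\leq n-\lambda_1$ that kills the extra terms, matches the paper's displayed inequalities). The only difference is cosmetic: where the paper simply notes $e_i(A)\in I_{\lambda_1'}$ for $i>0$ when $A\subset S$ and otherwise puts all the weight on the second tensor factor, you route the first factor through the identity $e(S_1;t)\equiv 1/e(C;t)$ and the vanishing of $h_k(C)$ for $k>a$ — harmless, but redundant, since $e_{d_1}(S_1)$ already vanishes identically for $d_1>a$.
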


\begin{proof}
    It suffices to check that $\widetilde{\varphi}_A(I\nls) = 0$ in $R_{\lambda'_1} \otimes R_{n,\widehat{\lambda},s}$ for all $A \subset [n]$ with $|A| = \lambda_1'$. Since $s \geq \lambda_1'$, we have that $x_i^s \in I_{\lambda_1'}$ for $i \in A$. Observe $x_j^s \in I_{n,\widehat{\lambda},s}$ for $j \in [n] \hspace{.5mm} \setminus \hspace{.5mm} A$ by definition. Therefore, we have that
    $$
    \widetilde{\varphi}_A(x_i^s) = 
    \begin{cases}
        x_i^s \otimes 1 & \text{if }i \in A \\
        1 \otimes x_i^s & \text{if }i \in [n] \hspace{.5mm} \setminus \hspace{.5mm} A
    \end{cases}
    $$
    always vanishes in $R_{\lambda'_1} \otimes R_{n,\widehat{\lambda},s}$.

    Next, consider $e_d(B) \in I\nls$, where $B\subset [n]$ and $d$ satisfies 
    \begin{equation}\label{eq: tanisaki-def}
    d > |B| - (\lambda'_{n-|B|+1} + \dots + \lambda'_n)
    \end{equation}
    Notice we can factor $e_d(B)$ along the sets $A \cap B$ and $B  \hspace{.5mm} \setminus \hspace{.5mm} A$ to yield
    \begin{equation}\label{eq: tensor-factor}
        \widetilde{\varphi}_A(e_d(B)) = \sum_{i=\max(0,d - |B \hspace{.5mm} \setminus \hspace{.5mm} A|)}^{|A \cap B|} e_i(A \cap B) \otimes e_{d-i}(B  \hspace{.5mm} \setminus \hspace{.5mm} A)
    \end{equation}
    We show for each summand, either $e_i(A \cap B) \in I_{\lambda_1'}$, or $e_{d-i}(B  \hspace{.5mm} \setminus \hspace{.5mm} A) \in I_{n,\widehat{\lambda},s}$.

    First, we consider the case where $A \subset B$. Then we have that $e_i(A \cap B) = e_i(A) \in I_{\lambda_1'}$ whenever $i > 0$. It remains to check the case where $i = 0$, which is only possible if $|B| > |A|$.  
    Making the substitution $\widehat{\lambda}'_i = \lambda_{i+1}'$, we have
    \begin{align*}
        |B  \hspace{.5mm} \setminus \hspace{.5mm} A| - (\widehat{\lambda}'_{n - |A|- |B \hspace{.5mm} \setminus \hspace{.5mm} A| + 1 } + \dots )
        &= |B| - |A| - (\lambda'_{n - |B| + 2} + \dots) \\
        &\leq |B| - (\lambda'_{n-|B| + 1} + \dots ) < d
    \end{align*}
    where the weak inequality uses the fact that $|A| = \lambda'_1 \geq \lambda'_{n-|B|+1}$. This implies that $e_d(B  \hspace{.5mm} \setminus \hspace{.5mm} A) \in I_{n,\widehat{\lambda},s}$.

    Now consider the case where $A \not\subset B$. We will show that $e_{d-i}(B  \hspace{.5mm} \setminus \hspace{.5mm} A) \in I_{n,\widehat{\lambda},s}$ for all $i$. We have by \eqref{eq: tensor-factor} that $i \leq |A \cap B|$. Subtracting this from \eqref{eq: tanisaki-def}, we have
    \begin{equation}\label{eq: d-i-ineq}
        |B| - |A \cap B| - (\lambda_{n - |B| +1} + \dots) < d-i
    \end{equation}
    Then, as before, we have
    \begin{align*}
        |B  \hspace{.5mm} \setminus \hspace{.5mm} A| - (\widehat{\lambda}'_{n - |A|- |B \hspace{.5mm} \setminus \hspace{.5mm} A| + 1 } + \dots ) &= |B| - |A \cap B| - (\lambda'_{n-|B|+2-(|A|-|A\cap B|)} + \dots) \\
        &\leq |B| - |A \cap B| - (\lambda'_{n-|B|+1} + \dots) < d-i
    \end{align*}
    where the weak inequality uses $|A| - |A \cap B| > 0$ since $A \not\subset B$. This completes the proof.
\end{proof}

\begin{prop}\label{prop: descent-triangularity}
    Let $\ax \in \DD\nls$, and suppose
    \begin{equation}\label{eq: sum-des-mons-0}
        \sum_{\bx \leq_{\des} \ax} c_\bx x^\bx = 0
    \end{equation}
    in $R\nls$. Then, we have $c_\ax = 0$. In particular, the monomials $\BB\nls$ are linearly independent in $R\nls$.
\end{prop}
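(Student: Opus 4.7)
The argument will proceed by induction on $k = |\lambda|$, following the template of Carlsson and the first author \cite{carlsson2024descent}. For the base case $\lambda = \emptyset$, the ideal $I_{n,\emptyset,s} = (x_i^s : i \in [n])$ makes $R_{n,\emptyset,s}$ into a truncated polynomial ring with obvious monomial basis $\{\xx^\cx : \cx \in [s-1]_0^n\} = \DD_{n,\emptyset,s}$, so $c_\ax = 0$ follows immediately from linear independence.

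For the inductive step, I fix an ordered set partition $\sigma = (A_1 \mid A_2 \mid \cdots \mid A_{\lambda_1} \mid B_1 \mid \cdots \mid B_{n-k}) \in \osp_{n,\lambda'}$ witnessing $\ax \in \DD\nls$ and apply $\varphi_{A_1}$ from Proposition \ref{prop: phi-delete-lambda1} to the relation \eqref{eq: sum-des-mons-0}, obtaining
$$ \sum_{\bx \leq_{\des} \ax} c_\bx \, \xx^{\bx|_{A_1}} \otimes \xx^{\bx|_{[n] \setminus A_1}} = 0 $$
in $R_{\lambda'_1} \otimes R_{n,\widehat{\lambda},s}$. Writing $A_1 = \{a_1, \dots, a_{\lambda'_1}\}$, the factor $\xx^{\ax|_{A_1}}$ is a classical Garsia--Stanton monomial in $R_{\lambda'_1}$, while $\xx^{\ax|_{[n] \setminus A_1}}$, viewed as an exponent vector on $[n]$ which vanishes on $A_1$, lies in $\DD_{n,\widehat{\lambda},s}$ via the refined ordered set partition $(A_2 \mid \cdots \mid A_{\lambda_1} \mid \{a_1\} \mid \cdots \mid \{a_{\lambda'_1}\} \mid B_1 \mid \cdots \mid B_{n-k}) \in \osp_{n,\widehat{\lambda}'}$; the size condition $|\widehat{\lambda}'| = k - \lambda_1'$ and the value conditions ($0 \in [s-1]_0$ on the new singletons) check out.

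The heart of the argument is a $\des$-triangular straightening in each tensor factor. In $R_{\lambda'_1}$, the classical straightening of Allen \cite{AllenDescent} expresses $\xx^{\bx|_{A_1}}$ as a combination of $\xx^{\cx_1}$ with $\cx_1 \in \mathcal{D}_{\lambda'_1}$ and $\cx_1 \leq_{\des} \bx|_{A_1}$. In $R_{n,\widehat{\lambda},s}$, the inductive hypothesis together with Corollary \ref{cor: count} supplies a basis $\DD_{n,\widehat{\lambda},s}$; the same inductive Proposition upgrades this to a triangular straightening by the standard argument of selecting the $\des$-maximal basis element in any such expansion and applying the Proposition to force its coefficient to vanish unless it is bounded by $\bx|_{[n] \setminus A_1}^{\mathrm{ext}}$ (the extension by zeros on $A_1$). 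Reassembling via Lemma \ref{lem: descent-order-subset}, every basis element $\xx^{\cx_1} \otimes \xx^{\cx_2}$ appearing in the straightened image of $\xx^\bx$ corresponds to a word $\cx$ on $[n]$ with $\cx \leq_{\des} \bx \leq_{\des} \ax$.

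To conclude, I extract the coefficient of $\xx^{\ax|_{A_1}} \otimes \xx^{\ax|_{[n]\setminus A_1}^{\mathrm{ext}}}$. For this basis element to appear after straightening $\varphi_{A_1}(\xx^\bx)$, triangularity forces $\ax|_{A_1} \leq_{\des} \bx|_{A_1}$ and $\ax|_{[n]\setminus A_1}^{\mathrm{ext}} \leq_{\des} \bx|_{[n]\setminus A_1}^{\mathrm{ext}}$; Lemma \ref{lem: descent-order-subset} then yields $\ax \leq_{\des} \bx$, and combined with $\bx \leq_{\des} \ax$ this forces $\bx = \ax$. Since both restrictions of $\ax$ already lie in the respective descent bases, straightening acts trivially on $\varphi_{A_1}(\xx^\ax)$, so the extracted coefficient equals $c_\ax$, which must vanish by linear independence in the tensor product basis. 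The main obstacle will be the bookkeeping in the second bullet: carefully deducing the $\des$-triangularity in $R_{n,\widehat{\lambda},s}$ from the inductive Proposition (rather than from a Gröbner argument, since $<_{\des}$ is not a monomial order) and verifying that the zero-extension on $A_1$ is compatible with membership in $\DD_{n,\widehat{\lambda},s}$ and with the restriction lemma.
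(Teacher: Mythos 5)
Your proposal is correct and follows essentially the same route as the paper's proof: induct by stripping the first column of $\lambda$, apply $\varphi_{A_1}$ from Proposition \ref{prop: phi-delete-lambda1}, use Allen's straightening in the $R_{\lambda_1'}$ factor, and reassemble the $\des$-inequalities with Lemma \ref{lem: descent-order-subset}. The only (harmless) divergence is that you straighten \emph{both} tensor factors and extract the coefficient of $\xx^{\ax|_{A_1}}\otimes\xx^{\ax''}$ against the full tensor-product basis --- which requires the inductive hypothesis together with Corollary \ref{cor: count} to upgrade $\DD_{n-\lambda_1',\widehat{\lambda},s}$ to a triangularly-straightening basis --- whereas the paper straightens only the first factor and feeds the resulting relation $\sum_{\bx''\leq_{\des}\ax''} d_{\ax',\bx''}\xx^{\bx''}=0$ directly into the inductive statement, so it never needs the second factor's descent monomials to span.
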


\begin{proof}
    We induct on the number of columns $\lambda_1$ of $\lambda$, i.e. the parts of $\lambda'$. If $\lambda_1 = 0$, i.e. $\lambda = \varnothing$, it is clear that $\BB_{n,\varnothing,s} = \{\xx^\ax: \ax \in [s-1]_0^n\}$ is a basis of $R_{n,\varnothing,s} = \Q[\xx_n]/(x_i^s: 1 \leq i \leq n)$, and are nonleading terms with respect to any total order on monomials.

    Now take $\lambda_1 \geq 1$, and let $\ax \in \DD\nls$, so that there is $\sigma = (\AAAA_\lambda\vert \BBB)\in \osp_{n,\lambda'}$ with $\ax \vert_{A_i} \in \DD_{\lambda'_i}$, $\ax\vert_{B_j} \in [s-1]_0$. Choose $A = A_1$ so $B = [n] \hspace{.5mm} \setminus \hspace{.5mm} A$, and let $\ax' = \ax|_A, \ax'' = \ax|_B$ denote the corresponding compositions in $\DD_{\lambda_1'},\DD_{n-\lambda_1',\widehat{\lambda},s}$, where $\widehat{\lambda}$ is as in the previous proposition. By Proposition \ref{prop: phi-delete-lambda1}, we compute
    \begin{equation}\label{eq: des-mons-expansion}
        0 = \varphi_A \bigg(\sum_{\bx \leq_{\des} \ax} c_\bx x^\bx\bigg) = \sum_{\bx' \in \DD_{\lambda_1'}} \xx^{\bx'} \otimes \bigg(\sum_{\bx''} d_{\bx',\bx''} \xx^{\bx''}\bigg)
    \end{equation}
    where we expand in the first factor using the descent basis, and push all coefficients into the second tensor factor. For each nonzero $d_{\bx',\bx''}$ in \eqref{eq: des-mons-expansion}, there is some $c_\bx \neq 0$ contributing to equation \eqref{eq: sum-des-mons-0} such that
    \begin{equation}
        \bx \leq_{\des} \ax  \quad \bx' \leq_{\des} \bx|_A, \quad \bx'' = \bx|_B, \quad
    \end{equation}
    where the second inequality follows from the fact that $\bx' \in \DD_{\lambda_1'}$ are the nonleading terms with respect to the $\des$-order (for an explicit straightening algorithm, see \cite{AllenDescent}).

    Since the left hand side of \eqref{eq: sum-des-mons-0} is $0$, we must have
    \begin{equation}
        \sum_{\bx''} d_{\ax',\bx''} \xx^{\bx''} = 0
    \end{equation}
    in $R_{n-\lambda_1',\widehat{\lambda},s}$. Furthermore, Lemma \ref{lem: descent-order-subset} implies that $d_{\ax',\bx''} \neq 0$ only if $\bx'' \leq_{\des} \ax''$, so we see $d_{\ax',\ax''} = c_\ax$ and write
    \begin{equation}
        \sum_{\bx''\leq_{\des} \ax''} d_{\ax',\bx''} \xx^{\bx'' } = 0
    \end{equation}
    The claim follows from applying the induction hypothesis.
\end{proof}

We can now prove Theorem \ref{thm: A}.

\begin{proof}[Proof of Theorem \ref{thm: A}]
    Proposition \ref{prop: descent-triangularity} shows that $\BB\nls$ are linearly independent, and by Corollary \ref{cor: count}, we have that $|\BB\nls| = \dim R\nls$. 
\end{proof}

Similar to \cite{carlsson2024descent}, we obtain the following corollary:

\begin{cor}
    There is an injection
    \begin{equation}\label{eq: injection-direct-sum}
    R\nls \hookrightarrow \bigoplus_{\sigma \in \osp_{n,\lambda}} R_{\lambda_1'} \otimes \dots \otimes R_{\lambda_m'} \otimes \Q[\xx_\BBB]/(x_i^s:i \in \BBB) =: M\nls
    \end{equation}
    obtained by patching together the maps $\varphi_{\lambda,A}$ and inducting on $\ell(\lambda')$.
\end{cor}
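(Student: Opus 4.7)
My plan is to construct $\Phi = \bigoplus_\sigma \Phi_\sigma : R\nls \to M\nls$, where for each $\sigma = (A_1|\cdots|A_m|B_1|\cdots|B_{n-k}) \in \osp_{n,\lambda'}$ the component $\Phi_\sigma$ is the natural map
$$\Phi_\sigma : \Q[\xx_n] \cong \Q[\xx_{A_1}] \otimes \cdots \otimes \Q[\xx_{A_m}] \otimes \Q[\xx_\BBB] \twoheadrightarrow R_{\lambda_1'} \otimes \cdots \otimes R_{\lambda_m'} \otimes \Q[\xx_\BBB]/(x_i^s : i \in \BBB).$$
Well-definedness of $\Phi_\sigma$ on $R\nls$ follows by iterating Proposition \ref{prop: phi-delete-lambda1}: one first applies $\varphi_{A_1}$ to land in $R_{\lambda_1'} \otimes R_{n,\widehat{\lambda},s}$, then applies an analogous map on the second tensor factor for $\widehat{\lambda}$ and $A_2$, peeling off columns until the last factor becomes $\Q[\xx_\BBB]/(x_i^s)$. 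This is the patching described in the corollary, organized as an induction on $\ell(\lambda')$.

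For injectivity, I would mirror Proposition \ref{prop: descent-triangularity}. Suppose $\Phi(f) = 0$ for some $f \in R\nls$, and by Theorem \ref{thm: A} write $f = \sum_{\ax \in \DD\nls} c_\ax \xx^\ax$. Let $\ax^*$ be $\des$-maximal with $c_{\ax^*} \neq 0$ and let $\sigma^* = (A_1^*|\cdots|A_m^*|B_1^*|\cdots|B_{n-k}^*)$ be a witness for $\ax^*$ from Definition \ref{def: dnls}. In the $\sigma^*$-summand, expand $\Phi_{\sigma^*}(f)$ in the tensor product basis obtained by taking the descent basis of each $R_{\lambda_j'}$ together with the monomial basis $\{\xx^\cx : \cx \in [s-1]_0^{n-k}\}$ of $\Q[\xx_{\BBB^*}]/(x_i^s)$. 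For any $\bx \in \DD\nls$, the descent-straightening algorithm (as in \cite{AllenDescent}) rewrites $\xx^{\bx|_{A_j^*}}$ as a sum of descent monomials $\leq_\des \bx|_{A_j^*}$ in $R_{\lambda_j'}$; combined with the term $\xx^{\bx|_{\BBB^*}}$ in the last factor and applying Lemma \ref{lem: descent-order-subset} across the columns, the expansion of $\Phi_{\sigma^*}(\xx^\bx)$ has tensor basis terms whose associated exponents on $[n]$ are $\leq_\des \bx$. Consequently, the $\des$-maximum term appearing in $\Phi_{\sigma^*}(f)$ is $c_{\ax^*}\, \xx^{\ax^*|_{A_1^*}} \otimes \cdots \otimes \xx^{\ax^*|_{A_m^*}} \otimes \xx^{\ax^*|_{\BBB^*}}$, which is a nonzero basis vector since $\ax^*|_{A_j^*} \in \DD_{\lambda_j'}$ and $\ax^*|_{B_k^*} \in [s-1]_0$. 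This forces $c_{\ax^*} = 0$, contradicting the choice of $\ax^*$.

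The main obstacle I anticipate is establishing well-definedness of $\Phi_\sigma$ for $\sigma$ with multiple columns: decomposing a generator $e_d(S) \in I\nls$ across $m+1$ tensor factors requires combinatorial bookkeeping analogous to the factorization \eqref{eq: tensor-factor} in the proof of Proposition \ref{prop: phi-delete-lambda1}, verifying at each step that at least one factor $e_{d_j}(S \cap A_j)$ lies in the ideal of the corresponding $R_{\lambda_j'}$. The induction on $\ell(\lambda')$ keeps this tractable, reducing the multi-column check at each stage to the one-column case already handled in Proposition \ref{prop: phi-delete-lambda1} together with the inductive hypothesis applied to $\widehat{\lambda}$.
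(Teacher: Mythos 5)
Your proposal is correct and follows essentially the same route the paper intends: the component maps $\Phi_\sigma$ are exactly the iterated/patched versions of $\varphi_A$ from Proposition \ref{prop: phi-delete-lambda1}, and injectivity is the descent-order triangularity of Proposition \ref{prop: descent-triangularity} (your extraction of the $\des$-maximal term across the full tensor factorization, using the straightening of \cite{AllenDescent} and Lemma \ref{lem: descent-order-subset}, is just a one-shot repackaging of the paper's column-by-column induction). The only point worth double-checking, which you handle implicitly and correctly, is that every entry of any $\bx \in \DD\nls$ is at most $s-1$, so no reduction is needed in the factor $\Q[\xx_{\BBB^*}]/(x_i^s)$.
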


\section{Frobenius character of $R_{n,\lambda,s}$}\label{sec: Frob char}
Now, we use the descent basis of $R_{n,\lambda,s}$ to give a combinatorial formula for the graded Frobenius character $\Frob_q(R\nls)$ in terms of our indexing set $\Sigma\nls$.
First, recall the formula for the ungraded Frobenius character \eqref{eq: Frob Rnls wrt ssyt}. We can rewrite \eqref{eq: Frob Rnls wrt ssyt} in terms of the set  $\Sigma\nls$ by composing the bijection in Lemma \ref{lem: bij, battery and ssyt (no cc)} with the bijection $\Psi$ in Proposition \ref{prop: bij psi}:
\begin{align}\label{eq: ungraded Frob Rnls wrt Rnls}
    \Frob(R\nls) = \sum_{(S,\ix)\in \Sigma\nls} s_{\shape(S)}.
\end{align}
Now, we can use \eqref{eq: ungraded Frob Rnls wrt Rnls} to get $\dim(N_\gamma R\nls)$, which we will use to determine $\Hilb_q(N_\gamma R\nls)$. 
\begin{align*}
    \dim(N_\gamma R\nls) &= \langle e_\gamma, \Frob(R\nls) \rangle \\ &= \langle \omega e_\gamma, \sum_{(S,\ix)\in \Sigma\nls} \omega  \rangle  \\&= \sum_{(S,\ix)\in \Sigma\nls} \langle h_\gamma, s_{\shape(S)^t} \rangle  \\&= \sum_{(S,\ix)\in \Sigma\nls} K_{\shape(S)^t,\gamma} \numberthis \label{eq: dim antisym}
\end{align*}
where $ K_{\shape(S)^t,\gamma}$ is the Kostka number.
Using Corollary \ref{cor: kostka number permutations} and Proposition \ref{prop: RSK props} \ref{thm:RSK rev}, we can rewrite the last expression to be
\begin{align*}
    \dim(&N_\gamma R\nls) \\&= \sum_{(S,\ix)\in \Sigma\nls} | \{w\in \sym_n, P(w) = S^t, \Des(w)\subset \{\gamma_1,\gamma_1 + \gamma_2, \dots, \gamma_1 +\cdots + \gamma_{l-1}\}| \\ &= \sum_{(S,\ix)\in \Sigma\nls} | \{w\in \sym_n, P(w) = S, \text{Asc}(w)\subset \{\gamma_l,\gamma_l + \gamma_{l-1}, \dots, \gamma_l +\cdots + \gamma_{2}\}|.\numberthis \label{eq: dim N Rnls}
\end{align*}
where $\Asc(w)$ denotes the \emph{ascent set} of $w$, defined to be $\Asc(w) = \{i : w_i < w_{i+1}\}$.

Now, we construct a basis of $N_\gamma R\nls$ using the descent basis. This basis will give us an explicit formula for $\Hilb_q(R\nls)$, which in turn will uniquely determine $\Frob_q(R\nls)$.

For any composition $\gamma\models n$ and word $\mathbf{z}$ of length $n$, we say $\mathbf{z}$ is \emph{strictly increasing on $\gamma$} if when we divide $\mathbf{z}$ into blocks of sizes $\gamma_1,\gamma_2,\dots,\gamma_{l}$, we have that the entries within the blocks are strictly increasing.

\begin{example}
    Consider $\mathbf{z} = 2423679$ and $\gamma = (2,2,3)$. Then when we divide $\mathbf{z}$ into three blocks $24|23|679$, the entries are strictly increasing. Thus $\mathbf{z}$ is strictly increasing on $\gamma$.
\end{example}
We use this to define a subset of $\mathcal{D}\nls$.
\[\mathcal{D}\nls^{\gamma} := \{\ax \in \mathcal{D}\nls: \ax \text{ is strictly increasing on } \gamma \}.\]

We will show that this set indexes a basis of $N_\gamma R\nls$. We first relate this set to pairs $(w,\mu)$ where $(P(w),\mu)\in \Sigma\nls$. To do this, we make the following observation.

\begin{lem}\label{lem: asc}
    Consider $\ax\in \mathcal{D}\nls$ and $\cc^{-1}(\rev(\ax)) = (w,\mu)$. Then we have $a_{n-j}< a_{n-j+1}$ if and only if $w_j > w_{j+1}$.
\end{lem}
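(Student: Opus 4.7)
My plan is to derive the lemma directly from the explicit construction of $w$ given in the inverse map in the proof of Proposition \ref{lem: Dnls to pairs}. There, writing $\mathbf{u} = \rev(\ax)$ (so that $\mathbf{u} = \cc(w,\mu)$), the permutation $w$ is defined by the rule that $w_i = d$ whenever $i$ is the $d$th smallest index with respect to the reading order $<_{\mathbf{u}}$ of \eqref{eq: reading order}. In other words, $w$ is precisely the standardization of $\mathbf{u}$. Consequently, the strict inequality $w_j > w_{j+1}$ is equivalent to $j >_{\mathbf{u}} j+1$, and this is the key reduction.

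Next, I will unpack $<_{\mathbf{u}}$. By definition, $j >_{\mathbf{u}} j+1$ holds precisely when either $u_j > u_{j+1}$, or $u_j = u_{j+1}$ together with $j > j+1$. The latter is vacuous since $j < j+1$, and in fact when $u_j = u_{j+1}$ the tie-breaking rule gives $j <_{\mathbf{u}} j+1$, i.e.\ $w_j < w_{j+1}$. Hence $w_j > w_{j+1}$ if and only if $u_j > u_{j+1}$. Translating back using $u_i = a_{n-i+1}$, the condition $u_j > u_{j+1}$ reads $a_{n-j+1} > a_{n-j}$, which is exactly $a_{n-j} < a_{n-j+1}$.

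I do not anticipate any real obstacle: the entire statement is an unpacking of the definition of the reading order and the standardization used to construct $w$ from $\mathbf{u}$. The tie-breaking convention in $<_{\mathbf{u}}$ was chosen precisely so that equalities among consecutive entries of the boosted cocharge word $\cc(w,\mu)$ always correspond to ascents of $w$, which is what makes the equivalence sharp in both directions.
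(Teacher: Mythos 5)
Your proof is correct, but it runs the argument in the opposite direction from the paper. The paper works with the \emph{forward} map $(w,\mu)\mapsto\cc(w,\mu)$: it observes that a descent $w_j>w_{j+1}$ forces a strict descent $\cc(w)_j>\cc(w)_{j+1}$ in the cocharge word, that the boost $\mu_{n-w_i+1}$ is weakly monotone in $w_i$ (so the strict descent survives boosting), and that conversely an ascent of $w$ yields a weak ascent of $\cc(w,\mu)$. You instead invoke the \emph{inverse} map from the proof of Proposition \ref{lem: Dnls to pairs}, namely that $w$ is the standardization of $\mathbf{u}=\cc(w,\mu)$ with respect to the reading order \eqref{eq: reading order}, so that $w_j>w_{j+1}$ unwinds to $u_j>u_{j+1}$ via the tie-breaking rule. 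Both are short unwindings of the same bijection; the trade-off is that your route pushes all the content into the already-established fact that standardization inverts the boosted-cocharge map (so you must be sure Proposition \ref{lem: Dnls to pairs} is fully in place, including that the unique preimage of $\mathbf{u}$ really is its standardization), whereas the paper's route re-verifies the relevant monotonicity of cocharge plus boosting directly and is self-contained. Your observation that the tie-breaking convention in $<_{\mathbf{u}}$ is exactly what turns equalities $u_j=u_{j+1}$ into ascents of $w$ is the right crux, and the index translation $u_j=a_{n-j+1}$ is handled correctly.
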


\begin{proof}
    Note that the condition $a_{n-j}<a_{n-j+1}$ is equal to $z_{j} > z_{j+1}$, where $\mathbf{z} = \rev(\ax) = \cc(w,\mu)$. By construction we can see that $ \cc(w,\mu)_j >  \cc(w,\mu)_{j+1}$ can only happen if $w_j >  w_{j+1}$.
    In particular: if $w_j > w_{j+1}$, it is clear that $\cc(w,\mu)_j >  \cc(w,\mu)_{j+1}$ since $\cc(w)_j > \cc(w)_{j+1}$. On the other hand, if $w_j < w_{j+1}$, then we have $\cc(w,\mu)_j \leq   \cc(w,\mu)_{j+1}$.
\end{proof}

Using Lemma \ref{lem: asc}, we can compute $|\mathcal{D}\nls^\gamma|$, by looking at the preimage of $\mathcal{D}\nls^\gamma$ under the map $(w,\mu)\mapsto \rev(\cc(w,\mu))$.

\begin{cor}\label{cor: subset of perms}
    For $\gamma\models n$, we have $\rev(\cc(w,\mu))\in \mathcal{D}^{\gamma}\nls$ if and only if $ \Asc(w) \subset \{\gamma_l,\gamma_{l}+\gamma_{l-1},\dots, \gamma_l+\gamma_{l-1}+\cdots+\gamma_{2}\}$.
    In particular, we have $|\mathcal{D}\nls^{\gamma}| = \dim(N_\gamma R\nls)$.
\end{cor}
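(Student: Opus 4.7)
The plan is to deduce both halves of the corollary directly from Lemma \ref{lem: asc} and Proposition \ref{lem: Dnls to pairs}. For the first statement I would translate the \emph{strictly increasing on $\gamma$} condition on $\ax = \rev(\cc(w,\mu))$ into a condition on $w$ via the index reversal built into Lemma \ref{lem: asc}. Unpacking definitions, $\ax$ is strictly increasing on $\gamma$ exactly when
\[
\{i \in [n-1] : a_i \geq a_{i+1}\} \;\subset\; \{\gamma_1,\; \gamma_1 + \gamma_2,\; \ldots,\; \gamma_1 + \cdots + \gamma_{l-1}\}.
\]
Lemma \ref{lem: asc} gives $a_{n-j} < a_{n-j+1}$ iff $w_j > w_{j+1}$, and since $w$ is a permutation the negation reads $a_{n-j} \geq a_{n-j+1}$ iff $j \in \Asc(w)$. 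Setting $j = n - i$ turns the containment above into $\Asc(w) \subset \{n - (\gamma_1 + \cdots + \gamma_r) : 1 \leq r \leq l-1\}$, and the identity $n - (\gamma_1 + \cdots + \gamma_r) = \gamma_{r+1} + \cdots + \gamma_l$ rewrites this as $\Asc(w) \subset \{\gamma_l,\, \gamma_l + \gamma_{l-1},\, \ldots,\, \gamma_l + \cdots + \gamma_2\}$, which is exactly the claim.

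For the cardinality equality I would combine the first statement with the bijection from Proposition \ref{lem: Dnls to pairs} to obtain
\[
\mathcal{D}^\gamma\nls \;\longleftrightarrow\; \{(w, \mu) : (P(w), \mu) \in \Sigma\nls,\; \Asc(w) \subset \{\gamma_l, \gamma_l + \gamma_{l-1}, \ldots, \gamma_l + \cdots + \gamma_2\}\}.
\]
Partitioning the right-hand side by the pair $(S, \mu) = (P(w), \mu) \in \Sigma\nls$ then yields
\[
|\mathcal{D}^\gamma\nls| \;=\; \sum_{(S, \mu) \in \Sigma\nls} \bigl|\{w \in \sym_n : P(w) = S,\; \Asc(w) \subset \{\gamma_l, \ldots, \gamma_l + \cdots + \gamma_2\}\}\bigr|,
\]
which matches the expression for $\dim(N_\gamma R\nls)$ in \eqref{eq: dim N Rnls} term by term.

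There is no deep obstacle here: with Lemma \ref{lem: asc} and Proposition \ref{lem: Dnls to pairs} in hand the proof is a bookkeeping argument. The only subtlety to track is the orientation reversal, since the block decomposition of $\ax$ is read left-to-right on positions $1,\ldots,n$ while the corresponding ascents of $w$ sit at the mirrored positions $n-1, \ldots, 1$; the substitution $n - (\gamma_1 + \cdots + \gamma_r) = \gamma_{r+1} + \cdots + \gamma_l$ is what converts one indexing scheme into the other.
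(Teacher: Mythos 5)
Your proof is correct and follows the same route as the paper: the first claim is exactly the index-reversal translation via Lemma \ref{lem: asc} (the paper leaves this bookkeeping implicit), and the counting claim is the restriction of the bijection of Proposition \ref{lem: Dnls to pairs} compared term by term with \eqref{eq: dim N Rnls}. The care you take with the mirrored positions and the identity $n-(\gamma_1+\cdots+\gamma_r)=\gamma_{r+1}+\cdots+\gamma_l$ is precisely the detail the paper omits.
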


\begin{proof}
The first statement follows immediately from Lemma \ref{lem: asc}. The second statement follows from \eqref{eq: dim N Rnls}.
\end{proof}

We can also use the following fact about $\mathcal{D}\nls$. Note that this is analogous to \cite[Lemma 3.4]{carlsson2024descent} and the same proof holds. 

\begin{lem}\label{lem: swapping}
      Consider $\ax\in \mathcal{D}\nls$ with $a_{j} > a_{j+1}$. Let $\tilde{\ax}$ be the word we get by swapping $a_{j}$ and $a_{j+1}$. Then $\tilde{\ax}\in \mathcal{D}\nls$.
\end{lem}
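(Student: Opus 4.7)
The plan is to fix a witnessing ordered set partition $\sigma = (A_1|\cdots|A_{\lambda_1}|B_1|\cdots|B_{n-k}) \in \osp_{n,\lambda'}$ for $\ax \in \DD\nls$ and split into cases based on which blocks of $\sigma$ contain the consecutive integers $j$ and $j+1$.

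In the first case, $j$ and $j+1$ lie in the same $A$-block $A_i$. Since $j,j+1$ are consecutive integers in $[n]$, no other index of $A_i$ can lie strictly between them, so they occupy adjacent positions in the sorted listing of $A_i$. The hypothesis $a_j > a_{j+1}$ is then literally a descent at adjacent positions of the restriction $\ax|_{A_i} \in \DD_{\lambda'_i}$. I would invoke the swap lemma for ordinary descent words (Lemma 3.4 of \cite{carlsson2024descent}) applied directly to $\mathcal{D}_{\lambda'_i}$, which yields $\tilde\ax|_{A_i} \in \DD_{\lambda'_i}$. All other restrictions are untouched, so $\tilde\ax \in \DD\nls$ via the same $\sigma$.

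In the second case, $j$ and $j+1$ lie in distinct blocks $C, C'$ of $\sigma$. This subsumes three subcases (both $A$-blocks, one $A$ and one $B$, or both $B$-blocks), but I would handle them uniformly by constructing a new ordered set partition $\tilde\sigma$ in which the memberships of $j$ and $j+1$ are swapped: set $C^{\mathrm{new}} := (C \setminus \{j\}) \cup \{j+1\}$ and $C'^{\mathrm{new}} := (C' \setminus \{j+1\}) \cup \{j\}$, leaving all other blocks alone. The key observation is that because $j, j+1$ are consecutive integers, inserting $j+1$ into the sorted list of $C \setminus \{j\}$ places it in the exact slot that $j$ vacated, and symmetrically for $j$ in $C' \setminus \{j+1\}$. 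Combined with $\tilde a_{j+1} = a_j$ and $\tilde a_j = a_{j+1}$, this forces the equalities of tuples $\tilde\ax|_{C^{\mathrm{new}}} = \ax|_C$ and $\tilde\ax|_{C'^{\mathrm{new}}} = \ax|_{C'}$. Hence each required membership condition ($\DD_{\lambda'_i}$ for an $A$-block, $[s-1]_0$ for a $B$-block) transfers from $\sigma$ to $\tilde\sigma$ unchanged, showing $\tilde\ax \in \DD\nls$.

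The main obstacle is the intra-block Case~1, but this is essentially the content of the classical swap lemma for pure descent words, and the statement itself notes that the same proof carries over, so it is an invocation rather than new work. The inter-block Case~2 is a clean bookkeeping argument whose only subtle input is the trivial fact that no integer lies strictly between $j$ and $j+1$, which is precisely what guarantees the new ordered set partition has restrictions identical to the old ones.
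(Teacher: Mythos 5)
Your proof is correct and follows essentially the route the paper intends: the paper gives no argument of its own, deferring to the analogous Lemma 3.4 of Carlsson--Chou with the remark that ``the same proof holds,'' and that proof is exactly your case split (same block: reduce to the adjacent-descent swap for ordinary descent words; different blocks: swap the memberships of $j$ and $j+1$, using that consecutive integers occupy the vacated slots so all restrictions are literally unchanged). Your uniform treatment of the singleton $B$-blocks in the inter-block case is the only new ingredient needed beyond the $\mathcal{D}_\mu$ setting, and it is handled correctly.
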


Now, we can use these results to construct an explicit basis of $N_\gamma R\nls$.
\begin{prop}\label{prop: antisym basis}
   $\mathcal{B}\nls^\gamma: = \{N_\gamma \xx^{\ax}: \ax\in \mathcal{D}\nls^{\gamma}\}$ is a basis of $N_\gamma R\nls$.
\end{prop}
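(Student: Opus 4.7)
The plan is to combine the cardinality count from Corollary \ref{cor: subset of perms}, which already gives $|\mathcal{D}\nls^{\gamma}| = \dim(N_\gamma R\nls)$, with a spanning argument; once we show that $\mathcal{B}\nls^\gamma$ spans $N_\gamma R\nls$, the dimension match upgrades spanning to basis-hood automatically.

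For the spanning step, we start from Theorem \ref{thm: A}: since $\BB\nls = \{\xx^\bx : \bx \in \mathcal{D}\nls\}$ descends to a basis of $R\nls$, the images $\{N_\gamma \xx^\bx : \bx \in \mathcal{D}\nls\}$ span $N_\gamma R\nls$. It therefore suffices to show that each such image reduces, up to sign, to some $N_\gamma \xx^\ax$ with $\ax \in \mathcal{D}\nls^\gamma$, or else to zero. Given $\bx \in \mathcal{D}\nls$, we partition $\bx$ into its $\gamma$-blocks and consider two cases. If some $\gamma$-block contains a repeated value $b_j = b_k$, then the transposition $(j\ k)$ lies in $S_\gamma$ and fixes $\xx^\bx$; decomposing $S_\gamma$ into left cosets of $\langle (j\ k) \rangle$ and pairing the two terms in each coset yields $N_\gamma \xx^\bx = 0$. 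Otherwise, every $\gamma$-block has distinct entries, and we sort each one into strictly increasing order by a sequence of adjacent swaps confined to that block. Each such swap is an adjacent transposition of entries $b_j > b_{j+1}$ of a word in $\mathcal{D}\nls$, and Lemma \ref{lem: swapping} ensures the swapped word remains in $\mathcal{D}\nls$. The sorted composition $\ax$ therefore lies in $\mathcal{D}\nls^\gamma$, and the aggregate sorting permutation $\pi$ lies in $S_\gamma$. Using the identity $N_\gamma \pi = \sgn(\pi) N_\gamma$ together with $\xx^\ax = \pi \cdot \xx^\bx$, we obtain $N_\gamma \xx^\bx = \sgn(\pi) N_\gamma \xx^\ax$, which is $\pm$ an element of $\mathcal{B}\nls^\gamma$.

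Combining both steps, $\mathcal{B}\nls^\gamma$ spans $N_\gamma R\nls$ and has cardinality equal to $\dim(N_\gamma R\nls)$, so it is a basis. We do not anticipate a substantial obstacle here: the only subtlety is verifying that the bubble sort of a $\gamma$-block can be expressed as a sequence of adjacent transpositions whose swapped positions both lie inside the block, so that Lemma \ref{lem: swapping} applies at every step and the aggregate permutation remains in $S_\gamma$. Both are automatic from the structure of bubble sort, so the main conceptual work has already been done in establishing Theorem \ref{thm: A}, Lemma \ref{lem: swapping}, and Corollary \ref{cor: subset of perms}.
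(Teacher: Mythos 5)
Your proposal is correct and follows essentially the same route as the paper: spanning via Theorem \ref{thm: A}, killing blocks with repeated exponents, straightening the remaining words with adjacent swaps justified by Lemma \ref{lem: swapping}, and then invoking the cardinality count of Corollary \ref{cor: subset of perms}. The extra details you supply (coset pairing for the vanishing case, and the check that bubble-sort swaps stay inside a contiguous $\gamma$-block so the lemma applies) are fine and merely make explicit what the paper leaves implicit.
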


\begin{proof}
From Theorem \ref{thm: A}, we know that $\{N_\gamma \xx^{\ax}: \ax\in \mathcal{D}\nls\}$ must span $N_\gamma R\nls$. 

We show that $\mathcal{B}\nls^\gamma$ spans $R\nls$. 
First, consider $\ax\in \mathcal{D}\nls$ such that when we split $\ax$ into blocks of size $\gamma_1,\dots, \gamma_l$, some block contains a repeated entry. Then, we must have $N_\gamma \xx^{\ax} = 0$, since $N_\gamma$ antisymmetrizes the exponents within the blocks. 

Now, consider $\ax\in \mathcal{D}\nls$ where the blocks consist of distinct entries, but $\ax\not\in \mathcal{D}\nls$. This means there exists an index $j$ such that $a_j> a_{j+1}$ belong to the same block. If $\tilde{ax}$ is the result of swapping $a_j$ and $a_{j+1}$, we have that $N_\gamma \xx^{\ax} = -N_\gamma \xx^{\tilde{\ax}}$. Furthermore, we know $\tilde{\ax} \in \mathcal{D}\nls$ by Lemma \ref{lem: swapping}.  In this way, for each such $\ax$, we can construct a $\bx\in \mathcal{D}\nls^\gamma$ such that $N_\gamma \xx^{\ax} = \pm N_\gamma \xx^{\bx}$. Thus $\mathcal{B}\nls^\gamma$ spans $N_\gamma R\nls$.

Note that $|\mathcal{B}\nls^\gamma| = |\mathcal{D}\nls^\gamma|$, thus from Corollary \ref{cor: subset of perms} we have that $\mathcal{B}\nls^\gamma$ is a basis of $N_\gamma R\nls$.
\end{proof}

\begin{cor}
    We have 
    \begin{equation}\label{eq;hilb antisym}
        \Hilb_q(N_\gamma R\nls) = \sum_{\substack
        {(w,\mu) \\ 
        (P(w), \mu) \in \Sigma\nls\\ 
        \Asc(w) \subset 
        \{\gamma_l, \gamma_l+\gamma_{l-1},\dots, \gamma_l+\gamma_{l-1}+\cdots+\gamma_{2} \} 
        }
        }
        q^{\cocharge(w)+|\mu|}.
    \end{equation}

\end{cor}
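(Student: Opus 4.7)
The proof is essentially an assembly of results that have already been established in the section, so the plan is direct.

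The plan is to deduce the formula immediately from the basis $\BB\nls^\gamma$ of $N_\gamma R\nls$ constructed in Proposition \ref{prop: antisym basis}. Since each $N_\gamma \xx^\ax$ is homogeneous of degree $|\ax| := a_1 + \dots + a_n$, the Hilbert series of $N_\gamma R\nls$ is simply
\[
    \Hilb_q(N_\gamma R\nls) \;=\; \sum_{\ax \in \DD\nls^\gamma} q^{|\ax|}.
\]
So the task reduces to re-indexing this sum using the bijection from Proposition \ref{lem: Dnls to pairs}, which identifies $\DD\nls$ with pairs $(w,\mu)$ satisfying $(P(w),\mu) \in \Sigma\nls$ via $\ax = \rev(\cc(w,\mu))$.

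Next, I would verify the degree equality $|\ax| = \cocharge(w) + |\mu|$. Since $|\rev(\cc(w,\mu))| = \sum_i \cc(w,\mu)_i$, splitting according to \eqref{defn: Phi map} gives
\[
    \sum_i \cc(w,\mu)_i \;=\; \sum_i \cc(w)_i + \sum_{i:\, w_i > k} \mu_{n - w_i + 1} \;=\; \cocharge(w) + \sum_{j=1}^{n-k}\mu_j \;=\; \cocharge(w) + |\mu|,
\]
where we used that as $w_i$ ranges over $\{k+1,\dots,n\}$, the index $n-w_i+1$ ranges bijectively over $\{1,\dots,n-k\}$. Finally, Corollary \ref{cor: subset of perms} translates the defining condition $\ax \in \DD\nls^\gamma$ (that is, $\ax$ is strictly increasing on $\gamma$) into the ascent condition $\Asc(w) \subset \{\gamma_l, \gamma_l + \gamma_{l-1}, \dots, \gamma_l + \dots + \gamma_2\}$. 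Substituting these two observations into the sum gives exactly \eqref{eq;hilb antisym}.

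There is no real obstacle here; everything of substance was already done in Proposition \ref{prop: antisym basis} (linear independence of the antisymmetrized descent monomials under the ascent condition) and in Proposition \ref{lem: Dnls to pairs} (the bijection between $\DD\nls$ and pairs $(w,\mu)$). The only thing to check carefully is the degree computation above, which is a one-line unfolding of the definition of $\cc(w,\mu)$. The proof is thus essentially a bookkeeping exercise compiling these ingredients, and no new idea seems needed.
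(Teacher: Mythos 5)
Your proposal is correct and follows the same route as the paper: both deduce the formula from the basis of Proposition \ref{prop: antisym basis}, use the bijection of Proposition \ref{lem: Dnls to pairs} together with Corollary \ref{cor: subset of perms} to re-index by pairs $(w,\mu)$ with the ascent condition, and identify the degree of $N_\gamma \xx^{\ax}$ with $\cocharge(w)+|\mu|$. Your explicit unfolding of the degree computation is a correct elaboration of the step the paper states without detail.
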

\begin{proof}
    This follows from the fact that the degree of $N_\gamma \xx^{\ax}$ is equal to $\cocharge(w)+|\mu|$, where $\rev(\cc(w,\mu)) = \ax$, and Proposition \ref{prop: antisym basis}.
\end{proof}

Finally, we get an explicit Schur expansion of $\Frob_q(R\nls)$ using $\Sigma\nls$.

\begin{thmb}\makeatletter\def\@currentlabel{B}
\begin{align}\label{eq: Frob, Sigmanls}
    \Frob_q(R\nls) = \sum\limits_{(S,\mathbf{i})\in \Sigma\nls} q^{\cocharge(S) + |\mu|} s_{\shape(S)}
\end{align}
\end{thmb}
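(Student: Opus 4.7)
The plan is to show that the two symmetric functions agree by testing them against every $e_\gamma$. Since any homogeneous symmetric function $f$ of degree $n$ is uniquely determined by its Hall pairings $\langle e_\gamma, f\rangle$ as $\gamma$ ranges over compositions of $n$, and since Proposition \ref{prop: dim and antisym} (together with its graded analogue) identifies $\langle e_\gamma, \Frob_q(R\nls)\rangle$ with $\Hilb_q(N_\gamma R\nls)$, it suffices to verify that the proposed right-hand side satisfies
$$\left\langle e_\gamma,\ \sum_{(S,\mu)\in\Sigma\nls} q^{\cocharge(S)+|\mu|} s_{\shape(S)}\right\rangle = \Hilb_q(N_\gamma R\nls)$$
for every $\gamma \models n$. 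The right-hand side is already computed by Proposition \ref{prop: antisym basis} and equation \eqref{eq;hilb antisym}, so the task reduces to a purely combinatorial identity.

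To carry this out, I would apply the isometry $\omega$ and Schur orthonormality to rewrite
$$\left\langle e_\gamma, s_{\shape(S)}\right\rangle = \left\langle h_\gamma, s_{\shape(S)^t}\right\rangle = K_{\shape(S)^t,\gamma},$$
so the left-hand side becomes $\sum_{(S,\mu)\in\Sigma\nls} q^{\cocharge(S)+|\mu|} K_{\shape(S)^t,\gamma}$. By Corollary \ref{cor: kostka number permutations}, for each fixed $S$ with $\shape(S) = \lambda$,
$$K_{\lambda^t,\gamma} = \left|\{w \in \sym_n : P(w) = S^t,\ \Des(w) \subset \{\gamma_1,\gamma_1+\gamma_2,\dots,\gamma_1+\cdots+\gamma_{l-1}\}\}\right|.$$
Next I would invoke Proposition \ref{prop: RSK props}\ref{thm:RSK rev}: the map $w \mapsto \rev(w)$ gives a bijection between permutations with $P(w) = S^t$ and permutations with $P(\rev(w)) = S$, and an elementary index computation shows that it converts the descent condition $\Des(w) \subset \{\gamma_1,\gamma_1+\gamma_2,\dots\}$ into the ascent condition $\Asc(\rev(w)) \subset \{\gamma_l,\gamma_l+\gamma_{l-1},\dots,\gamma_l+\cdots+\gamma_2\}$.

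Combining these steps and using that cocharge is constant on Knuth classes (so $\cocharge(w) = \cocharge(P(w)) = \cocharge(S)$ whenever $P(w) = S$), the inner product on the left becomes
$$\sum_{\substack{(w,\mu) \\ (P(w),\mu)\in\Sigma\nls \\ \Asc(w) \subset \{\gamma_l,\gamma_l+\gamma_{l-1},\dots,\gamma_l+\cdots+\gamma_2\}}} q^{\cocharge(w)+|\mu|},$$
which is exactly \eqref{eq;hilb antisym}. The main obstacle is a bookkeeping one: making sure that the reversal bijection interacts correctly with both the descent/ascent translation and with the pair $(S,\mu)$, i.e.\ confirming that $(P(w),\mu) \in \Sigma\nls$ is equivalent to $(P(\rev(w))^t,\mu) \in \Sigma\nls$ under the relabeling, and that the $\mu$ side of the condition is untouched. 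Once this verification is in place, equality of the Hilbert series for all $\gamma \models n$ yields equality of the graded Frobenius characters.
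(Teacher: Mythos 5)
Your proposal is correct and follows essentially the same route as the paper: pairing both sides with $e_\gamma$, converting $\langle e_\gamma, s_{\shape(S)}\rangle$ to $K_{\shape(S)^t,\gamma}$, translating via Corollary \ref{cor: kostka number permutations} and the RSK reversal property into a count of permutations with restricted ascent sets, and matching this against $\Hilb_q(N_\gamma R\nls)$ as computed from the descent basis in \eqref{eq;hilb antisym}. The bookkeeping point you flag is handled exactly as you describe (the parameter $\mu$ is carried along untouched, and cocharge is constant on Knuth classes), so nothing further is needed.
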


\begin{proof}
  Doing the same computation as in \eqref{eq: dim antisym} and \eqref{eq: dim N Rnls}, except now carrying over the power of $q$, we have
    \begin{align*}
        \langle e_\gamma, \sum\limits_{(S,\mu)\in \Sigma\nls} q^{\cocharge(S) +|\mu|} s_{\shape(S)}\rangle  &=   \sum_{(S,\mu)\in \Sigma\nls} q^{\cocharge(S) +|\mu|}  K_{\shape(S)^t,\gamma}  \\  &= \sum\limits_{\substack{(w,\mu) \\ (P(w),\mu) \in \Sigma\nls\\  \text{Asc}(w)\subset \{\gamma_l,\gamma_l + \gamma_{l-1}, \dots, \gamma_l +\cdots + \gamma_{2}\}}} 
        q^{\cocharge(P(w)) +|\mu|}.
        \numberthis\label{eq: final line} \end{align*}
Since cocharge$(P(w)) = \cocharge(w)$, we can see that \eqref{eq: final line} is equal to \eqref{eq;hilb antisym}.
\end{proof}

\subsection{Connections to known formulas for Frobenius characters}
We can see that the correct specializations recover the known Frobenius characters for $R_{\lambda}$ and $R_{n,k}$. In particular, when $(n,\lambda,s) = (k,\lambda,\ell(\lambda))$, we recover \eqref{eq: frob R_lambda}, as proven in \cite{hanada2025charge}.

When we set $(n,\lambda,s) = (n,1^k,k)$, we get
  \[\Frob_q(R\nls) = \sum\limits_{\substack{S\in \SYT_n\\ \des(S) < k}} \left(\sum\limits_{\mu\subset(n-k)\times(k-\des(S)-1)} q^{|\mu|}\right) q^{\cocharge(S)} s_{\shape(S)}.\]

Note that this is equivalent to following expression for $\Frob_q(R_{n,k})$ \cite[Corollary 6.13]{HaglundRhoadesShimozono2018}, using properties of the $q$-binomial coefficients, as well Proposition \ref{prop: maj, cc}.
\[\Frob_q(R_{n,k}) = \sum\limits_{\substack{S\in \SYT_n\\ \des(S) < k}} \stirling{n-\des(S)-1}{n-k}_{\!q} q^{\maj(S)} s_{\shape(S)}.\]

Furthermore, there is the following Schur expansion formula for $\Frob_q(R\nls)$, due to Gillespie--Griffin \cite{battery}, in terms of battery-powered tableaux. 
\begin{thm}[Gillespie--Griffin {\cite[Theorem 1.6]{battery}}]\label{thm: og battery}
    \begin{align}\label{eq: Frob, battery}
        \Frob_q(R\nls) = \frac{1}{q^{\binom{s-1}{2}(n-k)
}} \sum\limits_{T = (D,B) \in \mathcal{T}^+\nls} q^{\cocharge(T)} s_{\shape(D)}.
\end{align}
\end{thm}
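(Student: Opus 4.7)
The plan is to deduce this theorem directly from Theorem B via the bijection $\Psi: \Sigma\nls \to \mathcal{T}^+\nls$ of Proposition \ref{prop: bij psi}. This is essentially a reindexing argument: I would rewrite the right-hand side of the claimed identity by pulling the sum back along $\Psi^{-1}$, and observe that the $q^{-\binom{s-1}{2}(n-k)}$ prefactor exactly cancels the rigid contribution of the battery to the cocharge. The same argument simultaneously establishes Corollary C, giving the combinatorial bridge to the Gillespie--Griffin formula that was missing from their geometric proof.

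First, I would pin down the precise cocharge identity along $\Psi$. For any $(S,\mu) \in \Sigma\nls$ with image $T = \Psi(S,\mu)$, the tableau $\tilde{\Psi}(S,\mu) = \cc(\std(T))$ splits as $\cc(S,\mu)$ on the device portion and the superstandard tableau of shape $(n-k) \times (s-1)$ on the battery, the latter by Lemma \ref{lem: cc of battery}. Summing entries on each piece and using that cocharge equals the sum of cocharge labels and is preserved under Lascoux standardization yields
\[\cocharge(T) \;=\; \cocharge(S) \;+\; |\mu| \;+\; \binom{s-1}{2}(n-k).\]
Combined with the shape identity $\shape(D) = \shape(S)$ from Proposition \ref{prop: bij psi}, this gives
\[\frac{1}{q^{\binom{s-1}{2}(n-k)}} \sum_{T = (D,B) \in \mathcal{T}^+\nls} q^{\cocharge(T)} s_{\shape(D)} \;=\; \sum_{(S,\mu) \in \Sigma\nls} q^{\cocharge(S)+|\mu|} s_{\shape(S)},\]
and Theorem B identifies the right-hand side with $\Frob_q(R\nls)$.

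There is no genuine obstacle here; all the substantive work has already been done in Proposition \ref{prop: bij psi} (the construction of $\Psi$ and verification that device shapes match) and Lemma \ref{lem: cc of battery} (the rigidity of the battery in the cocharge tableau). The one subtlety worth checking is that the cocharge statistic on battery-powered tableaux used in \cite{battery} agrees with $\cocharge(\std(T))$, but this is automatic since standardization preserves cocharge on any (skew) semistandard tableau.
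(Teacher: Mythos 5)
Your argument is exactly the paper's: the identity is deduced from Theorem \ref{thm: b} by reindexing the sum along the bijection $\Psi$ of Proposition \ref{prop: bij psi}, with the prefactor $q^{-\binom{s-1}{2}(n-k)}$ cancelling the battery's fixed cocharge contribution from Lemma \ref{lem: cc of battery}. Your explicit cocharge identity $\cocharge(T) = \cocharge(S) + |\mu| + \binom{s-1}{2}(n-k)$ is the correct bookkeeping (it matches the proof, rather than the literal statement, of Proposition \ref{prop: bij psi}), and there are no gaps.
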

In \cite{battery}, the authors give a geometric proof of \eqref{eq: Frob, battery} and pose the question whether there is more direct algebraic or combinatorial proof. Though recent results \cite{skewing_rise_delta} give such explanation for $R_{n,k}$, there is still no known such argument for general $R\nls$.

Our formula provides an algebraic and combinatorial proof for Theorem \ref{thm: og battery} by showing that it is equivalent to Theorem \ref{thm: b} up to a combinatorial bijection.

\begin{corc}
Theorem \ref{thm: b} is equivalent to Theorem \ref{thm: og battery} up to a combinatorial bijection between the sets $\Sigma\nls$ and $\mathcal{T}^+\nls$. 
\end{corc}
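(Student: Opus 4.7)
The plan is to invoke the bijection $\Psi : \Sigma\nls \to \mathcal{T}^+\nls$ already established in Proposition \ref{prop: bij psi} as the desired combinatorial bijection, and to check that its compatibilities with shape and cocharge are precisely what is needed to convert between the two Schur expansions. No new combinatorial construction is required; all of the work has already been done in Section \ref{sec:counting}.

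First I would recall the shape statement from Proposition \ref{prop: bij psi}: for each $(S,\mu) \in \Sigma\nls$, the battery-powered tableau $T = \Psi(S,\mu)$ has device of shape $\shape(S)$. This pairs up the Schur terms on the two sides. Next I would use the cocharge decomposition, which is where Lemma \ref{lem: cc of battery} enters: the cocharge tableau $\cc(\std(T))$ splits as a battery component, whose entries always sum to the constant $\binom{s-1}{2}(n-k)$, plus a device component, whose entries sum to $\cocharge(S) + |\mu|$. Reading this through the conventions used in \cite{battery}, the total cocharge of $T$ exceeds $\cocharge(S) + |\mu|$ by exactly $\binom{s-1}{2}(n-k)$.

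The proof then reduces to direct substitution. Starting from the right-hand side of \eqref{eq: Frob, battery} and applying $\Psi^{-1}$ term-by-term, I obtain
\[
\frac{1}{q^{\binom{s-1}{2}(n-k)}} \sum_{T = (D,B) \in \mathcal{T}^+\nls} q^{\cocharge(T)} s_{\shape(D)} \;=\; \sum_{(S,\mu) \in \Sigma\nls} q^{\cocharge(S) + |\mu|} s_{\shape(S)},
\]
where the prefactor cancels exactly against the constant battery contribution. The right-hand side is Theorem \ref{thm: b}, so Theorems \ref{thm: b} and \ref{thm: og battery} imply one another through $\Psi$.

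There is no genuine obstacle remaining: the construction of $\Psi$ and the verification of its shape- and cocharge-preservation were carried out in detail when building $\Sigma\nls$ and $\std(\mathcal{T}^+\nls)$. The only bookkeeping subtlety worth flagging is reconciling two cocharge conventions, namely the statistic on $\mathcal{T}^+\nls$ used by Gillespie--Griffin (which records the full cocharge of the standardization and hence includes the battery piece) against the shifted statistic coming naturally from $\Sigma\nls$ via Lascoux standardization. This discrepancy is precisely what the $q^{\binom{s-1}{2}(n-k)}$ normalizing factor in \eqref{eq: Frob, battery} encodes, and once it is accounted for the equivalence is immediate.
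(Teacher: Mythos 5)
Your proposal is correct and follows essentially the same route as the paper: the paper's proof simply applies the bijection $\Psi$ of Proposition \ref{prop: bij psi} term-by-term to \eqref{eq: Frob, Sigmanls} to obtain \eqref{eq: Frob, battery}. Your extra care in reconciling the two cocharge conventions (the full cocharge of $T$ including the constant battery contribution $\binom{s-1}{2}(n-k)$, which the prefactor cancels) is exactly the bookkeeping the paper performs inside the proof of Proposition \ref{prop: bij psi} via Lemma \ref{lem: cc of battery}.
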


\begin{proof}
The equality of \eqref{eq: Frob, battery} and  \eqref{eq: Frob, Sigmanls} follows from Proposition \ref{prop: bij psi}. In particular, if we apply $\Psi$ to \eqref{eq: Frob, Sigmanls}, we get \eqref{eq: Frob, battery}. 
\end{proof}

\section{Higher Specht Basis}\label{sec:specht}
We now give our second basis of $R\nls$, which is a \emph{higher Specht basis}, generalizing the work of Gillespie-Rhoades for $R_{n,k}$ \cite{RhoadesGillespie2021}, which was in turn an extension of Ariki-Terasoma-Yamada \cite{ariki1997higher}. Most of the results in this section hold for arbitrary $\lambda$; we will specify when we invoke that $\lambda$ has at most two parts. Our construction is obtained from generalizing the work of the second author in \cite{hanada2025charge}. Recall that here we are taking our partitions $\mu\subseteq (n-k)\times (k-\des(S)-1)$ and writing them as tuples $(i_1,\dots, i_{n-k})$ of nonnegative integers, where $i_{n-k} = \mu_{n-k}, i_{j} = \mu_j - \mu_{j+1}$.

\begin{defn}\label{def: higherspechtbasis}
    Let $\lambda \vdash k$, $k \leq n$, and $\ell(\lambda) \leq s$. We define
    \begin{equation}\label{eq: def-higherspechtbasis}
        \CC\nls := \{ F_T^S e_1^{i_1}\dots e_{n-k}^{i_{n-k}}: (S,\ix)\in \Sigma\nls, \shape(S) = \shape(T)\}
    \end{equation}
\end{defn}


\begin{thmc}\makeatletter\def\@currentlabel{D}\label{thm: higherspecht}
    Let $\lambda = (\lambda_1,\lambda_2)$ be a partition with at most two rows. The collection of polynomials $\CC_{n,\lambda,\ell(\lambda)}$ forms a higher Specht basis of $R_{n,\lambda,\ell(\lambda)}$.
\end{thmc}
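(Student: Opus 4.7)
The plan is to compare $\CC_{n,\lambda,\ell(\lambda)}$ with the descent basis $\BB_{n,\lambda,\ell(\lambda)}$ of Theorem \ref{thm: A} via a leading-term argument under the descent order of Definition \ref{def: descent-order}, generalizing the strategy used by Ariki--Terasoma--Yamada \cite{ariki1997higher} for $R_n$ and Gillespie--Rhoades \cite{RhoadesGillespie2021} for $R_{n,k}$.

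First, I would verify the cardinality $|\CC_{n,\lambda,\ell(\lambda)}| = \dim R_{n,\lambda,\ell(\lambda)}$. Expanding $\Frob(R_{n,\lambda,\ell(\lambda)})$ into irreducibles via Theorem \ref{thm: b} at $q = 1$ and using $\dim V_\mu = |\SYT(\mu)|$ gives
\[ \dim R_{n,\lambda,\ell(\lambda)} = \sum_{(S,\ix) \in \Sigma_{n,\lambda,\ell(\lambda)}} |\SYT(\sh(S))|, \]
which matches $|\CC_{n,\lambda,\ell(\lambda)}|$ by Definition \ref{def: higherspechtbasis}.

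Next, I would determine the leading term of $F_T^S \cdot e_1^{i_1} \cdots e_{n-k}^{i_{n-k}}$ with respect to $<_{\des}$. The Ariki--Terasoma--Yamada construction gives that the leading term of $F_T^S$ is a nonzero scalar multiple of the descent monomial $\xx^{\rev(\cc(w(S,T)))}$, where $w(S,T) \in \sym_n$ is the permutation associated to $(S,T)$ under RSK (so that $P(w(S,T)) = S$ and $Q(w(S,T)) = T$). Multiplying by $e_1^{i_1} \cdots e_{n-k}^{i_{n-k}}$ shifts the exponents on the largest $n-k$ variables in exactly the pattern of the boosted cocharge $\cc(w,\mu)$ of \eqref{defn: Phi map}, with $\mu_j = i_j + \cdots + i_{n-k}$. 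By Proposition \ref{lem: Dnls to pairs}, the resulting assignment $(S,T,\ix) \mapsto \mathrm{lead}(F_T^S e_1^{i_1} \cdots e_{n-k}^{i_{n-k}})$ is a bijection onto $\BB_{n,\lambda,\ell(\lambda)}$. Linear independence of $\CC_{n,\lambda,\ell(\lambda)}$ in $R_{n,\lambda,\ell(\lambda)}$ then follows from upper triangularity with the descent basis, and the matching count upgrades linear independence to a basis.

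The main obstacle is to verify that the boosted leading term actually lies in $\BB_{n,\lambda,\ell(\lambda)}$, which by \eqref{eq: Dnls} amounts to producing an ordered set partition in $\osp_{n,\lambda'}$ adapted to $w(S,T)$ witnessing that $P(w(S,T))\vert_k = S\vert_k$ has catabolism type dominating $\lambda$. For $\lambda = (\lambda_1,\lambda_2)$ with two rows, the entries of $\cc(S\vert_k)$ take at most two distinct values, so Blasiak's Algorithm \ref{alg: ctype} can be executed directly and the requisite ordered set partition constructed by hand following Lemma \ref{lem: constructing canonical OSP}. Combined with the constraint $\des(S) < s = 2$, this reduces the remaining verification to a finite case analysis controlled by the position of the unique descent of $S$ relative to the entries $1,\ldots,k$. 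For $\lambda$ with more than two rows, the catabolism-preserving leading-term analysis becomes substantially more delicate, which is precisely what prevents the argument from extending to Conjecture E.
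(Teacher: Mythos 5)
Your route (triangularity of $\CC_{n,\lambda,\ell(\lambda)}$ against the descent basis $\BB_{n,\lambda,\ell(\lambda)}$, plus the count) is genuinely different from the paper's, which instead builds a spanning set of $R_{n,\lambda,\ell(\lambda)}$ from the Gillespie--Rhoades basis of $\Q[\xx_n]/(x_1^s,\dots,x_n^s)$ via the surjections of Lemma \ref{lem:containment oif ideals} and Corollary \ref{cor: lambda-tilde-span}, and then kills the extraneous elements by a sign-reversing involution on $\DX_{\col}(U,s)\times\snake(T)$ using the harmonic space of Rhoades--Yu--Zhao (Proposition \ref{prop: lambda-tilde-bad-death}); the two-row hypothesis enters there, not in any leading-term analysis. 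Your cardinality step is fine and agrees with Corollary \ref{cor: count}.

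The gap is in your central claim that the $<_{\des}$-leading term of $F_T^S$ is $\xx^{\rev(\cc(w(S,T)))}$. As stated this is false for the order of Definition \ref{def: descent-order}: take $n=2$, $S=T=\ytableaushort{2,1}$, so $F_T^S=x_2-x_1$ and the target descent monomial is $x_2$ (exponent $(0,1)$); but $(1,0)>_{\des}(0,1)$, so the $\des$-maximal monomial in the support of $F_T^S$ is $x_1$, not $x_2$. Consequently $F_T^S$ is not supported on monomials $\leq_{\des}\xx^{\rev(\cc(w))}$, and Proposition \ref{prop: descent-triangularity} (which only controls relations supported \emph{below} a descent word) cannot be applied directly; the reverse triangularity statement one would need instead ("a relation supported on monomials $\geq_{\des}\ax$ forces $c_\ax=0$") is also false, since $x_1+x_2=0$ in $R_2$. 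What would actually be required is a straightening lemma asserting that, after reducing $F_T^S e_1^{i_1}\cdots e_{n-k}^{i_{n-k}}$ modulo $I_{n,\lambda,\ell(\lambda)}$ into the basis $\BB_{n,\lambda,\ell(\lambda)}$, the coefficient of $\xx^{\rev(\cc(w,\mu))}$ is nonzero and the transition matrix is unitriangular; this is a substantive lemma that neither Ariki--Terasoma--Yamada nor Gillespie--Rhoades supplies for this order and this quotient, and you give no argument for it. Separately, the step you identify as the "main obstacle" --- checking that the exponent vector $\rev(\cc(w(S,T),\mu))$ lies in $\DD_{n,\lambda,\ell(\lambda)}$ --- is not an obstacle at all: it is exactly the content of Proposition \ref{lem: Dnls to pairs} applied to $(w,\mu)$ with $(P(w),\mu)\in\Sigma_{n,\lambda,\ell(\lambda)}$, so your case analysis there is unnecessary while the real difficulty is left unaddressed.
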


The proof of Theorem \ref{thm: higherspecht} will involves showing $\partial f \cdot g = 0$ for certain polynomials $f,g \in \Q[\xx_n]$. To do this, we first expand $f,g$ in terms of monomials:
$$ f = \sum_\ax c_\ax \xx^\ax \qquad g = \sum_\bx d_\bx \xx^\bx$$
so that by linearity, we have

\begin{equation}\label{eq: partials}
 \partial f \cdot g = \sum_{\ax,\bx} c_\ax d_\bx (\partial \xx^\ax \cdot \xx^\bx).
\end{equation}
We then construct a sign-reversing involution on the terms. The following two subsections will give a combinatorial mechanism for keeping track of the terms. The final subsection will contain the proof of Theorem \ref{thm: higherspecht}.

\subsection{Tableau Formula for higher Specht Polynomials}

We now give a combinatorial formulation for higher Specht polynomials as a signed sum over certain fillings of a tableau.

First, we go over notation we will use throughout this section. 
Let $\gamma \subset \Z_{\geq 0} \times \Z_{\geq 0}$ to be a \emph{diagram} in the plane. We use $\mu,\lambda$ for partition shapes, and reserve $\gamma$ for more irregular shapes. A \emph{filling} of $\gamma$ is an assignment of nonnegative integers to each box $D: \gamma \to \N$. For a box $u = (a,b) \in \gamma$, we denote its entry by $D(u)$, and set the \emph{height} to be $\height(u) = b$. For a one-to-one filling, we may define the inverse map $D^{-1}: \im(D) \to \gamma$.

\begin{defn}\label{def: hs-monomial}
    Let $S,P$ denote fillings of the same shape $\lambda$, where $P$ has content $(1^n)$. Then, define the monomial
    $$ \xx^S_P := \prod_{u \in \lambda} x_{P(u)}^{S(u)}.$$
\end{defn}

\begin{defn}\label{def: T-snaking}
    Let $T \in \SYT(\lambda)$. A \emph{$T$-snaking} is a filling $P$ of $\lambda$ with content $1^n$ such that if $i,j$ occur in the same column in $T$, then $i,j$ occur in different rows of $P$. We refer to the set of $T$-snakings as $\snake(T)$.
\end{defn}

We refer to these diagrams as $T$-snakings, because one can imagine taking the columns of $T$ and ``snaking" them down $\lambda$ in some order.

Let $T_i = \{a_1,\dots,a_m\}$ denote the set of entries in the $i$th column of $T$. Suppose these appear in the order $a_{\tau(1)},\dots,a_{\tau(m)}$ from bottom to top in $P$ where $\tau^{(i)}(P)$ is a permutation. We refer to the boxes $u_1,\dots,u_m \in \lambda$ corresponding to $a_1,\dots,a_m$ as the \emph{$i$th snake} of $P$, denoted $P^{(i)}$, and set
$$ (-1)^P := \prod_{i} (-1)^{\tau^{(i)}(P)}$$

\begin{example}\label{ex: T-snaking}
    Consider the following tableaux:
    $$S =
    \begin{ytableau}
        2 & 3 \\
        1 & 1 & 2 \\
        0 & 0 & 1 & 3
    \end{ytableau}
    , \qquad 
    T = 
    \begin{ytableau}
        *(pink)5 & *(cyan)9 \\
        *(pink)3 & *(cyan)6 & *(lime)8 \\
        *(pink)1 & *(cyan)2 & *(lime)4 & *(magenta)7
    \end{ytableau}
    $$
    Then, an example of a $T$-snaking $P$, with its corresponding signed monomial $(-1)^P\xx_P^S$ is given by
    $$P=
    \begin{ytableau}
        *(cyan)2 & *(pink)1 \\
        *(lime)4 & *(pink)5 & *(cyan)6 \\
        *(lime)8 & *(magenta)7 & *(cyan)9 & *(pink)3 
    \end{ytableau}
    \qquad
    (-1)^P\xx_P^S = +x_1^3 x_2^2 x_3^3 x_4^1 x_5^1 x_6^2 x_7^0 x_8^0 x_9^1 = +x_1^3 x_2^2 x_3^3 x_4 x_5 x_6^2 x_9
    $$
    $$
    (-1)^P = (-1)^{|231|}(-1)^{|321|}(-1)^{|21|}(-1)^{|1|} = (-1)^0(-1)^1(-1)^1(-1)^0 = +1
    $$
    where we have colored each column its corresponding snake accordingly.
\end{example}

\begin{lem}\label{lem: snaking-is-hs}
    Let $S$ be a $\lambda$-filling, and $T \in \SYT(\lambda)$. Then, the \emph{higher Specht polynomial} $F_T^S$ is given by
    \begin{equation}\label{eq: t-snakings}
        F_T^S = \varepsilon_T x_T^S = \sum_{P \in \snake(T)} (-1)^P \xx_P^S.
    \end{equation}
\end{lem}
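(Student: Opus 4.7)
The plan is to unfold the definition of $\varepsilon_T$ and rewrite $F_T^S$ as a sum indexed by $R(T) \times C(T)$. Since $\pi \in \sym_n$ acts on $\xx_T^S = \prod_{u \in \lambda} x_{T(u)}^{S(u)}$ by $\pi \cdot \xx_T^S = \xx_{\pi T}^S$, where $\pi T$ is the filling defined by $(\pi T)(u) := \pi(T(u))$, we obtain
\[
F_T^S = \sum_{(\sigma,\tau) \in R(T) \times C(T)} (-1)^\tau \, \xx_{\tau \sigma T}^S.
\]
The goal then reduces to producing a sign-matching bijection $\Phi : R(T) \times C(T) \to \snake(T)$ with $\Phi(\sigma,\tau) = \tau\sigma T$ and $(-1)^\tau = (-1)^{\Phi(\sigma,\tau)}$.

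First I would verify well-definedness: for any $(\sigma,\tau)$, the filling $P := \tau\sigma T$ is a $T$-snaking. If $i,j$ occur in the same column of $T$, then $\tau^{-1} \in C(T)$ sends them to entries $\tau^{-1}(i), \tau^{-1}(j)$ still in that column, hence in distinct rows of $T$; applying $\sigma^{-1} \in R(T)$ (which preserves rows) keeps them in distinct rows. Since $P^{-1}(i) = T^{-1}(\sigma^{-1}(\tau^{-1}(i)))$, and analogously for $j$, the boxes $P^{-1}(i)$ and $P^{-1}(j)$ lie in distinct rows of $\lambda$, as required.

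For bijectivity, injectivity is immediate: $\tau\sigma T = \tau'\sigma' T$ implies $\tau\sigma = \tau'\sigma'$ (since $T$ has content $1^n$), so $(\tau')^{-1}\tau = \sigma'\sigma^{-1} \in C(T) \cap R(T) = \{e\}$. For surjectivity, given $P \in \snake(T)$, set $\pi := P \circ T^{-1}$; the snake condition translates precisely to the statement that $\pi$ sends each row of $T$ to a transversal of the columns of $T$. One then builds $\sigma$ row-by-row, sending each entry $b$ in row $r$ of $T$ to the unique entry in row $r$ of $T$ lying in the same column as $\pi(b)$, and sets $\tau := \pi \sigma^{-1}$, which by construction preserves every column of $T$ and so lies in $C(T)$.

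Finally, for the signs, I would decompose $\tau = \prod_i \tau_i$, where $\tau_i$ permutes the column entries $T_i = \{a_1,\ldots,a_m\}$ of $T$ (indexed so that $a_j$ sits in row $r_j$ with $r_1 < \cdots < r_m$), giving $(-1)^\tau = \prod_i (-1)^{\tau_i}$. A direct trace of $P^{-1}(a_j) = T^{-1}(\sigma^{-1}(a_{\tau_i^{-1}(j)}))$ shows that $a_j$ occupies row $r_{\tau_i^{-1}(j)}$ of $P$, so reading the entries of $T_i$ from bottom to top in $P$ yields the order $a_{\tau_i(1)},\ldots,a_{\tau_i(m)}$; hence $\tau^{(i)}(P) = \tau_i$, and taking the product over $i$ gives $(-1)^\tau = (-1)^P$. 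The main obstacle in this plan is the surjectivity step, which is essentially the classical $R(T) \cdot C(T)$-decomposition lemma from Specht-module theory; the remainder is careful bookkeeping translating how column permutations rearrange the rows in which column entries of $T$ appear in $P$.
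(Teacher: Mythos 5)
The paper states Lemma \ref{lem: snaking-is-hs} without proof, so there is no argument of the authors' to compare against; judged on its own, your proposal is correct and is the natural unpacking of the definition: expand $\varepsilon_T$, observe $(\tau\sigma)\cdot \xx_T^S = \xx_{\tau\sigma T}^S$, and show $(\sigma,\tau)\mapsto\tau\sigma T$ is a sign-matching bijection onto $\snake(T)$. Your well-definedness and injectivity checks are right, and the sign bookkeeping is correct: with $a_j$ the entry of column $i$ of $T$ in row $j$, one gets $a_j$ sitting in row $\tau_i^{-1}(j)$ of $P$, so the bottom-to-top reading is $a_{\tau_i(1)},\dots,a_{\tau_i(m)}$ and $\tau^{(i)}(P)=\tau_i$, exactly as you say. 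The one step that deserves an extra line is surjectivity: when you define $\sigma(b)$ as ``the unique entry in row $r$ of $T$ lying in the same column as $\pi(b)$,'' you need that box to exist, i.e.\ that the column of $T$ containing $\pi(b)$ has index at most $\lambda_r$. The snaking condition only directly gives that $\pi$ sends each row of $T$ to entries in \emph{distinct} columns, not that those columns are among $1,\dots,\lambda_r$. This is repaired by the counting argument inside the classical $R(T)\cdot C(T)$ lemma you invoke: for each $c$, every row $r$ with $\lambda_r\ge c$ must send at least $\lambda_r-c+1$ of its entries into columns $\ge c$, and summing over rows already accounts for all $\sum_{j\ge c}\lambda_j'$ entries lying in those columns, so equality holds and rows with $\lambda_r<c$ contribute nothing there; hence each row $r$ hits exactly columns $1,\dots,\lambda_r$. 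With that remark (or an explicit citation of the decomposition lemma in this transposed form) the proof is complete.
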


\begin{example}
    Let $S = \ytableaushort{11,00}, T = \ytableaushort{34,12}$. The sixteen $T$-snakings, along with their monomials and signs are
    \begin{align*}
        F_T^S &=
        &\begin{ytableau}
            *(pink)3 & *(cyan)4 \\
            *(pink)1 & *(cyan)2
        \end{ytableau}\quad
        &
        &\begin{ytableau}
            *(pink)1 & *(cyan)4 \\
            *(pink)3 & *(cyan)2
        \end{ytableau}\quad
        &
        &\begin{ytableau}
            *(pink)3 & *(cyan)2 \\
            *(pink)1 & *(cyan)4
        \end{ytableau}\quad
        &
        &\begin{ytableau}
            *(pink)1 & *(cyan)2 \\
            *(pink)3 & *(cyan)4
        \end{ytableau}\quad
        &
        &\begin{ytableau}
            *(pink)3 & *(cyan)4 \\
            *(cyan)2 & *(pink)1
        \end{ytableau}\quad
        &
        &\begin{ytableau}
            *(pink)1 & *(cyan)4 \\
            *(cyan)2 & *(pink)3
        \end{ytableau}\quad
        &
        &\begin{ytableau}
            *(pink)3 & *(cyan)2 \\
            *(cyan)4 & *(pink)1
        \end{ytableau}\quad
        &
        &\begin{ytableau}
            *(pink)1 & *(cyan)2 \\
            *(cyan)4 & *(pink)3
        \end{ytableau}\quad
        \\
        &
        &x_3x_4 \quad
        &-&x_1x_4\quad
        &-&x_2x_3\quad
        &+&x_1x_2\quad
        &+&x_3x_4\quad
        &-&x_1x_4\quad
        &-&x_2x_3\quad
        &+&x_1x_2\quad \\
        &
        &\begin{ytableau}
            *(cyan)4 & *(pink)3\\
            *(cyan)2 & *(pink)1
        \end{ytableau}\quad
        &
        &\begin{ytableau}
            *(cyan)4 & *(pink)1\\
            *(cyan)2 & *(pink)3
        \end{ytableau}\quad
        &
        &\begin{ytableau}
            *(cyan)2 & *(pink)3\\
            *(cyan)4 & *(pink)1
        \end{ytableau}\quad
        &
        &\begin{ytableau}
            *(cyan)2 & *(pink)1\\
            *(cyan)4 & *(pink)3
        \end{ytableau}\quad
        &
        &\begin{ytableau}
            *(cyan)4 & *(pink)3\\
            *(pink)1 & *(cyan)2
        \end{ytableau}\quad
        &
        &\begin{ytableau}
            *(cyan)4 & *(pink)1\\
            *(pink)3 & *(cyan)2
        \end{ytableau}\quad
        &
        &\begin{ytableau}
            *(cyan)2 & *(pink)3\\
            *(pink)1 & *(cyan)4
        \end{ytableau}\quad
        &
        &\begin{ytableau}
            *(cyan)2 & *(pink)1\\
            *(pink)3 & *(cyan)4
        \end{ytableau}\quad
        \\
        &+
        &x_3x_4\quad
        &-&x_1x_4\quad
        &-&x_2x_3\quad
        &+&x_1x_2\quad
        &+&x_3x_4\quad
        &-&x_1x_4\quad
        &-&x_2x_3\quad
        &+&x_1x_2\quad
    \end{align*}
\end{example}

\subsection{Constructions for Injective Tableau}

We first recall a few definitions and theorems from Rhoades-Yu-Zhao \cite{rhoades2020harmonic}, which will be key ingredients in our proof. We adjust notations slightly to match Griffin's for consistency. 

\begin{defn}\label{def: Vnls}
    The \emph{harmonic space} $V\nls$ is defined to be the Macaulay inverse system of $R\nls$:
    $$ V\nls := I\nls^\perp = \{g \in \Q[\xx_n] : \partial f \cdot g = 0, \forall f \in I\nls\} $$
\end{defn}

Given a $\lambda$-filling $U: \lambda\to \N$ with positive entries, we say that $U$ is \emph{column strict} if the entries decrease in the columns from top to bottom. We say that $U$ is \emph{injective} if $U$ contains distinct entries. The set of column-strict, injective fillings with entries $\leq n$ is denoted by $\Inj(\lambda, \leq n)$. 

Let $A = \{i_1 < \dots < i_r\} \subset [n]$ be a size $r$-subset. We may consider the Vandermonde determinant in these variables:

$$ \Delta(A) := 
\begin{vmatrix}
    1 & \dots & 1 \\
    x_{i_1} & \dots & x_{i_r} \\
    \vdots & \ddots & \vdots\\
    x_{i_1}^{r-1} & \dots& x_{i_r}^{r-1}
\end{vmatrix}
=
\varepsilon_r \cdot (x_{i_2}x_{i_3}^2\dots x_{i_r}^{r-1})
$$
where $\varepsilon_r := \sum_{\sigma \in \sym_r} (-1)^\sigma \cdot \sigma$ is the sign idempotent for $\sym_r$.

Given a $\lambda$-filling $U$, where $\lambda$ has $m$ columns, let the sets of entries in each column be denoted by $U_1,\dots,U_m$. Then, we define

$$ \Delta_U := \Delta(U_1)\dots \Delta(U_m)$$
to be the products of Vandermondes in each column. Rearranging the entries within the columns of $U$ only changes the sign of $\Delta_U$, so we typically assume $U$ is column strict. In this case, we have that $\Delta_U$ is the \emph{Garnir polynomial} associated to $U$. 

If $U$ is injective as well with entries $\leq n$, Rhoades-Yu-Zhao (\cite{rhoades2020harmonic}) introduce an $(n,\lambda,s)$-extension as follows:

$$ \Delta_{U,s} := \Delta_U \prod_{\substack{1 \leq i \leq n \\ i \not\in U}} x_i^{s-1}.$$

We can now describe the generating set of $V\nls$:

\begin{thm}[Rhoades-Yu-Zhao {\cite{rhoades2020harmonic}}]\label{thm: harmonic-space-gens}
    Fix $s \geq \ell(\lambda)$. Then, we have

    $$V\nls = \Q[\partial \xx_n] \cdot \{\Delta_{U,s} : U \in \Inj(\lambda,\leq n)\}$$
\end{thm}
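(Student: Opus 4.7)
The plan is to establish both inclusions in $V\nls = \Q[\partial\xx_n] \cdot \{\Delta_{U,s} : U \in \Inj(\lambda, \leq n)\}$. First observe that $V\nls$ is automatically stable under the action of $\Q[\partial\xx_n]$: if $g \in V\nls$ and $h \in \Q[\xx_n]$, then for any $f \in I\nls$ we have $\partial f \cdot \partial h \cdot g = \partial(fh) \cdot g = 0$ since $I\nls$ is an ideal. Thus the containment $\supseteq$ reduces to checking each $\Delta_{U,s}$ itself lies in $V\nls$.

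For this I would verify $\partial f \cdot \Delta_{U,s} = 0$ against each generator of $I\nls$. For $f = x_i^s$ the claim is immediate: if $i \in U$ lies in column $U_j$, then $x_i$ appears in $\Delta(U_j)$ with degree at most $|U_j| - 1 \leq \ell(\lambda) - 1 \leq s - 1$; if $i \notin U$, then $x_i$ appears only as $x_i^{s-1}$. For $f = e_d(S)$ with $d > |S| - (\lambda'_n + \cdots + \lambda'_{n-|S|+1})$, I would expand $\partial e_d(S) \cdot \Delta_{U,s}$ as a signed sum over $d$-subsets $T \subseteq S$ of terms $\partial(\prod_{i \in T} x_i) \cdot \Delta_{U,s}$. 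The numerical condition on $d$ forces, by pigeonhole, that every such $T$ must either differentiate some $x_i$ with $i \notin U$ more than $s-1$ times, or differentiate two variables from the same column $U_j$ simultaneously; the first kills the term directly and the second kills it after antisymmetrization within $\Delta(U_j)$.

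For the reverse inclusion $\subseteq$, I would argue by dimensions, using that $\dim V\nls = \dim R\nls$ in each degree by Macaulay duality. Theorem \ref{thm: A} gives $\dim R\nls = |\DD\nls|$, and this is organized via the bijection of Proposition \ref{lem: Dnls to pairs} by pairs $(w,\mu)$ with $(P(w),\mu) \in \Sigma\nls$. Writing $W := \Q[\partial\xx_n] \cdot \{\Delta_{U,s}\}$, I would use the apolarity pairing $\langle f, g\rangle = (\partial f \cdot g)\vert_{\xx = 0}$ and show that for each $\ax \in \DD\nls$ there is a distinguished $U_\ax \in \Inj(\lambda, \leq n)$ and a distinguished $h_\ax \in \Q[\xx_n]$ such that the pairing matrix of the $\xx^\ax$ against the elements $\partial(h_\ax) \cdot \Delta_{U_\ax, s}$ is triangular with nonzero diagonal under the descent order. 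The natural candidate for $U_\ax$ is to read off the ordered set partition $\sigma = (\AAAA_\lambda \mid \BBB)$ witnessing $\ax \in \DD\nls$ and place the sets $A_1, \dots, A_{\lambda_1}$ into the columns of $U_\ax$, while $h_\ax$ encodes both the intra-column descent data from each $\ax\vert_{A_i}$ and the ``boosting'' data from $\mu$ on $\BBB$.

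The main obstacle will be this last step: verifying triangularity of the pairing matrix and checking that no extraneous terms appear. Concretely, after differentiating $\Delta_{U_\ax, s}$ by the appropriate monomial operator, one obtains a signed sum of monomials whose leading term (in descent order) should match $\xx^\ax$, but controlling the lower-order terms requires an analog of the straightening argument used in the descent basis proof (Proposition \ref{prop: descent-triangularity}). I would expect the induction on the number of columns $\lambda_1$ to carry through here via the same factorization $\widetilde{\varphi}_A$ of Proposition \ref{prop: phi-delete-lambda1}, reducing the harmonic-spanning claim for $V\nls$ to the known cases of $V_{\lambda'_1}$ (classical Vandermonde harmonics) and $V_{n,\widehat{\lambda},s}$ (by induction).
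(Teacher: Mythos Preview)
This theorem is quoted from Rhoades--Yu--Zhao \cite{rhoades2020harmonic} and is not proved in the present paper; it is invoked only as a black box to obtain Corollary~\ref{cor: ideal-iff-kills-garnirs}. There is therefore no in-paper proof against which to compare your attempt.

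On the merits of your sketch, the forward inclusion has a genuine gap in the $e_d(S)$ case. Since $e_d(S)$ is squarefree, the operator $\partial e_d(S)$ applies each $\partial_i$ at most once, so your alternative (a)---differentiating some $x_i$ more than $s-1$ times---can never occur. Alternative (b) is also false as stated: applying $\partial_\alpha\partial_\beta$ to a Vandermonde $\Delta(U_j)$ with $\alpha,\beta\in U_j$ does not annihilate it in general (for $|U_j|=3$ one computes $\partial_\alpha\partial_\beta\Delta(U_j)=2(x_\beta-x_\alpha)\neq 0$), and there is no residual ``antisymmetrization'' to invoke, since $\Delta(U_j)$ is already the antisymmetrized object. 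The vanishing of $\partial e_d(S)\cdot\Delta_{U,s}$ does not come term-by-term over $d$-subsets $T$; it comes from splitting $e_d(S)=\sum_r e_r(S\cap U)\,e_{d-r}(S\setminus U)$, reducing the $\Delta_U$ factor to the classical Tanisaki annihilation for $R_\lambda$, and then checking that the Tanisaki inequality on $d$ forces the inequality needed on $r$ for every surviving summand. Your pigeonhole intuition points at the right numerics but misidentifies the mechanism.

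Your reverse-inclusion plan is not circular (Theorem~\ref{thm: A} and Corollary~\ref{cor: count} are established independently of this statement), but it is only a plan: you have not constructed the $h_\ax$, and there is no evident reason the maps $\varphi_A$ of Proposition~\ref{prop: phi-delete-lambda1}---which are maps of quotient rings, not of harmonic subspaces---should interact with the apolarity pairing in a way that makes the proposed induction go through.
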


\begin{cor}\label{cor: ideal-iff-kills-garnirs}
    We have that $f \in I\nls$ if and only if $\partial f \cdot \Delta_{U,s} = 0$ for all $T \in \Inj(\lambda,\leq n)$.
\end{cor}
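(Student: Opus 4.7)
The plan is to derive the corollary as a direct consequence of Theorem \ref{thm: harmonic-space-gens} combined with the standard double-annihilator property of Macaulay inverse systems. Both directions rest on passing between $I\nls$ and its apolar dual $V\nls$, and the corollary amounts to the statement $I\nls = (V\nls)^\perp$, which is the reverse inclusion of the tautological $I\nls \subseteq (I\nls^{\perp})^\perp$.

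For the forward direction, the observation is immediate: by Theorem \ref{thm: harmonic-space-gens}, each $\Delta_{U,s}$ lies in $V\nls$, so the definition $V\nls = I\nls^\perp$ already forces $\partial f \cdot \Delta_{U,s} = 0$ whenever $f \in I\nls$. No extra work is needed here.

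For the backward direction, I would proceed in two steps. First, upgrade the hypothesis from the generators $\Delta_{U,s}$ to all of $V\nls$. The key tool is that polynomial derivations commute, which yields the identity $\partial f \cdot (\partial h \cdot g) = \partial h \cdot (\partial f \cdot g)$ for any $f,g,h \in \Q[\xx_n]$. Combined with Theorem \ref{thm: harmonic-space-gens}, which expresses every $g \in V\nls$ as a $\Q$-linear combination of elements of the form $\partial h \cdot \Delta_{U,s}$, this commutation identity and linearity immediately promote the hypothesis $\partial f \cdot \Delta_{U,s} = 0$ for all $U$ to $\partial f \cdot g = 0$ for all $g \in V\nls$. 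In other words, $f \in (V\nls)^\perp = (I\nls^{\perp})^\perp$.

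The second step is to conclude $(I\nls^{\perp})^\perp = I\nls$. This is the standard double-annihilator property for homogeneous ideals in a polynomial ring: the apolar pairing $\Q[\xx_n]_d \otimes \Q[\xx_n]_d \to \Q$ defined by $(f,g) \mapsto (\partial f \cdot g)(0)$ is nondegenerate on each graded piece, and since $I\nls$ is homogeneous by construction, we automatically have $(I\nls^{\perp})^\perp = I\nls$. Applying this fact finishes the proof. The main (mild) obstacle is recognizing the corollary as a bookkeeping consequence of Theorem \ref{thm: harmonic-space-gens} and the duality of homogeneous ideals; once the commutation identity is in hand, no genuine computation is required.
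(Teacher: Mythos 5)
Your proof is correct and is exactly the intended derivation: the paper states this corollary without proof, as an immediate consequence of Theorem \ref{thm: harmonic-space-gens}, and your argument (forward direction from $\Delta_{U,s}\in V\nls=I\nls^{\perp}$; backward direction by commuting partials to upgrade from the generators $\Delta_{U,s}$ to all of $V\nls$, then invoking $(I\nls^{\perp})^{\perp}=I\nls$ via nondegeneracy of the apolarity pairing on graded pieces of the homogeneous ideal $I\nls$) is the standard Macaulay-duality argument the authors are implicitly relying on. No gaps.
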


We now give a combinatorial representation for the monomials of $\Delta_{U,s}$.

\begin{defn}\label{def: Gamma-monomial}
    Let $D$ denote a filling of a diagram $\gamma \subset \Z_{\geq 0} \times \Z_{\geq 0}$ in the plane. We denote the \emph{monomial} of $D$, written $\xx_D$, to be

    $$ \xx_D := \prod_{u \in \gamma} x_{D(u)}^{\height(u)}$$
\end{defn}

We define a $i$th-\emph{column} of $\gamma$ to be the collection of (not necessarily connected) boxes $u = (i,b)$ with fixed $x$-coordinate for arbitrary $b$. If the entries in each column of $D$ are distinct, let $D_i := \{D(u) : u = (i,b)\}$ denote the entries in each column, and let $\sigma^{(i)}$ be the standardization (from bottom to top) of the $i$th column. We set $(-1)^D:= (-1)^{\sigma^{(1)}}\dots(-1)^{\sigma^{(m)}}$.

Fix $\lambda \vdash k \leq n$, and $s \geq \ell(\lambda)$. Set $\xi = \{ (\lambda_1+j,s-1):1 \leq j \leq n-k\}$ to be a horizontal strip of length $n-k$ with height $s-1$ starting at $\lambda_1+1$, and set $\gamma = \lambda \sqcup \xi$. Given an injective tableau $U \in \Inj(\lambda,\leq n)$, define
$$ \DX_{\col(U,s)} := \{ D: \gamma \to \N \ \vert \  D_i = U_i, \{D(u):u \in \xi\} =  [n]  \hspace{.5mm} \setminus \hspace{.5mm} (U_1 \cup\dots\cup U_m), \xi \text{ increasing} \} $$
i.e. the $\lambda$ component is a rearrangement of the entries within the columns of $U$, and the remaining entries fill $\xi$ in increasing order.

\begin{example}
    Let $\lambda = (3,1)$, $n = 7$, $s = 4$, so that $n-k = 3$. Then, we have
    $$ 
    \gamma =
    \begin{ytableau}
        \none & \none & \none & 3 & 3 & 3 \\
        \none \\
        1 \\
        0 & 0 & 0
    \end{ytableau}
    $$
    where the number in each box denotes its height. The three boxes in a row containing the $3$'s correspond to $\xi$.
\end{example}

\begin{lem}\label{lem: garnir-tab-formulation}
    We have that
    \begin{equation}\label{eq: garnir-tab-formulation}
        \Delta_{U,s} = \sum_{D \in \DX_{\col}(U,s)} (-1)^{D} \xx_D
    \end{equation}
\end{lem}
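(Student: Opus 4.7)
The plan is to expand both sides as signed sums of monomials and match them term by term; the right-hand side is engineered to be the combinatorial realization of the Leibniz expansion of the Vandermonde product, so the proof is largely unpacking definitions.

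First, I would expand each column Vandermonde. For a subset $A = \{a_1 < \dots < a_r\} \subset [n]$, the Leibniz formula gives
$$\Delta(A) = \sum_{\sigma \in \sym_r} (-1)^\sigma \prod_{j=1}^r x_{a_{\sigma(j)}}^{j-1}.$$
Each summand corresponds to a filling of a length-$r$ column where $a_{\sigma(j)}$ is placed at height $j-1$. Since $a_1 < \dots < a_r$, the permutation $\sigma$ is precisely the standardization (read bottom to top) of this column filling, so $(-1)^\sigma$ matches the column factor of $(-1)^{D}$ as defined for elements of $\DX_{\col}(U,s)$.

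Next, I would multiply the column Vandermondes $\Delta(U_i)$ for $i = 1,\dots,m$. Because $U$ is injective, distinct columns involve disjoint variable sets, so distributing the product gives
$$\Delta_U = \sum_{D'} (-1)^{D'} \prod_{u \in \lambda} x_{D'(u)}^{\height(u)},$$
summed over fillings $D'$ of $\lambda$ whose $i$-th column is some arrangement of $U_i$, with $(-1)^{D'}$ equal to the product of the per-column standardization signs.

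Finally, I would multiply by $\prod_{i \notin U} x_i^{s-1}$. This factor corresponds to placing the entries of $[n] \setminus (U_1 \cup \dots \cup U_m)$ into the boxes of $\xi$ at height $s-1$ in increasing order, which is exactly the constraint in the definition of $\DX_{\col}(U,s)$. Each box of $\xi$ lies in its own length-one column, so its standardization is trivial and contributes $+1$ to $(-1)^D$. Combining, every monomial in the expansion of $\Delta_{U,s}$ is indexed by a unique $D \in \DX_{\col}(U,s)$, with exponent $\height(u)$ at each box $u$ and the correct sign, giving \eqref{eq: garnir-tab-formulation}.

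There is no real obstacle; the argument is bookkeeping. The only point requiring mild care is recognizing that the factor $\prod_{i \notin U} x_i^{s-1}$, although symmetric in the remaining variables, contributes a \emph{single} term to $\DX_{\col}(U,s)$ rather than $(n-k)!$ because the $\xi$-entries are required to be increasing; this is the convention that makes the sign bookkeeping work out cleanly.
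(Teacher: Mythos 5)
Your proof is correct and is exactly the bookkeeping the paper has in mind: the paper in fact omits a proof of this lemma (offering only an example), treating it as a direct unpacking of the Leibniz expansion of each column Vandermonde, which is precisely what you carry out, including the correct identification of $(-1)^\sigma$ with the per-column standardization sign and the single increasing filling of $\xi$.
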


\begin{example}
    Let $\lambda = (2,1,1), n = 5, s = 4$, let $U = \ytableaushort{5,3,12}$ be an injective tableau. We have
    \begin{align*}
        \Delta_{U,s} &=
        &\begin{ytableau}
            \none & \none & 4 \\
            5 \\
            3 \\
            1 & 2
        \end{ytableau}\quad
        &&\begin{ytableau}
            \none & \none & 4 \\
            3 \\
            5 \\
            1 & 2
        \end{ytableau}\quad
        &&\begin{ytableau}
            \none & \none & 4 \\
            5 \\
            1 \\
            3 & 2
        \end{ytableau}\quad
        &&\begin{ytableau}
            \none & \none & 4 \\
            1 \\
            5 \\
            3 & 2
        \end{ytableau}\quad
        &&\begin{ytableau}
            \none & \none & 4 \\
            3 \\
            1 \\
            5 & 2
        \end{ytableau}\quad
        &&\begin{ytableau}
            \none & \none & 4 \\
            1 \\
            3 \\
            5 & 2
        \end{ytableau}\quad
        \\
        &&x_3x_5^2x_4^3 \quad
        &-&x_3^2 x_5 x_4^3\quad
        &-&x_1x_5^2x_4^3\quad
        &+&x_1^2x_5x_4^3\quad
        &+&x_1x_3^2x_4^3\quad
        &-&x_1^2x_3x_4^3\quad
    \end{align*}
\end{example}

Now we discuss how to keep track of the terms in \eqref{eq: partials}. For our applications, the expression $\partial f \cdot g$ will typically have $f = F_T^S, g = \Delta_{U,s}$ for the appropriate fillings $S,T,U$. As such, the monomials of $f$ will correspond to $T$-snakings $P \in \snake(T)$, and the monomials of $g$ will correspond to fillings $D \in \DX_{\col}(U,s)$. We conduct an analysis on the monomial $\partial \xx_P^S \cdot \xx_D$.

Define the exponent tuples to be
\begin{equation}\label{eq: exponent-tuple-def}
\ax(P,S) := ((S \circ P^{-1})(i))_{1 \leq i \leq n} \qquad \bx(D) := ((\height \circ D^{-1})(i))_{1 \leq i \leq n}
\end{equation}
That is, we have  $\ax(P,S)_i$ (resp. $\bx(D)_i$)
is equal to the exponent of $x_i$ in the monomial $\xx_P^S$ (resp. $\xx_D$). The following lemma is immediate.

\begin{lem}\label{lem: derivative-def}
    If $\ax(P,S)_i > \bx(D)_i$ for any $1 \leq i \leq n$, then $\partial \xx_P^S \cdot \xx_D = 0$. Otherwise,
    $$ \partial \xx_P^S \cdot \xx_D = \partial \xx^{\ax} \cdot \xx^{\bx} = C\prod_{i = 1}^n x_i^{\bx(D)_i - \ax(P,S)_i} $$
    where $C = \prod_{1 \leq i \leq n} \frac{\bx(D)_i!}{(\bx(D)_i-\ax(P,S)_i)!}$.
\end{lem}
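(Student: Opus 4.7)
The plan is to observe that this lemma is a direct computation once the notation is unpacked. The key realization is that the exponent tuples $\ax(P,S)$ and $\bx(D)$ defined in \eqref{eq: exponent-tuple-def} are precisely built so that $\xx_P^S = \xx^{\ax(P,S)}$ and $\xx_D = \xx^{\bx(D)}$: this is immediate from Definitions \ref{def: hs-monomial} and \ref{def: Gamma-monomial}, since $\ax(P,S)_i$ (respectively $\bx(D)_i$) records the exponent of $x_i$ in each monomial after reindexing boxes by their entry.

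Given this, the first equality $\partial \xx_P^S \cdot \xx_D = \partial \xx^{\ax} \cdot \xx^{\bx}$ is a tautology. For the second, I would write the polarization action explicitly as
\[
\partial \xx^{\ax} = \prod_{i=1}^n \partial_{x_i}^{a_i},
\]
and use the fact that partial derivatives in distinct variables commute with multiplication by other variables, so the operator factors:
\[
\partial \xx^{\ax} \cdot \xx^{\bx} = \prod_{i=1}^n \bigl(\partial_{x_i}^{a_i} x_i^{b_i}\bigr).
\]
Then the whole statement reduces to the one-variable identity
\[
\partial_{x}^{a} x^{b} = \begin{cases} 0 & \text{if } a > b, \\ \dfrac{b!}{(b-a)!}\, x^{b-a} & \text{if } a \leq b, \end{cases}
\]
which follows by induction on $a$. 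Multiplying these single-variable identities together yields zero as soon as some index has $a_i > b_i$ (establishing the first clause), and otherwise produces exactly the claimed product with constant $C = \prod_i \tfrac{b_i!}{(b_i - a_i)!}$.

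There is no genuine obstacle here: the statement is essentially the base case that lets us reinterpret the bilinear pairing $(f,g) \mapsto \partial f \cdot g$ in Definition \ref{def: Vnls} monomial by monomial. The only minor care required is making sure the conventions on $\partial \xx^{\ax}$ (no combinatorial normalization like $1/\ax!$) are spelled out consistently, since this sets the constants that will appear in the sign-reversing involutions later in Section \ref{sec:specht}.
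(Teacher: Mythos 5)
Your proof is correct and is exactly the direct computation the paper has in mind — the paper states this lemma as "immediate" and omits the verification entirely. Your unpacking of the exponent tuples and the variable-by-variable reduction to $\partial_x^a x^b = \tfrac{b!}{(b-a)!}x^{b-a}$ (or $0$ when $a>b$) is the standard argument and matches the paper's conventions, including the unnormalized pairing $\partial f\cdot g$.
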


\begin{example}\label{ex: derivatives}
    Let $P,S,T$ be as in Example \ref{ex: T-snaking}:
    $$
    S =
    \begin{ytableau}
        2 & 3 \\
        1 & 1 & 2 \\
        0 & 0 & 1 & 3
    \end{ytableau}
    , \qquad 
    T = 
    \begin{ytableau}
        *(pink)5 & *(cyan)9 \\
        *(pink)3 & *(cyan)6 & *(lime)8 \\
        *(pink)1 & *(cyan)2 & *(lime)4 & *(magenta)7
    \end{ytableau}
    , \qquad 
    P=
    \begin{ytableau}
        *(cyan)2 & *(pink)1 \\
        *(lime)4 & *(pink)5 & *(cyan)6 \\
        *(lime)8 & *(magenta)7 & *(cyan)9 & *(pink)3 
    \end{ytableau}
    $$
    then we have $\ax(P,S) = (3,2,3,1,1,2,0,0,1)$, so $\xx^\ax = x_1^3x_2^2x_3^3x_4x_5x_6^2x_9$, as before.

    Let $s = 3$, and $U, D \in \DX_{\col}(U,s)$ be as follows:
    $$U = 
    \begin{ytableau}
        7 \\
        4 & 8 \\
        2 & 5
    \end{ytableau}
    , \qquad 
    D = 
    \begin{ytableau}
        \none & \none & 1 & 3 & 6 & 9 \\
        2 \\
        4 & 5 \\
        7 & 8
    \end{ytableau}
    $$
    so that $\bx(D) = (3,2,3,1,1,3,0,0,3)$ and $\xx^\bx = x_1^3x_2^2x_3^3x_4x_5x_6^3x_9^3$. We have that 
    $$\partial \xx^\ax \cdot \xx^\bx = 3!\cdot2!\cdot3!\cdot 3 \cdot 3 \cdot x_6 x_9^2$$
    and indeed, $\ax_i \leq \bx_i$ for $1 \leq i \leq n$.
\end{example}

\subsection{Proof of Basis}

We now prove for $\lambda = (\lambda_1,\lambda_2)$ and $s = \ell(\lambda)$ that $\CC\nls$ is a basis $R\nls$: we give a brief outline of the argument here for clarity. First, we prove that for all $\lambda \vdash k \leq n$ and $s = \ell(\lambda)$, there is a composite surjective map

$$ \varphi: \Q[x_1,\dots,x_n]/(x_1^s,\dots,x_n^s) \twoheadrightarrow R_{n,\nu,s} \twoheadrightarrow R_{n,\lambda,s}$$
where $\nu$ is the unique smallest shape in dominance order with $\ell(\nu) = s$. Then, we show that 
\[\widetilde{\CC}_{n,s} = \{ F_T^S e_1^{i_1}\dots e_{n-k}^{i_{n-k}} : (S,\ix)\in \Sigma_{n,1^k, s}, \shape(S) = \shape(T)\}\]
spans $R_{n,\nu,s}$. We then show that if $\varphi(f) \neq 0$ for $f\in \tilde{\mathcal{C}}_{n,s} $, then $f \in \CC\nls$, so that $\CC\nls$ spans $R\nls$. Combined with Corollary \ref{cor: count}, we have that $\CC\nls$ is a basis of $R\nls$.

\begin{lem}\label{lem:containment oif ideals}
    Let $\lambda, \nu$ be as above, and $s = \ell(\lambda)$. We have the containment of ideals
    $$ (x_1^s,\dots,x_n^s) \subset I_{n,\nu,s} \subset I\nls$$
    As a consequence, the surjection $\varphi$ factors through $R_{n,\nu,s}$:
    $$
    \varphi: \Q[x_1,\dots,x_n]/(x_1^s,\dots,x_n^s) \twoheadrightarrow R_{n,\nu,s} \twoheadrightarrow R_{n,\lambda,s}
    $$
\end{lem}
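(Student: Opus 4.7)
The plan is to verify the two containments generator-by-generator, using that both ideals share the same types of generators (pure powers $x_i^s$ and partial elementary symmetric polynomials $e_d(S)$), and reduce the comparison of the $e_d(S)$-thresholds to a dominance inequality on the conjugate partitions. The factorization of $\varphi$ will then be a formal consequence.

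The first containment $(x_1^s,\dots,x_n^s) \subset I_{n,\nu,s}$ is immediate: by definition, $I_{n,\nu,s}$ includes the ideal generated by these pure powers. For the second containment $I_{n,\nu,s} \subset I\nls$, I will again handle the two families of generators separately. The generators $x_i^s$ lie in $I\nls$ by definition. For the Tanisaki-type generators $e_d(S)$, I will introduce the threshold function
\begin{equation*}
    b_\mu(r) := r - \sum_{i=n-r+1}^n \mu'_i,
\end{equation*}
so that $e_d(S) \in I_{n,\mu,s}$ iff $d > b_\mu(|S|)$. The heart of the argument is then to show $b_\lambda(r) \le b_\nu(r)$ for all $r$, which ensures that any generator $e_d(S)$ of $I_{n,\nu,s}$ already satisfies the (more permissive) threshold defining $I\nls$, and hence lies in $I\nls$.

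To prove the inequality $b_\lambda(r) \le b_\nu(r)$, I will use that $|\lambda| = |\nu| = k$, so $\sum_i \lambda'_i = \sum_i \nu'_i = k$, whence
\begin{equation*}
    b_\lambda(r) - b_\nu(r) = \sum_{i=1}^{n-r}\bigl(\lambda'_i - \nu'_i\bigr).
\end{equation*}
Since $\nu$ is the minimum in dominance order among partitions of $k$ with $\ell(\nu) = s$, and $\lambda$ itself satisfies $\ell(\lambda) = s$, we have $\nu \unlhd \lambda$; because conjugation reverses dominance, this gives $\lambda' \unlhd \nu'$, so $\sum_{i=1}^{n-r}\lambda'_i \le \sum_{i=1}^{n-r}\nu'_i$, yielding $b_\lambda(r) \le b_\nu(r)$.

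Finally, the factorization of $\varphi$ through $R_{n,\nu,s}$ is automatic: the nested ideal containments produce canonical quotient surjections $\Q[\xx_n]/(x_i^s) \twoheadrightarrow R_{n,\nu,s} \twoheadrightarrow R\nls$ whose composition recovers $\varphi$. I do not anticipate any substantial obstacle here; the only care required lies in correctly tracking the direction of dominance under conjugation, which is otherwise a purely bookkeeping matter.
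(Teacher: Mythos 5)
Your proposal is correct and follows essentially the same route as the paper: both containments are checked generator-by-generator, and the key step is exactly the paper's observation that $\nu \unlhd \lambda$ gives $\lambda' \unlhd \nu'$, whose partial-sum inequalities (combined with $|\lambda|=|\nu|=k$) show every Tanisaki generator $e_d(S)$ of $I_{n,\nu,s}$ meets the threshold for $I\nls$. The only blemish is your claim that $e_d(S) \in I_{n,\mu,s}$ \emph{iff} $d > b_\mu(|S|)$ --- the ``only if'' direction is not justified (and not needed); since you only use the ``if'' direction, the argument stands.
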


\begin{proof}
    The first containment is by definition. Let $e_d(S) \in I_{n,\nu,s}$, so that $d > |S| - \nu'_n - \dots - \nu'_{n-|S|+1}$. Since $\nu \trianglelefteq \lambda$, we have that $\nu' \trianglerighteq \lambda'$, so that
    \begin{equation}\label{eq: dominance}
     \nu'_1 + \dots + \nu'_{n-|S|} \geq \lambda'_1 + \dots + \lambda'_{n-|S|}
     \end{equation}
    adding $|S|-k$ to both sides, and combining with \ref{eq: dominance}, we have that
    $$d > |S| - \nu'_n - \dots - \nu'_{n-|S|+1} \geq |S| - \lambda'_n - \dots - \lambda_{n-|S|+1}'$$
    which completes the proof.
\end{proof}

\begin{lem}[Gillespie--Rhoades \cite{RhoadesGillespie2021}]\label{lem: augmented-span}
    The augmented basis
    \begin{align}\label{eq:big basis}
        \{ F_T^S e_1^{i_1}\dots e_{n}^{i_{n}}: (S,\ix)\in \Sigma_{n,\emptyset, s}, \shape(S) = \shape(T) \}
    \end{align}
    is a basis of $\Q[\xx_n]/(x_1^s,\dots,x_n^s)$ for any $s \geq 0$.
\end{lem}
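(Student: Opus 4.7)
The plan is to combine a cardinality count via Worpitzky's identity with an elementary exponent bound on the monomials of $F_T^S \cdot e_1^{i_1}\cdots e_n^{i_n}$. The constraint on $\mu$ in the definition of $\Sigma_{n,\emptyset,s}$ is calibrated precisely so that these polynomials never produce a monomial reaching exponent $s$ in any variable, which will reduce linear independence in $\Q[\xx_n]/(x_1^s,\dots,x_n^s)$ to linear independence in $\Q[\xx_n]$ itself.

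For the count, RSK together with Proposition \ref{prop: RSK props}(b) gives that the number of pairs $(S,T) \in \SYT_n \times \SYT_n$ with $\sh(S)=\sh(T)$ and $\des(S)=d$ equals the Eulerian number $A_n(d)$, while partitions $\mu \subseteq n \times (s-d-1)$ are counted by $\binom{n+s-d-1}{n}$. Hence the proposed basis has cardinality
\begin{equation*}
\sum_{d=0}^{s-1} A_n(d) \binom{n+s-d-1}{n} \;=\; s^n
\end{equation*}
by Worpitzky's identity, matching $\dim \Q[\xx_n]/(x_1^s,\dots,x_n^s)$.

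The key observation is that for any $(S,\ix) \in \Sigma_{n,\emptyset,s}$ and $T \in \SYT(\sh(S))$, every monomial of $F_T^S \cdot e_1^{i_1}\cdots e_n^{i_n}$ has each variable's exponent at most $s-1$. The polynomial $F_T^S = \varepsilon_T\, \xx_T^{\cc(S)}$ is a signed sum of monomials obtained by permuting variable indices in $\xx_T^{\cc(S)}$, whose exponents are the entries of $\cc(S)$, bounded by $\des(S)$ (Lemma \ref{lem: max cc}). Each monomial of $\prod_j e_j^{i_j}$ has each variable appearing with exponent at most $\sum_j i_j = \mu_1 \leq s-\des(S)-1$, since each $e_j$ is square-free of degree one in each variable. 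The combined bound is $\des(S) + (s-\des(S)-1) = s-1$.

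With the bound in hand, any element of $(x_1^s,\dots,x_n^s)$ has every monomial divisible by some $x_i^s$, so if $\sum c_{S,T,\ix}\, F_T^S\, e_1^{i_1}\cdots e_n^{i_n} \in (x_1^s,\dots,x_n^s)$, the sum must vanish as a polynomial in $\Q[\xx_n]$. By Ariki--Terasoma--Yamada together with Chevalley--Shephard--Todd, the set $\{F_T^S\cdot e_1^{i_1}\cdots e_n^{i_n} : \sh(S)=\sh(T),\, \ix \in \Z_{\geq 0}^n\}$ is a $\Q$-basis of $\Q[\xx_n]$, so all $c_{S,T,\ix} = 0$. A linearly independent set of cardinality $s^n$ in the quotient is a basis. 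The main obstacle is verifying the exponent bound cleanly, which works because $\varepsilon_T$ only permutes variable indices (so exponent multisets are preserved) and the $e_j$ are square-free; once this is secured, the argument sidesteps the Gröbner-theoretic machinery used in \cite{RhoadesGillespie2021}.
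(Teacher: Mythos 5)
Your proof is correct. The paper itself gives no argument for this lemma (it is quoted from Gillespie--Rhoades), and your reconstruction is essentially the standard one from that source: every monomial of $F_T^S e_1^{i_1}\cdots e_n^{i_n}$ has each exponent at most $\des(S) + \mu_1 \le s-1$ because $\varepsilon_T$ only permutes variable indices and the $e_j$ are squarefree, so membership in the monomial ideal $(x_1^s,\dots,x_n^s)$ forces the linear combination to vanish in $\Q[\xx_n]$, where the full collection $\{F_T^S e^{\ix}\}$ is a basis by Ariki--Terasoma--Yamada plus the freeness of $\Q[\xx_n]$ over $\Q[e_1,\dots,e_n]$; the Worpitzky count $\sum_d A_n(d)\binom{n+s-d-1}{n} = s^n$ (which uses the symmetry $A_n(d) = A_n(n-1-d)$, and in which the omitted terms with $d \ge s$ vanish) then upgrades linear independence to a basis.
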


We will need the following extension of $I_{n,k} := I_{n,(1^k),k}$, first defined in \cite{HaglundRhoadesShimozono2018}:

\begin{defn}[Haglund--Rhoades--Shimozono {\cite{HaglundRhoadesShimozono2018}}]\label{def: Ink-smaller-k}
    Suppose $0 \leq s,k \leq n$. Let
    $$ I_{n,k,s} := (x_1^s,\dots, x_n^s, e_n(\xx_n),\dots,e_{n-k+1}(\xx_n)).$$
    Note we allow $s < k$ here.
\end{defn}

\begin{lem}[Rhoades--Yu--Zhao {\cite{rhoades2020harmonic}}]\label{lem: s-small-ideal-equal}
    Suppose $s < k$, and write $k = qs + r$ for integers $q,r \geq 0$. Let $\nu = ( (q+1)^r,q^{s-r})$. Then, we have that  $I_{n,k,s} = I_{n,\nu,s}$.
\end{lem}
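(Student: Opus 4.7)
The plan is to establish the two inclusions $I_{n,k,s} \subseteq I_{n,\nu,s}$ and $I_{n,\nu,s} \subseteq I_{n,k,s}$ separately. The first inclusion is quick: both ideals contain all $x_i^s$, and each elementary symmetric polynomial $e_j(\xx_n)$ for $j \geq n-k+1$ appears as a Tanisaki generator of $I_{n,\nu,s}$ by taking $S = [n]$ in the Tanisaki definition, since the tail sum of $\nu'$ equals $|\nu| = k$ and hence the constraint $d > n - k$ is exactly satisfied by $d = j$.

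The main content is the reverse inclusion. The key tool is the generating function identity
$$ \prod_{i \in S}(1 + tx_i) = \prod_{i=1}^n(1+tx_i) \cdot \prod_{j \notin S}(1 + tx_j)^{-1} $$
in $\Q[\xx_n][[t]]$. Expanding $(1 + tx_j)^{-1} = \sum_{b \geq 0}(-x_j)^b t^b$ and extracting the coefficient of $t^d$ yields the polynomial identity
$$ e_d(S) = \sum_{a + \sum_{j \notin S} b_j = d} (-1)^{\sum_j b_j} \, e_a(\xx_n) \prod_{j \notin S} x_j^{b_j} $$
in $\Q[\xx_n]$. Reducing modulo $I_{n,k,s}$, the relations $x_j^s \equiv 0$ and $e_a(\xx_n) \equiv 0$ for $a \geq n-k+1$ annihilate any term in which some $b_j \geq s$ or $a \geq n-k+1$. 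Letting $m = n - |S|$, the largest exponent sum of a surviving term is $(n-k) + m(s-1)$, so whenever $d > (n-k) + m(s-1)$ the sum is empty and $e_d(S) \equiv 0 \pmod{I_{n,k,s}}$.

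The final step is a bookkeeping check: the bound $d > n - k + m(s-1)$ from the \emph{no surviving terms} argument matches exactly the Tanisaki bound $d > |S| - (\nu'_n + \cdots + \nu'_{|S|+1})$ defining the generators of $I_{n,\nu,s}$. Writing $\nu' = (s^q, r, 0^{n-q-1})$ when $r > 0$ (and $\nu' = (s^q, 0^{n-q})$ when $r = 0$), the tail sum is computed case-by-case: it is $0$ when $m \geq q+1$ (giving a vacuous constraint since $e_d(S) = 0$ for $d > |S|$); it is $r$ when $m = q$; and it is $(q-m)s + r$ when $m \leq q-1$. Substituting $k = qs + r$, the Tanisaki bound simplifies in every nontrivial case to $d > n-k + m(s-1)$, exactly matching the bound obtained above. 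Hence every Tanisaki generator of $I_{n,\nu,s}$ lies in $I_{n,k,s}$, completing the proof.

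The main obstacle is the case analysis needed to verify that the two bounds coincide across the three regimes $m \geq q+1$, $m = q$, and $m \leq q-1$; the subtlety is that the $r$ in the tail sum and the $q(s-1)$ coming from the ``no surviving terms'' estimate must cancel cleanly via the identity $k = qs + r$. Once the arithmetic is unpacked, however, the coincidence of the two bounds is a formal algebraic identity and the desired containment follows.
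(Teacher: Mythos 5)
Your proof is correct. Note that the paper does not actually prove this lemma --- it is quoted from Rhoades--Yu--Zhao --- so there is no internal proof to compare against; but your argument is the standard one for statements of this type (the same generating-function identity $\prod_{i\in S}(1+tx_i)=\prod_{i=1}^n(1+tx_i)\prod_{j\notin S}(1+tx_j)^{-1}$ and the resulting degree count underlie the analogous containments in Haglund--Rhoades--Shimozono and in Griffin's work). The two directions are both sound: the easy containment $I_{n,k,s}\subseteq I_{n,\nu,s}$ via $S=[n]$ and the tail sum $|\nu|=k$ checks out, and your case analysis for the reverse containment is arithmetically correct --- with $\nu'=(s^q,r,0,\dots)$ the Tanisaki threshold $|S|-(\nu'_n+\cdots+\nu'_{n-|S|+1})$ equals $n-k+m(s-1)$ exactly in the regimes $m=q$ and $m\le q-1$, and is vacuous for $m\ge q+1$ since then $e_d(S)=0$ identically. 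The only things worth making explicit are the standing conventions $0\le r<s$ and $q\ge 1$ (the latter follows from $s<k$), which you use implicitly when writing $\nu=((q+1)^r,q^{s-r})$ and $\nu'_1=\cdots=\nu'_q=s$.
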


The following result is a corollary of Lemma \ref{lem: s-small-ideal-equal}.

\begin{cor}\label{cor: lambda-tilde-span}
    The set 
    \begin{align}\label{eq: basis_Rnnus}
        \widetilde{\CC}_{n,s} = \{ F_T^S e_1^{i_1}\dots e_{n-k}^{i_{n-k}} : (S,\ix)\in \Sigma_{n,1^k, s}, \shape(S) = \shape(T)\}
    \end{align}
    spans $R_{n,\nu,s}$. Furthermore, $\nu$ is the unique smallest partition in dominance order such that $\nu \trianglelefteq \lambda$ and $\ell(
    \nu) = s$.
\end{cor}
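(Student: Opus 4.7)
The plan is to lift the basis of $\Q[\xx_n]/(x_1^s,\dots,x_n^s)$ given by Lemma \ref{lem: augmented-span} to a spanning set of $R_{n,\nu,s}$ via the canonical quotient, and then identify which basis elements die. Since $x_i^s \in I_{n,\nu,s}$ by definition, there is a surjection
\[ \varphi': \Q[\xx_n]/(x_1^s,\dots,x_n^s) \twoheadrightarrow R_{n,\nu,s}, \]
so applying $\varphi'$ to the basis $\{F_T^S e_1^{i_1}\cdots e_n^{i_n} : (S,\ix)\in\Sigma_{n,\emptyset,s},\ \shape(S)=\shape(T)\}$ of the domain gives a spanning set of $R_{n,\nu,s}$.

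Writing $k = qs + r$ with $0 \leq r < s$, Lemma \ref{lem: s-small-ideal-equal} identifies $I_{n,\nu,s} = I_{n,k,s}$ (the boundary case $s=k$, where $\nu = (1^k)$, follows directly from unpacking both definitions). In particular $e_{n-k+1}(\xx_n),\dots,e_n(\xx_n) \in I_{n,\nu,s}$, so any basis element with $i_j > 0$ for some $j > n-k$ is sent to zero by $\varphi'$. Restricting to tuples with $i_{n-k+1} = \cdots = i_n = 0$, the inequality $\sum_{j=1}^n i_j < s-\des(S)$ reduces to $\sum_{j=1}^{n-k} i_j < s-\des(S)$; combined with the observation that $\Sigma_{1^k} = \SYT_k$ places no condition on $S\vert_k$, this is precisely the defining condition of $\Sigma_{n,1^k,s}$. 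The image of the resulting restricted set is $\widetilde{\CC}_{n,s}$, which therefore spans $R_{n,\nu,s}$.

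For the final uniqueness claim, the partition $\nu = ((q+1)^r, q^{s-r})$ is the unique minimum in dominance order among partitions of $k$ with exactly $s$ parts (it is the ``most balanced'' shape), and every partition of $k$ with at most $s$ parts dominates it, so in particular $\nu \trianglelefteq \lambda$ using $\ell(\lambda) \leq s$. The main step in the argument is the identification $I_{n,\nu,s} = I_{n,k,s}$ via Lemma \ref{lem: s-small-ideal-equal}: without it, it would not be transparent that quotienting by the excess generators of $I_{n,\nu,s}$ beyond the $x_i^s$ amounts precisely to killing $e_{n-k+1},\dots,e_n$, and hence to pruning the index set from $\Sigma_{n,\emptyset,s}$ down to $\Sigma_{n,1^k,s}$.
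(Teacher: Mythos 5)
Your proposal is correct and follows essentially the same route as the paper: lift the augmented basis of $\Q[\xx_n]/(x_1^s,\dots,x_n^s)$ from Lemma \ref{lem: augmented-span}, kill the elements divisible by $e_{n-k+1},\dots,e_n$ since these generators lie in $I_{n,k,s}=I_{n,\nu,s}$ by Lemma \ref{lem: s-small-ideal-equal}, and observe that the survivors are exactly $\widetilde{\CC}_{n,s}$; the dominance-minimality of $\nu$ is handled by the same standard balancing argument (the paper phrases it via conjugate partitions). Your explicit treatment of the boundary case $s=k$ and of why the index set prunes from $\Sigma_{n,\emptyset,s}$ to $\Sigma_{n,1^k,s}$ is slightly more careful than the paper's, but not a different proof.
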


\begin{proof}
    Note that $e_j(\xx_n)\in I_{n,k,s}$ for $n-k+1\leq j\leq n$,  thus $\tilde{\mathcal{C}}_{n,s}$ spans $R_{n,k,s}$ by Lemma \ref{lem: augmented-span}. Thus the first statement is apparent from Lemma \ref{lem: s-small-ideal-equal}. To prove the second statement, consider $\lambda',\nu'$, and note $\ell(\lambda) = \ell(\nu) = s$ translates to $s = \lambda'_1 = \nu'_1$, so that all parts of $\lambda',\nu'$ are no greater than $s$. Then, if 
    $$ \lambda_1' + \dots + \lambda_j' > \nu_1' + \dots + \nu'_j = sj $$
    then $\lambda_i' > s$ for some $i$, a contradiction. Therefore, $\lambda' \trianglelefteq \nu'$, so $\nu \trianglelefteq \lambda$.
\end{proof}
   Note that $\widetilde{\mathcal{C}}_{n,s}\neq \mathcal{C}_{n,\nu,s}$ is not necessarily a basis.

\begin{prop}\label{prop: lambda-tilde-bad-death}
    Let $S \in \SYT_n$, $\lambda = (\lambda_1,\lambda_2)$ a partition of at most two rows, and $s \geq \ell(\lambda)$. If $\ctype(S\vert_k) \not\trianglerighteq \lambda$, then $F_T^S = 0$ in $R_{n,\lambda,s}$ for all $T \in \SYT_n$, where $\sh(S) = \sh(T)$.
\end{prop}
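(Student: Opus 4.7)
The plan is to invoke Corollary \ref{cor: ideal-iff-kills-garnirs}: it suffices to show $\partial F_T^S \cdot \Delta_{U,s} = 0$ in $\Q[\xx_n]$ for every $U \in \Inj(\lambda, \leq n)$. Expanding both factors via Lemmas \ref{lem: snaking-is-hs} and \ref{lem: garnir-tab-formulation} turns the identity into the double sum
\[
\partial F_T^S \cdot \Delta_{U,s} \;=\; \sum_{P \in \snake(T)} \sum_{D \in \DX_{\col}(U,s)} (-1)^{P}(-1)^{D} \,\partial \xx_P^{\cc(S)} \cdot \xx_D,
\]
and Lemma \ref{lem: derivative-def} pins down the surviving pairs as those satisfying $\cc(S)(P^{-1}(i)) \leq \height(D^{-1}(i))$ for every $i \in [n]$. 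The aim is a sign-reversing involution on these surviving pairs.

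Before constructing the involution, I would translate the hypothesis combinatorially. A direct inspection of Algorithm \ref{alg: ctype} shows that every $0$ in $\cc(S\vert_k)$ adds a box to row~$1$ of the developing partition while no bumping step can ever produce a new $0$, so
\[
\ctype(S\vert_k)_1 \;=\; \#\{\,u \in \shape(S\vert_k) : \cc(S\vert_k)(u) = 0\,\}.
\]
Thus $\ctype(S\vert_k) \not\trianglerighteq (\lambda_1,\lambda_2)$ is precisely the assertion that strictly fewer than $\lambda_1$ boxes of $S\vert_k$ carry the cocharge label $0$. Because $\lambda$ has at most two rows, $U$ consists of $\lambda_1$ bottom-row boxes and $\lambda_2$ top-row boxes, and only top-row entries of $D$ contribute positive-exponent factors to $\xx_D$; pigeonhole then forces, for every surviving pair $(P,D)$, the existence of a \emph{defective} height-$2$ column of $U$ whose two entries are placed by $P$ into boxes of $\shape(S)$ whose cocharge labels fail to realize the $(0,1)$-height pattern dictated by the column.

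The involution $\iota\colon (P,D)\mapsto(P',D')$ would then locate the first defective column in a canonical ordering of columns of $U$ and perform a coupled swap: exchange the two entries of that defective column inside $D$, and simultaneously transpose their positions within the snakes of $P$. The defect condition ensures that the swap flips exactly one column-sign of $D$ and one snake-sign of $P$, producing an overall sign flip, while the coincidence of cocharge labels with column heights at the defective column forces Lemma \ref{lem: derivative-def} to return the same scalar-times-monomial before and after. Hence the surviving contributions pair up into cancelling pairs, proving $\partial F_T^S \cdot \Delta_{U,s} = 0$.

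The principal obstacle is the fine verification that this involution is well-defined: one must check that the coupled swap preserves the factorial coefficients and monomial exponents supplied by Lemma \ref{lem: derivative-def}, that the swapped pair $(P',D')$ still lies in $\snake(T) \times \DX_{\col}(U,s)$, and that the canonical defect column is stable under the swap so that $\iota^2 = \id$. The two-row restriction is essential here: only a single nonzero height occurs in the $\lambda$-part of $\xx_D$, so every defect is local to one height-$2$ column and the swap is a single transposition; for $\ell(\lambda) \geq 3$ the defects nest across multiple column-heights simultaneously, and coordinating the required coupled swaps into one global involution is exactly the obstruction that leaves Conjecture~E open beyond the two-row case.
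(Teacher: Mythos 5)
Your high-level strategy coincides with the paper's: reduce via Corollary \ref{cor: ideal-iff-kills-garnirs} to showing $\partial F_T^S\cdot\Delta_{U,s}=0$, expand over $\snake(T)\times\DX_{\col}(U,s)$, and cancel surviving terms by a sign-reversing involution that swaps the two entries of a chosen height-two column. However, the execution has several genuine gaps. First, your combinatorial translation of the hypothesis is wrong: $\ctype(S|_k)\not\trianglerighteq(\lambda_1,\lambda_2)$ is \emph{not} equivalent to having fewer than $\lambda_1$ zeros in $\cc(S|_k)$. It is true that $\ctype(S|_k)_1$ equals the number of zeros, but the negation of dominance is the disjunction ``$\ctype_1<\lambda_1$ \emph{or} $\ctype_1+\ctype_2<\lambda_1+\lambda_2$,'' and the second alternative (e.g.\ $\lambda=(2,2)$ with $\ctype(S|_4)=(2,1,1)$) is exactly the nontrivial case: when there really are fewer than $\lambda_1$ zeros, every term of the double sum already vanishes by Lemma \ref{lem: derivative-def} and no involution is needed. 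Your pigeonhole argument therefore never engages the case that actually requires work.

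Second, and more seriously, the involution itself does not work as described. A ``defective'' column in your sense is one whose entries $\alpha,\beta$ (at heights $0,1$) satisfy $\ax_\alpha=\ax_\beta=0$ while $\bx_\alpha=0,\bx_\beta=1$; swapping $\alpha$ and $\beta$ in $D$ (with or without a compensating swap in $P$) changes the surviving monomial factor from $x_\beta^{\,1-0}$ to $x_\alpha^{\,1-0}$, so the map is \emph{not} weight-preserving — your own sentence asserting ``the coincidence of cocharge labels with column heights at the defective column'' contradicts the definition of defective. Moreover, flipping ``one column-sign of $D$ \emph{and} one snake-sign of $P$'' yields a net sign of $+1$, not $-1$. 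The paper's involution instead locates a column $\begin{smallmatrix}\beta\\\alpha\end{smallmatrix}$ of $D$ whose labels \emph{do} match the heights ($\ax_\alpha=\bx_\alpha=0$, $\ax_\beta=\bx_\beta=1$) \emph{and} whose entries lie in the same row of $P$; swapping in both $D$ and $P$ then preserves the scalar and the monomial exactly because $\ax_i=\bx_i$ at both positions, preserves every snake-sign of $P$ (heights are unchanged), and flips only the column-sign of $D$. The existence of such a pair is what must be extracted from $\ctype(S|_k)\not\trianglerighteq\lambda$, by a counting argument on the entries of $\cc(S)$ labelled $0$ and $1$ (using that label $0$ occurs only in row $1$ and label $1$ only in rows $1$--$2$ of $\cc(S)$, together with Algorithm \ref{alg: ctype}); this step is absent from your proposal. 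To repair your write-up you would need to redefine the target column, redo the sign and weight bookkeeping, and supply the existence argument for the full negation of dominance.
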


\begin{proof}
    Suppose $\ctype(S\vert_k) \not\trianglerighteq \lambda$. By Corollary \ref{cor: ideal-iff-kills-garnirs}, we need to show that $\partial F_T^S \cdot \Delta_{U,s} = 0$ for all $U \in \Inj(\lambda, \leq n)$. We have that
    $$ 
    \partial F_T^S \cdot \Delta_{U,s} = \sum_{D \in \DX_{\col}(U,s)}\sum_{P\in \snake(T)} (-1)^D(-1)^P \partial \xx_P^S \cdot \xx_D
    $$
    
    We will do this by constructing a sign-reversing, weight-preserving involution on the set $\DX_{\col}(U,s) \times \snake(T)$. It suffices to consider the case where $\partial \xx_P^S \cdot \xx_D$ is nonzero. We write $\ax(P,S),\bx(D)$ for the exponent tuples (as in \eqref{eq: exponent-tuple-def}).
    
    First, for sake of contradiction, we assume that there does not exist a column of $D$ of the form $\ytableaushort{\beta,\alpha}$ such that $\alpha,\beta$ appear in the same row of $P$ and $\ax_\alpha = \bx_\alpha, \ax_\beta = \bx_\beta$: that is, $\ax_\alpha = 0, \ax_\beta =1$. We will show that such pair exists, and use it to construct the sign reversing involution.

    Note that since $D$ consists of $(\lambda_1,\lambda_2)$ along with the horizontal strip $\xi$, we have that 
    \[\bx_i = \begin{cases}
        0 &  i \text{ appears in row 1 of }D,\\ 
        1 &  i \text{ appears in row 2 of }D,\\ 
        s-1 &  i \text{ appears in the horizontal strip } \xi.
    \end{cases}\]

    Since $\partial \xx_P^S \cdot \xx_D\neq 0$ we know that $\mathbf{a}_i \leq \mathbf{b}_i$ for all $i$ by Lemma \ref{lem: derivative-def}. 
    Note that $\ax_i$ is the entry in the box of $\cc(S)$ corresponding to box in $P$ containing $i$.
    Since 0 can only appear in the first row of $\cc(S)$, we have that if $\bx_i =0$, then $i$ appears in the first row of $P$ and the corresponding entry in $\cc(S)$ is 0. Thus we have at least $\lambda_1$ many $0$'s in row 1 of $\cc(S)$. From Algorithm \ref{alg: ctype}, this implies
    $\lambda_1 \leq \ctype(S\vert_k)_1$.

    Similarly, if $\bx_i= 1$, we know that $i$ must appear in row $1$ or $2$ of $P$, since the only rows where $1$ can appear in $\cc(S)$ are rows 1 and 2.
    Assume there exists a $\beta$ such that $\bx_\beta =1$ and $\beta$ appears in row $1$ of $P$.  Then we would have a column $\ytableaushort{\beta,\alpha}$ of $D$ such that $\alpha,\beta$ appear in the same row and $\ax_\alpha = 0$. Thus, by assumption, we must have $\ax_\beta < 1$.

    Thus, if $\bx_i = 1$, we must have one of two cases:
    \[\begin{cases}
        i \text{ appears in row 1 of } P \text{ and the corresponding entry in }\cc(S) \text{ is } 0,\\
        i \text{ appears in row 2 of } P \text{ and the corresponding entry in }\cc(S) \text{ is } 1.
    \end{cases}\]

   By Algorithm \ref{alg: ctype}, we have that 
   $\lambda_1+\lambda_2 \leq \ctype(S\vert_{k})_1 +  \ctype(S\vert_{k})_2$, which contradicts 
    $(\lambda_1,\lambda_2) \ntrianglelefteq	\ctype(S|_k)$.

    Thus, there exists a column of $D$ of the form $\ytableaushort{\beta,\alpha}$ such that $\alpha,\beta$ appear in the same row of $P$ and $\ax_\alpha = 0, \ax_\beta =1$. 
    Choose $(\alpha,\beta)$ to the from the leftmost such column across all such pairs; see Example \ref{ex: sri-pair}.
    
    Let $\tau = (\alpha \,\, \beta)$ denote the \emph{column} transposition on $D$ and $\sigma = (\alpha \,\, \beta)$ denote the \emph{row} transposition on $P$ that swaps $\alpha$ and $\beta$. Define $\Phi: \DX_{\col}(U,s) \times \snake(T) \to \DX_{\col}(U,s) \times \snake(T)$ by $\Phi( D,P) \to \Phi(\tau D,\sigma P)$. Since $\tau$ is a transposition, we have that
    $$ \sgn(\Phi(D,P)) =  (-1)^{\tau D} (-1)^{\sigma P} = (-1)(-1)^D(-1)^P = -\sgn((D,P))$$
    This has the effect of swapping $\ax_\alpha,\ax_\beta$ in $\ax$, and $\bx_\alpha,\bx_\beta$ in $\bx$. But since $\ax_\alpha = \bx_\alpha,\ax_\beta = \bx_\beta$ the quantities
    $$ x_\alpha^{\bx_\alpha - \ax_\alpha} \qquad x_\beta^{\bx_\beta - \ax_\beta} \qquad \frac{\bx_\alpha! \bx_\beta!}{(\bx_\alpha-\ax_\alpha)! (\bx_\beta-\ax_\beta)!}$$
    all remain unchanged, so $\Phi$ is weight preserving. It is easy to see that $\Phi$ is an involution. So we have $\partial F_T^S \cdot \Delta_{U,s} = 0$, so that $F_T^S = 0$ in $R_{n,\lambda,s}$.
\end{proof}

\begin{example}\label{ex: sri-pair}

    Let $\lambda = (4,3), n = 9, s = 2, S = \ytableaushort{56,1234789}$. Note that $\ctype(S|_7) = (4,2,1) \not\trianglerighteq (4,3)$. We give an example of the sign-reversing involution $\Phi$. Let
    $$
    \begin{ytableau}
        2 & 5 & 7 & \none & \none & 8 & 9\\
        1 & 4 & 6 & 4
    \end{ytableau}
    \qquad
    T = \ytableaushort{25,1346789}
    $$
    One such nonzero pair $(D,P) \in \DX_{\col}(U,s) \times \snake(T)$ is given by
    $$
    D =
    \begin{ytableau}
        1 & 3 & *(pink)7 & \none & \none & 8 & 9\\
        2 & 5 & *(pink)6 & 4
    \end{ytableau}
    \qquad
    P = 
    \begin{ytableau}
        1 & 3 \\
        5 & *(pink)6 & 2 & 4 & *(pink)7 & 9 & 8
    \end{ytableau}
    \qquad 
    \cc(S) =
    \begin{ytableau}
        1 & 1 \\
        0 & *(pink)0 & 0 & 0 & *(pink)1 & 1 & 1
    \end{ytableau}
    $$
    so that $\ax(P,S) = (1,0,1,0,0,0,1,1,1)$ and $\bx(D) = (1,0,1,0,0,0,1,1,1)$. We have $i = 3, \alpha = 6, \beta = 7$, which is highlighted in pink in the previous diagram, and so $\Phi$ simply swaps this pair:
    $$
    \Phi( D,P) =
    \bigg(
    \begin{ytableau}
        1 & 3 & *(pink)6 & \none & \none & 8 & 9\\
        2 & 5 & *(pink)7 & 4
    \end{ytableau},
    \qquad
    \begin{ytableau}
        1 & 3 \\
        5 & *(pink)7 & 2 & 4 & *(pink)6 & 9 & 8
    \end{ytableau}
    \bigg)
    $$
    Note this changes the sign contribution of $D$, but not of $P$, and the monomial is preserved.
\end{example}

\begin{rem}
     Proposition \ref{prop: lambda-tilde-bad-death} fails for general $\lambda$, with the first counterexample appearing for $n=k=6$, $\lambda = (2,2,2)$. While \emph{most} higher Specht polynomials $F_T^S \not\in \CC\nls$ do indeed vanish modulo $I\nls$, there are some that are nonzero in $R\nls$, yet contained in the span of $\CC\nls$.
\end{rem}




Now, we can show that our set gives a basis for $(n,(\lambda_1,\lambda_2),\ell(\lambda))$.
\begin{proof}[Proof of Theorem \ref{thm: higherspecht}]
    Note that from Lemma \ref{lem:containment oif ideals} we have that $I_{n,\nu,\ell(\lambda)} \subset I_{n,\lambda,\ell(\lambda)}$, which implies there exists a surjection $R_{n,\nu,\ell(\lambda)}  \twoheadrightarrow R_{n,\lambda,\ell(\lambda)}$ for $\nu$ that appears in Corollary \ref{cor: lambda-tilde-span}. Combining this with Corollary \ref{cor: lambda-tilde-span}, we have that $\widetilde{\CC}_{n,\ell(\lambda)}$ spans $R_{n,\lambda,\ell(\lambda)}$. From Proposition  \ref{prop: lambda-tilde-bad-death}, the elements in $\mathcal{C}_{n,\lambda,\ell(\lambda)}$ that are not in $\widetilde{\CC}_{n,\ell(\lambda)}$ are exactly those that are 0 in $R_{n,\lambda,\ell(\lambda)}$. Thus $\mathcal{C}_{n,\lambda,\ell(\lambda)}$ still spans $R_{n,\lambda,\ell(\lambda)}$. The statement follows from Corollary \ref{cor: count}, which shows that $|\CC\nls| = \dim(R\nls)$.
\end{proof}

Specializing to the case $(n,\lambda,s) = (k,(\mu_1,\mu_2),\ell(\mu))$ gives a higher Specht basis for the Garsia--Procesi module $R_\mu$.
\begin{cor}
For $\mu = (\mu_1,\mu_2)$, the polynomials 
\begin{equation*}\label{eq: higher specht of gp}
    \CC_\mu := \{ F_T^S: S \in \Sigma_\mu,  \sh(S) = \sh(T)\}
\end{equation*}
 form a higher Specht basis for the Garsia-Procesi module $R_\mu$.
\end{cor}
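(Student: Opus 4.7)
The plan is to obtain this corollary as a direct specialization of Theorem~\ref{thm: higherspecht} with $(n,\lambda,s) = (k,\mu,\ell(\mu))$, combined with the identification $R_{k,\mu,\ell(\mu)} = R_\mu$ noted in the introduction. Theorem~\ref{thm: higherspecht} then immediately produces a higher Specht basis $\CC_{k,\mu,\ell(\mu)}$ of $R_\mu$, and the only real task is to verify that $\CC_{k,\mu,\ell(\mu)}$ and $\CC_\mu$ coincide as collections of polynomials.

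First I would unpack Definition~\ref{def: higherspechtbasis} at the given specialization. Since $n-k=0$, the exponent tuple $(i_1,\dots,i_{n-k})$ is empty and no elementary symmetric factors appear, so every element of $\CC_{k,\mu,\ell(\mu)}$ has the form $F_T^S$. Similarly, unfolding $\Sigma_{k,\mu,\ell(\mu)}$ from \eqref{def: sigma nls}, its elements are pairs $(S,\emptyset)$ with $S \in \SYT_k$, $\ctype(S\vert_k) = \ctype(S) \trianglerighteq \mu$, and $\des(S) < \ell(\mu)$. Dropping the trivial $\emptyset$ component, this should match $\Sigma_\mu = \{S \in \SYT_k : \ctype(S) \trianglerighteq \mu\}$.

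The only nontrivial step---and effectively the main (minor) obstacle---is showing that the extra constraint $\des(S) < \ell(\mu)$ in $\Sigma_{k,\mu,\ell(\mu)}$ is already implied by $\ctype(S) \trianglerighteq \mu$. I would prove this by sandwiching $\ell(\ctype(S))$ between two bounds. From $\ctype(S) \trianglerighteq \mu$ one gets $\ell(\ctype(S)) \leq \ell(\mu)$, via the dual dominance $\ctype(S)' \trianglelefteq \mu'$ and $\ell(\alpha) = \alpha'_1$. In the other direction, inspecting Algorithm~\ref{alg: ctype}: when a letter of value $a$ in $\cc(S)$ is pushed back, its value only ever increases to $a+1, a+2, \ldots$, so it ultimately is placed in some row of index $\geq a+1$ in $\ctype(S)$. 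Taking $a = \max\cc(S) = \des(S)$ (equality following from Lemma~\ref{lem: max cc} together with the observation that cocharge labels increment on $\rw(S)$ exactly at the descents of $S$), we conclude $\ell(\ctype(S)) \geq \des(S)+1$. Chaining the two inequalities yields $\des(S) + 1 \leq \ell(\mu)$, as required. Once this is verified, $\CC_{k,\mu,\ell(\mu)} = \CC_\mu$ and Theorem~\ref{thm: higherspecht} closes the argument; there is no serious mathematical obstacle beyond this indexing bookkeeping.
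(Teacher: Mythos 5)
Your proposal is correct and matches the paper's argument, which likewise obtains the corollary by specializing Theorem D to $(n,\lambda,s)=(k,\mu,\ell(\mu))$. The only addition is that you explicitly verify the identification $\Sigma_{k,\mu,\ell(\mu)}=\Sigma_\mu$ (i.e.\ that $\ctype(S)\trianglerighteq\mu$ forces $\des(S)<\ell(\mu)$), a point the paper declares immediate in Section \ref{sec:counting}; your sandwich argument for it is sound.
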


\begin{rem}
    To prove our conjecture holds for all $(n,\lambda,s)$, it suffices to show $\CC_{n,\lambda,\ell(\lambda)}$ is a basis for $R_{n,\lambda,\ell(\lambda)}$. We can then extend the result to all $s\geq \ell(\lambda)$ by induction using the exact sequence in Griffin \cite{griffin2021ordered}. We can extend Proposition \ref{prop: lambda-tilde-bad-death} to show that \emph{most} non-basis elements vanish; it would suffice to express the non-vanishing, non-basis elements as linear combinations of $\CC_{n,\lambda,\ell(\lambda)}$.
\end{rem}

\subsection{Connection to Gillespie-Rhoades Bases}\label{subsec: gillespie-rhoades}
Gillespie-Rhoades \cite{RhoadesGillespie2021} conjectured that the following set gives a higher Specht basis for the Garsia-Procesi module: 

\begin{conjecture}[Conjecture 3.6, Gillespie--Rhoades\cite{RhoadesGillespie2021}]\label{conj: gillespie-rhoades}
    The set
    $$ \CC_\mu' := \bigcup\limits_{\lambda\vdash n} \{ F_T^S : (S,T) \in \SSYT(\lambda,\mu) \times \SYT(\lambda) \}$$
    descends to a higher Specht basis of $R_\mu$.
\end{conjecture}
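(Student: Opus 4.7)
The plan is to reduce the Gillespie--Rhoades conjecture to the authors' Theorem D by establishing the identification $\CC_\mu' = \CC_\mu$ through Lascoux standardization. For each shape $\lambda \vdash n$, the Lascoux bijection $\std: \SSYT(\lambda,\mu) \to \{S \in \Sigma_\mu : \sh(S) = \lambda\}$ is shape- and cocharge-preserving; paired with the identity on $\SYT(\lambda)$, it yields a bijection between the indexing pairs of $\CC_\mu'$ and those of $\CC_\mu$. In particular, $|\CC_\mu'| = |\CC_\mu| = \dim R_\mu$ by Corollary \ref{cor: count}.

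The crux is then a box-wise identity for the cocharge fillings: for every $S \in \SSYT(\lambda,\mu)$, the cocharge tableau $\cc(S)$ should coincide with $\cc(\std(S))$ as a filling of $\lambda$. Because Lascoux standardization resolves ties between equal entries by reading order, the order in which the Lascoux--Sch\"utzenberger algorithm visits each box is the same for $S$ and $\std(S)$, so the number of ``wrap-arounds'' accumulated at each box agrees. Granted this, the monomials $\xx^{\cc(S)}_T$ and $\xx^{\cc(\std(S))}_T$ are equal, hence $F_T^S = F_T^{\std(S)}$ as polynomials in $\Q[\xx_n]$, and $\CC_\mu'$ and $\CC_\mu$ coincide as sets of representatives in $R_\mu$.

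For $\mu = (\mu_1,\mu_2)$ a two-row partition, Theorem D specialized to $n = k = |\mu|$ and $s = \ell(\mu)$ states that $\CC_\mu$ is a higher Specht basis of $R_\mu$, so $\CC_\mu'$ is too, establishing the Gillespie--Rhoades conjecture in the two-row case. For general $\mu$, the conjecture reduces to the $R_\mu$-specialization of Conjecture E, namely that $\CC_{|\mu|,\mu,\ell(\mu)}$ is a basis of $R_\mu$.

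The main obstacle to the general case is extending Proposition \ref{prop: lambda-tilde-bad-death}. The sign-reversing involution there uses that $\lambda$ has at most two rows in an essential way, since it matches pairs of rows of the column filling $D$ against rows of the snaking $P$. The remark following Theorem D highlights the genuine obstruction: for $\mu = (2,2,2)$ there exist $F_T^S$ with $\ctype(S|_k) \not\unrhd \mu$ that do \emph{not} vanish in $R_\mu$ but still lie in the span of $\CC_\mu$. To close the gap I would attempt to produce a straightening algorithm rewriting each such $F_T^S$ as a $\Q$-linear combination of elements of $\CC_\mu$, perhaps by combining the descent-order triangularity of Proposition \ref{prop: descent-triangularity} with the column-by-column injection \eqref{eq: injection-direct-sum} to reduce inductively on $\ell(\mu')$.
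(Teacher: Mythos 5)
The central step of your argument --- the ``box-wise identity'' $\cc(S) = \cc(\std(S))$ for all $S \in \SSYT(\lambda,\mu)$ --- is false, and the paper's treatment of this statement is in fact a \emph{refutation} of the conjecture in general, not a proof. The map $\std$ is Lascoux standardization, built from crystal reflection operators, not the naive standardization that breaks ties among equal entries by reading order; it preserves shape and the cocharge \emph{statistic}, but not the cocharge \emph{filling}. The paper gives an explicit counterexample: for $S$ of shape $(5,2,1)$ and weight $(2,2,2,2)$ with rows (bottom to top) $11233$, $24$, $4$, the filling $\cc(S)$ has rows $00001$, $12$, $1$, while $\cc(\std(S))$ has rows $00000$, $12$, $2$. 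Hence $F_T^S \neq F_T^{\std(S)}$ in general and $\CC_\mu' \neq \CC_\mu$. Worse, the paper exhibits $S \in \SSYT\bigl((4,4,1,1),(2,2,2,2,2)\bigr)$ for which $F_T^S = 0$ identically in $\Q[\xx_n]$ for every $T$ of that shape (a sign-reversing involution swaps the entries occupying the two singleton rows of any $T$-snaking), so $\CC_\mu'$ contains the zero polynomial and cannot be linearly independent. The conjecture as stated is therefore false for general $\mu$, and no straightening algorithm can rescue it; your proposed reduction to Conjecture E cannot work.

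Your conclusion in the two-row case is correct and agrees with the paper, but for a different reason than the one you give: when $\mu = (\mu_1,\mu_2)$ (so every relevant $\lambda$ also has at most two rows), both $\cc(S)$ and $\cc(\std(S))$ are forced to consist of all $0$'s in the first row and all $1$'s in the second row, whence they trivially coincide, $\CC_\mu' = \CC_\mu$, and Theorem D applies. It is precisely this degenerate coincidence, not any general compatibility of standardization with the cocharge labelling, that makes the two-row case go through --- and its failure beyond two rows is what kills the conjecture.
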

For the precise definition of $F_T^S$ in the case where $S$ is a semistandard tableaux, see \cite{RhoadesGillespie2021}.
We can see that the definition of $\mathcal{C}'_\mu$ differs from our higher Specht basis given in \eqref{eq: higher specht of gp}. 
In particular, the set $\mathcal{C}'_\mu$ uses cocharge tableaux arising from semistandard tableaux of weight $\mu$, rather than the corresponding subset of standard tableaux $\Sigma_\mu$ that it gets mapped to with respect to $\std$. 
Though the two formulas seem similar, they yield different sets, since in general, the standardization map $\std$ only preserves the cocharge statistic, not the cocharge tableaux,. 

For example, consider 
 \[\ytableausetup{smalltableaux} S =  \ytableaushort{4,24, 11233} \ \ \  \std(S) =\ytableaushort{7,68,12345}\]  

The corresponding cocharge tableaux are
 \[\ytableausetup{smalltableaux} \cc(S) =  \ytableaushort{1,12, 00001} \ \ \  \cc(\std(S)) =\ytableaushort{2,12,00000}\]  

A natural question to ask if whether this other set $\mathcal{C}'_\mu$ also forms a higher Specht basis of $R_\mu$. We answer this question using an example.  Consider the following tableau $S \in \SSYT((4,4,1,1),(2,2,2,2,2))$:

$$ S = \ytableaushort{5,3,2345,1124} \qquad \cc(S) = \ytableaushort{2,2,1113,0002}$$

Let $T \in \SYT((4,4,1,1))$, and $P \in \snake(T)$. We must have that the first snake $P_1$ passes through both $2$'s. Denote the boxes in the second and third rows (each consisting only of one box) by $u,v$ respectively. We must then necessarily have that $F_T^S = 0$ for all $T$, since we may construct a sign-reversing involution on $\snake(T)$ that simply swaps the entries of $u,v$ at the cost of a sign.
From this, we can see that $\mathcal{C}'_\mu$ does not form a basis of $R_\mu$ in general. 

However, the authors show that $\mathcal{C}'_\mu$ is indeed a higher Specht basis when $\mu = (\mu_1, \mu_2)$. This is no coincidence: in this case, we have that $\mathcal{C}'_\mu$ is exactly our higher Specht basis $\mathcal{C}_\mu$. 

To see this, we describe the standardization map $\std: \SSYT(\mu)\to \Sigma_\mu$.
In general, the standardization map can be described to be the composition of the crystal reflection operators $s_i$ and the lowering operator $f_1$ (see \cite{SW}). For $(\mu_1,\mu_2)$, we can easily describe this map explicitly. 
The map $\std$ takes the semistandard tableau of shape $\lambda = (\lambda_1,\lambda_2)$ of weight $\mu$ and maps it to the standard tableau obtained in the following way:
\begin{itemize}
    \item Replace the 1s in the first row with $12\dots \mu_1$,
    \item replace the 2s in the second  row with $(\mu_1)\dots (\mu_1+ \lambda_2)$,
      \item replace the 2s in the second row with $(\mu_1+\lambda_2) \dots n$.
\end{itemize}

The following proposition is an immediate consequence of the standardization map. 
\begin{prop}
    Let $\mu = (\mu_1,\mu_2)$. Then for any $S\in \SSYT(\mu)$, we have $\cc(S) = \cc(\std(S))$.
\end{prop}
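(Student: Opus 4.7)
My plan is to show that both $\cc(S)$ and $\cc(\std(S))$ equal the same explicit SSYT — the one of shape $(\lambda_1,\lambda_2) := \shape(S)$ whose first row is entirely $0$'s and whose second row is entirely $1$'s — and conclude. Since $\mu$ has only two parts, the SSYT $S$ of weight $\mu$ and shape $(\lambda_1,\lambda_2)$ is uniquely determined: its first row reads $1^{\mu_1}\,2^{\lambda_1-\mu_1}$ and its second row reads $2^{\lambda_2}$. The explicit description of $\std$ just above the proposition then gives $\std(S)$ as the SYT whose first row is $1,2,\dots,\lambda_1$ and whose second row is $\lambda_1+1,\dots,n$.

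Computing $\cc(\std(S))$ is then immediate from Definition \ref{def: cc(S)}: the letters $1,\dots,\lambda_1$ all lie in row $1$, so each carries label $0$; the jump from $\lambda_1$ (row $1$) to $\lambda_1+1$ (row $2$) increments the label to $1$; and $\lambda_1+2,\dots,n$ all stay in row $2$, so each keeps label $1$. This produces exactly the target filling.

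For $\cc(S)$, I would use the standard cocharge labeling for semistandard tableaux, which on two-row weights is computed by iteratively extracting subwords of type $12$ from the reading word $\rw(S) = 2^{\lambda_2}\,1^{\mu_1}\,2^{\lambda_1-\mu_1}$: greedily pair the rightmost unmarked $1$ with the first unmarked $2$ to its right in cyclic order, labeling both with the number of cyclic wraparounds incurred. The $\lambda_1-\mu_1$ row-$1$ twos sit immediately to the right of the block of ones and so are consumed first, with each pair requiring no wrap and thus carrying label $0$. Once those are exhausted, the remaining $\lambda_2$ row-$2$ twos lie strictly to the left of every remaining $1$, so every subsequent pairing wraps exactly once: the two is labeled $1$ while its paired one keeps label $0$. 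Any leftover unpaired ones (when $\mu_1>\mu_2$) form singleton subwords, each labeled $0$. Reassembling these labels onto the boxes of $S$ yields row $1$ all $0$'s and row $2$ all $1$'s, matching $\cc(\std(S))$. The only technical point is the observation that the greedy rule forces row-$1$ twos to be consumed before row-$2$ twos, which is immediate from the layout of the reading word; everything else is routine bookkeeping.
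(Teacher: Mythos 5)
Your proof is correct and follows the same route as the paper: both arguments reduce to checking that $\cc(S)$ and $\cc(\std(S))$ each equal the filling of $\shape(S)$ with $0$'s in row $1$ and $1$'s in row $2$, with your write-up simply supplying the details (uniqueness of $S$, the explicit form of $\std(S)$, and the subword-pairing computation of the semistandard cocharge labeling) that the paper's proof asserts without elaboration. Your resolution of the duplicated bullet in the paper's description of $\std$ — sending the row-one $2$'s to $\mu_1+1,\dots,\lambda_1$ and the row-two $2$'s to $\lambda_1+1,\dots,n$ — is the correct one, as it is the unique reading under which $\std(S)$ lands in $\Sigma_\mu$ and preserves cocharge.
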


\begin{proof}
Let $S\in \SSYT(\lambda,\mu)$, where $\lambda,\mu$ are both two row partitions.
We see that $\cc(S)$ consists of all 0s in the first row and  1s in the second row.
From the description of $\std(S)$, we can see that the cocharge tableau of $\cc(\std(S))$ is also the same.
\end{proof}

From this, we get that the two constructions coincide for two row partitions.
\begin{cor}
    For $\mu = (\mu_1, \mu_2)$, we have that $\mathcal{C}_\mu = \mathcal{C}'_\mu.$
\end{cor}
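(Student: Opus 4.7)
The plan is to deduce the corollary directly from the preceding proposition, using the fact that the higher Specht polynomial $F_T^S$ depends on $S$ only through the cocharge \emph{tableau} $\cc(S)$, not through $S$ itself. In both the Ariki--Terasoma--Yamada definition (for standard $S$) and the Gillespie--Rhoades extension (for semistandard $S$), the polynomial is of the form $F_T^S = \varepsilon_T \cdot \xx_T^{\cc(S)}$, where $\varepsilon_T$ depends solely on $T$ and, by Definition \ref{def: hs-monomial}, the monomial $\xx_T^{\cc(S)} = \prod_{u \in \lambda} x_{T(u)}^{\cc(S)(u)}$ is read off by superimposing $\cc(S)$ on $T$.

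From there, the argument is immediate. First, I would invoke Lascoux's standardization bijection $\std \colon \SSYT(\mu) \to \Sigma_\mu$, which preserves shape. For any $S \in \SSYT(\lambda, \mu)$ and any $T \in \SYT(\lambda)$, the previous proposition gives $\cc(S) = \cc(\std(S))$ as fillings of $\lambda$, so
\[
F_T^S \;=\; \varepsilon_T \cdot \xx_T^{\cc(S)} \;=\; \varepsilon_T \cdot \xx_T^{\cc(\std(S))} \;=\; F_T^{\std(S)}.
\]
Thus the map $(S, T) \mapsto (\std(S), T)$ induces a bijection between the indexing sets of $\CC'_\mu$ and $\CC_\mu$ under which corresponding higher Specht polynomials are \emph{equal}, not merely related, yielding $\CC_\mu = \CC'_\mu$ as sets of polynomials in $\Q[\xx_n]$.

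There is no real obstacle here: the only subtlety is confirming that the Gillespie--Rhoades definition of $F_T^S$ for semistandard $S$ agrees on the nose with the Ariki--Terasoma--Yamada formula once $\cc(S)$ is taken as the input filling. This is how both constructions are presented in Section \ref{subsec: higher specht background} and in \cite{RhoadesGillespie2021}, and the preceding proposition has already done the combinatorial work of showing that the cocharge tableaux on the two sides match for two-row weights. Everything else is formal.
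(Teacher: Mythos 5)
Your proposal is correct and follows the same route as the paper: the corollary is presented there as an immediate consequence of the preceding proposition, precisely because $F_T^S = \varepsilon_T\cdot \xx_T^{\cc(S)}$ depends on $S$ only through the filling $\cc(S)$, and the shape-preserving bijection $\std$ then identifies the two indexing sets with equal polynomials. Your write-up just makes explicit the steps the paper leaves implicit.
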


\bibliographystyle{plain}
\bibliography{refs}

\end{document}